\pdfoutput=1
\RequirePackage{ifpdf}
\ifpdf 
\documentclass[pdftex]{sigma}
\else
\documentclass{sigma}
\fi

\usepackage{mathrsfs}

\usepackage{tikz,combelow,mathtools,wasysym,enumerate}

\usetikzlibrary{arrows,positioning,cd,calc}

\numberwithin{equation}{section}

\newtheorem{Theorem}{Theorem}[section]
\newtheorem{Proposition}[Theorem]{Proposition}

\newtheorem{Corollary}[Theorem]{Corollary}
\newtheorem{Lemma}[Theorem]{Lemma}

\theoremstyle{definition}
\newtheorem{Remark}[Theorem]{Remark}
\newtheorem{Example}[Theorem]{Example}
\newtheorem{Definition}[Theorem]{Definition}

\DeclarePairedDelimiter\ceil{\lceil}{\rceil}

\newcommand{\half}{\frac{1}{2}}
\newcommand{\qd}[1]{ \mathfrak{Diff}_{q^{#1}} }
\newcommand{\valpha}{| \alpha \rangle}

\newcommand{\vac}{|0 \rangle}
\newcommand{\n}{|l \rangle}
\newcommand{\qaD}{\mathrm{Diff}_q^{\, \mathrm{A}}}
\newcommand{\qD}{\mathfrak{Diff}_q}
\newcommand{\qDn}{\mathfrak{Diff}_{q^n}}

\newcommand{\Hem}{\mathfrak{Heis}}

\newcommand{\psia}{\psi_{({a})}}
\newcommand{\psiaa}{\psi^*_{( \! a \! )}}
\newcommand{\psib}{\psi^*_{( b )}}
\newcommand{\psibb}{\psi_{( b )}}
\newcommand{\phib}{\phi_{b}}

\newcommand{\T}{\tilde{T}}
\newcommand{ \W }{ \mathcal{W}_{q} ( \mathfrak{sl}_n ) }
\newcommand{ \Wgl }{ \mathcal{W}_{q} ( \mathfrak{gl}_n ) }
\newcommand{ \twW }{ \mathcal{W}_{q} ( \mathfrak{sl}_n, n_{tw} ) }
\newcommand{ \twWgl }{ \mathcal{W}_{q} ( \mathfrak{gl}_n, n_{tw} ) }
\newcommand{\tT}{T^{tw}}
\newcommand{\Q}{\mathbf{Q}_{(n)}}
\renewcommand{\P}{\mathbf{P}_{(n)}}
\newcommand{\dQ}{\mathbf{Q}^d_{(n)}}
\newcommand{\Torgln}{\qD({\mathfrak{gl}_n})}
\newcommand{\bE}{\bar{E}}
\newcommand{\tE}{\tilde{E}}
\newcommand{\tF}{\tilde{F}}
\newcommand{\tH}{\tilde{H}}

\newcommand{\dWFock}{\mathcal{F}^{\mathcal{W}_{q} ( \mathfrak{sl}_{nd}, n'd )}_{u_1, \dots, u_d}}
\newcommand{\nWFock}{\mathcal{F}^{\W}_{u_1, \dots, u_d}}
\newcommand{\WFock}{\mathcal{F}^{\twW}_{u_1, \dots, u_d}}

\newcommand{\tphi}{\tilde{\varphi}}
\newcommand{\Ver}{ \mathcal{V}^{\twW}_{\lambda_1, \dots, \lambda_d} }
\newcommand{\Vergl}{\mathcal{V}^{\twWgl}_{\lambda_1, \dots, \lambda_d} }
\newcommand{\VerglNT}{\mathcal{V}^{\Wgl}_{\lambda_1, \dots, \lambda_d} }
\newcommand{\vacgl}{\left|\bar{\lambda} \right\rangle_{\mathfrak{gl}} }
\newcommand{\vacsl}{ \left| \bar{\lambda} \right\rangle_{\mathfrak{sl}} }
\newcommand{\vacF}{ \left| \bar{u} \right\rangle}

\newcommand{\mw}{\underline{w}}
\newcommand{\WqD}{\mathcal{S}}
\newcommand{\qDW}{\mathcal{P}}
\newcommand{\tbE}{\tilde{\mathbb{E}}^{(2)}}
\newcommand{\siL}{ \Lambda^{^{\! \infty/2}} \,}
\newcommand{\UH}{\otimes U(\mathfrak{Heis})}
\newcommand{\WH}{\twW \UH}
\newcommand{\WHnd}{\mathcal{W}(\mathfrak{sl}_{nd}, n'd) \UH}
\newcommand{\e}{\varepsilon}
\newcommand{\se}{\mathtt{e}}
\newcommand{\ha}{\tilde{H}}
\DeclareMathOperator{\id}{id}
\DeclareMathOperator{\ch}{ch}
\DeclareMathOperator{\Span}{span}
\DeclareMathOperator{\End}{End}
\newcommand{\I}{\mathrm{I}}
\newcommand{\fact}{\frac{ns}{d}! \,}

\begin{document}
\allowdisplaybreaks

\newcommand{\arXivNumber}{1906.00600}

\renewcommand{\PaperNumber}{077}

\FirstPageHeading

\ShortArticleName{Twisted Representations of Algebra of $q$-Difference Operators}

\ArticleName{Twisted Representations of Algebra\\ of $\boldsymbol{q}$-Difference Operators, Twisted $\boldsymbol{q}$-$\boldsymbol{W}$ Algebras\\ and Conformal Blocks}

\Author{Mikhail BERSHTEIN~$^{\dag^1\dag^2\dag^3\dag^4\dag^5}$ and Roman GONIN~$^{\dag^2\dag^3}$}

\AuthorNameForHeading{M.~Bershtein and R.~Gonin}

\Address{$^{\dag^1}$~Landau Institute for Theoretical Physics, Chernogolovka, Russia}
\EmailDD{\href{mailto:mbersht@gmail.com}{mbersht@gmail.com}}

\Address{$^{\dag^2}$~Center for Advanced Studies, Skolkovo Institute of Science and Technology, Moscow, Russia}
\EmailDD{\href{mailto:roma-gonin@yandex.ru}{roma-gonin@yandex.ru}}

\Address{$^{\dag^3}$~National Research University Higher School of Economics, Moscow, Russia}

\Address{$^{\dag^4}$~Institute for Information Transmission Problems, Moscow, Russia}
\Address{$^{\dag^5}$~Independent University of Moscow, Moscow, Russia}

\ArticleDates{Received November 22, 2019, in final form August 01, 2020; Published online August 16, 2020}

\Abstract{We study certain representations of quantum toroidal $\mathfrak{gl}_1$ algebra for $q=t$. We construct explicit bosonization of the Fock modules $\mathcal{F}_u^{(n',n)}$ with a nontrivial slope $n'/n$. As a vector space, it is naturally identified with the basic level 1 representation of affine $\mathfrak{gl}_n$. We also study twisted $W$-algebras of $\mathfrak{sl}_n$ acting on these Fock modules. As an application, we prove the relation on $q$-deformed conformal blocks which was conjectured in the study of $q$-deformation of isomonodromy/CFT correspondence.}

\Keywords{quantum algebras; toroidal algebras; $W$-algebras; conformal blocks; Nekrasov partition function; Whittaker vector}

\Classification{17B67; 17B69; 81R10}

\section{Introduction}
{\bf Toroidal algebra.} Representation theory of quantum toroidal algebras has been actively de\-veloped in recent years. This theory has numerous applications, including geometric representation theory and AGT relation~\cite{N16}, topological strings~\cite{AFS}, integrable systems, knot theory~\cite{GN13}, and combinatorics~\cite{CM15}.

In this paper we consider only the quantum toroidal $\mathfrak{gl}_1$ algebra; we denote it by $U_{q,t}\big(\ddot{\mathfrak{gl}}_1\big)$. The algebra depends on two parameters $q$, $t$ and has PBW generators $E_{k,l}$, $(k,l)\in \mathbb{Z}^2$ and central generators~$c'$,~$c$~\cite{Burban}. In the main part of the text we consider only the case $q=t$, where toroidal algebra becomes the universal enveloping of the Lie algebra with these generators~$E_{k,l}$,~$c'$,~$c$ and the relation
\begin{gather*}
[E_{k,l} , E_{r,s} ] = \big( q^{(sk-lr)/2}-q^{(lr-sk)/2} \big) E_{k+r, l+s} + \delta_{k, -r} \delta_{l, -s} ( c' k + c l ).
\end{gather*}
We denote this Lie algebra by $\qD$, since there is a homomorphism from this algebra to the algebra of $q$-difference operators generated by $D$, $x$ with the relation $Dx=qxD$; namely $E_{k,l} \mapsto q^{kl/2} x^l D^k$.

There is another presentation of the algebra $\qD$ (and more generally $U_{q,t}\big(\ddot{\mathfrak{gl}}_1\big)$) using the Chevalley generators $E(z) = \sum\limits_{k \in \mathbb{Z}} E_{1,k} z^{-k}$, $F(z) = \sum\limits_{k \in \mathbb{Z}} E_{-1, k} z^{-k}$, $H(z) = \sum\limits_{k \neq 0} E_{0, k} z^{-k}$, see, e.g.,~\cite{Ts}.

In this paper we deal with the Fock representations of $\qD$; to be more precise there is a~family $\mathcal{F}_u$ of Fock modules, depending on the parameter $u$ (see Proposition~\ref{prop:sec3 Boson Fock} for a construction of~$\mathcal{F}_u$). They are just Fock representations of the Heisenberg algebra generated by~$E_{0,k}$. The images of~$E(z)$ and~$F(z)$ are vertex operators. A construction of this type is usually called \emph{bosonization}.

It was shown in \cite{FHHSY,N16} that the image of toroidal algebra $U_{q,t}\big(\ddot{\mathfrak{gl}}_1\big)$ in the endomorphisms of the tensor product of $n$ Fock modules is the deformed $W$-algebra for $\mathfrak{gl}_n$. There is the so-called conformal limit $q,t\rightarrow 1$, in which deformed $W$-algebras go to vertex algebras. These vertex algebras are tensor products of the Heisenberg algebra and the $W$-algebras of $\mathfrak{sl}_n$. In the case $q=t$, the central charge of the corresponding $W$-algebra of $\mathfrak{sl}_n$ is equal to $n-1$. These $W$-algebras appear in the study of isomonodromy/CFT correspondence (see \cite{GIL12,GM16}). This is one of the motivations of our paper.

The $q$-deformation of the isomonodromy/CFT correspondence was proposed in \cite{BS:2016:1,BGT17,JNS17}. The main statement is an explicit formula for the $q$-isomonodromic tau function as an infinite sum of conformal blocks for certain deformed $W$-algebras with $q=t$. In general, these tau functions are complicated, but there are special cases (corresponding to algebraic solutions) where these tau functions are very simple \cite{BGM, BS:2016:1}. These cases should correspond to special representations of $q$-deformed $W$-algebras. The construction of such representation is one of the purposes of this paper.

{\bf Twisted Fock modules.}
There is a natural action of ${\rm SL}(2,\mathbb{Z})$ on $\qD$. We will parametrize $\sigma\in {\rm SL}(2,\mathbb{Z})$ by
\begin{gather*}
\sigma = \begin{pmatrix}
m' & m \\
n' & n
\end{pmatrix}.
\end{gather*}
Then $\sigma$ acts as
\begin{gather*}
\sigma( E_{k,l} ) = E_{ m' k + m l, n' k + n l },\qquad
\sigma (c') = m' c' + n' c,\qquad
\sigma (c) = m c' + n c.
\end{gather*}
For any $\qD$ module $M$ and $\sigma \in {\rm SL}(2,\mathbb{Z})$, we denote by $M^\sigma$ the module twisted by the automorphism $\sigma$ (see Definition \ref{twisted}). The twisted Fock modules depend only on $n$ and $n'$ (up to isomorphism). These numbers are the values of the central generators $c$ and $c'$, correspondingly, acting on $\mathcal{F}_u^\sigma$. Therefore we will also use the notation $\mathcal{F}_u^{(n',n)}$ for $\mathcal{F}_u^\sigma$. Twisted Fock modules $\mathcal{F}_u^\sigma$ (for generic $q, t$) were used, for example, in \cite{AFS} and \cite{GN15}.

In Section \ref{Section with answers} we construct explicit bosonization of the twisted Fock modules $\mathcal{F}_u^\sigma$ for $q=t$. Actually, we give three constructions: the first one in terms of $n$-fermions (see Theorem \ref{TheFermionicTh}), the second one in terms of $n$-bosons (see Theorem \ref{Th:qD Boson}) and the third one in terms of one twisted boson (see Theorem \ref{Th:StrangeBoson}) (here, for simplicity, we assume that $n>0$). In other words, any twisted Fock module will be identified with the basic module for $\widehat{\mathfrak{gl}}_n$; these two bosonizations correspond to homogeneous~\cite{FK} and principal~\cite{KKLW,LW} constructions.

The construction of the bosonization is nontrivial, because it is given in terms of Chevalley generators (note that the ${\rm SL}(2,\mathbb{Z})$ action is not easy to describe in terms of Chevalley generators). The appearance of affine $\mathfrak{gl}_n$ is in agreement with the Gorsky--Negu\c{t} conjecture~\cite{GN15}. More specifically, it was conjectured in \cite{GN15} that there exists an action (with certain properties) of $U_{p^{1/2}} \big(\widehat{\mathfrak{gl}}_n\big)$ on $\mathcal{F}_u^{\sigma}$ for $p=q/t\neq1$; we expect this to be $p$-deformation of the $\widehat{\mathfrak{gl}}_n$-action constructed in this paper.

It is instructive to look at the formulas in the simplest examples. For simplicity, we give here only formulas for $E(z)$. Here we introduce the notation in a sloppy way (for details see Sections~\ref{level1} and~\ref{Section with answers}).

\begin{Example}
In the standard case $n=1$, $n'=0$ we have
\begin{gather*}
E(z) =u q^{-1/2} z \psi\big( q^{-1/2} z\big) \psi^* \big( q^{1/2} z\big)= \frac{u}{1-q}: \! \exp \big( \phi\big(q^{1/2} z\big) - \phi\big(q^{-1/2} z\big) \big) \! :,
\end{gather*}
where $\psi(z)$, $\psi^*(z)$ are complex conjugate fermions (see Section~\ref{fermi}), $\phi(z)=\sum\limits_{j \neq 0} a [j] z^{-j}/j$ is a~boson and $a[j]$ are generators of the Heisenberg algebra with relation $[a[j],a[j']]=j\delta_{j+j',0}$ (see Section~\ref{boson}).
\end{Example}

\begin{Example}
The first nontrivial case is given by $n=2$, $n'=1$. We have three formulas (corresponding to Theorems~\ref{TheFermionicTh}, \ref{Th:qD Boson} and \ref{Th:StrangeBoson}):
\begin{gather}
E(z) = u^{\frac{1}{2}}q^{-\frac14} \left( z^2\psi_{(0)} \big( q^{-1/2} z\big) \psi_{(1)}^* \big(q^{1/2} z\big) + z\psi_{(1)} \big( q^{-1/2} z\big) \psi_{(0)}^* \big(q^{1/2} z\big) \right), \label{eq:Etwferm}
\\
E(z) = u^{\frac{1}{2}}q^{-\frac14}\left( z^2 :\!\exp \left( \phi_1\big(q^{1/2} z\big) - \phi_0\big(q^{-1/2} z\big) \right)\!: \right.\nonumber\\
\left. \hphantom{E(z) =}{} +z^{} :\!\exp \left( \phi_0\big(q^{1/2} z\big) - \phi_1\big(q^{-1/2} z\big) \right)\!: \right) (-1)^{a_{0}[0]},\label{eq:Etwboson} \\
E(z) = \frac{z^{\frac12} u^{\frac{1}{2}}}{2\big(1-q^{\frac12}\big)}\left( :\!\exp \! \left( \sum_{k\neq 0} \frac{q^{-k/4}-q^{k/4}}{k} a_{k} z^{-k/2} \right)\!: \right.\nonumber\\
\left.\hphantom{E(z) =}{} - :\!\exp\!\left( \sum_{k \neq 0} (-1)^{k}\frac{q^{-k/4}-q^{k/4}}{k} a_{k} z^{-k/2} \right)\!:\right). \label{eq:Etwbosontw}
\end{gather}
Here $\psi_{(0)}(z)$, $\psi_{(0)}^*(z)$ and $\psi_{(1)}(z)$, $\psi_{(1)}^*(z)$ are anticommuting pairs of complex conjugate fermions (see Section~\ref{section: The Fermionic Th}), $\phi_{b}(z)=\sum\limits_{j \neq 0} a_{b} [j] z^{-j}/j + Q + a_b[0] \log z$ are commuting bosons, and $a_{b}[j]$ are generators of the Heisenberg algebra with the relation $[a_{b}[j],a_{b'}[j']]=j\delta_{j+j',0}\delta_{b,b'}$ (see Section~\ref{subsection: TW qDboson}). The generators $a_k$ in \eqref{eq:Etwbosontw} satisfy $[a_k,a_{k'}]=k\delta_{k+k',0}$.

The relation between \eqref{eq:Etwferm} and \eqref{eq:Etwboson} is a standard boson-fermion correspondence. In the right-hand side of formula \eqref{eq:Etwbosontw} we have only one Heisenberg algebra with generators~$a_{k}$, but since we have both integer and half-integer powers of $z$, one can think that we have a boson with a nontrivial monodromy. This is the reason for the term `twisted boson'; we will also call this construction \emph{strange bosonization}. Note that half-integer powers of~$z$ cancel in the right-side of~\eqref{eq:Etwbosontw}.
\end{Example}

We present two different proofs of Theorems \ref{TheFermionicTh}, \ref{Th:qD Boson} and \ref{Th:StrangeBoson}. The first one is given in Section~\ref{S:sublattice} and is based on the following idea. For any
full rank sublattice $\Lambda\in \mathbb{Z}^2$ of index $n$, we have a subalgebra $\qd{1/n}^{\Lambda} \subset \qd{1/n}$, which is spanned by $E_{a,b}$ for $(a,b) \in \Lambda$ and central elements $c$, $c'$. The algebra $\qd{1/n}^{\Lambda}$ is isomorphic to $\qD$; the isomorphism depends on the choice of a positively oriented basis $v_1$, $v_2$ in $\Lambda$. Denote this isomorphism by $\phi_{v_1,v_2}$.

If the basis $v_1$, $v_2$ is such that $v_1=(N,0)$, $v_2=(R,d)$, then the restriction of the Fock module~$\mathcal{F}_u$ on $\phi_{v_1,v_2}(\qD)$ is isomorphic to the sum of tensor products of the Fock modules
\begin{align} \label{eq:Fu decomp intro}
\left. \mathcal{F}_{u^{1/N}} \right|_{\phi_{v_1,v_2}(\qD)} \cong \bigoplus_{ l \in \mathbf{Q}_{(d)}} \ \mathcal{F}_{u q^{ rl_0}} \otimes \cdots \otimes \mathcal{F}_{u q^{r\left(\frac{\alpha}{n}+ l_{\alpha} \right)
}} \otimes \cdots \otimes \mathcal{F}_{u q^{r \left(\frac{d-1}{d}+ l_{d-1} \right)}}
\end{align}
where $r=\gcd(N,R)$ and $\mathbf{Q}_{(d)}=\{(l_0,\dots,l_{d-1})\in \mathbb{Z}^d\,|\,\sum l_i=0\}$. If we choose basis $w_1$, $w_2$ in $\Lambda$ which differs from~$v_1$,~$v_2$ by $\sigma\in {\rm SL}(2,\mathbb{Z})$, we get an analogue of decomposition \eqref{eq:Fu decomp intro} with right-hand side given by a sum of tensor products of the twisted Fock modules. For the basis $w_1=(r,n_{tw})$, $w_2=(0,n)$, we write formulas for Chevalley generators of $\qD = \qd{1/n}^{\Lambda}$ using either initial fermion or initial boson for~$\mathcal{F}_u$. Applying this for the lattices with $d=1$, we get Theorems~\ref{TheFermionicTh},~\ref{Th:qD Boson} and~\ref{Th:StrangeBoson}.

The secondproof of these theorems is based on the semi-infinite construction. Let $V_{u}$ denote the representation of the algebra $\qD$ in a vector space with basis $x^{k-\alpha}$ for $k \in \mathbb{Z}$, where $\qD$ acts as $q$-difference operators (see Definition \ref{eq: sec3 eval def}). This representation is called \emph{vector} (or \emph{evaluation}) representation; the parameter $u$ is equal to $q^{-\alpha}$. The Fock module $\mathcal{F}_u$ is isomorphic to $\Lambda^{\infty/2+0}(V_u)\subset \siL (V_u)$. After the twist, we get a semi-infinite construction of $\mathcal{F}_u^{\sigma} \subset \big(\siL V_u\big)^{\sigma}= \siL (V_u^{\sigma})$. Note that conjecturally the semi-infinite construction of $\mathcal{F}_u^\sigma$ can be generalized for $q\neq t$ (cf.~\cite{FFJMMa}).

{\bf Twisted $\boldsymbol{W}$-algebras.} Denote by $\qD^{\geqslant0}$ the subalgebra of $\qD$ generated by $c$ and $E_{a,b}$, for $a\geqslant 0$. There is an another set of generators $E^k[j]$ of the completion of the $U\big(\qD^{\geqslant 0}\big)$, defined by the formula $\sum\limits_{j \in \mathbb{Z}} E^k[j]z^{-j}= ( E (z) )^k$ (see Appendix~\ref{nonorm} for the definition of the power of~$E(z)$). The currents $H(z)$ and $E^k(z)$ for $k\in \mathbb{Z}_{>0}$ satisfy relations of the $q$-deformed $W$-algebra of $\mathfrak{gl}_\infty$ (see~\cite{N16}). We denote this algebra by $\mathcal{W}_q(\mathfrak{gl}_\infty)$.

There is an ideal $J_{\mu,d}^{\geqslant0}$ in $U\big(\qD^{\geqslant0}\big)=\mathcal{W}_q(\mathfrak{gl}_\infty)$ which acts by zero on any tensor product $\mathcal{F}_{u_1}\otimes \dots\otimes \mathcal{F}_{u_d}$, here $\mu= \frac{1}{1-q} (u_1 \cdots u_d)^{{1}/{n}}$. This ideal is generated by relations $c=d$ and
\begin{gather*}
E^d(z)= {\mu^d}{d!} \exp ( \varphi_- (z) ) \exp ( \varphi_+ (z) ),
\end{gather*}
where \begin{gather*}\varphi_{-} (z) = \sum_{j>0} \frac{q^{-j/2}-q^{j/2}}{j} E_{0,-j} z^j,\qquad \varphi_{+} (z) = - \sum_{j>0} \frac{q^{j/2}-q^{-j/2}}{j} E_{0,j} z^{-j}.\end{gather*}
The quotient of $\mathcal{W}_q(\mathfrak{gl}_{\infty})/J_{\mu,d}^{\geqslant0}$ is the $q$-deformed $W$-algebra of $\mathfrak{gl}_d$. We denote this algebra by $\mathcal{W}_q(\mathfrak{gl}_d)$; it does no depend on $\mu$ (up to isomorphism) and acts on any tensor product $\mathcal{F}_{u_1}\otimes \dots\otimes \mathcal{F}_{u_d}$ (see~\cite{FHSSY,N16}).

In Section \ref{section: q-W} we study a tensor product of the twisted Fock modules ${\mathcal{F}^{\sigma}_{u_1} \otimes \cdots \otimes \mathcal{F}_{u_d}^{\sigma}}$. We prove that the ideal $J_{\mu,nd,n'd}^{\geqslant0}$ generated by relations $c=nd$ and
\begin{gather*}E^{nd}(z)= z^{n'd} {\mu^{nd}}{(nd)!} \exp ( \varphi_- (z) ) \exp ( \varphi_+ (z) )\end{gather*} acts by zero for
$\mu = (-1)^{{1}/{n}} \frac{q^{-{1}/{2n}}}{q^{1/2}-q^{-1/2}} (u_1 \cdots u_d)^{{1}/{nd}}$.We denote the quotient $\mathcal{W}_q(\mathfrak{gl}_{\infty})/J_{\mu,nd,n'd}^{\geqslant0}$ by $\mathcal{W}_q(\mathfrak{gl}_{nd},n'd)$ and call it the \emph{twisted $q$-deformed $W$-algebra of $\mathfrak{gl}_{nd}$}.

There exists another description of the above using the $q$-deformed $W$-algebra of $\mathfrak{sl}_n$ introduced in \cite{FF}. Define $T_k[j]$ by the formula
\begin{gather*}
T_k (z) =\sum T_k[j] z^{-j} = \frac{\mu^{-k}}{k!} \exp \left(-\frac{k}{c} \varphi_- (z) \right) E^k (z) \exp \left( - \frac{k}{c} \varphi_+ (z) \right).
\end{gather*}
The generators $T_k[j]$ are elements of a localization of the completion of $U\big(\qD^{\geqslant0}\big)$. These generators commute with $H_i$ and satisfy certain quadratic relations. The algebra generated by~$T_k[j]$ is denoted by $\mathcal{W}_q(\mathfrak{sl}_{\infty})$.

There is an ideal in $\mathcal{W}_q(\mathfrak{sl}_{\infty})$ which acts by zero on any tensor product $\mathcal{F}_{u_1}\otimes \dots\otimes \mathcal{F}_{u_d}$. This ideal contains relations $c=d$, $T_d(z)=1$, and $T_{d+k}(z)=0$ for $k>0$. The quotient is a standard $W$-algebra $\mathcal{W}_q(\mathfrak{sl}_d)$ \cite{FF} (see also Definition \ref{def: non-twisted W}). We have a relation $\mathcal{W}_q(\mathfrak{gl}_d)=\mathcal{W}_q(\mathfrak{sl}_d) \UH$, where $\mathfrak{Heis}$ is the Heisenberg algebra generated by $E_{0,j}$.

In the case of a product of the twisted Fock modules ${\mathcal{F}^{\sigma}_{u_1} \otimes \cdots \otimes \mathcal{F}_{u_d}^{\sigma}}$ the situation is similar. The corresponding ideal contains the relations $T_{nd}(z)=z^{n'd}$, $T_{nd+k}(z)=0$ for $k>0$. We present the quotient in terms of the generators $T_1(z),\dots,T_{nd}(z)$ and relations (this is Theorem~\ref{Th:Wiso}). We call the algebra with such generators and relations by twisted $W$-algebra $\mathcal{W}_q(\mathfrak{sl}_{nd},n'd)$; see Definition~\ref{def: twisted W}.\footnote{One can find a definition of $\mathcal{W}_{q,p}(\mathfrak{sl}_2, 1)$ in \cite[equations~(37)--(38)]{Sh}.} The quadratic relations in the algebra $\mathcal{W}_q(\mathfrak{sl}_{nd},n'd)$ are the same as in the untwisted case (see equation~\eqref{relation:modesW1}--\eqref{relation:modesWn-1}), the only difference lies in the relation $T_{nd}(z)=z^{n'd}$.

The algebra $\mathcal{W}_q(\mathfrak{sl}_{nd},n'd)$ is graded, with $\deg T_k[j]=j+\frac{n'k}{n}$. Let us rename the generators by $T^{tw}_k[r]=T_k\big[r-\frac{n'k}{n}\big]$, for $r \in \frac{n'k}{n}+\mathbb{Z}$. The presentations of the algebra $\mathcal{W}_q(\mathfrak{sl}_{nd},n'd)$ in terms of generators~$T^{tw}_k[r]$ and the presentations of the algebra $\mathcal{W}_q(\mathfrak{sl}_{nd})$ is terms of generators~$T_k[r]$ are given by the same formulas; the only difference is the region of $r$. Heuristically, one can think that $\mathcal{W}_q(\mathfrak{sl}_{nd},n'd)$ is the same algebra as $\mathcal{W}_q(\mathfrak{sl}_{nd})$ but with currents having nontrivial monodromy around zero.

In order to explain these results in more details, consider an example of $\mathfrak{sl}_2$.
\begin{Example} \label{examp: Int Fock odinary}
As a warm-up, consider the untwisted case $n'=0$. The algebra $\mathcal{W}_q(\mathfrak{sl}_2)$ is $q$-deformed Virasoro algebra \cite{SKAO}. It has one generating current $T(z)=T_1(z)$ and the relation reads
\begin{gather}\label{eq:Int TT}
\sum_{l=0}^{\infty} f[l] \big( T[r{-}l] T[s{+}l] - T[s{-}l] T[r{+}l] \big) = -2r\big(q^{\half}{-}q^{-\half}\big)^2 \delta_{r+s,0} ,
\end{gather}
where $f[l]$ are coefficients of a series $\sum\limits_{l=0}^{\infty} f[l] x^l=\sqrt{(1-qx)\big(1-q^{-1}x\big)}/(1-x)$. This algebra has a standard bosonization \cite{SKAO}
\begin{gather}
T (z) = - \big(q^{\half}-q^{-\half}\big) z \nonumber\\
\hphantom{T (z) =}{}\times \left[
u : \! \exp \big( \eta(q^{1/2} z\big) - \eta\big(q^{-1/2} z\big)\big) \! : +u^{-1} : \! \exp \big( \eta\big(q^{-1/2} z\big) - \eta\big(q^{1/2} z\big) \big) \! : \right],\label{eq:Int T st}
\end{gather}
where $\eta(z) = \sum\limits_{k \neq 0} \eta[k] z^{-k}/k$ and $\eta[k]$ are the generators of the Heisenberg algebra $[\eta[k_1], \eta[k_2]] = \frac12 k_1 \delta_{k_1+k_2,0}$; one can also add $\eta[0]$ related to the parameter $u$. In terms of the toroidal algebra $\qD$ this formula corresponds to the tensor product of two Fock modules $\mathcal{F}_{u_1}\otimes \mathcal{F}_{u_2}$, here $u^2=u_1/u_2$.
\end{Example}

\begin{Example}Now, consider the twisted case $n'=1$. The algebra $\mathcal{W}_q(\mathfrak{sl}_2,1)$ is generated by one current $T^{tw}(z)=T_1^{tw}(z)=\sum\limits_{r \in \mathbb{Z}+1/2} T_1^{tw}[r] z^{-r}$. The generators $T^{tw}[r]=T_1^{tw}[r]$ satisfy relation~\eqref{eq:Int TT}. The algebra $\mathcal{W}_q(\mathfrak{sl}_2,1)$ is called \emph{twisted $q$-deformed Virasoro algebra}.

As was explained above, the representations of $\mathcal{W}_q(\mathfrak{sl}_2,1)$ come from the twisted Fock modu\-les~$\mathcal{F}_u^{(1,2)}.\!$ The bosonization of the twisted Fock module leads to the bosonization of the~$\mathcal{W}_q(\mathfrak{sl}_2,1)$. Using formula~\eqref{eq:Etwboson} we get a bosonization
\begin{gather*}
T^{tw} (z) = \big(q^{\half}-q^{-\half}\big) \\
\hphantom{T^{tw} (z) =}{} \times \left[ z^{1/2} :\!\exp \big( \eta\big(q^{1/2} z\big) + \eta\big(q^{-1/2} z\big) \big)\!: + z^{3/2}:\! \exp \big( {-}\eta\big(q^{1/2} z\big) - \eta\big(q^{-1/2} z\big) \big)\!: \right].
\end{gather*}
Using formula \eqref{eq:Etwbosontw} we get a strange bosonization
\begin{gather*}
T^{tw} (z) = (-1)^{\frac{1}{2}} \frac{q^{\half}-q^{-\half}}{2\big(q^{ \frac{1}{4}} - q^{-\frac{1}{4}}\big)} z^{\frac{1}{2}} \\
\hphantom{T^{tw} (z) =}{}\times \left[ : \! \exp \left( \sum_{2 \nmid r} \frac{q^{-\frac{r}{4}}-q^{\frac{r}{4}}}{r} J_r z^{-\frac{r}{2}} \right) - : \! \exp \left( \sum_{2 \nmid r} \frac{q^{\frac{r}{4}}-q^{-\frac{r}{4}}}{r} J_r z^{-\frac{r}{2}} \right) \! \!:\right].
\end{gather*}
Here $\eta(z) = \sum\limits_{k \neq 0} \eta[k] z^{-k}/k +Q + \eta[0] \log z$, and $J_r$ are modes of the odd Heisenberg algebra, $[J_r,J_s]=r\delta_{r+s,0}$. These formulas for bosonization are probably new.
\end{Example}

\begin{Example}
One can also use embedding $\qd{1/n}^{\Lambda} \subset \qd{1/n}$ in order to construct a bosonization of the $W$-algebras. Namely one can take a representation of $\qd{1/n}$ with known bosonization and then express the $W$-algebra related to $\qD = \qd{1/n}^{\Lambda}$ in terms of these bosons.

For example, consider $\Lambda$ generated by $v_1=e_1$, $v_2=2e_2$ and the Fock representations $\mathcal{F}_{u^{1/2}}$ of $\qd{1/2}$. One can show (for example, using \eqref{eq:Fu decomp intro}) that $\mathcal{W}_q(\mathfrak{gl}_\infty)$ algebra related to $\qD \cong \qd{1/2}^\Lambda$ acts on $\mathcal{F}_{u^{1/2}}$ through the quotient $\mathcal{W}_q(\mathfrak{gl}_2)$. Therefore, we get an odd bosonization of non-twisted $q$-deformed Virasoro algebra $\mathcal{W}_q(\mathfrak{sl}_2)$
\begin{gather}\label{eq:Int T odd}
T (z) = \frac{q^{\frac{1}{4}}+q^{-\frac{1}{4}}}{2} \left[ : \! \exp \left( \sum_{2 \nmid r} \frac{q^{-\frac{r}{4}}-q^{\frac{r}{4}}}{r} J_r z^{-\frac{r}{2}} \right) \! : + : \! \exp \left( \sum_{2 \nmid r} \frac{q^{\frac{r}{4}}-q^{-\frac{r}{4}}}{r} J_r z^{-\frac{r}{2}} \right) \! : \right].\!\!\!
\end{gather}
Here $J_r$ are the odd modes of the initial boson for $\mathcal{F}_u$. The even modes of the boson disappear in the formula since it belongs to $\mathfrak{Heis}\subset \qd{1/2}^\Lambda$.

It follows from the decomposition \eqref{eq:Fu decomp intro} that formula \eqref{eq:Int T odd} gives bosonization of certain special representation $\mathcal{W}_q(\mathfrak{sl}_2)$, to be more specific, a direct sum of Fock modules (defined by~\eqref{eq:Int T st}) with particular parameters $u=q^{l-1/4}$ for $l \in \mathbb{Z}$.

In the conformal limit $q\rightarrow 1$ formula \eqref{eq:Int T odd} goes to the odd bosonization of the Virasoro algebra $L_k = \frac{1}{4} \sum\limits_{\frac12 (r+s)=k} : \! J_r J_s \! : + \frac{1}{16} \delta_{k,0}$, see, e.g., \cite{Z}.
\end{Example}

{\bf Whittaker vectors and relations on conformal blocks.}
As an application, in Section~\ref{section: conformal} we prove the following identity
\begin{gather}
z^{\frac12 \sum \frac{i^2}{n^2}} \prod_{i \neq j} \frac{1}{\big(q^{1+\frac{i-j}{n}}; q, q\big)_{\infty}} \big( q^{\frac{1}{n}} z^{\frac{1}{n}}; q^{\frac{1}{n}} ,q^{\frac{1}{n}} \big)_{\infty} \nonumber\\
\qquad{} = \sum_{(l_0, \dots, l_{n-1}) \in \Q} \mathcal{Z} \big( q^{l_0}, q^{\frac{1}{n}+l_1}, \dots, q^{\frac{n-1}{n} + l_{n-1}}; z \big).\label{eq:Intr relation}
\end{gather}
Here the lattice $\Q$ is as above, $(u; q, q)_{\infty} = \prod\limits_{i,j=0}^{\infty} \big(1-q^{i+j} u\big)$. The function $\mathcal{Z}(u_1,\dots,u_n;z)$ is a Whittaker limit of conformal block. By AGT relation it equals to the Nekrasov partition function. We recall the definition of $\mathcal{Z}(u_1,\dots,u_n;z)$ below.

The relation \eqref{eq:Intr relation} was conjectured in \cite{BGM} in the framework of $q$-isomonodromy/CFT correspondence. As we discussed in the first part of the introduction the main statement of this correspondence is an explicit formula for the $q$-isomonodromic tau function as an infinite sum of conformal blocks. The left-hand side of~\eqref{eq:Intr relation} is a tau function corresponding to the algebraic solution of deautonomized discrete flow in Toda system, see~\cite[equation~(3.11)]{BGM}.
The right-hand side of~\eqref{eq:Intr relation} is a specialization of conjectural formula \cite[equation~(3.6)]{BGM} for the generic tau function of these flows. In differential case the isomonodromy/CFT correspondanse is proven in many cases, see \cite{BS:2014,GIL:2018,GL:2016, ILT:2015}, but in the $q$-difference case the main statements are still conjectures. The generic formula for tau function of deautonomized discrete flow in Toda system is proven only for particular case $n=2$ \cite{BS:2018,MN:2018}. Here we prove formula for arbitrary $n$ but for special solution.

Let us recall the definition of $\mathcal{Z}(u_1,\dots,u_n;z)$. The Whittaker vector $W(z|u_1,\dots,u_N)$ is a~vector in a~completion of $ \mathcal{F}_{u_1}\otimes\dots \otimes\mathcal{F}_{u_n}$, which is an eigenvector of $E_{a,b}$ for $Nb\geqslant a \geqslant 0$ with certain eigenvalues depending on $z$, see Definition \ref{def: Whit eigen}. Such vector exists and unique for generic values of $u_1,\dots,u_n$. This property looks to be a part of folklore, we give a proof of this in Appendix \ref{Appendix:Whittaker}. The proof is essentially based on the results of \cite{N17, N16}. The function $\mathcal{Z}$ is proportional to a~Shapovalov pairing of two Whittaker vectors
\begin{gather*}
\mathcal{Z} (u_1, \dots, u_n ; z)
 = z^{\frac{\sum (\log u_i)^2}{2 (\log q)^2}} \!\prod_{i \neq j}\! \frac{1}{\big(qu_i u_j^{-1}; q, q\big)_{\infty}} \left\langle W_u\big(1|qu_n^{-1}, \dots, qu_1^{-1}\big) , W(z| u_1, \dots, u_n) \right\rangle.
\end{gather*}

We give a proof of \eqref{eq:Intr relation} using decomposition \eqref{eq:Fu decomp intro}. We consider the Whittaker vector $W(z|1)$ for the algebra $\qd{1/n}$. Its Shapovalov pairing gives the left-hand side of the relation~\eqref{eq:Intr relation}. On the other hand, we prove that its restriction to summands $\mathcal{F}_{q^{ l_0}} \otimes \cdots \otimes \mathcal{F}_{ q^{\frac{n-1}{n}+ l_{n-1} }}$ is the Whittaker vector for the algebra $\qd{}$. So taking the Shapovalov pairing we get the right-hand side of the relation~\eqref{eq:Intr relation}.

In the conformal limit $q\rightarrow 1$ the analogue of the relation \eqref{eq:Intr relation} in case $n=2$ was proven in~\cite{BS:2016:2} by a similar method. The conformal limit of the decomposition~\eqref{eq:Fu decomp intro} was studied in~\cite{BGMtw}.

{\bf Discussion of $\boldsymbol{q\neq t}$ case.} As we mentioned above, $\qD$ is a specialization of quantum toroidal algebra $U_{q,t}\big(\ddot{\mathfrak{gl}}_1\big)$ for $q=t$. It is much more interesting to study the algebra without the constrain. Let us discuss our expectations on generalizations of the results from this paper.

It is likely that fermionic construction (see Theorem~\ref{TheFermionicTh}) will be generalized after the replacement of the fermions by vertex operators of quantum affine $\mathfrak{gl}_n$. Hence we have bosonization, expressing the currents in terms of exponents dressed by screenings. We also expect that representations of twisted and non-twisted $W_n$-algebras can be realized via these vertex operators (see \cite{BG} for the $n=2$ case). It is not clear how one can generalize strange bosonization and connection with isomonodromy/CFT correspondence for $q \neq t$.

{\bf Plan of the paper.} The paper is organized as follows.

In Section \ref{q-dif} we recall basic definitions and properties on the algebra $\qD$.

In Section \ref{level1} we recall basic constructions of the Fock module $\mathcal{F}_u$.

In Section \ref{Section with answers} we present three constructions of the twisted Fock module $\mathcal{F}_u^{\sigma}$: the fermionic construction in Theorem \ref{TheFermionicTh}, the bosonic construction in Theorem \ref{Th:qD Boson}, and the strange bosonic construction in Theorem \ref{Th:StrangeBoson}.

In Section \ref{S:sublattice} we study restriction of the Fock module to a subalgebra $\qD^{\Lambda}$. Using these restrictions we prove Theorems \ref{TheFermionicTh}, \ref{Th:qD Boson} and \ref{Th:StrangeBoson}.

In Section \ref{S:Semi-infinite} we give an independent proof of Theorem \ref{TheFermionicTh} using the semi-infinite construction.

In Section \ref{section: q-W} we study twisted $q$-deformed $W$-algebras. We define $\mathcal{W}_q(\mathfrak{sl}_n,n_{tw})$ by generators and relations. Then we show in Theorem \ref{Th:Wiso} that the tensor product $\mathcal{W}_q(\mathfrak{sl}_n,n_{tw})\UH$ is isomorphic to the certain quotient of $U(\qD)$; we denote this quotient by $\mathcal{W}_q(\mathfrak{gl}_n,n_{tw})$. We show that $\mathcal{W}_q(\mathfrak{sl}_{nd},n'd)$ acts on the tensor product of twisted Fock modules ${\mathcal{F}^{\sigma}_{u_1} \otimes \cdots \otimes \mathcal{F}_{u_d}^{\sigma}}$. At the end of the section we study relation between these modules and the Verma modules for $\mathcal{W}_q(\mathfrak{gl}_{nd},n'd)$ and $\mathcal{W}_q(\mathfrak{sl}_{nd},n'd)$.

In Section \ref{Section: general sublattice} we prove decomposition \eqref{eq:Fu decomp intro}. Then we study the strange bosonization of $W$-algebra modules arising from the restriction of Fock module on $\qD^{\Lambda}$.

In Section \ref{section: conformal} we recall definitions and properties of Whittaker vector, Shapovalov pairing, and conformal blocks. Then we prove \eqref{eq:Intr relation}, see Theorem \ref{Theorem: the relation}.

In Appendix \ref{nonorm} we give a definition and study necessary properties of regular product of currents $A(z) B(az)$ for $a\in \mathbb{C}$.

Appendices \ref{Appendix:Serre} and \ref{Appendix:HomWD} consist of calculations which are used in Section \ref{section: q-W}.

In Appendix \ref{Appendix:Whittaker} we study the Whittaker vector for $\qD$ in the completion of the tensor product $ \mathcal{F}_{u_1}\otimes\dots \otimes\mathcal{F}_{u_n}$. We prove its existence and uniqueness (we use this in Section \ref{section: conformal}). To prove existence we present a construction of Whittaker vector via an intertwiner operator from~\cite{AFS}. We also relate this Whittaker vector to the Whittaker vector of $\mathcal{W}_q(\mathfrak{sl}_n)$ introduced in~\cite{T}.

\section[$q$-difference operators]{$\boldsymbol{q}$-difference operators} \label{q-dif}

In this section we introduce notation and recall basic facts about algebra $\qD$, see \cite{FFZ, GL,KR93}.
\begin{Definition}
The associative algebra of $q$-difference operators $\qaD$ is an associative algebra generated by $D^{\pm 1}$ and $x^{\pm 1}$ with the relation $Dx=qxD$.
\end{Definition}

\begin{Definition}
The algebra of $q$-difference operators $\qD$ is a Lie algebra with a basis $E_{k,l}$ {\upshape(}where $(k, l) \in \mathbb{Z}^2 \backslash \{(0,0) \}${\upshape)}, $c$ and $c'$. The elements $c$ and $c'$ are central. All other commutators are given by
\begin{gather}
[ E_{k,l} , E_{r,s} ] = \big( q^{(sk-lr)/2}-q^{(lr-sk)/2} \big) E_{k+r, l+s} + \delta_{k, -r} \, \delta_{l, -s} ( c' k + c l ). \label{qDc}
\end{gather}
\end{Definition}

\begin{Remark}Note that the vector subspace of $\qaD$ spanned by $x^l D^k$ (for $(l,k) \neq (0,0)$) is closed under commutation, i.e., has a natural structure of Lie algebra (denote this Lie algebra by $\mathrm{Diff}_q^{\mathrm{L}}$). Consider a basis of this Lie algebra $E_{k,l} := q^{kl/2} x^l D^k$. Finally, $\qD$ is a central extension of $\mathrm{Diff}_q^{\mathrm{L}}$ by two-dimensional abelian Lie algebra spanned by $c$ and $c'$.
\end{Remark}

\subsection[${\rm SL}_2(\mathbb{Z} )$ action]{$\boldsymbol{{\rm SL}_2(\mathbb{Z} )}$ action}

In this section we will define action ${\rm SL}_2 ( \mathbb{Z} )$ on $\qD$. Let $\sigma$ be an element of ${\rm SL}_2 ( \mathbb{Z} )$ corresponding to a matrix
\begin{gather*}
\sigma = \begin{pmatrix}
m' & m \\
n' & n
\end{pmatrix}. 
\end{gather*}
Then $\sigma$ acts as follows
\begin{gather} \label{action1}
\sigma( E_{k,l} ) = E_{ m' k + m l, n' k + n l }, \qquad
\sigma (c') = m' c' + n' c, \qquad \sigma (c) = m c' + n c.
\end{gather}

\begin{Proposition} Formula \eqref{action1} defines ${\rm SL}_2 ( \mathbb{Z} )$ action on $\qD$ by Lie algebra automorphisms.
\end{Proposition}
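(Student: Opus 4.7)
The plan is to check directly that each $\sigma$ preserves the defining bracket \eqref{qDc}, after which the multiplicativity $(\sigma_1\sigma_2)(E_{k,l})=\sigma_1(\sigma_2(E_{k,l}))$ and the identity condition follow at once from the fact that $\sigma$ is defined by the linear substitution $(k,l)\mapsto (m'k+ml,n'k+nl)$ on indices and by the matrix $\sigma$ itself on the central generators $c',c$.

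The main computation is to evaluate $[\sigma(E_{k,l}),\sigma(E_{r,s})]$ using \eqref{qDc} and compare with $\sigma([E_{k,l},E_{r,s}])$. For the non-central term, the key quantity is the exponent
\begin{gather*}
(n'r+ns)(m'k+ml)-(n'k+nl)(m'r+ms)=(m'n-mn')(sk-lr)=sk-lr,
\end{gather*}
where the last equality is precisely $\det\sigma=1$. Consequently the prefactor $q^{(sk-lr)/2}-q^{(lr-sk)/2}$ is preserved, and the new second index of the resulting generator $E_{m'(k+r)+m(l+s),\,n'(k+r)+n(l+s)}$ agrees with $\sigma$ applied to $E_{k+r,l+s}$.

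For the central term, one must show that the delta conditions $\delta_{m'k+ml,-(m'r+ms)}\delta_{n'k+nl,-(n'r+ns)}$ reduce to $\delta_{k,-r}\delta_{l,-s}$. This is where invertibility of $\sigma$ over $\mathbb{Z}$ is used: the two conditions amount to $\sigma\binom{k+r}{l+s}=0$, which forces $k=-r$, $l=-s$. Once these are imposed, the central contribution becomes $c'(m'k+ml)+c(n'k+nl)=(m'c'+n'c)k+(mc'+nc)l$, which is exactly $\sigma(c'k+cl)$ by the prescription for the action on $c,c'$. Hence the two sides agree term by term.

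I do not expect any serious obstacle: once the $\det\sigma=1$ cancellation is spotted the non-central part is immediate, and the only subtlety is noticing that the linear substitution is injective (so the delta constraints are equivalent) and that the transformation rule for $c,c'$ is tuned precisely so that the central term transforms covariantly. Finally, multiplicativity of the action reduces to the matrix identities $(\sigma_1\sigma_2)\binom{k}{l}=\sigma_1(\sigma_2\binom{k}{l})$ applied to the index pair and, separately, to the central pair, both of which are tautological.
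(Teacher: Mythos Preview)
Your proof is correct and is precisely the unpacked version of the paper's one-line argument ``Note that \eqref{qDc} is ${\rm SL}_2(\mathbb{Z})$ covariant'': you verify covariance of the non-central term via $\det\sigma=1$ and of the central term via the invertibility of $\sigma$ together with the prescribed action on $c,c'$. There is no difference in approach, only in level of detail.
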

\begin{proof} Note that \eqref{qDc} is ${\rm SL}_2 ( \mathbb{Z} )$ covariant. \end{proof}

For any $\qD$-module $M$ denote by $\rho_{M} \colon \qD \rightarrow \mathfrak{gl} (M)$ the corresponding homomorphism.

\begin{Definition} \label{twisted} For any $\qD$-module $M$ and $\sigma \in {\rm SL}(2,\mathbb{Z})$ let us define the representation $M^\sigma$ as follows. $M$ and $M^{\sigma}$ are the same vector space with different actions, namely $\rho_{M^{\sigma}} = \rho_M \circ \sigma$.
\end{Definition}

We will refer to $M^{\sigma}$ as a \emph{twisted representation}. More precisely, $M^{\sigma}$ is the representation $M$, twisted by~$\sigma$.

\subsection{Chevalley generators and relations}

The Lie algebra $\qD$ is generated by $E_k := E_{1, k}$, $F_k:=E_{-1, k}$ and $H_k := E_{0, k}$. We will call them the \emph{Chevalley generators} of $\qD$.
Define the following \emph{currents} (i.e., formal power series with coefficients in $\qD$)
\begin{gather*}
E(z) = \sum_{k \in \mathbb{Z}} E_{1,k} z^{-k} = \sum_{k \in \mathbb{Z}} E_k z^{-k},\\
F(z) = \sum_{k \in \mathbb{Z}} E_{-1, k} z^{-k} = \sum_{n \in \mathbb{Z}} F_k z^{-k},\\
H(z) = \sum_{k \neq 0} E_{0, k} z^{-k} = \sum_{k \neq 0} H_k z^{-k}.
\end{gather*}

Let us also define the formal delta function
\begin{gather*}
\delta(x) = \sum_{k \in \mathbb{Z}} x^k.
\end{gather*}
\begin{Proposition}\label{relation} Lie algebra $\qD$ is presented by the generators $E_k$, $F_k$ $($for all $k \in \mathbb{Z})$, $H_l$ {\upshape(}for $l \in \mathbb{Z} \backslash \{0 \}${\upshape)}, $c$, $c'$ and the following relations
\begin{gather}
[H_k, H_l] = k c \delta_{k+l, 0}, \label{RqDHH} \\
[H_k , E(z)] = \big(q^{-k/2}-q^{k/2}\big)z^k E(z), \qquad
[H_k, F(z)]= \big(q^{k/2}-q^{-k/2}\big) z^k F(z), \label{RqDHE-F} \\
(z-qw)\big(z-q^{-1} w\big) [E(z), E(w)]= 0, \qquad
(z-qw)\big(z-q^{-1} w\big) [F(z), F(w)]= 0 , \label{RqDE2} \\
[ E (z) , F (w)] = \big( H\big( q^{-1/2} w \big) - H\big(q^{1/2} w\big) + c'\big) \delta( w / z ) + c \frac{w}{z} \delta'(w/ z),\label{RqDEF} \\
z_2 z_3^{-1} [E(z_1), [E(z_2), E(z_3)]] + \mathrm{cyclic} =0, \label{RqDEEE} \\
z_2 z_3^{-1} [F(z_1), [F(z_2), F(z_3)]] + \mathrm{cyclic} =0. \label{RqDFFF}
\end{gather}
\end{Proposition}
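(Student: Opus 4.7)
The plan is to establish two statements: first, that the Chevalley generators and currents satisfy \eqref{RqDHH}--\eqref{RqDFFF} when computed from \eqref{qDc}; second, conversely, that these relations imply the full commutation law \eqref{qDc}, so that the abstract Lie algebra defined by the presentation is isomorphic to $\qD$.

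For the forward direction, each relation reduces to matching coefficients in the defining formula \eqref{qDc}. Relation \eqref{RqDHH} is the specialisation of \eqref{qDc} to $E_{0,k}$, $E_{0,l}$ (both $q$-exponents collapse to $1$). Relations \eqref{RqDHE-F} follow by computing $[E_{0,k}, E_{\pm 1, r}] = \bigl(q^{\mp k/2} - q^{\pm k/2}\bigr) E_{\pm 1, k+r}$ and resumming. For \eqref{RqDEF}, the part of $[E_{1,r}, E_{-1,s}]$ with $r + s \neq 0$ assembles, after fixing $k = r + s$, into $\bigl(H(q^{-1/2}w) - H(q^{1/2}w)\bigr)\delta(w/z)$, while the central piece $\delta_{r+s,0}(c' + cr)$ gives $c'\delta(w/z) + c\,(w/z)\delta'(w/z)$ via the identity $\sum_r r\, x^r = x\,\delta'(x)$. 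Relation \eqref{RqDE2} is the statement that $[E(z), E(w)]$ is supported at $w = q^{\pm 1} z$: a direct expansion shows it is a combination of terms proportional to $\delta(qw/z)$ and $\delta(q^{-1} w/z)$, each annihilated by the corresponding factor of $(z - qw)(z - q^{-1}w)$; the calculation for $F$ is identical. The Serre-type identities \eqref{RqDEEE}, \eqref{RqDFFF} reduce to a finite polynomial identity in $q^{\pm 1/2}$ among coefficients of $E_{\pm 3, a_1 + a_2 + a_3}$; the shifts provided by the prefactor $z_2 z_3^{-1}$ and its cyclic permutations arrange for the three contributions to cancel.

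For the converse, let $\widetilde{\qD}$ denote the Lie algebra abstractly generated by $E_k$, $F_k$, $H_l$, $c$, $c'$ modulo \eqref{RqDHH}--\eqref{RqDFFF}, and let $\pi\colon \widetilde{\qD} \to \qD$ be the canonical surjection; the goal is to construct an inverse. Surjectivity of $\pi$ is clear: for any $N$ and any $a$ with $N \neq 2a$ the coefficient $q^{(N-2a)/2} - q^{(2a-N)/2}$ in $[E_a, E_{N-a}]$ is nonzero, so $E_{2,N}$ is recovered, and iterating produces all $E_{a,b}$ with $|a| \geq 2$. To obtain injectivity I would define higher currents $E^{(a)}(z)$, $F^{(a)}(z)$ in a completion of $\widetilde{\qD}$ by iterated point-split commutators, using the quadratic relation \eqref{RqDE2} to render the regularised products well-defined and the cubic Serre relations \eqref{RqDEEE}--\eqref{RqDFFF} to guarantee that different inductive expressions for the same $E_{a,b}$ agree. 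The mixed brackets $[E^{(a)}(z), F^{(b)}(w)]$ are then controlled by \eqref{RqDEF} together with \eqref{RqDHH} and \eqref{RqDHE-F}, producing in particular the central-extension terms.

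The main obstacle is the inductive consistency in the converse direction: verifying that the asymmetric Serre relation \eqref{RqDEEE} (with its prefactor $z_2 z_3^{-1}$) really suffices to make the iterated commutators defining the higher currents agree, and to force $[E_{a,b}, E_{r,s}]$ to reproduce the coefficients in \eqref{qDc} on the nose. This is a standard but nontrivial induction for toroidal-type presentations and can be organised either by induction on $|a| + |r|$ or by passing to the associated graded with respect to the $\mathbb{Z}$-grading by the first (i.e.\ $a$) index.
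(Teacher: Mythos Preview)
The paper does not supply its own proof of this proposition: it simply cites \cite[Theorem~2.1]{M07} and \cite[Theorem~5.5]{Ts}. So there is no ``paper's approach'' to compare against beyond those references.

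Your forward direction is fine; those are the routine coefficient checks. For the converse, your strategy --- build the higher $E_{a,b}$ inductively from iterated commutators of the Chevalley generators and use \eqref{RqDE2} together with the Serre relations \eqref{RqDEEE}--\eqref{RqDFFF} to show the construction is unambiguous --- is exactly the route taken in the cited papers. But you have not actually carried it out: the sentence ``this is a standard but nontrivial induction'' is precisely where all the content lives. Concretely, one must show that the quadratic relation \eqref{RqDE2} alone already forces the double commutator $[E(z_1),[E(z_2),E(z_3)]]$ to be a finite sum of triple delta-functions with operator coefficients (this is Proposition~\ref{Prop:withoutSerre} in the paper's appendix), and then that the Serre relation kills exactly the two ``extra'' coefficients $R_1$, $R_2$ (Proposition~\ref{prop:R1R2}). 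Only after this structural result can the induction on $|a|$ be set up cleanly; without it, your claim that ``different inductive expressions for the same $E_{a,b}$ agree'' is an assertion, not an argument. If you want a self-contained proof rather than a citation, that appendix material (or the analogous lemmas in Tsymbaliuk) is the missing ingredient.
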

One can find a proof of Proposition \ref{relation} in \cite[Theorem~2.1]{M07} or \cite[Theorem~5.5]{Ts}.

\section{Fock module} \label{level1}
In this section we review basic constructions of representations of $\qD$ with $c=1$ and $c'=0$. These construction were studied in~\cite{GL}.

\subsection{Free boson realization} \label{boson}

Introduce the Heisenberg algebra generated by $a_k$ (for $k \in \mathbb{Z}$) with relation $[a_k, a_l] = k \delta_{k+l, 0}$. Consider the Fock module $F_{\alpha}^a$ generated by $| \alpha \rangle$ such that $a_k | \alpha \rangle = 0$ for $k > 0$, $a_0 | \alpha \rangle = \alpha | \alpha \rangle$.
\begin{Proposition} \label{prop:sec3 Boson Fock}
The following formulas
determine an action of $\qD$ on $F_{\alpha}^a$:
\begin{gather}
c \mapsto 1, \qquad c' \mapsto 0, \qquad H_k \mapsto a_k, \label{boson1}\\
E(z) \mapsto \frac{u}{1-q} \exp \left( \sum_{k>0} \frac{q^{-k/2}-q^{k/2}}{k} a_{-k} z^k \right) \exp \left( \sum_{k<0} \frac{q^{-k/2}-q^{k/2}}{k} a_{-k} z^k \right) \label{eq:E},\\
F(z) \mapsto \frac{u^{-1}}{1-q^{-1}} \exp \left( \sum_{k>0} \frac{q^{k/2}-q^{-k/2}}{k} a_{-k} z^k \right) \exp \left( \sum_{k<0}\frac{q^{k/2}-q^{-k/2}}{k} a_{-k} z^k \right). \label{eq:F}
\end{gather}
\end{Proposition}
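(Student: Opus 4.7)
By Proposition~\ref{relation}, it suffices to verify that the operators in \eqref{boson1}--\eqref{eq:F} satisfy the relations \eqref{RqDHH}--\eqref{RqDFFF} on $F_\alpha^a$. Relations \eqref{RqDHH} and \eqref{RqDHE-F} are immediate: the first is the Heisenberg rule with $c\mapsto 1$, while the second is a one-line Baker--Campbell--Hausdorff computation using $[a_k,a_{-k}]=k$, which brings down exactly $\big(q^{-k/2}-q^{k/2}\big)z^k$ from the creation exponential of $E(z)$, the annihilation half commuting with $a_k$; the argument for $F(z)$ is identical.

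For \eqref{RqDE2}, split $E(z)=E_-(z)E_+(z)$ into its creation and annihilation halves and reorder via $e^A e^B = e^B e^A e^{[A,B]}$ (valid since $[A,B]$ is central). A short calculation yields
\begin{gather*}
E(z)E(w) = \frac{u^2}{(1-q)^2}\,\frac{(1-w/z)^2}{(1-qw/z)\big(1-q^{-1}w/z\big)}\,{:}E(z)E(w){:}.
\end{gather*}
Multiplying by $(z-qw)\big(z-q^{-1}w\big)$ makes the prefactor a Laurent polynomial symmetric in $z,w$, and the normally ordered piece is automatically symmetric, so the bracket vanishes. The $FF$ case is identical.

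The main obstacle is the mixed relation \eqref{RqDEF}. The same reordering trick gives
\begin{gather*}
E(z)F(w) = \frac{-1}{\big(q^{1/2}-q^{-1/2}\big)^2}\,\frac{(1-qw/z)\big(1-q^{-1}w/z\big)}{(1-w/z)^2}\,{:}E(z)F(w){:},
\end{gather*}
with a double pole at $z=w$, and the symmetric formula for $F(w)E(z)$. Using the partial-fraction splitting $(1-qw/z)(1-q^{-1}w/z)/(1-w/z)^2 = 1 - \big(q^{1/2}-q^{-1/2}\big)^2(w/z)/(1-w/z)^2$ together with the standard formal-distribution identities $\iota_{w,z}(1-w/z)^{-1}-\iota_{z,w}(1-w/z)^{-1}=\delta(w/z)$ and its $w$-derivative, the subtraction $E(z)F(w)-F(w)E(z)$ localizes at $z=w$ as a combination of $\delta(w/z)$ and $(w/z)\delta'(w/z)$. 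The coefficient of $(w/z)\delta'(w/z)$ evaluates $:E(z)F(w):$ at $z=w$ (which is $1$) and matches $c=1$; the coefficient of $\delta(w/z)$ picks up the first normal derivative of $:E(z)F(w):$ at $z=w$, and this reorganizes in terms of the Heisenberg generators as $H\big(q^{-1/2}w\big) - H\big(q^{1/2}w\big)$, the half-integer shifts arising from the $q^{\pm k/2}$ built into the exponentials; the $c'$ contribution vanishes by $c'\mapsto 0$. Finally, the Serre relations \eqref{RqDEEE}, \eqref{RqDFFF} are verified by the analogous three-current normal-ordering computation: the triple-product prefactor is quasi-symmetric under $z_i\leftrightarrow z_j$, so after multiplication by $z_2 z_3^{-1}$ the cyclic sum cancels identically, and likewise for the $FFF$ case.
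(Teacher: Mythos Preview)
The paper does not prove this proposition; it is recalled from \cite{GL} as known background, so there is no ``paper's proof'' to compare against. Your strategy of verifying the presentation in Proposition~\ref{relation} is the natural one, and your treatment of \eqref{RqDHH}--\eqref{RqDEF} is essentially correct: the OPE coefficients work out, in particular $:E(w)F(w):=1$ once the scalar $\frac{u}{1-q}\cdot\frac{u^{-1}}{1-q^{-1}}=\frac{-1}{(q^{1/2}-q^{-1/2})^2}$ has been factored out, and the $\delta$-coefficient $w\partial_z:E(z)F(w):\big|_{z=w}$ is indeed $\sum_{k\neq0}(q^{-k/2}-q^{k/2})a_{-k}w^k=H(q^{-1/2}w)-H(q^{1/2}w)$.

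The gap is the Serre relations \eqref{RqDEEE}--\eqref{RqDFFF}. The sentence ``the triple-product prefactor is quasi-symmetric under $z_i\leftrightarrow z_j$, so after multiplication by $z_2 z_3^{-1}$ the cyclic sum cancels identically'' is not an argument. The rational prefactor $R(z_1,z_2,z_3)=\prod_{i<j}\frac{(z_i-z_j)^2}{(z_i-qz_j)(z_i-q^{-1}z_j)}$ is indeed fully symmetric, but this only tells you that the various orderings $E(z_{\sigma(1)})E(z_{\sigma(2)})E(z_{\sigma(3)})$ are different expansions of the same rational function times $:\!EEE\!:$; the double commutator is a sum of triple-$\delta$ terms as in Proposition~\ref{Prop:withoutSerre}, and Serre is the nontrivial constraint $R_1=R_2=0$ of Proposition~\ref{prop:R1R2}, which does not follow from symmetry alone. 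What actually makes Serre hold here is the \emph{double zero} of the $(1,3)$ factor $(1-z_3/z_1)^2$. In the language of Proposition~\ref{prop:SerreOPE}, the dangerous contributions to the triple OPE are the terms carrying $(1-q^{\epsilon}z_2/z_1)^{-1}(1-q^{-\epsilon}z_3/z_2)^{-1}$; on their support one has $z_1=z_3$, and the factor $(1-z_3/z_1)^2$ kills them. Equivalently, compute $E(z)E^2(w)$ directly and observe that only simple poles at $z=q^{\pm1}w$ survive; this is exactly the method the paper uses in Appendix~\ref{Appendix:HomWD} for the analogous verification.
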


We will denote this representation by $\mathcal{F}_u$.

\begin{Remark} \label{remark:NOalpha}Note that $\alpha$ does not appear in formulas \eqref{boson1}--\eqref{eq:F}. But we will need operator $a_0$ later (see the proof of Proposition \ref{Boson-Fermion}) for the boson-fermion correspondence. Heuristically, one can think that $u = q^{-\alpha}$.
\end{Remark}

\begin{Remark}[on our notation]In this paper, we consider several algebras and their action on the corresponding Fock modules. We choose the following notation. All these representations are denoted by the letter F (for Fock) with some superscript to mention an algebra. Since $\qD$ is the most important algebra in our paper, we use no superscript for its representation.
Also, let us remark that we consider several copies of the Heisenberg algebra. To distinguish their Fock modules, we write a letter for generators as a superscript.
\end{Remark}

The standard bilinear form on $F^a_{\alpha}$ is defined by the following conditions: operator $a_{-k}$ is dual of $a_k$, the pairing of $\valpha$ with itself equals 1. We will use the bra-ket notation for this scalar product. For an operator $A$ we denote by $\langle \alpha | A | \alpha \rangle$ the scalar product of $A | \alpha \rangle$ with $\valpha$.

\begin{Proposition}\label{Fock} Suppose the algebra $\qD$ acts on $F^a_{\alpha}$ so that $H_k \mapsto a_k$ and $\langle \alpha | E(z) | \alpha \rangle = \frac{u}{1-q}$; $\langle \alpha | F(z) | \alpha \rangle = \frac{u^{-1}}{1-q^{-1}}$.
Then this representation is isomorphic to~$\mathcal{F}_u$.
\end{Proposition}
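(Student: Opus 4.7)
The strategy is a uniqueness argument for vertex operators. Relation \eqref{RqDHE-F}, together with irreducibility of $F^a_\alpha$ under the Heisenberg subalgebra $\{a_k : k \neq 0\}$, forces $E(z)$ to coincide with the bosonic expression in \eqref{eq:E} up to a scalar-valued formal series in $z$, and the hypothesis on $\langle \alpha | E(z) | \alpha \rangle$ then pins that scalar down. The argument for $F(z)$ is identical.

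Concretely, I would write the right-hand side of \eqref{eq:E} as $E_0(z) = \frac{u}{1-q} V_+(z) V_-(z)$, where $V_+(z)$ is the exponential involving only the creation modes $a_{-k}$ ($k>0$) and $V_-(z)$ the one involving only annihilation modes $a_k$ ($k>0$); both are formally invertible as exponentials. A direct commutator computation from $[a_k, a_l] = k \delta_{k+l, 0}$ yields
\begin{gather*}
[a_k, V_+(z) V_-(z)] = \big(q^{-k/2} - q^{k/2}\big) z^k V_+(z) V_-(z) \qquad (k \neq 0),
\end{gather*}
which is precisely the relation \eqref{RqDHE-F} imposes on $E(z)$ once $H_k$ is identified with $a_k$. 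Therefore $\Psi(z) := V_-(z)^{-1} V_+(z)^{-1} E(z)$ commutes with every $a_k$ for $k \neq 0$, and trivially with $a_0$, since $a_0$ acts as the scalar $\alpha$ on $F^a_\alpha$.

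Because $F^a_\alpha$ is irreducible over the Heisenberg subalgebra $\{a_k : k \neq 0\}$, a mode-by-mode application of Schur's lemma (organized via the natural energy grading on $F^a_\alpha$, which makes each coefficient of $\Psi(z)$ a finite-degree operator) shows that each coefficient of $\Psi(z)$ is a scalar, so $\Psi(z)$ is a scalar-valued formal series. The vacuum matrix element then evaluates immediately: $V_-(z) | \alpha \rangle = | \alpha \rangle$ and $\langle \alpha | V_+(z) = \langle \alpha |$, so $\langle \alpha | V_+(z) V_-(z) | \alpha \rangle = 1$, giving $\Psi(z) = \langle \alpha | E(z) | \alpha \rangle = \frac{u}{1-q}$ by hypothesis. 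Hence $E(z) = E_0(z)$, and the identical argument applied to $F(z)$, with its analogous vertex-operator factorization and vacuum matrix element $\frac{u^{-1}}{1-q^{-1}}$, yields \eqref{eq:F}. Since the actions of $H_k$ agree by hypothesis and those of $E(z)$, $F(z)$ agree by the above, the two representations coincide. The only real subtlety is the Schur step for a formal series of operators, which is handled cleanly by decomposing mode-by-mode using the energy grading; everything else is direct bookkeeping.
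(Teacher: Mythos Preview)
Your proof is correct and follows essentially the same approach as the paper: strip off the vertex-operator dressing from $E(z)$ to obtain an operator commuting with the Heisenberg subalgebra, invoke irreducibility of $F^a_\alpha$ (Schur) to reduce it to a scalar series, and then identify that scalar via the vacuum expectation value. The only cosmetic difference is ordering---the paper conjugates, setting $T(z) = V_+(z)^{-1} E(z) V_-(z)^{-1}$, whereas you left-multiply by $V_-(z)^{-1} V_+(z)^{-1}$---but the arguments are otherwise identical.
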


\begin{proof} Consider the current \[T(z) =\exp \! \left( - \sum_{k>0} \frac{q^{-k/2}-q^{k/2}}{k} a_{-k} z^k \right) E(z) \ \exp \! \left( - \sum_{k<0}\frac{q^{-k/2}-q^{k/2}}{k} a_{-k} z^k \right).\]

It is easy to verify that $[a_k, T(z)] =0$. Since $F_{\alpha}$ is irreducible, $T(z)= f(z)$ for some formal power series $f(z)$ with $\mathbb{C}$-coefficients. On the other hand, $f(z) = \langle \alpha | E(z) | \alpha \rangle = \frac{u}{1-q} $. This implies~\eqref{eq:E}. The proof of~\eqref{eq:F} is analogous.
\end{proof}

\begin{Proposition} \label{E_l}
Denote $E_l(z) = E_{l, k} z^{-k}$. The action of $E_l(z)$ on Fock representation $\mathcal{F}_u$ is given by the following formula
\begin{gather} \label{eq: El in Fock mod}
E_l(z) \rightarrow \frac{u^l}{1-q^l} \exp \! \left( \sum_{k>0} \frac{q^{-kl/2}-q^{kl/2}}{k} a_{-k} z^k \right) \exp \! \left( \sum_{k<0} \frac{q^{-kl/2}-q^{kl/2}}{k} a_{-k} z^k \right).
\end{gather}
\begin{proof}
The commutation relation \eqref{qDc} implies that formula \eqref{eq: El in Fock mod} holds up to a pre-exponential factor. Also, we see from \eqref{qDc} that
\begin{gather} \label{eq: OPE proof for El}
E(z) E_l(w) = \frac{q^{-1}w}{z-q^{-1}w} E_{l+1}\big(q^{-1}w\big) - \frac{q^l w }{z-q^l w} E_{l+1}(w) + \mathrm{reg}.
\end{gather}
The factor can be found inductively from \eqref{eq: OPE proof for El}.
\end{proof}
\end{Proposition}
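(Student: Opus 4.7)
The plan is to mimic the bosonization argument used in Proposition~\ref{Fock}: first pin down $E_l(z)$ up to a scalar function $f_l(z)$ using its commutation with the Heisenberg subalgebra, and then determine $f_l(z)$ inductively on $l$.

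For the first step, the commutator~\eqref{qDc} gives, for every $l\neq 0$,
\[
[H_k,E_l(w)]=\bigl(q^{-kl/2}-q^{kl/2}\bigr)w^kE_l(w),
\]
with no central term (the condition $\delta_{0,-l}$ forces $l=0$). The right-hand side of~\eqref{eq: El in Fock mod} satisfies exactly this commutation with every $a_k$, so the dressed current
\[
T_l(z)=\exp\!\left(-\sum_{k>0}\tfrac{q^{-kl/2}-q^{kl/2}}{k}a_{-k}z^k\right)E_l(z)\exp\!\left(-\sum_{k<0}\tfrac{q^{-kl/2}-q^{kl/2}}{k}a_{-k}z^k\right)
\]
commutes with the whole Heisenberg algebra. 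By the irreducibility argument of Proposition~\ref{Fock}, $T_l(z)=f_l(z)\cdot\mathrm{id}$ for some scalar formal series, and taking the vacuum matrix element gives $f_l(z)=\langle\alpha|E_l(z)|\alpha\rangle$. An energy-grading check shows that only the $z^0$-mode contributes, so $f_l$ is in fact constant in~$z$.

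For the second step, the base case $l=1$ is Proposition~\ref{Fock}. For the induction, I would exploit the hint~\eqref{eq: OPE proof for El}: substitute the bosonic ansatz~\eqref{eq: El in Fock mod} into the product $E(z)E_l(w)$, normal-order via Wick's theorem to produce an explicit scalar contraction factor involving $\exp([A_-(z),B_+(w)])$, and match the simple pole at $z=q^{-1}w$ with the residue $q^{-1}w\cdot E_{l+1}(q^{-1}w)$. The contraction factor collapses to a simple $q$-rational function of $w/z$, and equating residues yields a recursion between $f_{l+1}$, $f_l$ and $f_1$ that is easily seen to have the unique solution $f_l=u^l/(1-q^l)$. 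Alternatively one can bypass the OPE and apply the same matching to the single commutator $[E_{1,j},E_{l,-j}]=(q^{-j(l+1)/2}-q^{j(l+1)/2})E_{l+1,0}$ (valid for $l\neq-1$, any $j\neq0$), computing both $\langle\alpha|E_{1,j}E_{l,-j}|\alpha\rangle$ and $\langle\alpha|E_{l,-j}E_{1,j}|\alpha\rangle$ from the bosonic form via the same contraction.

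The main obstacle I anticipate is the formal-series bookkeeping around~\eqref{eq: OPE proof for El}: the identity lives in a space of currents with a prescribed expansion region (say $|z|>|q^{\pm1}w|$), and the bosonic contraction factor must be expanded in the same region for the pole comparison to make sense on the nose. Once that convention is fixed, both steps reduce to short calculations with geometric series.
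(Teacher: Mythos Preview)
Your proposal is correct and follows essentially the same two-step strategy as the paper: use the Heisenberg commutation $[H_k,E_l(w)]=(q^{-kl/2}-q^{kl/2})w^kE_l(w)$ from~\eqref{qDc} to fix the exponential form up to a scalar (exactly as in Proposition~\ref{Fock}), and then pin down the scalar inductively via the OPE~\eqref{eq: OPE proof for El}. Your write-up is more explicit than the paper's terse sketch, and your alternative via the single commutator $[E_{1,j},E_{l,-j}]$ is a harmless variant of the same residue-matching idea.
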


\subsection{Free fermion realization} \label{fermi}

In this section we give another construction for the Fock representation of $\qD$. To do this, let us consider the Clifford algebra, generated by $\psi_i$ and $\psi^*_j$ for $i,j \in \mathbb{Z}$ subject to the relations
\begin{gather*}
\{ \psi_i, \psi_j \} =0, \qquad \{ \psi^*_i, \psi^*_j \} =0, 
\qquad \{ \psi_i , \psi^*_j \} = \delta_{i+j, 0}. 
\end{gather*}
Consider the currents
\begin{gather*}
\psi(z) = \sum_{i} \psi_i z^{-i-1}, \qquad \psi^*(z) = \sum_i \psi^*_i z^{-i}.
\end{gather*}
Consider a module $F^\psi$ with a cyclic vector $\n$ and relation
\begin{gather*}
\psi_i \n = 0 \quad \text{for} \ i \geqslant l, \qquad \psi^*_j \n = 0 \quad \text{for} \ j > -l.
\end{gather*}
The module $F^\psi$ is independent of $l$. The isomorphism can be seen from the formulas $\psi^*_{-l} \n = | l+1 \rangle$ and $\psi_{l-1} \n = | l-1 \rangle$.
Let us define the $l$-dependent normal ordered product (to be compatible with $\n$) by the following formulas
\begin{gather}
: \! \psi_i \psi^*_j \! :_{(l)} = - \psi^*_j \psi_i \qquad \text{for } i \geqslant l, \label{norm1} \\
: \! \psi_i \psi^*_j \! :_{(l)} = \psi_i \psi^*_j \qquad \text{for } i < l. \label{norm2}
\end{gather}

\begin{Proposition} \label{prop: level 1 Fermion}
The following formulas determine an action of $\qD$ on $F^{\psi}$:
\begin{gather}
c \mapsto 1, \quad c' \mapsto 0, \quad H_k \mapsto \sum_{i+j=k} \psi_i \psi_j^*, \label{eq:centerFermi} \\
E(z) \mapsto \frac{q^{l} u }{1-q} + u q^{ - 1/2 }z : \! \psi\big( q^{-1/2} z\big) \psi^* \big( q^{1/2} z\big) \! :_{(l)} = u q^{-1/2} z \psi\big( q^{-1/2} z\big) \psi^* \big( q^{1/2} z\big), \label{ferm:E} \\
F(z) \mapsto \frac{q^{-l} u^{-1} }{1-q^{-1}} + u^{-1} q^{1/2} z : \! \psi\big( q^{1/2} z\big) \psi^* \big(q^{-1/2} z\big)\!:_{(l)}= u^{-1} q^{1/2} z \psi\big( q^{1/2} z\big) \psi^* \big(q^{-1/2} z\big). \!\!\!\label{ferm:F}
\end{gather}
\end{Proposition}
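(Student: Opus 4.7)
The plan is to reduce the proposition to the bosonic case by invoking the boson-fermion correspondence together with Proposition \ref{Fock}. (Equivalently one could verify the Chevalley relations \eqref{RqDHH}--\eqref{RqDFFF} directly from the fermion OPE $\psi(z)\psi^*(w) \sim 1/(z-w)$ and Wick's theorem, but the bosonic route is shorter given the material already established.)

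First I would use the charge grading on $F^\psi$: $\psi_i$ has charge $+1$, $\psi^*_j$ has charge $-1$, and $F^\psi = \bigoplus_{l \in \mathbb{Z}} F^{\psi, l}$ where $F^{\psi, l}$ has cyclic vector $\n$. All three operators in \eqref{eq:centerFermi}--\eqref{ferm:F} preserve the charge (using the unsubtracted form of $E(z)$ and $F(z)$). A standard Wick calculation shows that the operators $H_k = \sum_{i+j=k}\psi_i\psi^*_j$, $k \neq 0$, satisfy $[H_k, H_l] = k\delta_{k+l,0}$, and that each $F^{\psi, l}$ is freely generated from $\n$ by the $H_{-k}$ with $k>0$. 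This gives an isomorphism $F^{\psi, l} \cong F^a_l$ identifying $H_k$ with $a_k$.

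Next I would compute the vacuum expectation values using the unsubtracted form $E(z) = uq^{-1/2}z\,\psi(q^{-1/2}z)\psi^*(q^{1/2}z)$: the fermion two-point function in sector $l$ yields exactly $\langle l | E(z) | l \rangle = q^l u/(1-q)$, and similarly $\langle l | F(z) | l \rangle = q^{-l}u^{-1}/(1-q^{-1})$. The same contraction, applied to the same product with the $l$-dependent normal ordering \eqref{norm1}--\eqref{norm2} subtracted, gives precisely the constant $q^l u/(1-q)$, so the two expressions for $E(z)$ in \eqref{ferm:E} coincide (and likewise for $F(z)$).

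At this point Proposition \ref{Fock} characterizes a $\qD$-module with these data on $F^a_l$ as isomorphic to $\mathcal{F}_{q^l u}$. To turn this into a proof that the formulas \emph{define} an action (rather than merely characterizing one), I would reverse the logic: transport the bosonic $\qD$-action of Proposition \ref{prop:sec3 Boson Fock} on $\mathcal{F}_{q^l u}$ through the charge-sector identification, and verify that the resulting operators coincide with \eqref{eq:centerFermi}--\eqref{ferm:F}. This verification is the standard vertex-operator realization of the free fermions: writing $\psi(z)$ and $\psi^*(z)$ as normal-ordered exponentials of the $a_k$ (with the cocycle shifting $l \mapsto l\pm 1$), the normal-ordered product in \eqref{ferm:E} becomes exactly \eqref{eq:E} with $u$ replaced by $q^l u$. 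The main obstacle is bookkeeping, namely keeping track of the half-shifts $q^{\pm 1/2}z$ in the fermion arguments so that they match the bosonic exponents $q^{-k/2} - q^{k/2}$ in \eqref{eq:E}, and handling the cocycle correctly so that the charge shifts on the fermionic side match the parameter rescalings on the bosonic side.
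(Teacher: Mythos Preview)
The paper does not give a proof of this proposition; it is presented as a known result (the constructions of Section~\ref{level1} are attributed to~\cite{GL}). Your approach via the charge decomposition, boson-fermion correspondence, and Proposition~\ref{Fock} is sound, and in fact it anticipates the argument the paper gives a few lines later for Proposition~\ref{Boson-Fermion}: there the paper computes $\langle l|E(z)|l\rangle = q^l u/(1-q)$ and invokes Proposition~\ref{Fock} to identify each charge sector with $\mathcal{F}_{q^l u}$, which is precisely your steps 1--3. Your ``reverse the logic'' step is the correct way to close the gap between characterizing an action and constructing one, and the verification you outline (bosonize $\psi$, $\psi^*$ and collapse the normal-ordered product to the exponential in~\eqref{eq:E}) is the standard computation. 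One small caveat: Proposition~\ref{prop:sec3 Boson Fock} is itself stated without proof in the paper, so your reduction trades one standard fact for another; this is entirely reasonable, but it means the argument is not self-contained within the paper's proven results.
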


Let us denote this representation by $\mathcal{M}_u$.

\begin{Remark}The Products $\psi\big( q^{-1/2} z\big) \psi^* \big( q^{1/2} z\big)$ and $\psi\big( q^{1/2} z\big) \psi^* \big(q^{-1/2} z\big)$ from formulas \eqref{ferm:E}--\eqref{ferm:F} are not normally ordered (see Appendix \ref{nonorm} for a formal definition and some other technical details on the \emph{regular product}). In particular, this reformulation implies that $\mathcal{M}_u$ does not depend on~$l$.
\end{Remark}

\subsection{Semi-infinite construction}

\begin{Definition} \label{eq: sec3 eval def}The evaluation representation $V_{u}$ of the algebra $\qD$ is a vector space with the basis $x^{k}$ for $k \in \mathbb{Z}$ and the action
\begin{gather*}
E_{a,b} x^{k} = u^a q^{\frac{ab}{2} + ak} x^{k+b}, \qquad c=c'=0.
\end{gather*}
\end{Definition}

\begin{Remark}The associative algebra $\qaD$ acts on $V_u$. The representation of $\qD$ is obtained via evaluation homomorphism $\mathrm{ev}\colon \qD \rightarrow \qaD$.
\end{Remark}

\begin{Remark}Informally, one can consider $x^k \in V_u$ as $x^{k-\alpha}$ for $u=q^{-\alpha}$. Define the action of $\qaD$ as follows. The generator $x$ acts by multiplication and $D x^{k-\alpha} = q^{k-\alpha} x^{k-\alpha} = u q^k x^{k-\alpha}$. However, $q^{-\alpha}$ is not well defined for arbitrary complex $\alpha$. So we consider $u$ as a parameter of representation instead of $\alpha$.
\end{Remark}

Let us consider the \emph{semi-infinite exterior power} of the evaluation representation $\siL V_{u}$. It is spanned by $| \lambda,l \rangle = x^{l-\lambda_1} \wedge x^{l+1-\lambda_2} \wedge \cdots \wedge x^{l+N} \wedge x^{l+N+1} \wedge x^{l+N+2} \wedge \cdots$, where $\lambda$ is a Young diagram and $l \in \mathbb{Z}$. Let $p_1 > \dots> p_i$ and $q_1> \dots> q_i$ be Frobenius coordinates of $\lambda$.

\begin{Proposition} \label{Prop:SemiInf-Fermi}
There is a $\qD$-modules isomorphism $\siL V_{u} \xrightarrow{
\,\smash{\raisebox{-0.65ex}{\ensuremath{\scriptstyle\sim}}}\,} \mathcal{M}_{u}$ given by
\begin{gather}\label{eq:lambda l}
| \lambda,l \rangle \mapsto (-1)^{\sum_k (q_k-1)} \psi_{-p_1+l} \cdots \psi_{-p_i+l} \, \psi_{-q_i-l+1}^* \cdots \psi_{-q_1-l+1}^* \n.
\end{gather}
\end{Proposition}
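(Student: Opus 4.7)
The plan is to reduce Proposition~\ref{Prop:SemiInf-Fermi} to a vector-space identification together with a verification that the Chevalley generators act identically on both sides; the latter will ultimately follow from the uniqueness statement already encoded in Proposition~\ref{Fock}.

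First I would make precise the vector-space isomorphism $\siL V_u \xrightarrow{\sim} F^\psi$. The basis $|\lambda,l\rangle$ of $\siL V_u$ is in evident bijection with pairs $(\lambda,l)$ consisting of a partition and an integer. On the fermion side, the reordering formulas \eqref{norm1}--\eqref{norm2} together with the anticommutation relations imply that monomials of the form $\psi_{i_1}\cdots\psi_{i_a}\psi^*_{j_b}\cdots\psi^*_{j_1}\n$ with $i_1<\cdots<i_a<l$, $-l<j_1<\cdots<j_b$, and $a-b$ fixed form a basis of $F^\psi$; rewriting $i_k=-p_k+l$ and $j_k=-q_k-l+1$ identifies these monomials with Frobenius coordinates of partitions, so the prescription \eqref{eq:lambda l} defines a bijection of bases. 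The sign $(-1)^{\sum_k(q_k-1)}$ accounts for passing from the ``filling'' wedge convention to the Frobenius-ordered fermion monomial.

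Next I would install a $\qD$-action on $\siL V_u$ extending the evaluation action on $V_u$. For $E_{a,b}$ with $(a,b)\neq(0,0)$ one uses the derivation rule $E_{a,b}(v_1\wedge v_2\wedge\cdots)=\sum_i v_1\wedge\cdots\wedge E_{a,b}v_i\wedge\cdots$; this sum is finite on each basis vector $|\lambda,l\rangle$ because for all but finitely many indices $i$ the shifted monomial $x^{l-i-\lambda_{i+1}+b}$ already appears in the wedge and the term vanishes. For the potentially divergent diagonal piece $E_{0,0}$, which enters only via commutators $[E_{a,b},E_{-a,-b}]$, one introduces normal ordering with respect to the vacuum $|\emptyset,0\rangle=x^0\wedge x^1\wedge\cdots$; the regularization forces the Lie bracket \eqref{qDc} to pick up the scalars $c=1$ and $c'=0$, since the only anomaly comes from the ``half-line'' of $x^k$'s present in the vacuum while the evaluation coefficient $u^a q^{ab/2+ak}$ gives no $u$- or $q$-weight in the $a=0$ direction and one unit in the $c$ direction. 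Under the identification of Step~1, wedging with $x^k$ corresponds to $\psi_{-k+l}$ up to sign, and dual contraction to $\psi^*$, so $H_k$ matches $\sum_{i+j=k}\psi_i\psi^*_j$ exactly as in \eqref{eq:centerFermi}.

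The remaining task is to match the currents $E(z)$ and $F(z)$. Rather than reorder long fermionic products by hand, I would invoke Proposition~\ref{Fock}: the action of $H_k$ and the values $c=1$, $c'=0$ are in place by Step~2, so it suffices to verify the vacuum matrix elements
\begin{gather*}
\langle\emptyset,0|E(z)|\emptyset,0\rangle=\frac{u}{1-q},\qquad \langle\emptyset,0|F(z)|\emptyset,0\rangle=\frac{u^{-1}}{1-q^{-1}}.
\end{gather*}
These are direct computations on $\siL V_u$: the nonzero contributions to $E_{1,k}|\emptyset,0\rangle$ that survive projection back to the vacuum are exactly the terms $i=0$ in the Leibniz expansion with $k=0$, giving the geometric series $u\sum_{j\geq 0}q^j=u/(1-q)$ after suitable bookkeeping (and analogously for $F$). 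Combined with the fact that both $\siL V_u$ and $\mathcal{M}_u$ are cyclic over $\qD$ from their respective vacua, Proposition~\ref{Fock} then forces the map \eqref{eq:lambda l} to intertwine the $\qD$-actions.

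The main obstacle I anticipate is Step~2: carefully justifying the normal ordering on $\siL V_u$ and pinning down the central charges. The subtle point is that while $E_{a,b}$ with $a\neq 0$ acts term by term in a finite manner, the commutator relations \eqref{qDc} must still be verified in the presence of the regularized diagonal; the geometric series appearing in the vacuum computation of $\langle E,F\rangle$ is precisely what accounts for the central term $c=1$, and one has to be attentive to the $q^{ak}$-factors so that no analogous anomaly appears in $c'$.
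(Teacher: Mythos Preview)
The paper states Proposition~\ref{Prop:SemiInf-Fermi} without proof, treating it as standard (the construction goes back to~\cite{GL}); so there is no paper argument to compare against, and I comment only on the correctness of your outline.

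Your Step~2 contains a real gap: the claim that the Leibniz sum is finite for all $(a,b)\neq(0,0)$ is false. When $b=0$ and $a\neq 0$ the action $E_{a,0}x^j=u^aq^{aj}x^j$ is diagonal, so on $|\varnothing,l\rangle=x^l\wedge x^{l+1}\wedge\cdots$ the derivation rule produces the divergent geometric series $u^a\sum_{j\geq l}q^{aj}$. All of the $E_{a,0}$ (not just a notional $E_{0,0}$) must be regularized; the eigenvalue in~\eqref{eq:eigen vac} is precisely this regularized sum, and the cocycle computation yielding $c=1$, $c'=0$ runs through these regularized diagonal operators. Your final paragraph flags Step~2 as the delicate part, which is right, but the issue appears one line earlier than you locate it.

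In Step~3, the claim that $\siL V_u$ and $\mathcal{M}_u$ are cyclic over $\qD$ from their vacua is false: Proposition~\ref{Boson-Fermion} gives $\mathcal{M}_u\cong\bigoplus_{l\in\mathbb{Z}}\mathcal{F}_{q^lu}$, an infinite direct sum of irreducibles, so no single vector generates it. The fix is to drop cyclicity and apply Proposition~\ref{Fock} sector by sector. For that you need $\langle\varnothing,l|E(z)|\varnothing,l\rangle=uq^l/(1-q)$ for every $l$, not just $l=0$; once Step~2 is done properly these follow from the regularized $E_{a,0}$ action, and then the uniqueness argument inside the proof of Proposition~\ref{Fock} forces the two $\qD$-actions to coincide operator by operator on each sector, which is exactly the intertwining you want.
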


\begin{Proposition} \label{Boson-Fermion}
There is an isomorphism of $\qD$-modules $\mathcal{M}_u \cong \bigoplus_{l \in \mathbb{Z}} \mathcal{F}_{q^l u}$.
The sub\-module~$\mathcal{F}_{q^l u}$ is spanned by $| \lambda,l \rangle$.
\end{Proposition}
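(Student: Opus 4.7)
The plan is to apply the standard boson--fermion correspondence: split $\mathcal{M}_u = F^\psi$ into its charge subspaces, show that each one is a $\qD$-submodule, and identify each with a bosonic Fock module by checking the characterization in Proposition~\ref{Fock}.

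First, I would introduce the charge grading $F^\psi = \bigoplus_{l \in \mathbb{Z}} F^\psi_l$ given by $\deg \psi_i = +1$, $\deg \psi_j^* = -1$, so that $|l\rangle$ has charge $l$. Inspection of \eqref{eq:lambda l} shows that each $|\lambda, l\rangle$ is produced from $|l\rangle$ by applying equally many $\psi$'s and $\psi^*$'s (namely the Frobenius length of $\lambda$), hence lies in $F^\psi_l$; a standard basis count shows that the $|\lambda, l\rangle$ span $F^\psi_l$. Since $E(z)$, $F(z)$, and the $H_k$ from \eqref{eq:centerFermi} each consist of one $\psi$ paired with one $\psi^*$, the $\qD$-action preserves the charge grading, so each $F^\psi_l$ is a $\qD$-submodule.

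Next, I would verify the hypotheses of Proposition~\ref{Fock} for $F^\psi_l$ with parameter $q^l u$ in place of $u$. The Heisenberg relations $[H_k, H_{k'}] = k\delta_{k+k', 0}$ on the fermionic realization are a routine bilinear contraction; the annihilation $H_k|l\rangle = 0$ for $k>0$ is immediate from the vacuum conditions defining $|l\rangle$; and cyclicity of $F^\psi_l$ as a module over $\{H_{-k}\}_{k>0}$ with highest-weight vector $|l\rangle$ follows from comparing graded dimensions, both sides being counted by partitions. The diagonal matrix elements of $E(z)$ and $F(z)$ are then read off directly from \eqref{ferm:E}--\eqref{ferm:F}: the normal-ordered products $:\!\psi(q^{-1/2}z)\psi^*(q^{1/2}z)\!:_{(l)}$ and $:\!\psi(q^{1/2}z)\psi^*(q^{-1/2}z)\!:_{(l)}$ have vanishing vacuum expectation by the very definition \eqref{norm1}--\eqref{norm2} of the $l$-dependent normal ordering, leaving $\langle l | E(z) | l \rangle = \frac{q^l u}{1-q}$ and $\langle l | F(z) | l \rangle = \frac{(q^l u)^{-1}}{1-q^{-1}}$. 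Proposition~\ref{Fock} then identifies $F^\psi_l \cong \mathcal{F}_{q^l u}$, giving the claimed decomposition. The only step requiring more than bookkeeping is the cyclicity check, which is a standard graded-dimension argument.
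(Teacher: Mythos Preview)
Your argument is correct and follows essentially the same route as the paper: decompose $F^\psi$ into charge sectors (the paper simply cites the standard boson--fermion correspondence from \cite{KR} for this step, while you spell out the Heisenberg-module structure and cyclicity by hand), read off the vacuum expectation values of $E(z)$ and $F(z)$ from \eqref{ferm:E}--\eqref{ferm:F}, and invoke Proposition~\ref{Fock} on each sector. The only cosmetic slip is the sign of your charge convention (with $\deg\psi=+1$, $\deg\psi^*=-1$ and $|l{+}1\rangle=\psi^*_{-l}|l\rangle$, the vector $|l\rangle$ has charge $-l$, matching the paper's $a_0|l\rangle=-l|l\rangle$), but this does not affect the proof.
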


\begin{proof}Recall the ordinary boson-fermion correspondence (see \cite{KR}). The coefficients of
\begin{gather*}
a(z) = \sum_{n} a_n z^{-n-1} = : \! \psi(z) \psi^* (z) \! :_{(0)}
\end{gather*}
are indeed generators of the Heisenberg algebra. Moreover, $F^{\psi} = \oplus_{l \in \mathbb{Z}} F^{a}_{-l}$. The highest vector of $F^{a}_{-l}$ is $\n$ (in particular, $a_0 \n = -l \n $). Note that this is the decomposition of $\qD$-modules as well. Also, note that
\[\langle l | E(z) | l \rangle = \frac{q^{l} u }{1-q}, \qquad \langle l | F(z) | l \rangle = \frac{q^{-l} u^{-1} }{1-q^{-1}}.\]
Therefore one can use Proposition~\ref{Fock} for each summand $F^{a}_{-l}$.
\end{proof}

There is a basis in the Fock module $\mathcal{F}_u$ given by semi-infinite monomials
\begin{gather*}
|\lambda\rangle= x^{- \lambda_1} \wedge x^{1 - \lambda_2} \wedge \cdots \wedge x^{i - \lambda_{i+1}} \wedge \cdots.
\end{gather*}

To write the action of $\qD$ in this basis, let us remind the standard notation. Let $l(\lambda)$ be the number of non-zero rows. We will write $s=(i,j)$ for the $j$th box in the $i$th row (i.e., $j \leqslant \lambda_i$). The content of a box
$c(s):=i-j$. For the diagram $\mu \subset \lambda$, we define a skew Young diagram~$\lambda \backslash \mu$, being a set of boxes in $\lambda$ which are not in~$\mu$. Ribbon is a skew Young diagram without $2\times2$ squares. The height $\operatorname{ht}(\lambda \backslash \mu)$ of a ribbon is one less than the number of its rows.
\begin{Proposition}
The action of $\qD$ on $\mathcal{F}_u$ is given by the following formulas
\begin{gather}
E_{a,-b} \, |\lambda\rangle= q^{-\frac{a}{2}}u^{a} \sum_{\mu \backslash \lambda =b-\text{\rm ribbon}} (-1)^{\operatorname{ht}(\mu \backslash \lambda)}q^{\frac{a}{b}\sum\limits_{s\in \mu \backslash \lambda}c(s)}|\mu\rangle, \label{eq: Fock basis action1} \\
E_{a,b} \, |\lambda\rangle= q^{-\frac{a}{2}} u^{a} \sum_{\lambda \backslash \mu =b-\text{\rm ribbon}} (-1)^{\operatorname{ht}(\lambda \backslash \mu)}q^{\frac{a}{b}\sum\limits_{s\in \mu \backslash \lambda}c(s)}|\mu\rangle,\\
E_{a,0} \, |\lambda\rangle= u^a \left( \frac{1}{1-q^a} + \sum_{i=0}^{l(\lambda)-1} \big(q^{a(i-\lambda_{i+1})} - q^{ai} \big) \right) |\lambda\rangle, \label{eq: Fock basis action3}
\end{gather}
here $b>0$.
\end{Proposition}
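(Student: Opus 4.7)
The plan is to pull the action back to the semi-infinite construction and reduce everything to the Leibniz rule on the evaluation module. By Propositions~\ref{Prop:SemiInf-Fermi} and~\ref{Boson-Fermion}, $\mathcal{F}_u$ sits inside $\siL V_u$ as the charge-zero sector, and $|\lambda\rangle$ is identified with the wedge $x^{-\lambda_1}\wedge x^{1-\lambda_2}\wedge x^{2-\lambda_3}\wedge\cdots$. On $V_u$ the action is $E_{a,b}x^k = u^a q^{ab/2+ak}x^{k+b}$, so for $b\neq 0$ the Leibniz rule produces the well-defined sum
\begin{gather*}
E_{a,b}|\lambda\rangle = u^a\sum_{i\geqslant 0} q^{ab/2 + a(i-\lambda_{i+1})}\, x^{-\lambda_1}\wedge\cdots\wedge x^{(i-\lambda_{i+1})+b}\wedge\cdots,
\end{gather*}
in which only finitely many summands are non-vanishing.

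I would then analyse which summands survive and re-sort them into the standard wedge form. A term is non-zero precisely when the shifted position $(i-\lambda_{i+1})+b$ is not already occupied in the Maya diagram of $\lambda$, and the resulting diagram $\mu$ differs from $\lambda$ by adding (for $b<0$) or removing (for $b>0$) a $b$-ribbon. Two combinatorial facts remain to be checked: first, re-sorting the wedge requires passing the moved vector past exactly those particles whose positions lie strictly between the initial and final values of the hop, and this count equals $\operatorname{ht}(\mu\setminus\lambda)$ via the standard Maya-to-ribbon bijection (each blocking particle corresponds to one row-change in the ribbon); second, the $q$-exponent reduces to $-a/2+\tfrac{a}{b}\sum_{s}c(s)$ by the ribbon identity $\sum_{s\in\mu\setminus\lambda}c(s) = b(i-\lambda_{i+1})-\binom{b}{2}$ (for $E_{a,-b}$, with a mirror version for $E_{a,b}$). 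Multiplying by $a/b$ and combining with the prefactor $q^{\pm ab/2}$ from Leibniz yields exactly the claimed exponent, proving \eqref{eq: Fock basis action1} and its $E_{a,b}$ analogue.

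The zero-mode case $b=0$ requires regularisation because the naive Leibniz sum $u^a\sum_{i\geqslant 0}q^{a(i-\lambda_{i+1})}$ is divergent. The normalisation on the vacuum is forced by Proposition~\ref{E_l} applied to the coefficients of $E_a(z)$, giving $E_{a,0}|\emptyset\rangle = u^a(1-q^a)^{-1}|\emptyset\rangle$. For general $\lambda$, the summands with $i\geqslant l(\lambda)$ coincide with the vacuum ones because $\lambda_{i+1}=0$, so the eigenvalue differs from $u^a/(1-q^a)$ only by the finite correction $u^a\sum_{i=0}^{l(\lambda)-1}(q^{a(i-\lambda_{i+1})}-q^{ai})$, which is precisely \eqref{eq: Fock basis action3}.

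The main obstacle is purely combinatorial bookkeeping: establishing the Maya-diagram-to-ribbon dictionary in the form needed, matching both the sign and the content sum to a single particle hop of size $|b|$. I would verify this by first treating the horizontal ribbon case ($\operatorname{ht}=0$), where the content identity is an elementary telescoping and there are no transpositions, and then inducting on the number of rows of the ribbon, since each additional row simultaneously contributes one extra transposition in the wedge re-ordering and shifts the content sum by an amount compatible with $\sum c(s) = b(i-\lambda_{i+1})-\binom{b}{2}$.
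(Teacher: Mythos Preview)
The paper states this proposition without proof, treating it as a standard fact about the semi-infinite realization of the Fock module. Your approach via the Leibniz rule on $\siL V_u$ is exactly the natural derivation and is correct: the identification of non-vanishing terms with ribbon addition/removal, the sign $(-1)^{\operatorname{ht}}$ from re-sorting the wedge, the content identity $\sum_{s}c(s)=bk-\binom{b}{2}$ for the hop $k\mapsto k-b$, and the regularisation of $E_{a,0}$ by subtracting the vacuum tail and fixing the constant from Proposition~\ref{E_l} all go through as you describe. There is nothing to compare against in the paper, and no gap in your outline; the only work left is the routine bookkeeping you already flagged.
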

In particular,
\begin{gather}
E_{a,0} \, \vac =\frac{u^{a}}{1-q^{a}}\, \vac. \label{eq:eigen vac}
\end{gather}

Let us introduce the notation $c(\lambda) = \sum\limits_{s \in \lambda} c(s)$. Define an operator $\I_{\tau} \in \End( \mathcal{F}_u)$ by the following formula
\begin{gather} \label{eq: def I tau}
\I_{\tau}\, |\lambda \rangle = u^{|\lambda|} q^{-\frac12 |\lambda|+c(\lambda)}\, | \lambda \rangle.
\end{gather}
The operator was introduced in \cite{BGHT} and is well known nowadays.
\begin{Proposition} \label{prop: I sigma}
The operator $\I_{\tau}$ enjoys the property $\I_{\tau} E_{a,b} \I_{\tau}^{-1} = E_{a-b,b}$.
\end{Proposition}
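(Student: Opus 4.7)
The plan is to verify the identity by comparing matrix elements in the basis $\{|\lambda\rangle\}$ of $\mathcal{F}_u$. Since $\I_{\tau}$ is diagonal in this basis with eigenvalue $I_{\lambda}:=u^{|\lambda|}q^{-|\lambda|/2+c(\lambda)}$ by \eqref{eq: def I tau}, for any $X\in\End(\mathcal{F}_u)$ one has
$$\langle\mu|\I_{\tau}X\I_{\tau}^{-1}|\lambda\rangle = \frac{I_{\mu}}{I_{\lambda}}\,\langle\mu|X|\lambda\rangle,$$
and it suffices to prove the scalar identity
$$\langle\mu|E_{a-b,b}|\lambda\rangle = \frac{I_\mu}{I_\lambda}\,\langle\mu|E_{a,b}|\lambda\rangle$$
for all Young diagrams $\lambda$, $\mu$.

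If $b=0$, the operator $E_{a,0}$ is already diagonal by \eqref{eq: Fock basis action3}, conjugation by $\I_\tau$ is trivial on it, and $E_{a-0,0}=E_{a,0}$: done. If $b\neq 0$, the formulas \eqref{eq: Fock basis action1} tell us that $\langle\mu|E_{a,b}|\lambda\rangle$ is nonzero only when $\mu\setminus\lambda$ (if $b<0$) or $\lambda\setminus\mu$ (if $b>0$) is a ribbon of size $|b|$; in either case $|\mu|-|\lambda|=-b$, whence
$$\frac{I_\mu}{I_\lambda} = u^{-b}\,q^{b/2}\,q^{c(\mu)-c(\lambda)}.$$

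The proof now reduces to an exponent bookkeeping. Using the identity $\sum_{s\in\mu\setminus\lambda}c(s)=c(\mu)-c(\lambda)$ (and its analogue with $\mu$, $\lambda$ swapped), the formulas \eqref{eq: Fock basis action1} express both $\langle\mu|E_{a,b}|\lambda\rangle$ and $\langle\mu|E_{a-b,b}|\lambda\rangle$ as a common sign $(-1)^{\operatorname{ht}}$ times a power of $u$, a power of $q$, and an exponent linear in $c(\mu)-c(\lambda)$. Multiplication by $I_\mu/I_\lambda$ contributes the three shifts $u^{-b}$, $q^{b/2}$, and $q^{c(\mu)-c(\lambda)}$, which are precisely the changes one gets by replacing $a$ with $a-b$ throughout the formula for $\langle\mu|E_{a,b}|\lambda\rangle$. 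Indeed, the coefficient of $c(\mu)-c(\lambda)$ in the content exponent is $-a/b$ in both sign regimes, and $(-a/b)+1=-(a-b)/b$. This reproduces $\langle\mu|E_{a-b,b}|\lambda\rangle$ and completes the argument. The only mild obstacle is tracking the signs of the content sum across the cases $b>0$ and $b<0$; both are governed by the same algebraic identity on exponents, so no genuine difficulty arises.
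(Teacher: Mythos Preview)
Your proof is correct and follows exactly the approach the paper indicates: the paper's own proof consists of the single line ``Follows from \eqref{eq: Fock basis action1}--\eqref{eq: Fock basis action3},'' and you have carried out precisely that verification by comparing matrix elements in the basis $|\lambda\rangle$ and tracking the $u$-, $q^{-a/2}$-, and content-exponent shifts. Your argument is simply a fleshed-out version of what the paper leaves implicit.
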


\begin{proof}Follows from \eqref{eq: Fock basis action1}--\eqref{eq: Fock basis action3}.
\end{proof}

\begin{Corollary} \label{corol: unipotant twist}
$\mathcal{F}_u^{\tau} \cong \mathcal{F}_u$ for $\tau = \left( \begin{smallmatrix} 1 &1 \\ 0 & 1\end{smallmatrix} \right)$.
\end{Corollary}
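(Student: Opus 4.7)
The plan is to exhibit the explicit intertwiner using the operator $\I_{\tau}$ already introduced in \eqref{eq: def I tau}, since Proposition~\ref{prop: I sigma} is essentially the content we need. First I would compute the action of $\tau$ on the generators: for $\tau=\left(\begin{smallmatrix}1&1\\0&1\end{smallmatrix}\right)$, formula \eqref{action1} yields $\tau(E_{a,b}) = E_{a+b,\,b}$, while $\tau(c')=c'$ and $\tau(c)=c'+c$. On $\mathcal{F}_u$ we have $c'=0$ and $c=1$, so the action of the central elements on $\mathcal{F}_u^\tau$ is again $c'=0$ and $c=1$, matching $\mathcal{F}_u$.

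Next, I would set $\phi := \I_{\tau}^{-1} \in \End(\mathcal{F}_u)$, which is well defined since $\I_{\tau}$ is diagonal in the basis $|\lambda\rangle$ with nonzero eigenvalues $u^{|\lambda|}q^{-|\lambda|/2+c(\lambda)}$. Proposition~\ref{prop: I sigma} reads $\I_{\tau} E_{a,b} \I_{\tau}^{-1} = E_{a-b,b}$; substituting $a \mapsto a+b$ and inverting gives
\begin{gather*}
\phi\, E_{a,b}\, \phi^{-1} \;=\; \I_{\tau}^{-1} E_{a,b} \I_{\tau} \;=\; E_{a+b,\,b} \;=\; \tau(E_{a,b}).
\end{gather*}
This is precisely the intertwining condition $\phi \circ \rho_{\mathcal{F}_u}(X) = \rho_{\mathcal{F}_u}(\tau(X)) \circ \phi = \rho_{\mathcal{F}_u^\tau}(X) \circ \phi$ for $X$ ranging over the Lie algebra generators $E_{a,b}$, and since both sides also agree on $c$ and $c'$ (by the computation above), $\phi$ is a $\qD$-module isomorphism $\mathcal{F}_u \xrightarrow{\sim} \mathcal{F}_u^\tau$.

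There is no real obstacle here: the entire content has been loaded into Proposition~\ref{prop: I sigma}, whose proof was an easy check on the basis $|\lambda\rangle$ using \eqref{eq: Fock basis action1}--\eqref{eq: Fock basis action3}. The only thing worth double-checking is the sign convention in the exponent of $\I_{\tau}^{\pm 1}$ and the direction of the shift $a \mapsto a\pm b$, to make sure one lands on $\tau(E_{a,b}) = E_{a+b,b}$ rather than on $E_{a-b,b}$; the calculation above shows $\I_{\tau}^{-1}$ (not $\I_{\tau}$) is the correct intertwiner. Hence the proof fits in a line.
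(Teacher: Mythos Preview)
Your proof is correct and is exactly the argument the paper intends: the Corollary is placed immediately after Proposition~\ref{prop: I sigma} precisely because $\I_\tau^{-1}$ furnishes the required intertwiner, and you have spelled out the one-line verification (including the direction check) that the paper leaves implicit.
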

\begin{Remark}
Also, Corollary \ref{corol: unipotant twist} follows from Proposition \ref{Fock}: we will use this approach to prove Proposition \ref{LambdaFock}.
\end{Remark}

\begin{Corollary} \label{corol: not m and m prime}
The twisted representation $\mathcal{F}_u^{\sigma}$ is determined up to isomorphism by $n$ and $n'$.
\end{Corollary}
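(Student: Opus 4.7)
The plan is to reduce the corollary to Corollary \ref{corol: unipotant twist} by a short matrix computation in ${\rm SL}_2(\mathbb{Z})$. Suppose $\sigma$ and $\sigma'$ are two elements of ${\rm SL}_2(\mathbb{Z})$ with the same bottom row $(n',n)$. I compute $\sigma (\sigma')^{-1}$ directly: its bottom row is obtained by applying $(\sigma')^{-1}$ to the bottom row of $\sigma$, i.e., to the bottom row of $\sigma'$, yielding $(0,1)$; its determinant is $1$, so $\sigma (\sigma')^{-1} = \begin{pmatrix} 1 & k \\ 0 & 1 \end{pmatrix} = \tau^k$ for some $k \in \mathbb{Z}$, where $\tau$ is as in Corollary \ref{corol: unipotant twist}. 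Hence $\sigma = \tau^k \sigma'$.

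Next I check that twisting is a right action of ${\rm SL}_2(\mathbb{Z})$ on the category of $\qD$-modules. From $\sigma(E_{k,l}) = E_{m'k+ml,\, n'k+nl}$ and the analogous formula for $c, c'$, the map $\sigma \mapsto \sigma$ is a left action on the generators of $\qD$, so $(\sigma_1 \sigma_2)(X) = \sigma_1(\sigma_2(X))$. Using the definition $\rho_{M^{\sigma}} = \rho_M \circ \sigma$, it follows that $M^{\sigma_1 \sigma_2} = (M^{\sigma_1})^{\sigma_2}$ as $\qD$-modules, and that twisting is a functor (any intertwiner $M \to N$ is also an intertwiner $M^{\sigma} \to N^{\sigma}$, since the underlying linear map and the new action are both unchanged on the vector space level).

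Combining the two, $\mathcal{F}_u^{\sigma} = \mathcal{F}_u^{\tau^k \sigma'} = (\mathcal{F}_u^{\tau^k})^{\sigma'}$. By Corollary \ref{corol: unipotant twist} we have $\mathcal{F}_u^{\tau} \cong \mathcal{F}_u$; iterating $k$ times gives $\mathcal{F}_u^{\tau^k} \cong \mathcal{F}_u$, and applying $(\,\cdot\,)^{\sigma'}$ to this isomorphism produces $\mathcal{F}_u^{\sigma} \cong \mathcal{F}_u^{\sigma'}$. Since $\sigma$ and $\sigma'$ were arbitrary with common bottom row $(n',n)$, the isomorphism class of $\mathcal{F}_u^{\sigma}$ depends only on $n$ and $n'$, as claimed. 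There is no real obstacle here beyond keeping the conventions (left action on $\qD$ versus right action on modules) straight; the substantive content has already been absorbed into Corollary \ref{corol: unipotant twist}.
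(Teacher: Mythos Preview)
Your proof is correct and is essentially the same as the paper's: both reduce to Corollary~\ref{corol: unipotant twist} by observing that any two elements of ${\rm SL}_2(\mathbb{Z})$ with the same bottom row $(n',n)$ differ by a left factor $\tau^k$, and then use $\mathcal{F}_u^{\tau^k\sigma'}=(\mathcal{F}_u^{\tau^k})^{\sigma'}\cong\mathcal{F}_u^{\sigma'}$. The only difference is presentational: you make the right-action identity $M^{\sigma_1\sigma_2}=(M^{\sigma_1})^{\sigma_2}$ explicit, whereas the paper computes $\tau^k\sigma$ directly and notes that all admissible top rows are hit.
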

\begin{proof}
Corollary \ref{corol: unipotant twist} implies that $\mathcal{F}_u^{\, \tau^k \sigma} \cong \mathcal{F}_u^{\sigma}$. Note that
\begin{gather*} \tau^k \sigma =
\begin{pmatrix}
1 & k \\
0 & 1
\end{pmatrix}
\begin{pmatrix}
m' & m \\
n' & n
\end{pmatrix}=
\begin{pmatrix}
m'+kn' & m+kn \\
n' & n
\end{pmatrix}.
\end{gather*}
For the fixed $n$ and $n'$, all the possible choices of $m$ and $m'$ appear for the appropriate $k$.
\end{proof}

\section{Explicit formulas for twisted representation} \label{Section with answers}
In this section we provide three explicit constructions of twisted Fock module $\mathcal{F}^\sigma_u$ for
\begin{equation} \label{sigma}
\sigma = \begin{pmatrix}m' & m \\
n' & n
\end{pmatrix}.
\end{equation}
Constructions are called fermionic, bosonic, and strange bosonic. This section contains no proofs. We will give proofs in Sections \ref{S:sublattice}. In Section \ref{S:Semi-infinite} we will provide an independent proof of Theorem \ref{TheFermionicTh}.

\subsection{Fermionic construction} \label{section: The Fermionic Th}

We need to consider the $\mathbb{Z}/2 \mathbb{Z}$-graded $n$th tensor power of the Clifford algebra defined above. More precisely, consider an algebra generated by $\psia [i]$ and $\psib [j]$, for $i,j \in \mathbb{Z}$; $a,b=0, \dots , n-1$, subject to relations
\begin{gather} \label{eq:nClifford1}
\big\{ \psia[i], \psibb[j] \big\} =0, \qquad \big\{ \psiaa [i], \psib[j] \big\} =0, \\ \big\{ \psia [i] , \psib [j] \big\} = \delta_{a,b} \delta_{i+j, 0}. \label{eq:nClifford2}
\end{gather}
Consider the currents
\begin{gather*}
\psia (z) = \sum_{i} \psia [i] z^{-i-1}, \qquad \psib(z) = \sum_i \psib[i] z^{-i}. \label{eq:nFermions}
\end{gather*}
Consider a module $F^{n\psi}$ with a cyclic vector $|l_0, \dots, l_{n-1} \rangle$ and the relations
\begin{gather*}
\psia [i] \, |l_0, \dots, l_{n-1} \rangle = 0 \quad \text{for } i \geqslant l_a, \\ \psiaa [j] \, \left|l_0, \dots, l_{n-1} \right\rangle = 0 \quad \text{for } j > -l_a.
\end{gather*}
The module $F^{n\psi}$ does not depend on $l_0, \dots, l_{n-1}$. The isomorphism can be seen from the following formulas:
\begin{gather*}
\psiaa [ -l_a ] \, |l_0, \dots, l_a, \dots, l_{n-1} \rangle = | l_0, \dots, l_a+1, \dots , l_{n-1} \rangle, \\
\psia [ l_a-1 ] \, |l_0, \dots, l_a, \dots , l_{n-1} \rangle = |l_0, \dots, l_a-1, \dots , l_{n-1} \rangle.
\end{gather*}

\begin{Theorem}\label{TheFermionicTh}
The formulas below determine an action of $\qD$ on $F^{n\psi}$
\begin{gather}
c' = n', \qquad c=n,\nonumber\\
H^{tw}_k = \sum_{a} \sum_{i+j=k} \psi_a [i] \psi^*_a [j] , \nonumber\\
E^{tw}(z) = \sum_{b-a \equiv -n' \bmod n} u^{\frac{1}{n}} q^{-1/2} z \psia \big( q^{-1/2} z\big) \psib \big(q^{1/2} z\big) z^{\frac{n'-a+b}{n}} q^{(a+b)/2n}, \label{eq:EtwFermi} \\
F^{tw}(z) = \sum_{b-a \equiv n' \bmod n} u^{-\frac{1}{n}} q^{1/2} z \psia \big( q^{1/2} z\big) \psib \big(q^{-1/2} z\big) z^{\frac{-n'-a+b}{n}} q^{-(a+b)/2n}. \nonumber 
\end{gather}
The module obtained is isomorphic to $\mathcal{M}_u^{\sigma}$.
\end{Theorem}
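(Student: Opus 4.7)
My plan is to realize $\mathcal{M}_u^{\sigma}$ via the semi-infinite construction and then rewrite the result in terms of $n$ fermion pairs. Since twisting commutes with the semi-infinite wedge functor, Proposition~\ref{Prop:SemiInf-Fermi} gives $\mathcal{M}_u^{\sigma}\cong \siL(V_u^{\sigma})$, so it suffices to identify the right-hand side with $F^{n\psi}$ and to compute the action of the Chevalley currents in the resulting basis.

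The key observation is that on $V_u^{\sigma}$ the operator $E_{k,l}$ shifts the basis index by $n'k+nl$, so modulo~$n$ the shift depends only on $k$. This gives a $\mathbb{Z}/n$-grading $V_u^{\sigma}=\bigoplus_{a=0}^{n-1}V_u^{(a)}$ with $V_u^{(a)}=\Span\{x^{a+nj}:j\in\mathbb{Z}\}$, and $E_{1,k}$ maps $V_u^{(b)}$ to $V_u^{(a)}$ exactly when $a-b\equiv n'\pmod n$. Ordering the basis of $V_u^{\sigma}$ by the integer $j$ and applying \eqref{eq:lambda l} simultaneously across residue classes identifies $\siL V_u^{\sigma}$ with $F^{n\psi}$; under this identification, $\psia$ and $\psiaa$ annihilate and create particles in the summand $V_u^{(a)}$.

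Under this identification, the selection rule forces $E^{tw}(z)=\sum_k E_{1,k}^{\sigma}z^{-k}$ to be a sum of bilinears $\psia(q^{-1/2}z)\psib(q^{1/2}z)$ over pairs $(a,b)$ with $b-a\equiv -n'\pmod n$. Reindexing $n'+nk=(a-b)+nk'$ (where $k'$ is an integer precisely because $a-b\equiv n'\pmod n$), the factor $z^{-k}$ becomes $z^{(n'-a+b)/n}z^{-k'}$, which explains the fractional power of $z$; the factor $q^{(a+b)/(2n)}$ emerges from the $q^{kl/2+ak}$ weight in the evaluation action evaluated at the endpoints of the shift, and the prefactor $u^{1/n}$ appears because, in the spirit of Remark~\ref{remark:NOalpha}, the evaluation parameter reparameterises as $u^{1/n}$ along the twisted direction. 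The computation for $F^{tw}(z)$ is strictly parallel, and $H_k^{tw}$ is block-diagonal, reducing on each $V_u^{(a)}$ to the standard boson-fermion correspondence.

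The main obstacle is organising the bookkeeping of signs, fractional powers and normal-ordering anomalies, and in particular verifying the values of the central charges: $c=n$ arises as a sum of $n$ standard fermionic anomalies, but $c'=n'$ is more subtle and must be extracted from the $\delta'$-term in a direct computation of $[E^{tw}(z),F^{tw}(w)]$ on the diagonal pairs $a=b$, matched against~\eqref{RqDEF}. An alternative route, developed in Section~\ref{S:sublattice}, bypasses these anomaly checks: embed $\qD\cong \qd{1/n}^{\Lambda}\subset \qd{1/n}$ via the sublattice $\Lambda=\mathbb{Z}(1,n')+\mathbb{Z}(0,n)$, take the fermionic Fock $\mathcal{M}_{u^{1/n}}$ of $\qd{1/n}$, and $n$-reduce its single fermion $\psi$ into the $n$ pairs $\psia,\psiaa$ according to residues of the mode index modulo~$n$; the Chevalley current $E(z)$ of $\qd{1/n}^{\Lambda}$ then assembles as a roots-of-unity average of the $E$-current of $\qd{1/n}$ evaluated at the $n$ points $\zeta^{-j}z^{1/n}$ with $\zeta=e^{2\pi i/n}$, and substituting~\eqref{ferm:E} and regrouping modes reproduces~\eqref{eq:EtwFermi} with the correct central values.
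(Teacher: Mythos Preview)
Your proposal is correct and follows essentially the same two approaches the paper gives: the semi-infinite construction of Section~\ref{S:Semi-infinite} (your main argument, formalized there via Propositions~\ref{proposition:twev} and~\ref{proposition:toidalGL}) and the sublattice approach of Section~\ref{S:sublattice} (your ``alternative route'', carried out in Proposition~\ref{TheFermionicProp} by decomposing the single fermion of $\qd{1/n}$ into $n$ residue classes as in~\eqref{defa1}). One minor remark: the roots-of-unity averaging you describe in the alternative route is the form the argument takes for the strange bosonization (Proposition~\ref{Prop:StrangeBoson}); for the fermionic version the paper works directly with the mode decomposition~\eqref{defa1} rather than averaging, though of course the two are equivalent.
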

Since $\mathcal{F}_u^{\sigma} \subset \mathcal{M}_u^{\sigma}$, we have obtained a fermionic construction for $\mathcal{F}_u^{\sigma}$.

\subsection{Bosonic construction} \label{subsection: TW qDboson}

Let us consider the $n$th tensor power of the Heisenberg algebra. More precisely, this algebra is generated by $a_b [i]$ for $b=0, \dots, n-1$ and $i \in \mathbb{Z}$ with the relation $[ a_{b_1}[i], a_{b_2}[j]] = i \delta_{b_1,b_2} \delta_{i+j,0}$. Let us extend the algebra by adding the operators $e^{Q_b}$, obeying the following commutation relations. The operator $e^{Q_b}$ commutes with all the generators except for $a_b[0]$ and satisfy $a_b[0] e^{Q_b} = e^{Q_b} (a_b[0]+1)$. Denote
\begin{gather*}
\phi_{b}(z) = \sum_{j \neq 0} \frac{1}{j} a_{b} [j] z^{-j} + Q_b + a_{b}[0] \log z.
\end{gather*}
\begin{Remark}
Informally, one can think that there exists an operator $Q_b$ satisfying $[a_b[0], Q_b] \allowbreak = 1$. However, this operator will not act on our representation. We will use $Q_b$ as a formal symbol. Our final answer will consist only of $e^{Q_b}$, but not of $Q_b$ without the exponent.
\end{Remark}

We need a notion of a normally ordered exponent $:\!\exp(\dots)\!:$. The argument of a normally ordered exponent is a linear combination of $a_b[i]$ and $Q_b$. Let $\mathtt{a}_+$, $\mathtt{a}_-$, $\mathtt{a}_0$, and $\mathtt{Q}$ denote a linear combination of $a_b[i]$ for $i>0$, $a_b[i]$ for $i<0$, $a_b[0]$ and $Q_b$, correspondingly ($b$ is not fixed). Define
\begin{gather*}
: \! \exp ( \mathtt{a}_+ + \mathtt{a}_- + \mathtt{a}_0 + \mathtt{Q} )\! : \, \stackrel{\text{def}}{=} \exp ( \mathtt{a}_+ ) \exp ( \mathtt{a}_- ) \exp ( \mathtt{Q} ) \exp ( \mathtt{a}_0 ).
\end{gather*}

Also, note that $\mathtt{a}_0$ will have the coefficient $\log z$. We shall understand it formally; the action of the operator $\exp(a_b[0] \log z) = z^{a_b[0]}$ is well defined, since in the representation to be considered below, $a_b[0]$ acts as multiplication by an integer at each Fock module.

Let $\Q$ be a lattice with the basis $Q_0{-}Q_1, \dots, Q_{n-2}{-}Q_{n-1}$. Consider the group algebra~$\mathbb{C} \big[ \Q \big]$. This algebra is spanned by $e^{\lambda}$ for $\lambda= \sum_i \lambda_i Q_i \in \Q$. Let us define the action of the commutative algebra generated by $a_b[0]$ on $\mathbb{C} \big[ \Q \big]$:
\begin{gather*}
a_b[0] e^{\sum \lambda_i Q_i} = \lambda_b e^{\sum \lambda_i Q_i}.
\end{gather*}
Let $F^{na}$ be the Fock representation of the algebra generated by $a_b[i]$ for $i\neq 0$; i.e., there is a~cyclic vector $\vac \in F^{na}$ such that $a_b[i] \, \vac = 0$ for $i>0$.

Finally, we can consider $F^{na} \otimes \mathbb{C} \big[ \Q \big]$ as representation of the whole Heisenberg algebra as follows: $a_b[i]$ for $i \neq 0$ acts on the first factor, $a_b[0]$ acts on the second factor. Also, $\mathbb{C} \big[ \Q \big]$ acts on $F^{na} \otimes \mathbb{C} \big[ \Q \big]$.

\begin{Theorem} \label{Th:qD Boson}
There is an action of $\qD$ on $F^{na} \otimes \mathbb{C} \big[ \Q \big]$ determined by the formulas
\begin{gather}
H^{tw}[k] = \sum_{b} a_b [k], \qquad c' = n', \qquad c=n,\nonumber\\
E^{tw}(z) = \sum_{b-a \equiv -n' \bmod n} \!\!\!\!\! u^{\frac{1}{n}} q^{\frac{a+b-n}{2n}} z^{\frac{n'-a+b}{n}+1} : \! \exp \big( \phi_b\big(q^{1/2} z\big) - \phi_a\big(q^{-1/2} z\big) \big) \! : \epsilon_{a,b}, \label{eq:th:Etw boson}\\
F^{tw}(z) = \sum_{b-a \equiv n' \bmod n} \!\!\!\!\! u^{-\frac{1}{n}} q^{ \frac{-a-b+n}{2n} } z^{\frac{-n'-a+b}{n}+1} : \! \exp \big( \phi_b\big(q^{-1/2} z\big) - \phi_a\big(q^{1/2} z\big) \big) \! : \epsilon_{a,b}, \label{eq:th:Ftw boson}
\end{gather}
here $\epsilon_{a,b} = \prod_r (-1)^{a_r[0]}$ $($we consider the product over such $r$ that $a-1 \geqslant r \geqslant b$ for $a>b$ and $b-1 \geqslant r \geqslant a$ for $b>a)$.

The representation obtained is isomorphic to $\mathcal{F}_u^{\sigma}$.
\end{Theorem}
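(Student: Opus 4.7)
The plan is to derive Theorem~\ref{Th:qD Boson} from the fermionic realization of Theorem~\ref{TheFermionicTh} by applying the boson--fermion correspondence to each of the $n$ fermion flavours independently. For each $b=0,\ldots,n-1$, the pair $\psi_{(b)}(z),\psi^*_{(b)}(z)$ generates a Clifford algebra, and the single-flavour correspondence used in the proof of Proposition~\ref{Boson-Fermion} produces a Heisenberg current with modes $a_b[k]=\sum_{i+j=k}{:}\psi_{(b)}[i]\psi^*_{(b)}[j]{:}$ together with an isomorphism $F^{\psi_{(b)}}\cong\bigoplus_{l_b\in\mathbb{Z}}F^{a_b}_{-l_b}$. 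Tensoring over $b$ gives $F^{n\psi}\cong\bigoplus_{(l_0,\ldots,l_{n-1})\in\mathbb{Z}^n}F^{na}_{(l_0,\ldots,l_{n-1})}$; since the currents in \eqref{eq:EtwFermi} only move a single fermion from flavour $b$ to flavour $a$, the $\qD$-action preserves the total charge $\sum_b l_b$, and restricting to the total-charge-zero sector yields a $\qD$-submodule canonically identified with $F^{na}\otimes\mathbb{C}[\Q]$.

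Next I would express the fermion currents as bosonic vertex operators, writing $\psi_{(b)}(z)\leftrightarrow{:}\exp\phi_b(z){:}$ and $\psi^*_{(b)}(z)\leftrightarrow{:}\exp(-\phi_b(z)){:}$ (with the sign convention already fixed by the untwisted $n=1$ case of Proposition~\ref{prop:sec3 Boson Fock}), dressed by Klein cocycle factors ensuring anticommutation across distinct flavours. Substituting into~\eqref{eq:EtwFermi} and normal-ordering the regular product $\psi_{(a)}(q^{-1/2}z)\psi^*_{(b)}(q^{1/2}z)$ via Wick's theorem produces the exponent $\phi_b(q^{1/2}z)-\phi_a(q^{-1/2}z)$; the zero-mode shift $z^{a_b[0]-a_a[0]}$ coming from the $a_b[0]\log z$ part of $\phi_b$, together with the explicit prefactor $u^{1/n}q^{-1/2}z\cdot z^{(n'-a+b)/n}q^{(a+b)/(2n)}$ of \eqref{eq:EtwFermi} and the Wick scalar from the same-flavour contraction, combine to give precisely the coefficient $u^{1/n}q^{(a+b-n)/(2n)}z^{(n'-a+b)/n+1}$ of~\eqref{eq:th:Etw boson}. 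The computation for~\eqref{eq:th:Ftw boson} is entirely parallel, with the arguments $q^{\pm 1/2}z$ swapped.

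The main obstacle is to pin down the Klein cocycle $\epsilon_{a,b}=\prod_r(-1)^{a_r[0]}$ consistently across all pairs $(a,b)$ so that the bosonic operators satisfy the Clifford relations~\eqref{eq:nClifford1}--\eqref{eq:nClifford2}. Many sign conventions are a priori admissible, and only the one stated in the theorem reproduces the full set of anticommutators. I would verify it by the standard adjacent-flavour check: the factor $\prod_r(-1)^{a_r[0]}$ anticommutes with any vertex operator carrying a nontrivial $e^{\pm Q_c}$ charge at an index $c$ in its range and commutes otherwise, which produces exactly the sign required for $\psi_{(a)}(z)\psi_{(b)}(w)=-\psi_{(b)}(w)\psi_{(a)}(z)$ when $a\neq b$. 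Once this is established, Theorem~\ref{TheFermionicTh} transports verbatim to the bosonic setting on $F^{n\psi}$, and restricting to the total-charge-zero sector $F^{na}\otimes\mathbb{C}[\Q]$ identifies the resulting $\qD$-module with $\mathcal{F}_u^\sigma\subset\mathcal{M}_u^\sigma$, completing the proof of Theorem~\ref{Th:qD Boson}.
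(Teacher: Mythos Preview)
Your approach is essentially the same as the paper's: the paper proves Theorem~\ref{Th:qD Boson} by combining Theorem~\ref{MainTh} with Proposition~\ref{Prop:qD Boson}, and the latter is established precisely by substituting the $n$-fold boson--fermion correspondence \eqref{eq:boson psi}--\eqref{eq:boson psi*} into the fermionic formulas \eqref{eq1: TheFermionicProp}--\eqref{prop:FtwFermi} and then restricting to the total-charge-zero sector via Lemma~\ref{lemma: zero mode of a, npsi is psi}. One small slip: your stated bosonization has the signs reversed relative to the paper's convention (it is $\psi_{(b)}(z)={:}\exp(-\phi_b(z)){:}$ and $\psi^*_{(b)}(z)={:}\exp(\phi_b(z)){:}$, cf.~\eqref{eq:boson psi}--\eqref{eq:boson psi*}), which is what actually produces the exponent $\phi_b(q^{1/2}z)-\phi_a(q^{-1/2}z)$ from $\psi_{(a)}(q^{-1/2}z)\psi^*_{(b)}(q^{1/2}z)$.
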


\subsection{Strange bosonic construction}

We will use notation of Section~\ref{boson}. Let $\zeta$ be a $n$th primitive root of unity, e.g., $\zeta = e^{\frac{2\pi i}{n}}$.

\begin{Theorem} \label{Th:StrangeBoson}
There is an action of $\qD$ on $F^a_{\alpha}$ determined by the formulas
\begin{gather}
H^{tw}_k = a_{nk}, \qquad c =n, \qquad c' = n',\nonumber\\
 E^{tw}(z) = z^{n'/n} \frac{u^{\frac{1}{n}}}{n\big(1-q^{1/n}\big)} \sum_{l=0}^{n-1} \zeta^{l n'} : \! \exp \! \left( \sum_{k} \frac{q^{-k/2n}-q^{k/2n}}{k} a_{k} \zeta^{-kl} z^{-k/n} \right) \! :, \label{eq:Th Strange boson E} \\
 F^{tw}(z) = z^{-n'/n} \frac{u^{-\frac{1}{n}}}{n\big(1-q^{-1/n}\big)} \sum_{l=0}^{n-1} \zeta^{-l n'} : \! \exp \! \left( \sum_{k}\frac{q^{k/2n}-q^{-k/2n}}{k} a_{k} \zeta^{-kl} z^{-k/n} \right) \! : . \label{eq:Th Strange boson F}
\end{gather}
The representation obtained is isomorphic to $\mathcal{F}_u^{\sigma}$.
\end{Theorem}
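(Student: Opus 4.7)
The plan is to realize the action on $F^a_\alpha$ as the restriction of the standard bosonic Fock module $\mathcal{F}_{u^{1/n}}$ of the larger algebra $\qd{1/n}$ (Proposition~\ref{prop:sec3 Boson Fock} with $q$, $u$ replaced by $q^{1/n}$, $u^{1/n}$) to a Lie subalgebra isomorphic to $\qD$. Concretely, let $\Lambda\subset\mathbb{Z}^2$ be the sublattice with basis $w_1=(1,n')$ and $w_2=(0,n)$, so $\det(w_1,w_2)=n$. Let $\qd{1/n}^{\Lambda}\subset\qd{1/n}$ be the Lie subalgebra spanned by $E_{a,b}$ with $(a,b)\in\Lambda$ together with $c,c'$. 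Because $\Lambda$ has index $n$, the factor $q^{1/n}$ in \eqref{qDc} for $\qd{1/n}$ combines with the rescaling by $\det(w_1,w_2)$ and becomes $q$, yielding a Lie algebra isomorphism $\phi_{w_1,w_2}\colon\qD\cong\qd{1/n}^{\Lambda}$ that sends $E_{a,b}\mapsto E_{aw_1+bw_2}=E_{a,\,an'+bn}$ and suitably rescales the central generators. This already produces a well-defined action of $\qD$ on $F^a_\alpha$; it remains to write it out explicitly and identify it with $\mathcal{F}_u^{\sigma}$.

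The first step of the calculation is to express the Chevalley currents. Under $\phi_{w_1,w_2}$, the current $E^{tw}(z)=\sum_k E_{1,k}z^{-k}$ of $\qD$ corresponds to $\sum_k E_{1,n'+kn}z^{-k}$ in $\qd{1/n}$. Proposition~\ref{E_l} (applied with $l=1$) gives the full current $E_1(w)=\sum_m E_{1,m}w^{-m}$ on $\mathcal{F}_{u^{1/n}}$ as a single normally ordered vertex operator: the two commuting exponentials of \eqref{eq: El in Fock mod} merge. To isolate the modes $m\equiv n'\pmod{n}$, I use the Fourier identity
\begin{gather*}
\sum_{l=0}^{n-1}\zeta^{-ln'}E_1\big(\zeta^{-l}z^{1/n}\big)=n\,z^{-n'/n}\sum_k E_{1,\,n'+kn}z^{-k},
\end{gather*}
where $\zeta$ is a primitive $n$-th root of unity. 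Substituting the vertex-operator expression, relabeling $k\mapsto -k$ in the exponent, and relabeling $l\mapsto -l\bmod n$ in the outer sum reproduces \eqref{eq:Th Strange boson E} verbatim; \eqref{eq:Th Strange boson F} follows in the same way, and $H^{tw}_k=E_{0,kn}\mapsto a_{kn}$ is immediate from \eqref{boson1}.

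The second step identifies this $\qD$-module with $\mathcal{F}_u^{\sigma}$. Since $\det\sigma=1$ forces $\gcd(n,n')=1$, the same lattice $\Lambda$ also admits a basis $v_1=(n,0)$, $v_2=(R,1)$ with $Rn'\equiv 1\pmod{n}$. The $d=1$ specialization of the decomposition \eqref{eq:Fu decomp intro} identifies $\mathcal{F}_{u^{1/n}}|_{\phi_{v_1,v_2}(\qD)}$ with the untwisted Fock module $\mathcal{F}_u$. The bases $(v_1,v_2)$ and $(w_1,w_2)$ differ by a matrix $\tau\in SL_2(\mathbb{Z})$ whose second row is exactly $(n',n)$; therefore the restriction via $\phi_{w_1,w_2}$ equals $\mathcal{F}_u^{\tau}$, and by Corollary~\ref{corol: not m and m prime} this coincides with $\mathcal{F}_u^{\sigma}$.

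The main obstacle is the lattice bookkeeping in the second step: one must verify that the change-of-basis matrix $\tau$ between $(v_1,v_2)$ and $(w_1,w_2)$ has second row exactly $(n',n)$, so that Corollary~\ref{corol: not m and m prime} actually yields $\mathcal{F}_u^{\tau}\cong\mathcal{F}_u^{\sigma}$ (rather than some other twist), and one must invoke \eqref{eq:Fu decomp intro} in a direction opposite to the way it is used in Section~\ref{S:sublattice}. These lattice computations, together with the combinatorial reindexing in the first step, constitute the work to be carried out in Section~\ref{S:sublattice}.
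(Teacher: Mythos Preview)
Your proposal is correct and follows essentially the same route as the paper: your first step is Proposition~\ref{Prop:StrangeBoson} (the Fourier identity you write is exactly the paper's formula \eqref{eq: proof of prop5.4 E}), and your second step is Theorem~\ref{MainTh} combined with Proposition~\ref{LambdaFock}. The only cosmetic difference is that the paper picks $v_2=(-m,1)$ with $m$ taken directly from the given $\sigma$, so the transition matrix from $(v_1,v_2)$ to $(w_1,w_2)$ is $\sigma$ itself and no appeal to Corollary~\ref{corol: not m and m prime} is needed; your choice of an arbitrary $R$ with $Rn'\equiv 1\pmod n$ works equally well once that corollary is invoked.
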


As before the representation does not depend on $\alpha$, see Remark~\ref{remark:NOalpha}.

\section{Twisted representation via a sublattice}\label{S:sublattice}

\subsection{Sublattices and subalgebras}

Consider a full rank sublattice $\Lambda \subset \mathbb{Z}^2$ of index $n$ (i.e., $\mathbb{Z}^2/\Lambda$ is a finite group of order $n$). Let us define a Lie subalgebra $\qD^{\Lambda} \subset \qD$ which is spanned by $E_{a,b}$ for $(a,b) \in \Lambda$ and central elements~$c$,~$c'$.

Denote by $E_{a,b}^{[n]}$, $c_{[n]}$, $c'_{[n]}$ standard generators of $\qDn$. Let $v_1 = (k_1, l_1)$ and $v_2 = (k_2, l_2)$ be a basis of $\Lambda$. Define a map $\varphi_{v_1, v_2}\colon \qDn \rightarrow \qD$
\begin{gather*}
\varphi_{v_1, v_2} E^{\scriptstyle{[n]}}_{a,b} = E_{a k_1 + b k_2, a l_1 + b l_2}, \qquad
\varphi_{v_1, v_2} c'_{\scriptstyle{[n]}} = k_1 c' + l_1 c, \qquad
\varphi_{v_1, v_2} c_{\scriptstyle{[n]}} = k_2 c' + l_2 c.
\end{gather*}

\begin{Proposition}
Let $v_1$, $v_2$ be a positively oriented basis $($i.e., $k_1 l_2 - k_2 l_1 =n)$. Then the map~$\varphi_{v_1, v_2}$ is a Lie algebra isomorphism $\qDn \cong \qD^{\Lambda}$.
\end{Proposition}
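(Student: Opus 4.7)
The plan is to verify that $\varphi_{v_1,v_2}$ preserves the defining commutation relation~\eqref{qDc}, and then observe that it is a vector-space bijection onto $\qD^{\Lambda}$; these two together give the required Lie algebra isomorphism.

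For the bracket, I would apply~\eqref{qDc} directly inside $\qD$ to $\varphi_{v_1,v_2}\big(E^{[n]}_{a,b}\big)=E_{ak_1+bk_2,\,al_1+bl_2}$ and $\varphi_{v_1,v_2}\big(E^{[n]}_{r,s}\big)=E_{rk_1+sk_2,\,rl_1+sl_2}$. The critical exponent of $q$ is
\begin{gather*}
(rl_1+sl_2)(ak_1+bk_2) - (al_1+bl_2)(rk_1+sk_2),
\end{gather*}
which after expansion collapses to $(k_1l_2 - k_2l_1)(as - br) = n(as-br)$. This is exactly what the $\qDn$ relation (obtained from~\eqref{qDc} by replacing $q$ with $q^n$) produces when applied to $E^{[n]}_{a,b}$ and $E^{[n]}_{r,s}$; the resulting element $E_{(a+r)k_1+(b+s)k_2,\,(a+r)l_1+(b+s)l_2}$ is by definition the $\varphi_{v_1,v_2}$-image of $E^{[n]}_{a+r,b+s}$. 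For the central contribution, the two Kronecker deltas force $(a+r)v_1+(b+s)v_2 = 0$ in $\Lambda$; since $v_1, v_2$ are $\mathbb{Q}$-linearly independent this is equivalent to $a+r = 0$ and $b+s = 0$, after which the prefactor regroups as $c'(ak_1+bk_2)+c(al_1+bl_2) = a(k_1c'+l_1c)+b(k_2c'+l_2c)$, matching $a\,\varphi_{v_1,v_2}(c'_{[n]})+b\,\varphi_{v_1,v_2}(c_{[n]})$ on the $\qDn$ side.

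Bijectivity is then automatic: the assignment $(a,b)\mapsto av_1+bv_2$ is a bijection $\mathbb{Z}^2\to\Lambda$ because $v_1, v_2$ is a $\mathbb{Z}$-basis of $\Lambda$, so $\varphi_{v_1,v_2}$ takes the basis $\{E^{[n]}_{a,b}\}$ of the non-central part of $\qDn$ onto the basis $\{E_{p,q}:(p,q)\in\Lambda\}$ of the non-central part of $\qD^{\Lambda}$; on the two-dimensional center it is given by the matrix $\left(\begin{smallmatrix}k_1&l_1\\k_2&l_2\end{smallmatrix}\right)$, whose determinant $n\neq 0$ makes it invertible over $\mathbb{C}$. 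I do not expect any serious obstacle; the entire content of the statement is the identity $k_1l_2-k_2l_1 = n$, which is precisely what rescales the parameter from $q$ to $q^n$. The positive-orientation hypothesis is what pins down the sign: the opposite orientation would yield $q^{-n}$ in place of $q^n$, producing an isomorphism with $\qd{-n}$ instead of $\qDn$.
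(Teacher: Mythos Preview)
Your proof is correct and follows exactly the approach the paper indicates: the paper's own proof is the single line ``It follows from \eqref{qDc} directly,'' and what you have written is precisely the direct verification of \eqref{qDc} that this line refers to. There is no difference in method, only in the level of detail you supply.
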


\begin{proof}It follows from \eqref{qDc} directly.
\end{proof}

Slightly abusing notation, denote the Fock representation of $\qDn$ by $\mathcal{F}_u^{[n]}$ .

\begin{Proposition} \label{LambdaFock}
Let $v_1 = (n,0)$ and $v_2 =(-m, 1)$. Then $ \mathcal{F}_{u^{n}}^{[n]} \cong \left. \mathcal{F}_u \right|_{\phi_{v_1, v_2} \left(\qd{n} \right)}$ as $\qd{n}$-modules.
\end{Proposition}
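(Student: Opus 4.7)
The strategy is to apply Proposition~\ref{Fock}, in its $q\to q^n$ form for $\qDn$, to the restricted action on $\mathcal{F}_u$. Thus the task reduces to exhibiting $\mathcal{F}_u$ as a Fock module for the Heisenberg subalgebra of $\phi_{v_1,v_2}(\qDn)$ and computing two vacuum matrix elements.

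First I would verify the central charges and the Heisenberg relations. With $v_1=(n,0)$, $v_2=(-m,1)$, one has $\phi_{v_1,v_2}(c'_{[n]})= nc' = 0$ and $\phi_{v_1,v_2}(c_{[n]}) = -mc'+c = 1$ on $\mathcal{F}_u$, matching the central charges of $\mathcal{F}_{u^n}^{[n]}$. Set $\tilde a_k := \phi_{v_1,v_2}(H^{[n]}_k) = E_{-km,\,k}$. A direct application of \eqref{qDc} to $[E_{-k_1 m,\,k_1}, E_{-k_2 m,\,k_2}]$ shows the factor $q^{(sk-lr)/2}-q^{(lr-sk)/2}$ vanishes, since $k_2(-k_1 m)-k_1(-k_2 m)=0$, and the delta conditions collapse to $k_1 = -k_2$. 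What remains is $k_1(c-mc')=k_1$, so $[\tilde a_{k_1},\tilde a_{k_2}] = k_1\,\delta_{k_1+k_2,0}$, exactly the Heisenberg relation at level one.

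Next I would identify $|0\rangle\in \mathcal{F}_u$ as the Fock vacuum for $\{\tilde a_k\}$. By \eqref{eq: Fock basis action1}--\eqref{eq: Fock basis action3}, $E_{a,b}|0\rangle = 0$ for $b>0$ (the empty partition admits no $b$-ribbon removal), hence $\tilde a_k|0\rangle = 0$ for $k>0$. Since $\tilde a_k$ decreases the partition-size grading by $k$, the submodule of $\mathcal{F}_u$ generated by $|0\rangle$ under $\{\tilde a_k\}$ is a quotient of the Heisenberg Fock module with central charge $1$, hence (by irreducibility of the latter) a copy of it; its graded character equals $\prod_{k\geqslant 1}(1-t^k)^{-1}$, which coincides with the character of $\mathcal{F}_u$ in the partition grading, so the two agree.

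Finally, I would compute the needed matrix elements. One has $\phi_{v_1,v_2}(E^{[n]}(z)) = \sum_k E_{n-km,\,k}\,z^{-k}$, and $\langle 0|E_{n-km,\,k}|0\rangle = 0$ unless $k=0$ by the grading argument above; the surviving $k=0$ term equals $u^n/(1-q^n)$ by \eqref{eq:eigen vac}. Analogously, $\langle 0|\phi_{v_1,v_2}(F^{[n]}(z))|0\rangle = u^{-n}/(1-q^{-n})$. Proposition~\ref{Fock} applied to $\qDn$ then yields $\mathcal{F}_u|_{\phi_{v_1,v_2}(\qDn)} \cong \mathcal{F}^{[n]}_{u^n}$. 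The only non-bookkeeping step is the character comparison in the second paragraph; there one must genuinely invoke irreducibility of the Heisenberg Fock module to ensure that the submodule generated by $|0\rangle$ under $\{\tilde a_k\}$ is all of $\mathcal{F}_u$, as opposed to a proper submodule satisfying the required relations.
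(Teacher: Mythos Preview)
Your proof is correct and follows essentially the same approach as the paper: a character comparison shows the restricted module is a Heisenberg Fock module, and then Proposition~\ref{Fock} (for $\qDn$) identifies which one via the vacuum matrix elements. The only cosmetic difference is that the paper cites Proposition~\ref{E_l} for the matrix elements $\langle 0|E_{\pm n,0}|0\rangle$, whereas you invoke \eqref{eq:eigen vac} directly; these are equivalent.
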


\begin{proof}
Note that the Fock module $\mathcal{F}_u$ is $\mathbb{Z}$-graded with grading given by
\begin{gather*}
\deg(| \alpha \rangle) = 0, \qquad \deg (E_{a,b})=-b.
\end{gather*}
Recall that a character of a $\mathbb{Z}$-graded module is the generating function of dimensions of the graded components. Then the character of Fock module $\ch \mathcal{F}_u = 1/(\mathfrak{q})_{ \infty } := \prod\limits_{k=1}^{\infty} 1/\big(1-\mathfrak{q}^k\big)$.

Consider a subalgebra $\Hem_0$ in $\qd{n}$ spanned by $E_{0, k}$ and $c$. Note that $\Hem_0$ is isomorphic to the Heisenberg algebra. Since $\deg (\phi_{v_1,v_2}E_{0,-k} )= \deg (E_{km,-k}) = k$, the character of the $\Hem_0$-Fock module is also $1/(\mathfrak{q})_{\infty}$; i.e., it coincides with $\ch \mathcal{F}_u$. This implies that $\left. \mathcal{F}_u \right|_{\phi_{v_1, v_2} \left(\qd{n} \right)}$ restricted to $\Hem_0$ is isomorphic to the $\Hem_0$-Fock module. To finish the proof, we use Propositions \ref{Fock} and \ref{E_l}.
\end{proof}

\subsection{Twisted Fock vs restricted Fock}
From now on we change $q \rightarrow q^{1/n}$.
Our goal is to construct an action of $\qD$ on the Fock module twisted by $\sigma \in {\rm SL}_2 ( \mathbb{Z} )$ as in \eqref{sigma} for $n \neq 0$. Consider a sublattice $\Lambda_{\sigma} \subset \mathbb{Z}^2$ spanned by $v_1 = (n,0)$ and $v_2 = (-m, 1)$. Consider another basis of $\Lambda_{\sigma}$ obtained by $\sigma$
\begin{gather}
w_1= m' v_1 + n' v_2 = (m' n - n' m, n') = (1, n'), \label{eq: v to w change 1} \\
w_2= m v_1 + n v_2 = (0, n). \label{eq: v to w change 2}
\end{gather}

\begin{figure}[t]\centering
\begin{tikzpicture}[scale=1.5]
\draw[step=.5cm,gray,very thin] (-3.3,-2.3) grid (4.3,2.3);
\draw[thin] (-3.3,0) -- (4.3,0);
\draw[thin] (0,-2.3) -- (0,2.3);
\foreach \point in {(-3,0),(0,0),(1.5,0)}
{
\draw[fill] \point circle (1pt);
\draw[fill] \point+(1.5,0) circle (1pt);
\draw[fill] \point circle (1pt);
\draw[fill] \point+(1,0.5) circle (1pt);
\draw[fill] \point+(2.5,0.5) circle (1pt);
\draw[fill] \point+(0.5,1) circle (1pt);
\draw[fill] \point+(2,1) circle (1pt);
\draw[fill] \point+(0,1.5) circle (1pt);
\draw[fill] \point+(1.5,1.5) circle (1pt);\draw[fill] \point+(0,-1.5) circle (1pt);
\draw[fill] \point+(1.5,-1.5) circle (1pt);
\draw[fill] \point+(1,-1) circle (1pt);
\draw[fill] \point+(2.5,-1) circle (1pt);
\draw[fill] \point+(0.5,-0.5) circle (1pt);
\draw[fill] \point+(2,-0.5) circle (1pt);
\draw[fill] \point+(1,2) circle (1pt);
\draw[fill] \point+(2.5,2) circle (1pt);
\draw[fill] \point+(0.5,-2) circle (1pt);
\draw[fill] \point+(2,-2) circle (1pt);
}
\draw[->,>=stealth,thick,blue] (0,0) to (0.5,1) node[anchor=south west] {$w_1$};
\draw[->,>=stealth,thick,blue] (0,0) to (0,1.5) node[anchor=south west] {$w_2$};
\draw[->,>=stealth,thick,red] (0,0) to (1.5,0) node[anchor=south west] {$v_1$};
\draw[->,>=stealth,thick,red] (0,0) to (-0.5,0.5) node[anchor=south west] {$v_2$};
\end{tikzpicture}
\caption{Lattice $\Lambda_\sigma$ for $n=3$, $m=1$.}\label{fig:Lambda_sigma}
\end{figure}
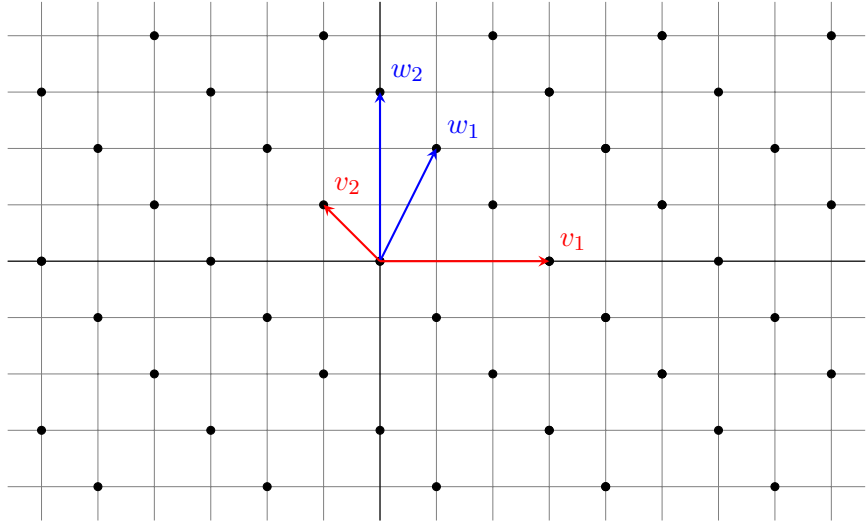

\begin{Remark}
The construction of the sublattice $\Lambda_{\sigma} \subset \mathbb{Z}^2$ naturally appears, if one require $\sigma$ to be a transition matrix from $v_i$ to $w_i$ and assume $v_1 = (n, 0)$, $w_2 = (0, n)$.
\end{Remark}

Denote the Fock module of $\qd{1/n}$ by $\mathcal{F}^{[1/n]}_{u}$.

\begin{Theorem}\label{MainTh}There is an isomorphism of $\qD$-modules $\left. ( \mathcal{F}_{u})^{ \sigma } \cong \mathcal{F}^{[1/n]}_{u^{1/n}} \right|_{\phi_{w_1,w_2} (\qD) }$.
\end{Theorem}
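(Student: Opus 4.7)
My plan is to reduce Theorem~\ref{MainTh} directly to Proposition~\ref{LambdaFock}. The key point is that the bases $(v_1,v_2)$ and $(w_1,w_2)$ of the same sublattice $\Lambda_\sigma$ differ by $\sigma\in\mathrm{SL}_2(\mathbb{Z})$, so the two corresponding embeddings of $\qD$ into $\qd{1/n}$ differ precisely by that twist. This turns the twist operation on the $\qD$-module side into a change of basis of $\Lambda_\sigma$ on the sublattice side.

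The first step is to verify the identity
\begin{equation*}
\phi_{w_1,w_2}=\phi_{v_1,v_2}\circ\sigma\colon\qD\longrightarrow\qd{1/n}.
\end{equation*}
This is routine bookkeeping. On a generator $E_{a,b}$, the right-hand side first sends it by \eqref{action1} to $E_{am'+bm,\,an'+bn}$, then to the element of $\qd{1/n}$ corresponding to the lattice point $(am'+bm)v_1+(an'+bn)v_2\in\Lambda_\sigma$. By \eqref{eq: v to w change 1}--\eqref{eq: v to w change 2} this equals $aw_1+bw_2$, which is exactly $\phi_{w_1,w_2}(E_{a,b})$. The matching statement on the central elements $c$, $c'$ is immediate from the definition of $\varphi_{v_1,v_2}$ on centers. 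Since $\det\sigma=1$, both bases are positively oriented, so $\phi_{v_1,v_2}$ and $\phi_{w_1,w_2}$ are bona fide Lie algebra isomorphisms $\qD\xrightarrow{\sim}\qd{1/n}^{\Lambda_\sigma}$.

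With this in hand the theorem is essentially a one-liner. By Definition~\ref{twisted}, for any $\qd{1/n}$-module $N$,
\begin{equation*}
\big(N\big|_{\phi_{v_1,v_2}(\qD)}\big)^{\sigma}=N\big|_{(\phi_{v_1,v_2}\circ\sigma)(\qD)}=N\big|_{\phi_{w_1,w_2}(\qD)}.
\end{equation*}
Applying this to $N=\mathcal{F}^{[1/n]}_{u^{1/n}}$ and invoking Proposition~\ref{LambdaFock} (which, after the global substitution $q\to q^{1/n}$ already in force, identifies $\mathcal{F}^{[1/n]}_{u^{1/n}}\big|_{\phi_{v_1,v_2}(\qD)}\cong\mathcal{F}_u$), I get
\begin{equation*}
\mathcal{F}^{[1/n]}_{u^{1/n}}\big|_{\phi_{w_1,w_2}(\qD)}\cong\mathcal{F}_u^{\sigma},
\end{equation*}
which is the statement of Theorem~\ref{MainTh}.

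The only step that requires attention is the bookkeeping identity $\phi_{w_1,w_2}=\phi_{v_1,v_2}\circ\sigma$; there is no analytic or representation-theoretic obstacle, because all of that content has already been packaged into Proposition~\ref{LambdaFock} via the character argument there. The $\mathrm{SL}_2(\mathbb{Z})$-twist is thus converted entirely into a change of positively oriented basis of $\Lambda_\sigma$, which is the whole reason for choosing $w_1$, $w_2$ as in \eqref{eq: v to w change 1}--\eqref{eq: v to w change 2}.
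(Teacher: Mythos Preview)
Your proof is correct and follows exactly the paper's approach: invoke Proposition~\ref{LambdaFock} for the basis $(v_1,v_2)$ and then use that $\sigma$ is the transition matrix between $(v_1,v_2)$ and $(w_1,w_2)$ (equations~\eqref{eq: v to w change 1}--\eqref{eq: v to w change 2}). You have simply made explicit the bookkeeping identity $\phi_{w_1,w_2}=\phi_{v_1,v_2}\circ\sigma$ and the consequence $(N|_{\phi_{v_1,v_2}})^{\sigma}=N|_{\phi_{w_1,w_2}}$, which the paper's two-sentence proof leaves to the reader.
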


\begin{proof}Proposition \ref{LambdaFock} implies $\left. \mathcal{F}^{[1/n]}_{u^{1/n}} \right|_{\phi_{v_1,v_2}(\qD)} \cong \mathcal{F}_{u}$. On the other hand, relations~\eqref{eq: v to w change 1} and~\eqref{eq: v to w change 2} yield that $\sigma$ is the transition matrix from $w_1$, $w_2$ to $v_1$, $v_2$.
\end{proof}

\begin{Corollary}There is an isomorphism of $\qD$-modules $\left. ( \mathcal{M}_{u} )^{ \sigma } \cong \mathcal{M}^{[1/n]}_{u^{1/n}} \right|_{\phi_{w_1,w_2} (\qD ) }$.
\end{Corollary}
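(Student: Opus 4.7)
The plan is to reduce to Theorem \ref{MainTh} via the fermionic charge decomposition, exactly as the corresponding bosonic Fock statement was handled.

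First I would unpack both sides using Proposition \ref{Boson-Fermion}. On the left one has $\mathcal{M}_u \cong \bigoplus_{l \in \mathbb{Z}} \mathcal{F}_{q^l u}$ as $\qD$-modules; since the twist $M \mapsto M^\sigma$ is simply precomposition of the action with $\sigma$, it is an exact functor that commutes with arbitrary direct sums, so $(\mathcal{M}_u)^\sigma \cong \bigoplus_{l \in \mathbb{Z}} (\mathcal{F}_{q^l u})^\sigma$. On the right, applying the analogous boson--fermion correspondence for $\qd{1/n}$ gives $\mathcal{M}^{[1/n]}_{u^{1/n}} \cong \bigoplus_{l \in \mathbb{Z}} \mathcal{F}^{[1/n]}_{q^{l/n} u^{1/n}}$, and restricting through $\phi_{w_1,w_2}$ preserves this direct sum decomposition (restriction is an exact functor).

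Next, I would apply Theorem \ref{MainTh} termwise. Setting $v = q^l u$ in that theorem yields
\begin{gather*}
\left. \mathcal{F}^{[1/n]}_{(q^l u)^{1/n}} \right|_{\phi_{w_1,w_2}(\qD)} \cong (\mathcal{F}_{q^l u})^\sigma,
\end{gather*}
and noting that $(q^l u)^{1/n} = q^{l/n} u^{1/n}$ (made consistent with the convention $q \mapsto q^{1/n}$ fixed in this section), summing over $l \in \mathbb{Z}$ produces
\begin{gather*}
\left. \mathcal{M}^{[1/n]}_{u^{1/n}} \right|_{\phi_{w_1,w_2}(\qD)} \cong \bigoplus_{l \in \mathbb{Z}} (\mathcal{F}_{q^l u})^\sigma \cong \left( \bigoplus_{l \in \mathbb{Z}} \mathcal{F}_{q^l u} \right)^{\!\sigma} \cong (\mathcal{M}_u)^\sigma,
\end{gather*}
which is the desired isomorphism.

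The only subtlety to check is that the charge decomposition of $\mathcal{M}_u$ into summands $\mathcal{F}_{q^l u}$ descends to a decomposition of $\qD$-modules after restriction through $\phi_{w_1,w_2}$. This is automatic: the summands are already $\qD$-stable on the nose, hence stable under every subalgebra. Since the hard work is carried entirely by Theorem \ref{MainTh} (together with Proposition \ref{Boson-Fermion}), there is no real obstacle; the argument is just a bookkeeping of direct sums and of the parameter substitution $u \leadsto q^l u$.
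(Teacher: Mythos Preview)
Your argument is correct and is exactly the intended derivation: the paper states this as an immediate corollary of Theorem~\ref{MainTh} without further proof, and the natural way to obtain it is precisely via the charge decomposition of Proposition~\ref{Boson-Fermion} applied termwise on both sides, as you do.
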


Theorem \ref{MainTh} combined with results from Section \ref{level1} enables us to find explicit formulas for action on $\mathcal{F}_{u}^{ \sigma }$. We will do this below.

\subsection{Explicit formulas for restricted Fock}
\subsubsection{Fermionic construction via sublattice}
Denote fermionic representation of $\qd{1/n}$ by $\mathcal{M}^{[1/n]}_u$. To be more specific, let us rewrite formulas from Section~\ref{fermi} for $\mathcal{M}^{[1/n]}_{u^{1/n}}$.
\begin{align}
&c \mapsto 1, \qquad c' \mapsto 0, \qquad H_m \rightarrow \sum_{i+j=m} \psi_i \psi_j^*, \nonumber\\
&E(z) \mapsto \frac{q^{l/n} u^{1/n} }{1-q^{1/n}} + u^{1/n} q^{ - 1/2n }z : \! \psi\big( q^{-1/2n} z\big) \psi^* \big( q^{1/2n} z\big) \! :_{(l)} ,
\label{ferm:En} \\
&F(z) \mapsto \frac{q^{-l/n} u^{-1/n} }{1-q^{-1/n}} + u^{-1/n} q^{1/2n} z : \! \psi\big( q^{1/2n} z\big) \psi^* \big(q^{-1/2n} z\big)\!:_{(l)}.
\label{ferm:Fn}
\end{align}
\begin{Proposition} \label{TheFermionicProp}
The following formulas below determine an action of $\qD$ on $F^{n\psi}$
\begin{gather}
c=n, \qquad c' = n_{tw}, \qquad
H^{tw}_k = \sum_{a} \sum_{i+j=k} \psi_a [i] \psi^*_a [j] , \label{eq1: TheFermionicProp} \\
E^{tw}(z) = \sum_{b-a \equiv -n_{tw} \bmod n} u^{\frac{1}{n}} q^{-1/2} z \psia \big( q^{-1/2} z\big) \psib \big(q^{1/2} z\big) z^{\frac{n_{tw}-a+b}{n}} q^{(a+b)/2n}, \label{prop:EtwFermi} \\
F^{tw}(z) = \sum_{b-a \equiv n_{tw} \bmod n} u^{-\frac{1}{n}} q^{1/2} z \psia \big( q^{1/2} z\big) \psib \big(q^{-1/2} z\big) z^{\frac{-n_{tw}-a+b}{n}} q^{-(a+b)/2n}. \label{prop:FtwFermi}
\end{gather}
The module obtained is isomorphic to $\left. \mathcal{M}^{[1/n]}_{u^{1/n}} \right|_{\phi_{w_1, w_2}(\qD)}$ for $w_1=(1, n_{tw})$, $w_2=(0,n)$.
\end{Proposition}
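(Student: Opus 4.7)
The plan is to derive Proposition~\ref{TheFermionicProp} directly from the identification $\mathcal{M}_u^{\sigma} \cong \mathcal{M}^{[1/n]}_{u^{1/n}}\big|_{\phi_{w_1,w_2}(\qD)}$, which follows from Theorem~\ref{MainTh} combined with Proposition~\ref{Boson-Fermion}, and then to substitute the known action of $\qd{1/n}$ on $\mathcal{M}^{[1/n]}_{u^{1/n}}$ from Proposition~\ref{prop: level 1 Fermion} (rewritten as \eqref{ferm:En}--\eqref{ferm:Fn} after $q \mapsto q^{1/n}$). Thus the content of the proposition is just the explicit computation of $\phi_{w_1,w_2}$ applied to the Chevalley generators, re-expressed in an $n$-fermion repackaging of the single Clifford algebra.

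First I would identify $F^{\psi} \cong F^{n\psi}$ by reindexing: set $\psi_{(a)}[k] := \psi_{nk+a}$ and $\psi^*_{(a)}[k] := \psi^*_{nk-a}$ for $0\leqslant a\leqslant n-1$ and $k\in\mathbb{Z}$. Since $|a-b|<n$, the relation $\{\psi_I,\psi^*_J\}=\delta_{I+J,0}$ translates cleanly into \eqref{eq:nClifford1}--\eqref{eq:nClifford2}. At the level of currents this gives
\begin{gather*}
\psi(y) \;=\; \sum_{a=0}^{n-1} y^{n-a-1} \psi_{(a)}(y^n), \qquad \psi^*(y) \;=\; \sum_{a=0}^{n-1} y^{a}\, \psi^*_{(a)}(y^n).
\end{gather*}
The central elements and the Cartan current are then immediate: from the definition of $\phi_{w_1,w_2}$ with $w_1=(1,n_{tw})$, $w_2=(0,n)$ one has $\phi_{w_1,w_2}(c) = n\,c_{[1/n]}$, $\phi_{w_1,w_2}(c') = c'_{[1/n]} + n_{tw}\,c_{[1/n]}$ and $\phi_{w_1,w_2}(H_k) = E^{[1/n]}_{0,kn}$. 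On $\mathcal{M}^{[1/n]}_{u^{1/n}}$ these act as $n$, $n_{tw}$ and $\sum_{I+J=kn}\psi_I\psi^*_J$, respectively; writing $I=ni+a$, $J=nj-a$ (the only pairings compatible with $I+J\equiv 0\pmod n$) immediately yields \eqref{eq1: TheFermionicProp}.

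The main computation is the identity $\phi_{w_1,w_2}(E_{1,k}) = E^{[1/n]}_{1,\, n_{tw}+kn}$, assembled into the $E^{tw}(z)$-current. I would use the standard root-of-unity projection with $\zeta=e^{2\pi i/n}$ in the form
\begin{gather*}
\sum_{k\in\mathbb{Z}} \phi_{w_1,w_2}(E_{1,k})\, z^{-k} \;=\; \frac{z^{n_{tw}/n}}{n} \sum_{l=0}^{n-1} \zeta^{l n_{tw}}\, E^{[1/n]}\!\left(\zeta^{l} z^{1/n}\right),
\end{gather*}
substitute \eqref{ferm:En} for $E^{[1/n]}(\cdot)$ at the shifted arguments $q^{\mp 1/2n}\zeta^l z^{1/n}$, and expand $\psi,\psi^*$ via the decomposition above. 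The prefactors split into three independent families ($q$-powers, $\zeta^l$-powers, $z$-powers); summing over $l$ via the orthogonality $\sum_{l=0}^{n-1}\zeta^{l(n_{tw}+b-a)}=n\,\delta_{b-a\equiv -n_{tw}\bmod n}$ collapses the double sum to the stated congruence class and produces exactly \eqref{prop:EtwFermi}. Starting from \eqref{ferm:Fn} with the dual multiplier $\zeta^{-l n_{tw}}$, the analogous computation yields \eqref{prop:FtwFermi}.

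The principal difficulty is purely bookkeeping: tracking the exponents of $q$, $z$ and $\zeta^l$ arising from the three substitutions and verifying they collapse into the clean prefactor $u^{1/n} q^{-1/2} z\, z^{(n_{tw}-a+b)/n} q^{(a+b)/2n}$ of the statement. No new representation-theoretic input is required beyond Theorem~\ref{MainTh}, which already guarantees that the resulting operators define a $\qD$-action on $F^{n\psi}$ isomorphic to $\mathcal{M}_u^{\sigma}$.
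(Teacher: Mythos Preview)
Your approach is essentially the paper's: the root-of-unity identity you write is equivalent to the mode-selection formula \eqref{eq: tw chevalleyH}--\eqref{eq: tw chevalleyF} that the paper uses, and the fermion splitting is exactly~\eqref{defa1}. Two points deserve attention, though.

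First, invoking Theorem~\ref{MainTh} and $\mathcal{M}_u^{\sigma}$ is unnecessary and, for general $n_{tw}$, unavailable. The proposition is purely a computation of the restriction $\mathcal{M}^{[1/n]}_{u^{1/n}}\big|_{\phi_{w_1,w_2}(\qD)}$: the $\qD$-action is inherited for free because $\phi_{w_1,w_2}(\qD)\subset\qd{1/n}$. No twist appears, and when $\gcd(n,n_{tw})>1$ there is no $\sigma\in{\rm SL}_2(\mathbb{Z})$ to speak of (cf.\ the Remark following the proposition). So drop that layer; the bookkeeping you describe is the entire proof.

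Second, there is a genuine (if small) gap in the case $n_{tw}\equiv 0\pmod n$. Substituting \eqref{ferm:En} into your averaging formula leaves the constant $q^{l/n}u^{1/n}/(1-q^{1/n})$ undisturbed by the $\zeta$-orthogonality precisely when $n_{tw}=0$, while the target \eqref{prop:EtwFermi} is a sum of \emph{regular} products $\psia(q^{-1/2}z)\psiaa(q^{1/2}z)$, each carrying its own constant. These are not the same constants term-by-term: matching them requires the $a$-dependent OPE constant computed in~\eqref{eq: psia OPE}, together with the elementary identity $\sum_{a=0}^{n-1} q^{a/n}\,q^{\lceil (l-a)/n\rceil}/(1-q)=q^{l/n}/(1-q^{1/n})$. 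The paper handles this by separating the cases $n_{tw}\neq 0$ and $n_{tw}=0$ and invoking~\eqref{eq: psia OPE} explicitly; you should do the same, or else work throughout with the regular-product form of~\eqref{ferm:En} and justify that the single-fermion regular product decomposes into the $n$-fermion regular products.
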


\begin{Remark}
Below we will substitute $n_{tw}=n'$ to prove Theorem \ref{TheFermionicTh}. However, Proposi\-tion~\ref{TheFermionicProp} is more general, than it is necessary for the proof, since we do not assume here that $\gcd(n, n_{tw})=1$. We will need the case of arbitrary $n_{tw}$ in Section~\ref{Section: general sublattice}.
\end{Remark}

\begin{proof}
We use the notation $E(z)$, $F(z)$ and $H(z)$ for the Chevalley generators of $\mathfrak{Diff}_{q^{1/n}}$. The generators of $\qD \cong \mathfrak{Diff}_{q^{1/n}}^{\Lambda}$ (identified by $\varphi_{w_1,w_2}$) will be denoted by $E^{tw}(z)$, $F^{tw}(z)$ and $H^{tw} (z)$. Let us write the identification $\phi_{w_1,w_2}$ explicitly for the Chevalley generators
\begin{gather}
H^{tw}(z) = \sum_k H_{ 0, nk} z^{-k}, \label{eq: tw chevalleyH}\\
E^{tw}(z) = z^{n_{tw}/n} \sum_{k \equiv n_{tw} \bmod n} E_{k} z^{-k/n}, \\
F^{tw}(z) = z^{-n_{tw}/n} \sum_{k \equiv -n_{tw} \bmod n} F_{ k} z^{-k/n} .\label{eq: tw chevalleyF}
\end{gather}

Let us consider currents $\psia (z)$ and $\psib (z)$ for $a,b = 0, 1, \dots, n-1$. These currents are defined by following equality
\begin{gather}
z \psi(z) = \sum_{a=0}^{n-1} z^{n-a} \psia (z^n) , \qquad
\psi^*(z) = \sum_{b=0}^{n-1} z^{b} \psib (z^n).\label{defa1}
\end{gather}
Let us denote the modes of $\psia (z)$ and $\psib (z)$ as in equality~\eqref{eq:nFermions}. It is easy to see that these modes satisfy Clifford algebra relations \eqref{eq:nClifford1}, \eqref{eq:nClifford2}. So we have identified the Clifford algebra and the $n$th power of Clifford algebra. This leads to an identification $F^{\psi}=F^{n \psi}$.

Substituting \eqref{defa1} into \eqref{ferm:En} and \eqref{ferm:Fn}, we obtain
\begin{gather*}
E(z) = \frac{q^{l/n} u^{1/n} }{1-q^{1/n}} + \sum_{a=0}^{n-1} \sum_{b=0}^{n-1} u^{1/n} q^{\frac{a+b}{2n}} q^{-1/2} z^{n-a+b} : \! \psia \big( q^{-1/2} z^n\big) \psib \big(q^{1/2} z^n\big) \! :_{(l)}, \\
F(z) = \frac{q^{-l/n} u^{-1/n} }{1-q^{-1/n}} + \sum_{a=0}^{n-1} \sum_{b=0}^{n-1} u^{-1/n} q^{- \frac{a+b}{2n}} q^{1/2} z^{n-a+b} : \! \psia \big( q^{1/2} z^n \big) \psib \big(q^{-1/2} z^n \big)\!:_{(l)}.
\end{gather*}
For technical reasons, we need to treat the cases $n_{tw} \neq 0$ and $n_{tw} =0$ separately. Let us first consider the case $n_{tw} \neq 0$. Using formulas \eqref{eq: tw chevalleyH}--\eqref{eq: tw chevalleyF}, we see that
\begin{gather*}
E^{tw}(z) = \sum_{a-b \equiv n_{tw} \bmod n} \! \! q^{\frac{a+b}{2n}} u^{1/n} z^{\frac{n_{tw}-a+b}{n}} q^{-1/2} z \psia \big( q^{-1/2} z\big) \psib \big(q^{1/2} z\big), \\
F^{tw} (z) = \sum_{a-b \equiv -n_{tw} \bmod n} \! \!q^{\frac{-(a+b)}{2n}} u^{-1/n} z^{\frac{-n_{tw}-a+b}{n}} q^{1/2} z \psia \big( q^{1/2} z \big) \psib \big(q^{-1/2} z \big) .
\end{gather*}
For $n_{tw}=0$ we obtain
\begin{gather*}
E^{tw}(z) = \frac{q^{l/n} u^{1/n} }{1-q^{1/n}} + \sum_{a=0}^{n-1} u^{1/n} q^{a/n} q^{-1/2} z : \! \psia \big( q^{-1/2} z\big) \psiaa \big(q^{1/2} z\big) \! :_{(l)}, \\
F^{tw}(z) = \frac{q^{-l/n} u^{-1/n} }{1-q^{-1/n}} + \sum_{a=0}^{n-1} u^{-1/n} q^{-a/n} q^{1/2} z : \! \psia \big( q^{1/2} z \big) \psiaa \big(q^{-1/2} z \big)\!:_{(l)}.
\end{gather*}
This can be rewritten as
\begin{gather*}
E^{tw}(z) = \sum_{a=0}^{n-1} u^{1/n} q^{a/n} \left( \frac{q^{ \ceil{\frac{l-a}{n}} }}{1-q} +q^{-1/2} z : \! \psia \big( q^{-1/2} z\big) \psiaa \big(q^{1/2} z\big) \! :_{(l)} \right), \\
F^{tw}(z) = \sum_{a=0}^{n-1} u^{-1/n} q^{-a/n} \left( \frac{q^{- \ceil{\frac{l-a}{n}} }}{1-q^{-1}} + q^{1/2} z : \! \psia \big( q^{1/2} z \big) \psiaa \big(q^{-1/2} z \big)\!:_{(l)} \right) .
\end{gather*}
Note that the $l$-dependent normal ordering is defined in terms of $\psi_i$ and $\psi^*_j$. One can check (cf.~\eqref{eq: regul prod psipsi})
\begin{align} \label{eq: psia OPE}
\psia(z) \psiaa(w) = \frac{(w/z)^{\ceil{\frac{l-a}{n}} }}{z(1-w/z)} + : \! \psia (z) \psiaa (w) \! :_{(l)}.
\end{align}
Hence
\begin{equation*}
q^{-1/2} z \psia\big(q^{-1/2}z\big) \psiaa\big(q^{1/2}z\big) = \frac{q^{\ceil{\frac{l-a}{n}} }}{1-q} + q^{-1/2} z : \! \psia \big(q^{-1/2}z\big) \psiaa \big(q^{1/2}z\big) \! :_{(l)}.\tag*{\qed}
\end{equation*}\renewcommand{\qed}{}
\end{proof}

\begin{proof}[Proof of Theorem \ref{TheFermionicTh}] Follows from Theorem \ref{MainTh} and Proposition \ref{TheFermionicProp}.
\end{proof}

\subsubsection{Bosonic construction via sublattices}

\begin{Proposition} \label{Prop:qD Boson}
There is an action of $\qD$ on $F^{na} \otimes \mathbb{C} [ \Q ]$ determined by the following formulas
\begin{align}
H^{tw}[k] =& \sum_{b} a_b [k], \quad c' = n_{tw}, \quad c=n, \label{eq1: Prop:qD Boson} \\
E^{tw}(z) =& \!\!\! \sum_{b-a \equiv -n_{tw} \bmod n} \!\!\!\!\! u^{\frac{1}{n}} q^{\frac{a+b-n}{2n}} z^{\frac{n_{tw}-a+b}{n}+1} : \! \exp \left( \phi_b(q^{1/2} z) - \phi_a(q^{-1/2} z) \right) \! : \epsilon_{a,b}, \\
F^{tw}(z) =& \!\!\! \sum_{b-a \equiv n_{tw} \bmod n} \!\!\!\!\! u^{-\frac{1}{n}} q^{ \frac{-a-b+n}{2n} } z^{\frac{-n_{tw}-a+b}{n}+1} : \! \exp \left( \phi_b(q^{-1/2} z) - \phi_a(q^{1/2} z) \right) \! : \epsilon_{a,b} , \label{eq3: Prop:qD Boson}
\end{align}
here $\epsilon_{a,b} = \prod_r (-1)^{a_r[0]}$ $($we consider the product over such $r$ that $a-1 \geqslant r \geqslant b$ for $a>b$ and $b-1 \geqslant r \geqslant a$ for $b>a)$.

The representation obtained is isomorphic to $\left. \mathcal{F}^{[1/n]}_{u^{1/n}} \right|_{\phi_{w_1, w_2}(\qD)}$ for $w_1=(1, n_{tw})$, $w_2=(0,n)$.
\end{Proposition}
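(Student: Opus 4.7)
The plan is to derive this proposition from the fermionic Proposition \ref{TheFermionicProp} by applying the standard boson--fermion correspondence separately to each of the $n$ pairs $(\psia, \psiaa)$. On the level of vector spaces, the ordinary boson--fermion correspondence identifies each single-pair fermion Fock with a sum of bosonic Fock modules indexed by charge; taking the tensor product over $a = 0,\dots,n-1$ gives
\begin{gather*}
F^{n\psi} \;\cong\; \bigoplus_{(l_0,\dots,l_{n-1}) \in \mathbb{Z}^n} F^{na} \otimes e^{\sum l_a Q_a}.
\end{gather*}
Under the splitting \eqref{defa1}, the subspace $\mathcal{F}^{[1/n]}_{u^{1/n}} \subset \mathcal{M}^{[1/n]}_{u^{1/n}}$ coincides with the total charge zero sector $\sum l_a = 0$, which is exactly the subspace $F^{na} \otimes \mathbb{C}[\Q]$ in the statement.

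Next I would bosonize the currents. Set $\psia(z) = c_a \, :\!\exp \phi_a(z)\!:$ and $\psiaa(z) = c_a^{-1} \, :\!\exp(-\phi_a(z))\!:$, where the $c_a$ are Klein cocycles inserted so that distinct $(a)$-families anticommute. A standard computation shows that $c_a c_b^{-1}$, once commuted through the vertex operator, reduces to the product $\epsilon_{a,b} = \prod_r (-1)^{a_r[0]}$ prescribed in the statement, with the directional range of $r$. For each pair $(a,b)$ with $b - a \equiv -n_{tw} \bmod n$ and $a \neq b$ the two fermion copies are independent, so the regular product $\psia(q^{-1/2}z)\, \psib(q^{1/2}z)$ bosonizes to $\epsilon_{a,b}$ times $:\!\exp(\phi_b(q^{1/2}z) - \phi_a(q^{-1/2}z))\!:$, up to an overall monomial in $z$ and $q$ coming from the zero-mode part. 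Substituting into \eqref{prop:EtwFermi} and collecting the explicit prefactors $u^{1/n} q^{-1/2} z \cdot z^{(n_{tw}-a+b)/n} q^{(a+b)/2n}$ produces the right-hand side of \eqref{eq1: Prop:qD Boson}; the formula for $F^{tw}(z)$ is analogous with the roles of $q^{\pm 1/2}$ swapped, and the identification $H^{tw}[k] = \sum_a a_a[k]$ is the componentwise correspondence $\sum_{i+j=k}\psia[i]\psiaa[j] = a_a[k]$.

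When $n_{tw} \equiv 0 \bmod n$ the summand $a = b$ is allowed, and there the fermion product carries the OPE pole \eqref{eq: psia OPE}; the resulting scalar is absorbed consistently with the bosonic formula when all terms are collected. The main obstacle is the Klein-factor bookkeeping: bosons from different copies commute while fermions from families $(a)$ and $(b)$ with $a \neq b$ anticommute, so cocycles $c_a$ must be inserted and then carefully tracked through the products to recover exactly $\epsilon_{a,b}$ with the correct directional convention ($a-1 \geqslant r \geqslant b$ when $a>b$ versus $b-1 \geqslant r \geqslant a$ when $b>a$). This sign tracking, together with the collection of $q$- and $z$-powers coming from the zero modes of $\phi_a$, is the technical heart of the argument; everything else follows from the fermionic construction and Theorem \ref{MainTh}.
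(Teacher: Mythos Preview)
Your approach is essentially the same as the paper's. The paper organizes the argument into two short lemmas: one (Lemma~\ref{lemma: subst boson fermi}) substitutes the explicit bosonizations $\psi_{(b)}(z) = {:\!\exp(-\phi_b(z))\!:}\,(-1)^{\sum_{k=0}^{b-1}a_k[0]}$ and $\psi^*_{(b)}(z) = {:\!\exp(\phi_b(z))\!:}\,(-1)^{\sum_{k=0}^{b-1}a_k[0]}$ into the fermionic formulas of Proposition~\ref{TheFermionicProp}, and the other (Lemma~\ref{lemma: zero mode of a, npsi is psi}) checks that under the splitting~\eqref{defa1} the single-fermion charge operator becomes $a[0]=\sum_b a_b[0]$, so that $\mathcal{F}^{[1/n]}_{u^{1/n}}$ is exactly the total-charge-zero piece $F^{na}\otimes\mathbb{C}[\Q]$ you identified; the Klein-factor bookkeeping you flag as the technical heart is handled by that fixed choice of cocycle rather than abstract $c_a$'s.
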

\begin{proof}[Proof Proposition \ref{Prop:qD Boson}]
We need an upgraded version boson-fermion correspondence for the proof.
Namely, there is an action of $n$th tensor power of the Heisenberg algebra on $F^{n \psi}$ given by
\begin{gather*}
\partial \phib(z) = : \! \psibb (z) \psib (z) \! :_{(0)}.
\end{gather*}
Let $\P$ be a lattice spanned $Q_b$. According to boson-fermion correspondence $F^{n \psi} \cong \bigoplus F^{na} \otimes \mathbb{C} [ \P ]$.

\begin{Lemma} \label{lemma: subst boson fermi}
Vector subspace $F^{na} \otimes \mathbb{C} [ \Q ] \subset F^{na} \otimes \mathbb{C} [ \P ] = F^{n \psi}$ is a $\qD$-submodule $($with respect to action, defined in Proposition~{\rm \ref{TheFermionicProp})}. The action of $\qD$ on the subrepresentation is given by \eqref{eq1: Prop:qD Boson}--\eqref{eq3: Prop:qD Boson}.
\end{Lemma}
\begin{proof}
One should substitute
\begin{gather}
\psibb (z) = : \! \exp \left( -\phib (z) \right) \! : (-1)^{\sum\limits_{k=0}^{b-1} a_k[0] }, \label{eq:boson psi}\\
\psib (z) = : \! \exp \left( \phib (z) \right) \! : (-1)^{\sum\limits_{k=0}^{b-1} a_k[0] } \label{eq:boson psi*}
\end{gather}
into fermionic formulas \eqref{eq1: TheFermionicProp}--\eqref{prop:FtwFermi}.
\end{proof}

Recall that decomposition of $\mathcal{M}^{[1/n]}_{u}$ is given by eigenvalues of $a[0]$; more precisely, operator~$a[0]$ acts by $-j$ on $ \mathcal{F}^{[1/n]}_{q^{j/n} u}$.
\begin{Lemma} \label{lemma: zero mode of a, npsi is psi}
Using identification $F^{\psi}=F^{n \psi}$ $($cf.~\eqref{defa1}$)$, we obtain $a[0] = a_0[0]+\cdots + a_{n-1}[0]$.
\end{Lemma}
\begin{proof}[Proof]
This follows from $\big\lceil \frac{l-b}{n}\big\rceil =0$ for $l=0$ and $b=0, \dots, n-1$ (cf. \eqref{eq: psia OPE}).
\end{proof}
Lemma \ref{lemma: zero mode of a, npsi is psi} implies that the identification of vector spaces $F^{ \psi}=F^{n\psi}$ leads to identification of subspaces $F^a_{0} = F^{na}\otimes \mathbb{C} [\Q]$. Let us package identifications of vector subspaces into a~commutative diagram
\[
\begin{tikzcd}
\left.\mathcal{F}^{[1/n]}_{u^{1/n}} \right|_{\phi_{w_1, w_2}(\qD)} \arrow[d, hook'] \arrow[r,equal] &
\mathcal{F}^{[1/n]}_{ u} \arrow[d, hook'] \arrow[r,equal] & F^a_{0} \arrow[d, hook'] \arrow[r,equal] & F^{na}\otimes \mathbb{C} [\Q] \arrow[d, hook']
\\
\left. \mathcal{M}^{[1/n]}_{u^{1/n}} \right|_{\phi_{w_1, w_2}(\qD)}
\arrow[r,equal] &
\mathcal{M}^{[1/n]}_{u} \arrow[r,equal] &
F^{\psi} \arrow[r,equal] &
F^{n \psi}.
\end{tikzcd}
\]
Proposition \ref{TheFermionicProp} states that formulas \eqref{eq1: TheFermionicProp}--\eqref{prop:FtwFermi} gives an action of $\qD$ with respect to identification of bottom line of the diagram. Therefore, Lemma~\ref{lemma: subst boson fermi} implies that formulas \eqref{eq1: Prop:qD Boson}--\eqref{eq3: Prop:qD Boson} describes the action of~$\qD$ with respect to identification of top line of the diagram.
\end{proof}

\begin{proof}[Proof of Theorem \ref{Th:qD Boson}] Follows from Theorem \ref{MainTh} and Proposition \ref{Prop:qD Boson}.
\end{proof}

\subsubsection{Strange bosonic construction via sublattices}
\begin{Proposition} \label{Prop:StrangeBoson}
There is an action of $\qD$ on $F^a_{\alpha}$ defined by formulas
\begin{gather*}
c = n, \qquad
c' = n_{tw}, \qquad
H^{tw}_k = a_{nk}, \\
E^{tw}(z) = z^{n_{tw}/n} \frac{u^{\frac{1}{n}}}{n\big(1-q^{1/n}\big)} \sum_{l=0}^{n-1} \zeta^{l n_{tw}} : \! \exp \! \left( \sum_{k} \frac{q^{-k/2n}-q^{k/2n}}{k} a_{k} \zeta^{-kl} z^{-k/n} \right) \! :, \\
F^{tw}(z) = z^{-n_{tw}/n} \frac{u^{-\frac{1}{n}}}{n\big(1-q^{-1/n}\big)} \sum_{l=0}^{n-1} \zeta^{-l n_{tw}} : \! \exp \! \left( \sum_{k}\frac{q^{k/2n}-q^{-k/2n}}{k} a_{k} \zeta^{-kl} z^{-k/n} \right) \! : .
\end{gather*}
Obtained module is isomorphic to $\left. \mathcal{F}^{[1/n]}_{u^{1/n}} \right|_{\phi_{w_1, w_2}(\qD)}$ for $w_1=(1, n_{tw})$, $w_2=(0,n)$.
\end{Proposition}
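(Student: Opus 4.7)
The plan is to derive the formulas directly from the bosonic realization of $\mathcal{F}^{[1/n]}_{u^{1/n}}$ (Proposition \ref{prop:sec3 Boson Fock}, applied with $q\to q^{1/n}$, $u\to u^{1/n}$) by implementing the restriction along $\phi_{w_1,w_2}$ via a discrete Fourier transform over $n$-th roots of unity. No new ingredient beyond Section \ref{level1} and the restriction formulas already used in the proof of Proposition \ref{TheFermionicProp} is needed.

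First, exactly as in \eqref{eq: tw chevalleyH}--\eqref{eq: tw chevalleyF}, the embedding $\phi_{w_1,w_2}$ with $w_1=(1,n_{tw})$, $w_2=(0,n)$ gives
\[
H^{tw}(z) = \sum_k H^{[1/n]}_{0,nk}\,z^{-k},\qquad E^{tw}(z) = z^{n_{tw}/n}\!\!\!\sum_{k\equiv n_{tw}\bmod n}\!\!\! E^{[1/n]}_{1,k}\,z^{-k/n},
\]
and the analogous formula for $F^{tw}(z)$. Since the bosonization of $\mathcal{F}^{[1/n]}_{u^{1/n}}$ sends $H^{[1/n]}_l\mapsto a_l$, one obtains $H^{tw}_k = a_{nk}$ immediately.

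For $E^{tw}$ and $F^{tw}$ I would use the standard Fourier projector
\[
\mathbf{1}_{k\equiv n_{tw}\bmod n} \;=\; \frac{1}{n}\sum_{l=0}^{n-1}\zeta^{\,l(k-n_{tw})},
\]
which converts the selection of modes into an average over the $n$ choices of $n$-th root of $z$:
\[
E^{tw}(z) \;=\; z^{n_{tw}/n}\cdot\frac{1}{n}\sum_{l=0}^{n-1}\zeta^{-l n_{tw}}\,E^{[1/n]}\bigl(\zeta^{l}z^{1/n}\bigr).
\]
Plugging the bosonic formula \eqref{eq:E} evaluated at the argument $\zeta^{l}z^{1/n}$ and substituting $k\to -k$ in the exponent replaces $a_{-k}(\zeta^{l}z^{1/n})^{k}$ by $a_k\zeta^{-lk}z^{-k/n}$, producing precisely the normal-ordered exponent of Proposition \ref{Prop:StrangeBoson}. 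The same computation with \eqref{eq:F} yields the formula for $F^{tw}(z)$.

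I expect no conceptual obstacle: the argument is a routine bookkeeping of roots of unity and of the conventions for normal-ordered exponents. The only point that deserves a brief comment is that the $k=0$ term in the exponent vanishes automatically because $q^{-k/2n}-q^{k/2n}=0$ at $k=0$, so writing the sum over all $k\in\mathbb Z$ (as in the statement) or over $k\neq 0$ gives the same operator; this is what makes the extension by $a_0$ (and the $\alpha$-dependence of $F^a_\alpha$) irrelevant to the formula, in agreement with Remark \ref{remark:NOalpha}.
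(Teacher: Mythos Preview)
Your approach is essentially identical to the paper's: it too rewrites the selection $k\equiv n_{tw}\bmod n$ as a discrete Fourier average to obtain $E^{tw}(z)=\frac{1}{n}z^{n_{tw}/n}\sum_l \zeta^{l n_{tw}}E^{[1/n]}\bigl(\zeta^{l}z^{1/n}\bigr)$ (and similarly for $F^{tw}$), and then substitutes the $\qd{1/n}$-bosonization \eqref{boson1}--\eqref{eq:F}. The only discrepancy is a sign in your intermediate formula (your $\zeta^{-l n_{tw}}E^{[1/n]}(\zeta^{l}z^{1/n})$ actually projects onto $k\equiv -n_{tw}$; the correct expression is obtained by sending $l\to -l$), but this is a harmless relabeling of the summation index and does not affect the argument.
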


\begin{proof}
Formulas \eqref{eq: tw chevalleyH}--\eqref{eq: tw chevalleyF} imply
\begin{gather}
H^{tw}_k = H_{nk}, \label{eq: proof of prop5.4 H}\\
E^{tw} (z) = \frac{1}{n} z^{n_{tw}/n} \sum_{l=0}^{n-1} \zeta^{ln_{tw}} E\big( \zeta^{l} z^{1/n} \big), \label{eq: proof of prop5.4 E} \\
F^{tw} (z) = \frac{1}{n} z^{-n_{tw}/n} \sum_{l=0}^{n-1} \zeta^{-ln_{tw}} F\big( \zeta^{l} z^{ 1/n } \big). \label{eq: proof of prop5.4 F}
\end{gather}
Substitution of $\mathfrak{Diff}_{q^{1/n}}$-version of \eqref{boson1}--\eqref{eq:F} to \eqref{eq: proof of prop5.4 H}--\eqref{eq: proof of prop5.4 F} finishes the proof.
\end{proof}
\begin{proof}[Proof of Theorem \ref{Th:StrangeBoson}] Follows from Theorem \ref{MainTh} and Proposition \ref{Prop:StrangeBoson}.
\end{proof}

\section{Twisted representation via a Semi-infinite construction}\label{S:Semi-infinite}

This section is devoted to another proof of the Theorem \ref{TheFermionicTh}. So we use the same notation
\[ \sigma = \begin{pmatrix}
m' & m \\
n' & n
\end{pmatrix}.\]

{\bf Twisted evaluation representation.} Let $e_{a,b}$ be a matrix unit (all entries are 0 except for one cell, where it is 1; this cell is in $b$th column and $a$th row).

Consider a homomorphism $\mathfrak{t}_{u, \sigma}\colon \qaD \rightarrow \qaD \otimes \mathrm{Mat}_{n \times n}$ defined by
\begin{gather}
E_{0,k} \mapsto E_{0, k} \otimes 1,\nonumber \\
E_{1,k} \mapsto u^{\frac{1}{n}} \sum_{b-a \equiv - n' \! \mod n} q^{\frac{a+b}{2n}} E_{1, k+ \frac{b-a+n'}{n}} \otimes e_{a,b}, \label{def:Etwev} \\
E_{-1,k} \mapsto u^{-\frac{1}{n}} \sum_{b-a \equiv n' \! \mod n} q^{-\frac{a+b}{2n}} E_{-1, k+ \frac{b-a-n'}{n}} \otimes e_{a,b}.\nonumber
\end{gather}

Algebra $\qaD \otimes \mathrm{Mat}_{n \times n}$ tautologically acts on $\mathbb{C}^n \big[z,z^{-1}\big]$. Therefore, homomorphism $\mathfrak{t}_{u, \sigma}$ induces an action of $\qaD$ on $\mathbb{C}^n [z,z^{-1}]$.

\begin{Proposition} \label{proposition:twev}
Obtained representation of $\qaD$ is isomorphic to $V_u^{\sigma}$.
\end{Proposition}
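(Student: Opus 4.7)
The plan is to write down an explicit linear isomorphism $\phi \colon V_u^\sigma \to \mathbb{C}^n[z, z^{-1}]$ and verify that it intertwines the Chevalley generators $E_{0,k}$ and $E_{\pm 1, k}$ of $\qD$; by Proposition~\ref{relation} these generate $\qD$, so this is enough. First I parametrize the basis of $V_u^\sigma$ by writing $l = nj + c$ with $c \in \{0, 1, \dots, n-1\}$ and $j \in \mathbb{Z}$, and set $\phi(x^l) = \alpha_l \, z^j e_c$ for scalars $\alpha_l$ to be pinned down. At the level of index shifts the two actions already match on the nose: on $V_u^\sigma$ the operator $\sigma(E_{1,k}) = E_{m'+mk, \, n'+nk}$ shifts $l \to l + n' + nk$, and if I let $a \in \{0, \dots, n-1\}$ satisfy $a \equiv c + n' \pmod n$ with $d = (c + n' - a)/n$, then this becomes $(j, c) \mapsto (j + k + d, \, a)$, which is exactly the shift produced by $\mathfrak{t}_{u, \sigma}(E_{1, k})$ on $z^j e_c$ by inspection of \eqref{def:Etwev}; the cases of $E_{-1, k}$ and $E_{0, k}$ are entirely analogous. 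So only the scalar factors need to be matched.

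Next I would pin down the $\alpha_l$. Comparing the scalar from $E_{0, k}$, which on $V_u^\sigma$ is $u^{mk} q^{mnk^2/2 + mkl}$ and on $\mathbb{C}^n[z, z^{-1}]$ is trivial, determines $\alpha_{nj+c}$ up to $n$ free constants $\beta_0, \dots, \beta_{n-1}$; explicitly $\alpha_{nj+c} = u^{-mj} q^{-mnj^2/2 - mjc} \beta_c$. Matching the scalar in $E_{1, k}$ then imposes a relation of the form $\beta_c = u^{m(c-a)/n} q^{\nu(c, a)} \beta_a$, where $\nu(c, a)$ is a specific expression extracted after using $\det \sigma = m'n - mn' = 1$ to cancel the bulk of the quadratic $q$-exponent. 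The parallel computation for $E_{-1, k}$ produces a compatible relation. Since $\gcd(n, n') = 1$ (a consequence of $m'n - mn' = 1$), the map $c \mapsto a$ is an $n$-cycle on $\{0, \dots, n-1\}$; iterating the relation around this cycle yields a consistency condition whose $u$-exponent telescopes to $0$ and whose $q$-exponent is checked by a direct summation, once again invoking $m'n - mn' = 1$. A single choice of $\beta_0$ then determines all the $\beta_c$ and produces the desired $\phi$.

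The main obstacle is the mechanical but delicate bookkeeping of the quadratic $q$-exponents and linear $u$-exponents, especially the cyclic consistency of the $\beta_c$ and the check that the constraint extracted from $E_{1, k}$ is automatically compatible with the one from $E_{-1, k}$. Everything hinges on the single identity $m'n - mn' = 1$, which makes the various combinations collapse correctly. Once this calculation is carried through, Proposition~\ref{proposition:twev} follows.
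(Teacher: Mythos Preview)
Your approach is correct and essentially the same as the paper's: both construct an explicit rescaling of the monomial basis $x^l$ and verify it intertwines the Chevalley generators. The only difference is that the paper shortcuts your derivation by writing down the answer directly---it sets $v_l := q^{ml^2/(2n)} u^{ml/n} x^l$ (which corresponds to your $\alpha_l^{-1}$, with $\beta_c = q^{-mc^2/(2n)} u^{-mc/n}$) and then checks in one line per generator that the identification $z^j e_b \mapsto v_{nj+b}$ works, avoiding your cyclic-consistency step for the $\beta_c$ entirely.
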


\begin{proof}

Consider a basis $v_l := q^{\frac{m l^2}{2n}} u^{ \frac{ml}{n} } x^l$ of evaluation representation $\mathbb{C}\big[x, x^{-1}\big]^{\sigma}$\!. Action with respect to this basis looks like
\begin{gather}
E_{0,k} v_l = v_{k+l}, \label{twevH} \\
E_{1,k} v_l = u^{\frac{1}{n}} q^{\frac{n'+kn+2l}{2n}} v_{l + nk + n'}, \label{twevE}\\
E_{-1,k} v_l = u^{-\frac{1}{n}} q^{\frac{n' - kn -2l}{2n}} v_{l+nk-n'}.\nonumber
\end{gather}

Let $a,b =0, \dots, n-1$ be such numbers that $l=nj+b$ and $a \equiv b + n' \mod{n}$. Substituting $l=nj+b$ into \eqref{twevE} we obtain
\begin{gather}
E_{1,k} v_{nj+b} = u^{\frac{1}{n}} q^{\frac{a+b}{2n}} q^{\frac{n'+b-a}{2n}} q^{\frac{k}{2}} q^j v_{n(k+j + \frac{n'+b-a}{n} ) + a}. \label{eq:EtwevINTER}
\end{gather}
Let us identify $\mathbb{C}^n \big[z, z^{-1}\big] \xrightarrow{\sim} \mathbb{C}\big[x,x^{-1}\big] $ by $z^j e_b \mapsto v_{nj+b}$. Then formula \eqref{eq:EtwevINTER} will be rewritten
\begin{gather*}
E_{1,k} (z^j e_b) = u^{\frac{1}{n}} q^{\frac{a+b}{2n}} \big( E_{1, k+ \frac{b-a+n'}{n}} \otimes e_{a,b} \big) \big( z^j e_b \big).
\end{gather*}

To be compared with formula \eqref{def:Etwev} this proves the proposition for $E_{1,k}$. The proof for $E_{-1,k}$ is analogous. For $E_{0,k}$ proposition is obvious from \eqref{twevH}.
\end{proof}

{\bf Semi-infinite construction.} To apply semi-infinite construction we need to pass from associative algebras to Lie algebras.

\begin{Definition}
Algebra $\Torgln$ is a Lie algebra with basis $E_{k,l} \otimes e_{a,b}$ (where $(k, l) \in \mathbb{Z}^2 \backslash (0,0)$ and $a,b = 0, \dots, n-1$), $c$ and $c'$. Elements $c$ and $c'$ are central. All other commutators are given by
\begin{gather*}
[ E_{k_1,l_1}\otimes e_{a_1, b_1} , E_{k_2,l_2} \otimes e_{a_2,b_2} ] = E_{k_1+k_2, l_1+l_2} \\
\qquad{} \otimes \big( q^{\frac{l_2 k_1 -l_1 k_2}{2}} \delta_{b_1, a_2} e_{a_1, b_2} -q^{\frac{l_1 k_2-l_2 k_1}{2}} \delta_{b_2, a_1} e_{a_2, b_1} \big) + \delta_{k_1, -k_2} \delta_{l_1, -l_2} \delta_{a_2,b_1} \delta_{a_1, b_2} ( c l_1 + c' k_1 ).
\end{gather*}
\end{Definition}
\begin{Proposition} \label{proposition:toidalGL}
There is an action of $\Torgln$ on $F^{n \psi}$ given by formulas
\begin{gather*}
c \mapsto 1, \qquad c' \mapsto 0, \\
E(z) \otimes e_{a,b} \mapsto q^{-\half} z \psia \big(q^{-\half} z\big) \psib \big(q^{\half} z\big), \\
F(z) \otimes e_{a,b} \mapsto q^{\half} z \psia \big(q^{\half} z\big) \psib \big(q^{{-}\half} z\big).
\end{gather*}
Obtained representation is isomorphic to $\siL \mathbb{C}^n \big[z,z^{-1}\big]$.
\end{Proposition}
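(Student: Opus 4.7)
The plan is to lift the tautological action of $\qaD \otimes \mathrm{Mat}_{n \times n}$ on $\mathbb{C}^n\big[z,z^{-1}\big]$ to the semi-infinite wedge, and then identify that wedge with $F^{n\psi}$ via an $n$-component analogue of~\eqref{eq:lambda l}. Linearly order the basis $\{z^j e_b\}_{j\in\mathbb{Z},\, b=0,\dots,n-1}$ of $\mathbb{C}^n\big[z,z^{-1}\big]$ by the integer $nj+b$, so that $\siL \mathbb{C}^n\big[z,z^{-1}\big]$ is spanned by strictly decreasing semi-infinite wedge products stabilizing to a fixed tail. Identify it with $F^{n\psi}$ by letting left-wedging with $z^j e_a$ correspond to $\psia[-j-1]$ and contraction with the dual $(z^j e_a)^\vee$ correspond to $\psi^*_{(a)}[-j]$; under this correspondence the vacuum $|0,\dots,0\rangle$ of $F^{n\psi}$ goes to the standard semi-infinite wedge vacuum, and the Clifford relations~\eqref{eq:nClifford1}--\eqref{eq:nClifford2} are exactly the relations of wedging and contraction on different colored basis vectors.

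Second, invoke the general principle that a one-body operator $X$ on a Tate space $V$ lifts to the semi-infinite wedge as the bilinear-in-fermions expression $\hat X = \sum_{v,w}\langle w, Xv\rangle :\!\psi_w \psi_v^*\!:$, with normal ordering prescribed by the chosen semi-infinite tail. Apply this to $X = E_{1,k}\otimes e_{a,b}$; using Definition~\ref{eq: sec3 eval def}, this operator sends $z^j e_{b'} \mapsto \delta_{b',b}\, q^{k/2+j}\, z^{j+k} e_a$. Summing the resulting bilinear expression against $z_0^{-k}$ and repackaging the shifts $q^{k/2}, q^j$ as arguments of the fermion currents gives
\begin{gather*}
\widehat{E(z_0)\otimes e_{a,b}} = q^{-\half}z_0\,\psia\big(q^{-\half}z_0\big)\psib\big(q^{\half}z_0\big),
\end{gather*}
in agreement with the proposition. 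The parallel computation for $E_{-1,k}\otimes e_{a,b}$ yields the formula for $F(z)\otimes e_{a,b}$. Since the formulas arise from the lift of a Lie algebra action, they automatically satisfy the commutation relations of $\Torgln$ up to central terms.

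Third, the normal ordering produces a central extension; its value has to be matched with the generators $c$ and $c'$ of $\Torgln$. It suffices to compute two independent anomalous commutators, for instance $[\hat{E}_{0,k}\otimes e_{a,a}, \hat{E}_{0,-k}\otimes e_{a,a}]$, which detects $c$, and $[\hat{E}_{1,k}\otimes e_{a,b}, \hat{E}_{-1,-k}\otimes e_{b,a}]$ for $k\neq 0$, which detects $c'$. The first computation recovers the standard affine level one anomaly and yields $c=1$, while the second is anomaly free and gives $c'=0$; this is the $n$-component generalization of the single-fermion computation underlying Proposition~\ref{prop: level 1 Fermion}. Putting these three steps together proves both that the formulas define an action and that the resulting module is isomorphic to $\siL \mathbb{C}^n\big[z,z^{-1}\big]$.

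The main obstacle is bookkeeping rather than conceptual: tracking the signs induced by reordering fermions of different colors under the chosen basis ordering, verifying that the $q^{\pm\half}$ shifts in $\psia$ and $\psib$ absorb exactly the $q^{k/2+j}$ factors produced by the evaluation action, and checking that the normal ordering convention adapted to the semi-infinite tail matches the vacuum conditions of $F^{n\psi}$. Apart from these careful checks, the argument is a direct application of the general theory of one-body operators on Tate spaces.
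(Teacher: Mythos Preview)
Your approach is correct and is precisely the standard argument one would give; the paper in fact states Proposition~\ref{proposition:toidalGL} without proof, treating it as a direct $n$-component generalization of Propositions~\ref{prop: level 1 Fermion} and~\ref{Prop:SemiInf-Fermi}. Your three-step outline (identify $\siL \mathbb{C}^n[z,z^{-1}]$ with $F^{n\psi}$ via colored wedging/contraction, lift one-body operators to normal-ordered fermion bilinears, and read off the central charges) is exactly what is implicit in the paper's Section~\ref{S:Semi-infinite}.

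One small imprecision in your central-charge check: the commutator $[E_{1,k}\otimes e_{a,b}, E_{-1,-k}\otimes e_{b,a}]$ in $\Torgln$ is purely central and equals $ck+c'$, so on the fermion side it is not anomaly-free for $k\neq 0$; rather, the anomaly computes to $k$, consistent with $c=1$, $c'=0$. If you want a commutator that isolates $c'$ cleanly, take $k=0$ (or subtract the known $ck$ contribution after the first computation). This is only a matter of wording; the conclusion $c\mapsto 1$, $c'\mapsto 0$ is correct.
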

\begin{proof}[Proof of Theorem \ref{TheFermionicTh}]
According to Proposition \ref{Prop:SemiInf-Fermi}, $\mathcal{M}_u \cong \siL V_u$. Then $\mathcal{M}_u^{\sigma} \cong \siL V_u^{\sigma}$. Therefore, Propositions~\ref{proposition:twev} and~\ref{proposition:toidalGL} imply Theorem~\ref{TheFermionicTh}.
\end{proof}

\section[$q$-$W$-algebras]{$\boldsymbol{q}$-$\boldsymbol{W}$-algebras} \label{section: q-W}

\subsection{Definitions}
{\bf Topological algebras and completions.} In this section we will work with topological algebras. Let us define topological algebra appearing as a completion of $\qD$. It is given by projective limit of $U(\qD)/J_k$ where $J_k$ is the left ideal generated by non-commutative polynomials in $E_{j_1,j_2}$ of degree $-k$ (with respect to grading $\deg E_{j_1, j_2} = -j_2$). Although each $U(\qD)/J_k$ does not have a structure of algebra, so does the projective limit. Moreover, the projective limit has natural topology.

Below we will ignore all corresponding technical problems concerning completions and topo\-logy. We will use term `generators' instead of `topological generators', the same notation for~$\qD$, and its completion and so on.

\subsubsection[Non-twisted $W$-algebras]{Non-twisted $\boldsymbol{W}$-algebras}
Let us introduce a notation
\begin{gather*}
\sum_{l=0}^{\infty}f_{k,n}[l] x^{l} = f_{k, n} (x)= \frac{ (1-qx)^{\frac{n-k}{n}} \big(1-q^{-1}x\big)^{ \frac{n-k}{n}
}}{(1-x)^{\frac{2(n-k)}{n}}}.
\end{gather*}

\begin{Definition} \label{def: non-twisted W}
Algebra $\W$ is generated by $T_k[r]$ for $r \in \mathbb{Z}$ and $k= 1, \dots, n-1$. It is convenient to add generators $T_0[r]=T_n[r] = \delta_{r,0}$. The defining relations are
\begin{gather}
\sum_{l=0}^{\infty} f_{k,n}[l] \big( T_1[r{-}l] T_k[s{+}l] - T_k[s{-}l] T_1[r{+}l] \big) = -\big(q^{\half}{-}q^{-\half}\big)^2 (kr-s) T_{k+1}[r{+}s], \label{relation:modesW1} \\
\sum_{l=0}^{\infty} f_{n{-}k,n}[l] \big( T_{n{-}1} [r{-}l] T_k[s{+}l] - T_k[s{-}l] T_{n{-}1}[r{+}l] \big) \nonumber\\
\qquad{} = -\big(q^{\half}{-}q^{-\half}\big)^2 ((n{-}k)r-s ) T_{k-1}[r{+}s] .\label{relation:modesWn-1}
\end{gather}
\end{Definition}

Introduce currents $T_k (z) = \sum\limits_{r \in \mathbb{Z} } T_k[r] z^{-r}$. Then relations \eqref{relation:modesW1}--\eqref{relation:modesWn-1} can be rewritten in current form
\begin{gather}
f_{k,n} (w/z) T_1(z) T_k (w) - f_{k,n}(z/w) T_k(w) T_1 (z) \nonumber\\
\qquad{}= -(q^{\half}-q^{-\half})^2 \left( (k+1) \frac{w}{z} \delta' \left(\frac{w}{z}\right) T_{k+1} (w) + w \delta\left(\frac{w}{z}\right) \partial_w T_{k+1} (w) \right), \label{relation:CurrentW}
\\
f_{n-k,n} (w/z) T_{n-1}(z) T_k (w) - f_{n-k,n}(z/w) T_k(w) T_{n-1} (z) \nonumber\\
\qquad{} =-\big(q^{\half}-q^{-\half}\big)^2 \left( (n-k+1) \frac{w}{z} \delta' \left(\frac{w}{z}\right) T_{k-1} (w) + w \delta\left(\frac{w}{z}\right) \partial_w T_{k-1} (w) \right). \label{relation:CurrentWn-1}
\end{gather}
Also note that $T_0(z)=T_n(z)=1$.
\begin{Remark}
There are different approaches to definition of $q$-${W}$-algebra. For example, in~\cite{FF} algebra $\mathcal{W}_{q,p} ( \mathfrak{sl}_n )$ was defined via bosonization. The currents $T_k(z)$ satisfy relation \cite[Theorem~2]{FF}
\begin{gather*}
f_{k,n} (w/z) T_1(z) T_k (w) - f_{k,n}(z/w)T_k(w) T_1 (z) \\
\qquad{} = \frac{(1-q)(1-p/q)}{1-p} \big( \delta(w/zp) T_{k+1}(z) - \delta\big(wp^k/z\big) T_{k+1} (w) \big),
\end{gather*}
where
\begin{gather*}
f_{k, n} (x)= \frac{\big(x| p^{m-1} q, p^m q^{-1}, p^n, p^{n-1}; p^n\big)}{\big(x| p^{m-1}, p^m, p^{n-1} q, p^n q^{-1}; p^n\big)}.
\end{gather*}

One can check that limit $p \rightarrow 1$ gives relation \eqref{relation:CurrentW}. However \cite{FF} do not provide presentation of $\mathcal{W}_{q,p} ( \mathfrak{sl}_n )$ in terms of generators and relations.

In the paper \cite{N16} relation~(2.62) defines algebra $\mathcal{W}_{q,p} ( \mathfrak{gl}_n)$ which (non-essentially) differs from $\mathcal{W}_{q,p} (\mathfrak{sl}_n )$ mentioned above (and from $\W$ defined above).
\end{Remark}

\subsubsection[Twisted $q$-$W$-algebras]{Twisted $\boldsymbol{q}$-$\boldsymbol{W}$-algebras}

Twisted $q$-${W}$-algebra depends on remainder of $n_{tw}$ modulo $n$. If $n_{tw}=0$, then we get definition of non-twisted $q$-${W}$-algebra from last section. One can find definition of $\mathcal{W}_{q,p}(\mathfrak{sl}_2, 1)$ in \cite[equations~(37)--(38)]{Sh}.

\begin{Definition} \label{def: twisted W}
Algebra $\twW$ is generated by $\tT_k[r]$ for $r \in n_{tw} k/n + \mathbb{Z}$ and $k= 1, \dots, n-1$. It is convenient to add $\tT_0[r]=\tT_n[r] = \delta_{r,0}$. The defining relations are
\begin{gather}
\sum_{l=0}^{\infty} f_{k,n}[l] \big( \tT_1[r{-}l] \tT_k[s{+}l] - \tT_k[s{-}l] \tT_1[r{+}l] \big) = -\big(q^{\half}-q^{-\half}\big)^2 (kr-s) \tT_{k+1}[r{+}s], \!\!\!\label{eq:TwWrel1} \\
\sum_{l=0}^{\infty} f_{n-k,n}[l] \big( \tT_{n-1} [r{-}l] \tT_k[s{+}l] - \tT_k[s{-}l] \tT_{n-1}[r{+}l] \big)\nonumber\\
\qquad{} = -\big(q^{\half}-q^{-\half}\big)^2 ((n{-}k)r-s ) \tT_{k{-}1}[r{+}s]. \label{eq:TwWrel2}
\end{gather}
\end{Definition}

Let us rewrite relations \eqref{eq:TwWrel1}--\eqref{eq:TwWrel2} in the current form. Define currents
\begin{gather*}
T_k^{tw}(z) := \sum T_k^{tw}[r] z^{-r}, \qquad T_k (z) := z^{ \frac{ \scriptstyle{kn}_{ \scriptscriptstyle{tw} }}{\scriptstyle{n}} } T_k^{tw}(z), \\
T^{\circ}_k (z) := z^{-\frac{\scriptstyle{(n-k) n}_{tw}}{\scriptstyle{n}} } T^{tw}_k (z) = z^{-n_{tw}} T_k(z).
\end{gather*}
Note that
\begin{gather}\label{eq:T boundary}
T_0(z)=T^{\circ}_n(z)=1, \qquad T_n(z) = T^{\circ}_0(z)= z^{n_{tw}}.
\end{gather}
\begin{Proposition}
Relation \eqref{eq:TwWrel1} is equivalent to
\begin{gather}
f_{k,n} (w/z) T_1(z) T_k (w) - f_{k,n}(z/w) T_k(w) T_1 (z) \nonumber\\
\qquad{} = -\big(q^{\half}-q^{-\half}\big)^2 \left( (k+1) \frac{w}{z} \delta' (w/z) T_{k+1} (w) + w \delta (w/z) \partial_w T_{k+1} (w) \right). \label{relation:currentTW1}
\end{gather}
Relation \eqref{eq:TwWrel2} is equivalent to
\begin{gather*}
f_{n-k,n} (w/z) T^{\circ}_{n-1}(z) T^{\circ}_k (w) - f_{n-k,n}(z/w) T^{\circ}_k(w) T^{\circ}_{n-1} (z) \nonumber\\ \qquad{}=-(q^{\half}-q^{-\half})^2 \left( (n-k+1) \frac{w}{z} \delta' (w/z) T^{\circ}_{k-1} (w) + w \delta(w/z) \partial_w T^{\circ}_{k-1} (w) \right). 
\end{gather*}
\end{Proposition}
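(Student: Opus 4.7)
The plan is to prove each equivalence by extracting coefficients of $z^{-A}w^{-B}$ from the stated current relations and matching them, after a reindexing, with the modes relations \eqref{eq:TwWrel1}, \eqref{eq:TwWrel2}. The key bookkeeping device is to use the renamed integer-indexed modes $T_k[m] := \tT_k[m+n_{tw}k/n]$ ($m\in\mathbb{Z}$), which are precisely the modes of $T_k(z)=z^{n_{tw}k/n}T_k^{tw}(z)$ as introduced above.

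For the first equivalence, parametrize $r=\tfrac{n_{tw}}{n}+a$ and $s=\tfrac{n_{tw}k}{n}+b$ with integer $a,b$ in \eqref{eq:TwWrel1}. A direct computation gives $kr-s=ka-b$, $\tT_{k+1}[r+s]=T_{k+1}[a+b]$, and the $\tT_1[r\pm l]$, $\tT_k[s\pm l]$ translate into $T_1[a\pm l]$, $T_k[b\pm l]$. So \eqref{eq:TwWrel1} is literally the non-twisted modes relation \eqref{relation:modesW1} for the integer-indexed $T_k[m]$, and the standard coefficient-extraction argument (expanding $\tfrac{w}{z}\delta'(w/z)=\sum_j jw^jz^{-j}$, etc.) shows that this is equivalent to the proposed current form \eqref{relation:currentTW1}, which is formally identical to \eqref{relation:CurrentW}.

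For the second equivalence, set $r=\tfrac{n_{tw}(n-1)}{n}+a$ and $s=\tfrac{n_{tw}k}{n}+b$. One computes $(n-k)r-s=(n-k)a-b+n_{tw}(n-1-k)$ and $r+s=\tfrac{n_{tw}(k-1)}{n}+n_{tw}+a+b$, so $\tT_{k-1}[r+s]=T_{k-1}[a+b+n_{tw}]$. Thus \eqref{eq:TwWrel2} reads, in the renamed variables, the non-twisted relation \eqref{relation:modesWn-1} for $T_k[m]$, plus an extra $(n-1-k)n_{tw}\,T_{k-1}[a+b+n_{tw}]$ term on the RHS. This correction is exactly what the shift in $T_k^{\circ}(z)=z^{-n_{tw}}T_k(z)$ produces: extracting the coefficient of $z^{-A}w^{-B}$ from the RHS of the proposed current relation, $(n-k+1)\tfrac{w}{z}\delta'(w/z)T_{k-1}^{\circ}(w)$ contributes $(n-k+1)A\,T_{k-1}[A+B-n_{tw}]$ and $w\delta(w/z)\partial_wT_{k-1}^{\circ}(w)$ contributes $-(A+B)T_{k-1}[A+B-n_{tw}]$; putting $A=a+n_{tw}$, $B=b+n_{tw}$ collapses these to $((n-k)a-b+(n-k-1)n_{tw})T_{k-1}[a+b+n_{tw}]$, which matches the modes version.

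The main obstacle is pure arithmetic bookkeeping: one must track four different offsets (the $n_{tw}k/n$ built into $\tT_k[r]$, the $n_{tw}(n-1)/n$ in the parametrization of $r$, the extra $n_{tw}$ that appears in the index of $\tT_{k-1}[r+s]$, and the $z^{-n_{tw}}$-shift in $T_k^{\circ}$) and verify the identity $(n-k-1)n_{tw}=n_{tw}(n-1-k)$ to conclude that both sides agree exactly with no further correction terms.
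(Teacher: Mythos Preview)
Your argument is correct. The paper does not give an explicit proof of this proposition; it is stated as a direct reformulation of the mode relations in current form, and your write-up supplies exactly the bookkeeping the authors leave to the reader. Your key observation---that passing to the integer-indexed modes $T_k[m]=\tT_k[m+n_{tw}k/n]$ turns \eqref{eq:TwWrel1} into the non-twisted mode relation \eqref{relation:modesW1} verbatim, and turns \eqref{eq:TwWrel2} into \eqref{relation:modesWn-1} plus the $(n-1-k)n_{tw}$ correction absorbed by the $z^{-n_{tw}}$ shift defining $T_k^{\circ}$---is precisely the point, and your coefficient extraction on the right-hand side is accurate.
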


\begin{Remark}In non-twisted case we have relations \eqref{relation:CurrentW} and \eqref{relation:CurrentWn-1} for currents $T_k(z)$. In twisted case we have the same relations, but for two different sets of currents $T_k(z)$ and $T_k^\circ(z)$. One should also keep in mind~\eqref{eq:T boundary}.
\end{Remark}

\subsection[Connection of $\twW$ with $\qD$]{Connection of $\boldsymbol{\twW}$ with $\boldsymbol{\qD}$}

Connection between $\W$ and $\qD$ is known (see \cite[Proposition~2.14]{FHSSY} or \cite[Proposition~2.25]{N16}). In this section we generalize it for arbitrary $n_{tw}$.

Let $\mathfrak{Heis}$ be a~Heisenberg algebra generated by $\ha_j$ with relation $\big[\ha_i, \ha_j\big]=ni\delta_{i+j, 0}$. We will prove that there is a surjective homomorphism $\qD \twoheadrightarrow \WH$. Secretly, generators $H_j$ are mapped to $\ha_j$ under the homomorphism. Let us introduce a notation to describe this homomorphism more precisely.

Define
\begin{alignat}{3}
& \varphi_{-} (z) = \sum_{j>0} \frac{q^{-j/2}-q^{j/2}}{j} H_{-j} z^j,\qquad & &
\varphi_{+} (z) = - \sum_{j>0} \frac{q^{j/2}-q^{-j/2}}{j} H_{j} z^{-j}, &\label{def:varphi} \\
& \tphi_{-} (z)= \sum_{j>0} \frac{q^{-j/2}-q^{j/2}}{j} \ha_{-j} z^j, \qquad &&
\tphi_{+} (z)= - \sum_{j>0} \frac{q^{j/2}-q^{-j/2}}{j} \ha_{j} z^{-j}.&\nonumber
\end{alignat}
Also, let introduce notation
\begin{gather}
\varphi(z) =\varphi_-(z)+\varphi_+(z), \qquad \tphi(z) =\tphi_-(z)+\tphi_+(z). \label{def:varphi without pm}
\end{gather}
Define
\begin{gather*}
\T_k (z) = \frac{1}{k!} \exp \left(-\frac{k}{n} \varphi_- (z) \right) E^k (z) \exp \left( - \frac{k}{n} \varphi_+ (z) \right).
\end{gather*}
Note that $\T_k (z)$ commute with $H_{j}$.

Let $J_{\mu ,n,n_{tw}}$ be two sided ideal in $\qD$ generated by $c-n$, $c'-n_{tw}$ and $\T_n(z) - \mu^n z^{n_{tw}}$ (here $\mu \in \mathbb{C} \backslash \{ 0 \}$). Parameter $\mu$ is not essential since automorphism $E_{a,b} \mapsto \mu^{-a} E_{a,b}$ maps $J_{\mu,n, n_{tw}}$ to $J_{1,n, n_{tw}}$. So we will abbreviate $J_{n, n_{tw}} = J_{\mu, n, n_{tw}}$.

\begin{Lemma} \label{lemma:T_k for big k}
$\T_k(z) \in J_{n,n'}$ for $k>n$.
\end{Lemma}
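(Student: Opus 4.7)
The plan is to proceed by induction on $k\geqslant n+1$, using the quadratic commutation relation between the currents $\T_1(z)$ and $\T_k(z)$ that holds inside (the completion of) $U(\qD)$. This relation has the form of \eqref{relation:CurrentW}, namely
\[
f_{k,n}(w/z)\T_1(z)\T_k(w) - f_{k,n}(z/w)\T_k(w)\T_1(z)
= -\big(q^{\half}-q^{-\half}\big)^2\!\left((k{+}1)\tfrac{w}{z}\delta'(w/z)\T_{k+1}(w) + w\,\delta(w/z)\partial_w\T_{k+1}(w)\right),
\]
and I would derive it by a direct computation of the regular product $E(z)E^{k}(w)$ from the defining relation~\eqref{RqDE2} and the dressing formulas~\eqref{def:varphi}; this is precisely the content set aside for Appendix~\ref{Appendix:HomWD}.

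For the base case $k=n+1$, the key observation is that $f_{n,n}(x)=1$, so the left-hand side of the relation at $k=n$ is simply the commutator $[\T_1(z),\T_n(w)]$. Modulo $J_{n,n_{tw}}$ we have $\T_n(w)\equiv \mu^n w^{n_{tw}}$, a central scalar depending only on $w$, so the left-hand side lies in $J_{n,n_{tw}}$. Hence the right-hand side does as well. Extracting the coefficient of $w^{-s-1}$ via the residue $\oint w^{s}\,dw/w$, the identities $A(z)\delta(w/z)=A(w)\delta(w/z)$ and $A(z)\delta'(w/z)=A(w)\delta'(w/z)+zA'(w)\delta(w/z)$ yield
\[
\sum_{r}\bigl[nr-(n{+}1)s\bigr]\,\T_{n+1}[r]\,z^{s-r}\in J_{n,n_{tw}}
\qquad\text{for every }s\in\mathbb{Z}.
\]
Reading off the coefficient of $z^{s-r}$ and varying $s$ (e.g.\ comparing $s=0$ and $s=1$) eliminates the scalar $[nr-(n+1)s]$ and forces each $\T_{n+1}[r]$ into $J_{n,n_{tw}}$, whence $\T_{n+1}(z)\in J_{n,n_{tw}}$.

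The induction step is essentially the same calculation. If $\T_k(z)\in J_{n,n_{tw}}$ for some $k\geqslant n+1$, then the left-hand side of the $(k)$-instance of \eqref{relation:CurrentW} already lies in $J_{n,n_{tw}}$ (multiplication by the scalar-coefficient series $f_{k,n}(w/z)$ and $f_{k,n}(z/w)$ preserves the ideal), and the same residue extraction and variation of~$s$ show $\T_{k+1}(z)\in J_{n,n_{tw}}$.

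The main obstacle is not the inductive bookkeeping but the preliminary verification of \eqref{relation:CurrentW} at the level of $\qD$, before passing to the quotient: one must compute $E(z)E^{k}(w)$ as a regular product in the sense of Appendix~\ref{nonorm}, carefully track the poles at $w=q^{\pm 1}z$ arising from \eqref{RqDE2}, and match the bosonic dressings $\exp(-\tfrac{k}{n}\varphi_\pm)$ so that the right-hand side reassembles into a derivative plus delta-function combination of $\T_{k+1}(w)$. Once that identity is in hand, the lemma follows cleanly from the induction above.
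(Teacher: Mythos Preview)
Your inductive argument is correct, but it takes a different and longer route than the paper's proof.  The paper argues directly: in the quotient $U(\qD)/J_{n,n_{tw}}$ one has $E^n(w)=n!\,\mu^n w^{n_{tw}}\,{:}\exp\varphi(w){:}$, so $E^k(z)=E^{k-n}(z)E^n(z)=n!\,\mu^n z^{n_{tw}}\,E^{k-n}(z)\,{:}\exp\varphi(z){:}$; but the normal-ordering computation gives
\[
E^{k-n}(z)\,{:}\exp\varphi(w){:}\;=\;\frac{(z-w)^{2(k-n)}}{(z-qw)^{k-n}(z-q^{-1}w)^{k-n}}\,\exp\varphi_-(w)\,E^{k-n}(z)\,\exp\varphi_+(w),
\]
which vanishes at $w=z$.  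Thus $E^k(z)\in J_{n,n_{tw}}$ immediately for every $k>n$, without any induction.

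Your approach instead feeds the defining relation $\T_n(w)\equiv\mu^n w^{n_{tw}}$ back through the quadratic relation \eqref{relation:currentTW1} between $\T_1$ and $\T_k$.  Two remarks.  First, the derivation of that relation you need is not in Appendix~\ref{Appendix:HomWD} but is exactly Proposition~\ref{prop:W1}; its proof is a pure regular-product computation in (the completion of) $U(\qD)$ and does not use Lemma~\ref{lemma:T_k for big k}, so there is no circularity---but you would have to reorder the exposition or at least remark that the proof of Proposition~\ref{prop:W1} is valid already in $U(\qD)$, for all $k\geqslant 1$, before passing to the quotient.  Second, your mode extraction is fine: for each fixed $r$ one chooses $s$ with $nr-(n{+}1)s\neq 0$ to force $\T_{n+1}[r]\in J_{n,n_{tw}}$, and the inductive step is identical.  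So both proofs work; the paper's is shorter and avoids invoking the full quadratic relation, while yours has the mild advantage of illustrating how the $W$-algebra recursion truncates once $\T_n$ is central.
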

\begin{proof}
It holds in $U(\qD)/J_{n,n_tw}$
\begin{gather*}
E^{k}(z) = n! \mu^n z^{n_{tw}}E^{k-n}(z) : \! \exp{\varphi(z)} \! :.
\end{gather*}
On the other hand,
\begin{gather*}
E^{k-n}(z) :\!\exp{\varphi(w)}\!: = \frac{(z-w)^{2(k-n)}}{(z-qw)^{k-n}\big(z-q^{-1}w\big)^{k-n}} \exp{\varphi_-(w)} E^{k-n}(z) \exp{\varphi_+(w)}.
\end{gather*}
Hence, $E^{k-n}(z) :\!\exp{\varphi(z)} \!:=0$.
\end{proof}

\begin{Theorem} \label{Th:Wiso}
There is an algebra isomorphisms $\WqD \colon \WH \xrightarrow{\sim} U(\qD)/J_{n,n_{tw}}$ such that
\begin{gather}
T_k(z) \mapsto \mu^{-k} \T_k(z), \qquad
\ha_j \mapsto H_j. \label{eq:ThWqD}
\end{gather}
The map $\qDW$ in opposite direction is given by
\begin{gather}
H_j \mapsto \ha_j, \qquad c \mapsto n, \qquad c' \mapsto n_{tw}, \label{eq:ThqDW1}\\
E(z) \mapsto \ \mu \exp \left( \frac{1}{n}{\tphi_-(z)} \right) T_1(z) \exp \left( \frac{1}{n} \tphi_+ (z) \right), \label{eqThqDWE}\\
F(z) \mapsto - \frac{\mu^{-1} z^{-n_{tw}}}{\big(q^{\half}-q^{-\half}\big)^2} \exp \left(- \frac{1}{n}{\tphi_-(z)} \right) T_{n-1}(z) \exp \left(-\frac{1}{n} \tphi_+ (z) \right). \label{eq:ThqDW2}
\end{gather}
\end{Theorem}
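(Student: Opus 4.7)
The plan is to construct the two maps separately, verify each defines an algebra homomorphism on generators, and then check that the compositions are the identity. The strategy mirrors the untwisted case (\cite{FHSSY, N16}), with the twist entering only through the boundary relation $\tilde{T}_n(z) = \mu^n z^{n_{tw}}$ in place of $\tilde{T}_n(z) = \mu^n$. Throughout, one keeps track of the extra factor $z^{n_{tw}k/n}$ that distinguishes $T_k(z)$ from $T_k^{tw}(z)$, and this bookkeeping is the principal complication compared to $n_{tw}=0$.

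For the well-definedness of $\qDW$, I would use the Chevalley presentation in Proposition \ref{relation} and check relations \eqref{RqDHH}--\eqref{RqDFFF} on the images. The Heisenberg relation \eqref{RqDHH} is immediate. The relations \eqref{RqDHE-F} follow from the explicit commutators $[\ha_k, \tphi_\pm(z)] = \mp (q^{k/2}-q^{-k/2}) z^{k}$ (for the appropriate sign), combined with the fact that $T_1(z)$ and $T_{n-1}(z)$ commute with $\ha_j$ (which is a property of $\twW \UH$ as a tensor product). The quadratic relation \eqref{RqDE2} translates, after stripping off the Heisenberg dressing, into a condition on $T_1(z) T_1(w)$ multiplied by $(z-qw)(z-q^{-1}w)$; this vanishes because the W-algebra relation \eqref{relation:currentTW1} with $k=1$ produces delta-function terms that are annihilated by $(z-qw)(z-q^{-1}w)$. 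The relation \eqref{RqDEF} uses the W-algebra relation \eqref{relation:currentTW1} for $k=n-1$ together with the boundary conditions \eqref{eq:T boundary}, namely $T_n(z)=z^{n_{tw}}$ and $T_0(z)=1$, so that the $T_{n}$ and $T_{0}$ terms give the central part $(c' k + c l)$ on the right of \eqref{qDc} — this is where the twist visibly enters. The Serre relations \eqref{RqDEEE}--\eqref{RqDFFF} are the most technical and are deferred to Appendix \ref{Appendix:Serre}. Finally, $\qDW(\T_n(z)) = \mu^n z^{n_{tw}}$ by the boundary condition, so the ideal $J_{n,n_{tw}}$ is killed.

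For the well-definedness of $\WqD$, the commutation $[\ha_j, \mu^{-k}\T_k(z)] = 0$ in $U(\qD)/J_{n,n_{tw}}$ is automatic: the dressing by $\exp(-\frac{k}{n}\varphi_\pm)$ was designed precisely so that $\T_k(z)$ commutes with the Heisenberg subalgebra spanned by $H_j$. To see that $\mu^{-k}\T_k(z)$ satisfies \eqref{relation:currentTW1} and its $T_{n-1}$ counterpart, I would compute OPEs of $E^k(z) E^j(w)$ using \eqref{RqDE2} and the commutation of $E(z)$ with $\varphi_\pm$; the required delta-function identities on the right-hand side follow by induction on $k$, with Lemma \ref{lemma:T_k for big k} ensuring $\T_k(z)$ vanishes for $k>n$ and the ideal relation $\T_n(z) = \mu^n z^{n_{tw}}$ closing the induction at $k=n$ — this is precisely the point where the twist modifies the untwisted calculation.

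For the inversion $\WqD \circ \qDW = \mathrm{id}$ and $\qDW \circ \WqD = \mathrm{id}$, it suffices to check equality on generators. The maps act as the identity on the Heisenberg generators by \eqref{eq:ThWqD} and \eqref{eq:ThqDW1}. For $E(z)$ one must verify $\WqD(T_1(z)) = \mu^{-1} E(z)$ after absorbing the exponential factors in \eqref{eqThqDWE}; this is a direct normal-ordering computation. For $T_k(z)$ with $k\geqslant 2$ the check reduces, using \eqref{relation:currentTW1} inductively, to the case $k=1$. The main obstacle I anticipate is the OPE computation establishing the quadratic W-algebra relations from the $\qD$-side, since the product $E^k(z)$ requires careful treatment of regularization (as set up in Appendix \ref{nonorm}), and the $z^{n_{tw}}$ factor introduced by $\T_n(z) = \mu^n z^{n_{tw}}$ must be tracked through every cyclic permutation in the Serre and quadratic calculations of Appendices \ref{Appendix:Serre} and \ref{Appendix:HomWD}.
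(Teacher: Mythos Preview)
Your overall strategy matches the paper's: construct $\WqD$ and $\qDW$ separately via Propositions~\ref{prop:Hom(W,qD)} and~\ref{prop:HomDWtw}, then check mutual inversion (Proposition~\ref{prop:mutinv}). The checks you outline for $\qDW$ are essentially what Appendix~\ref{Appendix:HomWD} carries out.

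There is, however, a real gap in your plan for the $\WqD$ direction. You propose to verify both W-algebra relations --- \eqref{relation:currentTW1} and its $T_{n-1}$-counterpart --- by computing OPEs of $E^k(z) E^j(w)$. This works for \eqref{relation:currentTW1} (the paper's Proposition~\ref{prop:W1}), but not for the $T_{n-1}$-relation: there is no closed formula for the commutator of $E^{n-1}(z)$ with $E^k(w)$ that directly produces $\T_{k-1}$ on the right. The paper's route (Propositions~\ref{prop:F=Tn-1} and~\ref{prop:Wn-1}) is different and essential: one first shows, by comparing the second-order pole of $F(z)E^n(w)$ against the ideal relation $E^n(w)=n!\,\mu^n w^{n_{tw}}:\!\exp\varphi(w)\!:$, that in the quotient
\[
F(z) = -\frac{\mu^{-n} z^{-n_{tw}}}{\big(q^{1/2}-q^{-1/2}\big)^2}\exp\!\Big({-}\tfrac{1}{n}\varphi_-(z)\Big)\,\T_{n-1}(z)\,\exp\!\Big({-}\tfrac{1}{n}\varphi_+(z)\Big).
\]
Then the $\T^{\circ}_{n-1}\T^{\circ}_k$ relation follows from the explicit commutator $[F(z),E^k(w)]$ (the lemma with equations \eqref{eq:OPEFE^k}--\eqref{eq:commutatorFE^k}), not from any $E^j E^k$ OPE. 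This identification of $F$ with $\T_{n-1}$ is also what you need, and omit, in the inversion step: to verify $\WqD\circ\qDW=\mathrm{id}$ on $U(\qD)/J_{n,n_{tw}}$ one must check it on $F(z)$ as well as on $E(z)$ and $H_j$, and that check is precisely Proposition~\ref{prop:F=Tn-1}.
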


The rest of this section is devoted to proof of Theorem \ref{Th:Wiso}. First of all, we will prove that formula \eqref{eq:ThWqD} indeed defines a homomorphism $\WqD \colon \WH \rightarrow U(\qD)/J_{n,n_{tw}}$ (see Proposition~\ref{prop:Hom(W,qD)}). Then we prove that formulas \eqref{eq:ThqDW1}--\eqref{eq:ThqDW2} defines a homomorphism in opposite direction (see Proposition~\ref{prop:HomDWtw}). Finally, we note that maps~$\qDW$ and~$\WqD$ are mutually inverse.

\begin{Proposition} \label{prop:W1}
Currents $\T_k(z)$ 
$($as power series with coefficients in $U(\qD)/J_{n,n_{tw}})$ satisfy relation \eqref{relation:currentTW1}.
\end{Proposition}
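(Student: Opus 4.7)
The plan is to verify \eqref{relation:currentTW1} by a direct OPE computation in $U(\qD)/J_{n,n_{tw}}$, using only the definition $\tilde T_k(z)=\tfrac{1}{k!}\exp(-\tfrac{k}{n}\varphi_-(z))E^k(z)\exp(-\tfrac{k}{n}\varphi_+(z))$ and the basic contractions of vertex operators in the quotient.

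First I record the necessary contractions. In the quotient $c=n$, so directly from \eqref{RqDHH}--\eqref{RqDHE-F} and the expansions \eqref{def:varphi}:
\begin{equation*}
[\varphi_+(z),\varphi_-(w)]=n\log g(w/z),\qquad [\varphi_\pm(z),E(w)]=\pm\log g(w/z)\cdot E(w),
\end{equation*}
where $g(x)=(1-x)^2/\bigl((1-qx)(1-q^{-1}x)\bigr)$. The central element $c'$ does not enter any of these, so the twist $n_{tw}$ will play no role in the generic relation; it enters only through the boundary identity $\tilde T_n(z)=\mu^n z^{n_{tw}}$ coming from the ideal, which matters solely for $k=n-1$.

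Next I reorder $\tilde T_1(z)\tilde T_k(w)$ into a normal-ordered form. Pushing $\exp(-\tfrac{1}{n}\varphi_+(z))$ to the right past $\exp(-\tfrac{k}{n}\varphi_-(w))$ and past $E^k(w)$, and pushing $\exp(-\tfrac{k}{n}\varphi_-(w))$ to the left past $E(z)$, each commutation produces a scalar factor $g(w/z)^\alpha$. Combining these with the OPE prefactor for the regular product $E(z)\cdot E^k(w)$ supplied by Appendix~\ref{nonorm}, I obtain
\begin{equation*}
\tilde T_1(z)\tilde T_k(w)=\frac{R_k(w/z)}{k!}\,\Omega_k(z,w),
\end{equation*}
where $\Omega_k(z,w)=\,:\!\exp\!\bigl(-\tfrac{1}{n}\varphi(z)-\tfrac{k}{n}\varphi(w)\bigr)\,E(z)E^k(w)\!:$ is the fully normal-ordered body, which is $z\leftrightarrow w$-symmetric, and $R_k$ is an explicit rational function in $w/z$. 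The analogous computation gives $\tilde T_k(w)\tilde T_1(z)=R_k(z/w)\Omega_k(z,w)/k!$ with the same body.

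Now the key point is that $f_{k,n}(w/z)R_k(w/z)$ is a rational function whose ``$+/-$ expansion mismatch'' is supported only at $w/z=1$ (the potential poles at $w/z=q^{\pm 1}$ coming from the Heisenberg contractions are cancelled by the zeros built into the regular product $E(z)E^k(w)$ in Appendix~\ref{nonorm}); using the standard identity $\frac{1}{(1-x)^m}\bigl|_+-\frac{1}{(1-x)^m}\bigl|_-=\frac{\delta^{(m-1)}(x)}{(m-1)!}$ the antisymmetrization becomes
\begin{equation*}
f_{k,n}(w/z)\tilde T_1(z)\tilde T_k(w)-f_{k,n}(z/w)\tilde T_k(w)\tilde T_1(z)=\frac{1}{k!}\bigl(\alpha_k\tfrac{w}{z}\delta'(w/z)+\beta_k\delta(w/z)\bigr)\Omega_k(z,w)
\end{equation*}
for specific constants $\alpha_k,\beta_k$ read off from $f_{k,n}R_k$. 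By the iterative definition of $E^{k+1}$ from Appendix~\ref{nonorm}, the restriction $\Omega_k(z,w)|_{z=w}$ equals $(k+1)!\,\tilde T_{k+1}(w)$, so after applying $f(w)\delta(w/z)=f(z)\delta(w/z)$ and $f(w)\delta'(w/z)=f(z)\delta'(w/z)-zf'(w)\delta(w/z)$ to relocate the $\tilde T_{k+1}(w)$-dependence appropriately, the right-hand side rewrites in the form prescribed by \eqref{relation:currentTW1} with $\alpha_k=-(k+1)(q^{1/2}-q^{-1/2})^2$ and the residual $\delta$-coefficient combining with the Taylor correction to produce $-(q^{1/2}-q^{-1/2})^2 w\,\partial_w\tilde T_{k+1}(w)$.

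The main obstacle is the pair of verifications in Step~2: the precise calculation of the rational factor $R_k(x)$ and the cancellation showing that $f_{k,n}(x)R_k(x)$ has singular support only at $x=1$. The naive bosonic contractions alone yield poles at $x=q^{\pm 1}$, and it is only the compatibility with the regular-product construction of $E^k(z)$ from Appendix~\ref{nonorm} that removes these, leaving the clean $\delta(w/z),\delta'(w/z)$-structure that matches the right-hand side of the quadratic relation.
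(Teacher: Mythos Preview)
Your overall strategy coincides with the paper's: pull the Heisenberg exponentials to the outside, note that after multiplying by $f_{k,n}$ the only surviving singularity is at $w/z=1$, and read off the $\delta,\delta'$ contributions from the value and first derivative of the body at $z=w$.

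The execution, however, has a real gap. Your description of the cancellation is backwards, and your body $\Omega_k$ is not well-defined as written. The regular product $E(z)E^k(w)$ has no ``zeros'': by Corollary~\ref{Corollary:EE^k} (a consequence of the Serre relation) it has \emph{simple poles} at $z=q^{\pm1}w$. What actually happens is that after the Heisenberg reordering and multiplication by $f_{k,n}(w/z)$ the scalar prefactor becomes exactly $(1-qw/z)(1-q^{-1}w/z)/(1-w/z)^2$ (this is the paper's \eqref{eq:normoderT1Tk}); the numerator is what kills the poles of $E(z)E^k(w)$, producing the regular two-variable series $\mathbb{E}^{(k+1)}(z,w)=(z-qw)(z-q^{-1}w)E(z)E^k(w)$. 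It is this $\mathbb{E}^{(k+1)}$, sandwiched between the exponentials, that serves as the common body for both orderings --- its symmetry is \eqref{eq:mathbbEk+1}, and that is precisely what lets you write $f_{k,n}(w/z)\tilde T_1\tilde T_k$ and $f_{k,n}(z/w)\tilde T_k\tilde T_1$ with the \emph{same} middle and different expansions of $(z-w)^{-2}$. Your $\Omega_k$ with the raw $E(z)E^k(w)$ inside is neither regular nor $z\leftrightarrow w$-symmetric, so the antisymmetrization step as you wrote it does not go through.

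Finally, to obtain the term $w\,\partial_w\tilde T_{k+1}(w)$ with the correct coefficient you also need the derivative identity \eqref{eq:EE^kderiv}, namely $\partial_z\mathbb{E}^{(k+1)}(z,w)\big|_{z=w}=(1-q)(1-q^{-1})\bigl(\tfrac{w^2}{k+1}\partial_wE^{k+1}(w)+wE^{k+1}(w)\bigr)$, which is proved in Appendix~\ref{nonorm} and is not an immediate consequence of the definitions; your ``Taylor correction'' sentence is where this is hiding.
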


\begin{proof}
Let us define power series in two variables
\begin{gather*}
\mathbb{E}^{(k+1)} (z,w) = (z-qw)\big(z-q^{-1}w\big) E(z) E^{k}(w).
\end{gather*}
According Corollary \ref{Corollary:EE^k}, $\mathbb{E}^{(k+1)} (z,w)$ is regular in sense of Definition~\ref{def:regular}. Following relations follows from results of Appendix~\ref{nonorm}
\begin{gather}
\mathbb{E}^{(k+1)} (z,w) = (z-qw)\big(z-q^{-1}w\big) E^{k}(w) E(z), \label{eq:mathbbEk+1}\\
\mathbb{E}^{(k+1)} (w,w) = (1-q)\big(1-q^{-1}\big) w^2 E^{k+1} (w), \label{eq:mathbbEk+1ww}\\
 \mathbb \partial_z \mathbb{E}^{(k+1)} (z,w) \big|_{z=w} = (1-q)\big(1-q^{-1}\big) w^2 \frac{1}{k+1} \partial_w E^{k+1} (w)\nonumber\\
\hphantom{\mathbb \partial_z \mathbb{E}^{(k+1)} (z,w) \big|_{z=w} =}{} + (1-q)\big(1-q^{-1}\big) w E^{k+1}(w). \label{eq:EE^kderiv}
\end{gather}
More precisely, \eqref{eq:mathbbEk+1}--\eqref{eq:mathbbEk+1ww} easily follows from Propositions \ref{prop:ass&comm}. One can find a proof of \eqref{eq:EE^kderiv} at the end of Appendix \ref{nonorm}.

It is straightforward to check that
\begin{gather}
f_{k,n}(w/z) \T_{1}(z) \T_{k} (w) = \frac{ \big(1-q\frac{w}{z}\big) \big(1-q^{-1}\frac{w}{z}\big)}{k! \big(1-\frac{w}{z}\big)^{2}} \exp \left( - \frac{1}{n} \left( \varphi_-(z) + k \varphi_- (w) \right) \right)\nonumber\\
\hphantom{f_{k,n}(w/z) \T_{1}(z) \T_{k} (w) =}{}
\times E (z) E^k(w) \exp \left( - \frac{1}{n} \left( \varphi_+(z) + k \varphi_+ (w) \right) \right). \label{eq:normoderT1Tk}
\end{gather}

Formulas \eqref{eq:normoderT1Tk} and \eqref{eq:mathbbEk+1}--\eqref{eq:EE^kderiv} implies that
\begin{gather*}
f_{k,n}(w/z) \T_{1}(z) \T_{k} (w) - f_{k,n}(z/w) \T_k(w) \T_1 (z) \\
\qquad{} = \frac{1}{k!} \exp \left( - \frac{1}{n} \left( \varphi_-(z) + k \varphi_- (w) \right) \right) \mathbb{E}^{(k+1)}(z,w) \\
\qquad\quad{}\times \exp \left( - \frac{1}{n} \left( \varphi_+(z) + k \varphi_+ (w) \right) \right) \partial_w \big( w^{-1} \delta(w/z) \big) \\
\qquad {} = (1-q)\big(1-q^{-1}\big)(k+1) \T_{k+1}(w) \frac{w}{z} \delta'(w/z) + (1-q)\big(1-q^{-1}\big) w \T'_{k+1}(w) \delta(w/z).\!\!\!\tag*{\qed}
\end{gather*}
\renewcommand{\qed}{}
\end{proof}

\begin{Lemma} The following OPE holds in $\qD$
\begin{gather}
F(z) E^k(w) = k(c-k+1)\frac{\frac{w}{z} E^{k-1}(w)}{\big(1-\frac{w}{z}\big)^2} \nonumber\\
\hphantom{F(z) E^k(w) =}{} + k \frac{w\partial_w E^{k-1}(w) - w:\!\varphi'(w) E^{k-1}(w)\!: - c'E^{k-1}(w)}{1-\frac{w}{z}} + \mathrm{reg}; \label{eq:OPEFE^k}
\end{gather}
or, equivalently
\begin{gather}
\big[F(z), E^k(w)\big] = k(c-k+1) E^{k-1}(w) \frac{w}{z} \delta'(w/z) \nonumber\\
\hphantom{\big[F(z), E^k(w)\big] =}{} +k \big( w \partial_w E^{k-1}(w) - w:\!\varphi'(w) E^{k-1}(w)\!:-c'E^{k-1}(w) \big) \delta(w/z).\label{eq:commutatorFE^k}
\end{gather}
\end{Lemma}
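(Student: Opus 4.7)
I would prove the OPE~\eqref{eq:OPEFE^k} (equivalently the commutator form~\eqref{eq:commutatorFE^k}) by induction on~$k$. The base case $k=1$ is essentially a restatement of~\eqref{RqDEF}: a direct comparison of~\eqref{def:varphi} with the definition of $H(z)$ gives $H(q^{-1/2}w) - H(q^{1/2}w) = w\varphi'(w)$, so after swapping $z \leftrightarrow w$ in~\eqref{RqDEF} and using $\delta(z/w)=\delta(w/z)$, $\tfrac{z}{w}\delta'(z/w) = -\tfrac{w}{z}\delta'(w/z)$, and the support identity $f(w)\delta(w/z) = f(z)\delta(w/z)$, one recovers the right-hand side of~\eqref{eq:commutatorFE^k} for $k=1$ (using $E^{0}(w)=1$, so $\partial_w E^{0}=0$ and $:\!\varphi'(w) E^{0}(w)\!: = \varphi'(w)$).

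For the inductive step $k \to k+1$ I would mimic the regularization strategy of the proof of Proposition~\ref{prop:W1}, working with the regular power series $\mathbb{E}^{(k+1)}(u,w) = (u-qw)(u-q^{-1}w) E(u) E^{k}(w)$, whose diagonal value is $(1-q)(1-q^{-1})w^{2} E^{k+1}(w)$ by~\eqref{eq:mathbbEk+1ww}. Splitting
\[
F(z)\mathbb{E}^{(k+1)}(u,w) = (u-qw)(u-q^{-1}w)\bigl(E(u) F(z) E^{k}(w) + [F(z),E(u)] E^{k}(w)\bigr),
\]
I would apply the induction hypothesis to $F(z) E^{k}(w)$ in the first summand; the prefactor $(u-qw)(u-q^{-1}w)$ then combines with $E(u) E^{k-1}(w)$ into $\mathbb{E}^{(k)}(u,w)$, whose value at $u=w$ and whose partial derivatives (via~\eqref{eq:EE^kderiv} together with the chain-rule identity $\partial_w[\mathbb{E}^{(k)}(w,w)] = \partial_u\mathbb{E}^{(k)}(u,w)|_{u=w} + \partial_w\mathbb{E}^{(k)}(u,w)|_{u=w}$) promote $\partial_w E^{k-1}(w)$ and $:\!\varphi'(w) E^{k-1}(w)\!:$ to $\partial_w E^{k}(w)$ and $:\!\varphi'(w) E^{k}(w)\!:$ after dividing by $(1-q)(1-q^{-1})w^{2}$. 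In the second summand I would substitute the base-case formula $[F(z),E(u)] = c\tfrac{u}{z}\delta'(u/z) - (u\varphi'(u)+c')\delta(u/z)$ and use the formal-distribution identities $f(u)\delta(u/z) = f(z)\delta(u/z)$ and $f(u)\delta'(u/z) = f(z)\delta'(u/z) - z f'(z)\delta(u/z)$ to eliminate the $u$-dependence before setting $u=w$.

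The hard part will be the combinatorial bookkeeping. One must verify that the induction coefficient $k(c-k+1)$ from the first summand combines with the double-pole contribution coming from $c\tfrac{u}{z}\delta'(u/z) E^{k}(w)$ in the second summand to yield exactly $(k+1)(c-k)$; that the simple-pole coefficient correctly upgrades from $k$ to $k+1$ with the proper $\partial_w E^{k}$, $:\!\varphi'(w) E^{k}(w)\!:$, and $c' E^{k}$ structure (incorporating the normal-ordering subtleties from the regularization); and that all remaining terms end up in the regular part. The equivalence of~\eqref{eq:OPEFE^k} and~\eqref{eq:commutatorFE^k} is then standard: the $\frac{w/z}{(1-w/z)^{2}}$ and $\frac{1}{1-w/z}$ singular pieces encode exactly the $\tfrac{w}{z}\delta'(w/z)$ and $\delta(w/z)$ terms of the commutator.
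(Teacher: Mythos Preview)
Your inductive strategy is sound and would work, but it is genuinely different from the paper's argument. The paper does not induct on $k$; instead it point-splits $E^{k}(w)$ into $E(w_1)\cdots E(w_k)$, writes $F(z)E(\mw) = [F_-(z),E(\mw)] + {:\!F(z)E(\mw)\!:}$, and expands the commutator by the Leibniz rule into a sum over the $k$ insertion points. Each summand involves $H(q^{1/2}w_j)-H(q^{-1/2}w_j)$ acting on neighbouring $E(w_l)$'s, and after normal-ordering these produce rational functions in $w_j/w_l$; a partial-fraction identity collapses the resulting cross-terms into a single second-order pole contribution plus a derivative term. Only at the very end does the paper set $w_1=\dots=w_k=w$, reading off the coefficients $k(c-k+1)$ and $k$ directly from the combinatorics of the $\binom{k}{2}$ cross-terms and the $k$ diagonal terms.

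Your approach trades that one-shot partial-fraction manipulation for an inductive bookkeeping of how the coefficients $k(c-k+1)$ and $k$ upgrade to $(k+1)(c-k)$ and $k+1$, and leans more heavily on the regular-product calculus of Appendix~\ref{nonorm} (in particular~\eqref{eq:EE^kderiv}) to control the $u\to w$ limit. Both routes require tracking the normal-ordering corrections between $E(u)$ and $\varphi'(w)$, so neither is dramatically shorter; the paper's version has the advantage of producing the final coefficients in closed form without an inductive verification, while yours keeps the analytic input localized to the $k=1$ commutator and the already-established identities for~$\mathbb{E}^{(k)}$.
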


\begin{proof}
Denote by $E(\mw) = E(w_1) \cdots E(w_k)$. We will write ${:\!F(z) E(\mw)\!:} = F_+ (z)E(\mw)+E(\mw) F_-(z)$; this definition reminds standard Definition \ref{def:norm}, but is applied in different situation ($E(\mw)$ is not a current in one variable). Note that \begin{gather}
F(z) E(\mw) = [F_- (z), E(\mw)] + {:\!F(z) E(\mw)\!:} = \sum_j E(w_1) \cdots E(w_{j-1})\label{eq:technF(z)E(mw)}\\
{}\times \left( \frac{H\big(q^{\half}w_{j}\big) - H\big(q^{-\half}w_{j}\big)-c'}{1-\frac{w_{j}}{z}} +c\frac{w_{j}}{z} \frac{1}{\big(1-\frac{w_j}{z}\big)^2} \right) E(w_{j+1}) \cdots E(w_k) +:\!F(z) E(\mw)\!:. \nonumber
\end{gather}
Recall that $w \varphi'(w) = H(q^{-\half}w) -H(q^{\half}w) $. It follows from \eqref{RqDHE-F}that
\begin{gather*}
\big(H(q^{\half}w_j) -H(q^{-\half}w_j) \big)E(w_l) = -\frac{2E(w_l)}{1-\frac{w_l}{w_j}} + \frac{E(w_l)}{1-q\frac{w_l}{w_j}}+\frac{E(w_l)}{1-q^{-1}\frac{w_l}{w_j}} -w_j :\!\varphi'(w_j) E(w_l)\!:,\\
E(w_j) \big(H\big(q^{\half}w_l\big) -H\big(q^{-\half}w_l\big) \big) = \frac{2E(w_j)}{1-\frac{w_l}{w_j}} - \frac{E(w_j)}{1-q\frac{w_l}{w_j}}-\frac{E(w_j)}{1-q^{-1}\frac{w_l}{w_j}} -w_l :\!\varphi'(w_l) E(w_j)\!:.
\end{gather*}
Using identity
\begin{gather*}
\frac{1}{\big(1-\frac{w_j}{z}\big)\big(1-\frac{w_j}{w_l}\big)}-\frac{1}{\big(1-\frac{w_l}{z}\big)\big(1-\frac{w_j}{w_l}\big)} = - \frac{\frac{w_l}{z}}{\big(1-\frac{w_l}{z}\big)\big(1-\frac{w_j}{z}\big)}
\end{gather*}
we obtain
\begin{gather*}
\left. \frac{E(w_l)}{\big(1-\frac{w_j}{z}\big)\big(1-\frac{w_j}{w_l}\big)}-\frac{E(w_j)}{\big(1-\frac{w_l}{z}\big) \big(1-\frac{w_j}{w_l}\big)} \right|_{w=w_j=w_l} =- \frac{\frac{w}{z}E(w)}{\big(1-\frac{w}{z}\big)^2} + \frac{w \partial_{w} E(w)}{1-\frac{w}{z}}.
\end{gather*}
Finally, we conclude that
\begin{gather}
\left. \sum_j E(w_1) \cdots E(w_{j-1}) \frac{H\big(q^{\half}w_{j}\big) - H\big(q^{-\half}w_{j}\big)}{1-\frac{w_{j}}{z}} E(w_{j+1}) \cdots E(w_k) \right|_{w= w_1 = \dots = w_k} \nonumber\\
\qquad{} = - \binom{k}{2} \frac{2\frac{w}{z} E^{k-1}(w)}{\big(1-\frac{w}{z}\big)^2}
+ \binom{k}{2} \frac{2w E^{k-2}(w) \partial_w E(w)}{1-\frac{w}{z}} - k \frac{w:\!\varphi'(w) E^{k-1}(w)\!:}{1-\frac{w}{z}}.
\label{eq:techEEHEE}
\end{gather}

Relation \eqref{eq:OPEFE^k} follows from \eqref{eq:technF(z)E(mw)} and \eqref{eq:techEEHEE}.
\end{proof}
\begin{Proposition} \label{prop:F=Tn-1}
In algebra $U(\qD)/J_{n,n_{tw}}$ holds
\begin{gather*}
F(z) = - \frac{\mu^{-n} z^{-n_{tw}}}{\big(q^{\half}-q^{-\half}\big)^2 } \exp \left( -\frac{1}{n} \varphi_-(z) \right) \T_{n-1} (z) \exp \left( - \frac{1}{n} \varphi_+(z) \right).
\end{gather*}
\end{Proposition}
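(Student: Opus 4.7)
The plan is to prove $F(z)=\tilde F(z)$ in $U(\qD)/J_{n,n_{tw}}$, where
\[\tilde F(z) := -\tfrac{\mu^{-n}z^{-n_{tw}}}{(n-1)!(q^{\frac12}-q^{-\frac12})^2}\exp(-\varphi_-(z))E^{n-1}(z)\exp(-\varphi_+(z))\]
(which equals the stated right-hand side after unpacking the definition of $\tilde T_{n-1}(z)$), by computing $[E(w),\tilde F(z)]$ in the quotient and matching it with $[E(w),F(z)]$ read off from \eqref{RqDEF}. The idea is that the Heisenberg commutators \eqref{RqDHH}--\eqref{RqDHE-F} make $E(w)$ move past the flanking exponentials with explicit rational OPE factors: $E(w)\exp(-\varphi_-(z))=\frac{(1-qz/w)(1-q^{-1}z/w)}{(1-z/w)^2}\exp(-\varphi_-(z))E(w)$ expanded in $z/w$, and the mirror identity for $\exp(-\varphi_+(z))$ expanded in $w/z$. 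These factors exactly cancel the denominators of the regular product $E(w)E^{n-1}(z)=\mathbb{E}^{(n)}(w,z)/[(w-qz)(w-q^{-1}z)]$ from Appendix \ref{nonorm} (and of its opposite-ordering version via \eqref{eq:mathbbEk+1}), reducing the commutator to
\[[E(w),\tilde F(z)] = -\tfrac{\mu^{-n}z^{-n_{tw}}}{(n-1)!(q^{\frac12}-q^{-\frac12})^2}\,e^{-\varphi_-(z)}\!\left[\iota_{|z|<|w|}\tfrac{\mathbb{E}^{(n)}(w,z)}{(w-z)^2} - \iota_{|w|<|z|}\tfrac{\mathbb{E}^{(n)}(w,z)}{(w-z)^2}\right]\!e^{-\varphi_+(z)}.\]

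Next I would evaluate the bracketed distribution, which is supported on $z=w$. Taylor-expanding the regular factor $\mathbb{E}^{(n)}(w,z)$ around the diagonal and using the standard formal identity $\iota_{|w|>|z|}(w-z)^{-2}-\iota_{|z|>|w|}(w-z)^{-2}=w^{-2}\delta'(z/w)$ turns it into $\mathbb{E}^{(n)}(w,w)\tfrac{\delta'(z/w)}{w^2} - (\partial_z\mathbb{E}^{(n)})(w,w)\tfrac{\delta(z/w)}{w}$. Substituting $\mathbb{E}^{(n)}(w,w)=(1-q)(1-q^{-1})w^2E^n(w)$ from \eqref{eq:mathbbEk+1ww}, then computing $(\partial_z\mathbb{E}^{(n)})(w,z)|_{z=w}$ via the chain rule applied to $\partial_w[\mathbb{E}^{(n)}(w,w)]$ combined with \eqref{eq:EE^kderiv}, and finally imposing the quotient relation $E^n(z)=n!\mu^n z^{n_{tw}}e^{\varphi_-(z)}e^{\varphi_+(z)}$ causes the outer $e^{-\varphi_\pm(z)}$ to collapse against the $e^{\varphi_\pm(z)}$ inside $E^n(z)$. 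After the delta-function identities $f(w)\delta(z/w)=f(z)\delta(z/w)$, $f(w)\delta'(z/w)=f(z)\delta'(z/w)+zf'(z)\delta(z/w)$ and $(z/w)\delta'(z/w)=\delta'(z/w)-\delta(z/w)$, the resulting expression is $n\delta'(z/w)+(z\varphi'(z)+n_{tw}-n)\delta(z/w)$, which coincides with $[E(w),F(z)]=(z\varphi'(z)+n_{tw})\delta(z/w)+n\tfrac{z}{w}\delta'(z/w)$ extracted from \eqref{RqDEF} (using $z\varphi'(z)=H(q^{-1/2}z)-H(q^{1/2}z)$). A short parallel calculation shows $[H_k,\tilde F(z)]=[H_k,F(z)]$ using only that $[H_k,\varphi_\pm(z)]$ and $[H_k,E^{n-1}(z)]$ are explicit scalar multiples. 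Consequently $F(z)-\tilde F(z)$ commutes with every $E(w)$ and every $H_k$ in the quotient, and evaluating on the vacuum of a tensor product of twisted Fock modules $\mathcal{F}^{\sigma}_{u_1}\otimes\cdots\otimes\mathcal{F}^{\sigma}_{u_d}$ — on which $U(\qD)/J_{n,n_{tw}}$ acts faithfully — forces the difference to vanish.

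The main obstacle is the bookkeeping in the Taylor-expansion step: the derivative $(\partial_z\mathbb{E}^{(n)})(w,z)|_{z=w}$ is not the same as the quantity $\partial_z\mathbb{E}^{(n)}(z,w)|_{z=w}$ that \eqref{eq:EE^kderiv} supplies, so one must recover it through the chain rule applied to $\partial_w(w^2 E^n(w))$. This is also the step from which the explicit normalization $-\mu^{-n}z^{-n_{tw}}/(q^{\frac12}-q^{-\frac12})^2$ and the $z\varphi'(z)$ contribution to the $\delta(z/w)$-coefficient both emerge, and where sign, factorial, and $q^{\pm 1/2}$ mismatches are easiest to make.
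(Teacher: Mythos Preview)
Your commutator computation is plausible, but the final step has a genuine gap. The claim that $U(\qD)/J_{n,n_{tw}}$ acts faithfully on a tensor product of twisted Fock modules is nowhere established in the paper and cannot be invoked here without circularity: Proposition~\ref{prop:F=Tn-1} is itself an ingredient in the proof of Theorem~\ref{Th:Wiso}, and any faithfulness statement would most naturally be deduced \emph{from} that isomorphism. Even granting faithfulness, ``evaluating on the vacuum'' does not finish the argument: you would still need to verify separately that $(F(z)-\tilde F(z))\,\vacF=0$, which is a nontrivial identity between $F_k\,\vacF$ and combinations of $E^{n-1}[j]\,\vacF$, and you would need the vacuum to be cyclic under the subalgebra generated by $E(w)$ and $H_k$ alone. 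The paper obtains the latter only \emph{after} Theorem~\ref{Th:Wiso} (see the proof of Lemma~\ref{lemma: q-dif >0 Verma}), so this too is circular.

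The paper's argument is much shorter and avoids all of this. It computes $F(z)E^n(w)$ in two ways: once using the defining relation $E^n(w)=n!\,\mu^n w^{n_{tw}}\!:\!\exp\varphi(w)\!:$ of the quotient, which gives a clean OPE with $F(z)$ via the Heisenberg relations, and once using the general formula~\eqref{eq:OPEFE^k} for $F(z)E^k(w)$ at $k=n$. Matching the coefficient of $(z-w)^{-2}$ in these two expressions immediately yields
\[
nE^{n-1}(w)=n!\,\mu^n w^{n_{tw}}(1-q)\bigl(1-q^{-1}\bigr)\exp(\varphi_-(w))\,F(w)\,\exp(\varphi_+(w)),
\]
which rearranges to the stated identity. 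No representation, no faithfulness, no centrality argument---just a second-order pole comparison.
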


\begin{proof}
Using relation $\T_n(z) - \mu^n z^{n_{tw}} \in J_{n, n_{tw}}$, we find a relation in $U(\qD)/J_{n,n_{tw}}$
\begin{gather*} F(z) E^n(w) = n! \mu^n w^{n_{tw}} F(z) :\! \exp(\phi(w))\!: \\
\hphantom{F(z) E^n(w)}{} =n! \mu^n w^{n_{tw}} \frac{(1-qw/z)\big(1-q^{-1}w/z\big)}{(1 - w/z)^2} \exp(\phi_- (w)) F(z) \exp(\phi_+ (w)).
\end{gather*}
Comparing coefficient of $(z-w)^{-2}$ with \eqref{eq:OPEFE^k}, we obtain
\begin{gather*}
n E^{n-1} (w) = n! \mu^n w^{n_{tw}} (1-q)\big(1-q^{-1}\big) \exp(\phi_-(w)) F(w) \exp( \phi_+(w) ).\tag*{\qed}
\end{gather*}\renewcommand{\qed}{}
\end{proof}

Denote by $\T^{\circ}_k (z)= \mu^{-n} z^{-n_{tw}} \T_k(z)$.

\begin{Proposition} \label{prop:Wn-1}
Following relation holds in $U(\qD)/J_{n,n_{tw}}$
\begin{gather*}
f_{n-k,n} (w/z) \T^{\circ}_{n-1}(z) \T^{\circ}_k (w) - f_{n-k,n}(z/w) \T^{\circ}_k(w) \T^{\circ}_{n-1} (z) \\
\qquad{}= -\big(q^{\half}-q^{-\half}\big)^2 \left( (n-k+1) \frac{w}{z} \delta' \left(\frac{w}{z}\right) \T^{\circ}_{k-1} (w) + w \delta\left(\frac{w}{z}\right) \partial_w \T^{\circ}_{k-1} (w) \right).
\end{gather*}
\end{Proposition}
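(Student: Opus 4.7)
The plan is to mirror the proof of Proposition \ref{prop:W1}, substituting the commutator $[F(z), E^k(w)]$ (given explicitly by \eqref{eq:commutatorFE^k}) for the regular-product input $\mathbb{E}^{(k+1)}(z,w)$ used there. The bridge between the two settings is Proposition \ref{prop:F=Tn-1}, which in $U(\qD)/J_{n,n_{tw}}$ rewrites $\T^{\circ}_{n-1}(z)$ as a vertex-operator dressing of $F(z)$,
$$\T^{\circ}_{n-1}(z) = -\big(q^{\frac12}-q^{-\frac12}\big)^2 \exp\!\big(\tfrac{1}{n}\varphi_-(z)\big) F(z) \exp\!\big(\tfrac{1}{n}\varphi_+(z)\big),$$
while $\T^{\circ}_k(w) = \mu^{-n}w^{-n_{tw}}\T_k(w)$ retains its original dressing of $E^k(w)$. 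The two Heisenberg dressings now carry opposite $\tfrac{1}{n}$-charges, rather than the matching $-\tfrac{1}{n}$-charges seen in Proposition \ref{prop:W1}.

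First I would bring $\T^{\circ}_{n-1}(z)\T^{\circ}_k(w)$ into normal ordered form by moving the Heisenberg exponentials to the outside, using the contraction $[\varphi_+(z),\varphi_-(w)]$ (computed from \eqref{RqDHH} with $c=n$) and the known actions of $\varphi_\pm$ on $F$ and on $E^k$. Because the charges are opposite, the resulting total scalar factor is precisely the reciprocal of $f_{n-k,n}(w/z)$; the two cancel, and $f_{n-k,n}(w/z)\T^{\circ}_{n-1}(z)\T^{\circ}_k(w)$ collapses to a constant times a normal-ordered sandwich with $F(z)E^k(w)$ in the middle. Applying the same manipulation to the opposite ordering, the difference in the statement becomes the same dressing sandwiching the commutator $[F(z),E^k(w)]$.

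Plugging \eqref{eq:commutatorFE^k} into this sandwich produces a $\delta'(w/z)$-term proportional to $E^{k-1}(w)$ and a $\delta(w/z)$-term involving $w\partial_w E^{k-1}(w)$, $w\!:\!\varphi'(w)E^{k-1}(w)\!:$, and $n_{tw}E^{k-1}(w)$. The distributional identity $g(z,w)\delta'(w/z) = g(w,w)\delta'(w/z) + w\partial_z g(z,w)\big|_{z=w}\delta(w/z)$, a consequence of $(z-w)\delta'(w/z) = w\delta(w/z)$, converts the $z$-dependence of the dressing and of the $w/z$ factor into an additional $\delta$-contribution. A direct product-rule expansion of $w\partial_w \T^{\circ}_{k-1}(w)$ from its definition then recognizes the operator inside the resulting dressing as $w\partial_w \T^{\circ}_{k-1}(w) - (n-k+1)\T^{\circ}_{k-1}(w)$, and combining with the $\delta'$-part via $(w/z)\delta'(w/z) = \delta'(w/z) - \delta(w/z)$ reassembles exactly the right-hand side of the proposition. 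The overall prefactor $-(q^{\frac12}-q^{-\frac12})^2$ arises from the constant in Proposition \ref{prop:F=Tn-1} once the $k$ from the commutator, the $1/k!$ from $\T_k$, and the $(k-1)!$ inside $\T^{\circ}_{k-1}$ collapse.

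The main obstacle will be bookkeeping: the Heisenberg contractions have delicate signs that must line up with $f_{n-k,n}(w/z)$ to produce the clean cancellation above, and the combinatorial coefficients appearing in $[F(z),E^k(w)]$ together with the ``extra'' $(n-k+1)E^{k-1}(w)$ correction forced by the $\delta'$-collapse identity must conspire to fill out exactly the $w\partial_w \T^{\circ}_{k-1}(w)$ pattern of the target.
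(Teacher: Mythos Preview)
Your proposal is correct and follows essentially the same route as the paper: use Proposition~\ref{prop:F=Tn-1} to rewrite $\T^{\circ}_{n-1}(z)$ as a Heisenberg dressing of $F(z)$, normal order so that the $f_{n-k,n}$ factors cancel and the difference becomes a dressing of $[F(z),E^k(w)]$, and then substitute \eqref{eq:commutatorFE^k}. In fact you spell out in more detail the distributional-identity bookkeeping that the paper dismisses as ``straightforward.''
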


\begin{proof}
Proposition \ref{prop:F=Tn-1} imply
\begin{gather*}
f_{n-k,n} (w/z) \T^{\circ}_{n-1}(z) \T^{\circ}_k (w)- f_{n-k,n}(z/w) \T^{\circ}_k(w) \T^{\circ}_{n-1} (z)
= -\big(q^{\half}-q^{-\half}\big)^2 \frac{\mu^{-n} w^{-n_{tw}}}{k!} \\
\qquad{}\times \exp \left(\frac{1}{n}\varphi_-(z) - \frac{k}{n} \varphi_-(w) \right)\big[F(z), E^k(w)\big] \exp \left(\frac{1}{n}\varphi_+(z) - \frac{k}{n} \varphi_+(w) \right).
\end{gather*}
It is straightforward to finish the proof using \eqref{eq:commutatorFE^k}.
\end{proof}

\begin{Proposition} \label{prop:Hom(W,qD)}
Formula \eqref{eq:ThWqD} defines a homomorphism $\WqD$ from $\WH$ to the algebra $ U(\qD)/J_{n,n_{tw}}$.
\end{Proposition}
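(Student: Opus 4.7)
The plan is to verify that the defining relations of $\WH=\twW\otimes U(\mathfrak{Heis})$ hold for the images of the generators in $U(\qD)/J_{n,n_{tw}}$. The relations split into three families: (i) the Heisenberg relations $[\ha_i,\ha_j]=ni\,\delta_{i+j,0}$; (ii) the commutations $[\tT_k[r],\ha_j]=0$ expressing the tensor-product structure; and (iii) the quadratic relations \eqref{eq:TwWrel1}--\eqref{eq:TwWrel2}. Since Propositions \ref{prop:W1} and \ref{prop:Wn-1} have already done the essential computational work for (iii), my plan is to check (i) and (ii) directly, assemble (iii) from these propositions, and confirm the boundary conditions.

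For (i), I would observe that in $U(\qD)/J_{n,n_{tw}}$ the central element $c$ equals $n$, so the $\qD$-relation $[H_i,H_j]=ic\,\delta_{i+j,0}$ becomes $ni\,\delta_{i+j,0}$, matching the target Heisenberg relation. For (ii), iterating \eqref{RqDHE-F} gives $[H_j,E^k(w)]=k\bigl(q^{-j/2}-q^{j/2}\bigr)w^jE^k(w)$, while a direct computation from \eqref{def:varphi} yields $[H_j,\varphi_-(z)]=n\bigl(q^{-j/2}-q^{j/2}\bigr)z^j$ for $j>0$ (zero for $j<0$), with the symmetric statement for $\varphi_+$. Conjugation by $\exp\bigl(-\tfrac{k}{n}\varphi_\pm\bigr)$ therefore contributes $-k\bigl(q^{-j/2}-q^{j/2}\bigr)z^j$, which exactly cancels the $E^k$ contribution, so $[H_j,\T_k(z)]=0$.

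For (iii), the relation \eqref{eq:TwWrel1} is equivalent in current form to \eqref{relation:currentTW1}. Under the map $T_k(z)\mapsto\mu^{-k}\T_k(z)$ each side of \eqref{relation:currentTW1} picks up the same factor $\mu^{-(k+1)}$, so the image of the relation is exactly the content of Proposition \ref{prop:W1}. For \eqref{eq:TwWrel2} in current form (the $T^\circ$ version), the image of $T_k^\circ(z)$ is $\mu^{-k}z^{-n_{tw}}\T_k(z)$, which differs from the quantity $\T_k^\circ=\mu^{-n}z^{-n_{tw}}\T_k$ of Proposition \ref{prop:Wn-1} only by overall scalar factors; both sides of the image relation scale consistently, and after clearing the common $\mu$-power the image reduces to the identity of Proposition \ref{prop:Wn-1}. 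Finally, the boundary conditions: $\T_0(z)=1$ is automatic from the definition of $\T_k$ at $k=0$, and $\mu^{-n}\T_n(z)=z^{n_{tw}}$ is precisely the generating relation of $J_{n,n_{tw}}$; Lemma \ref{lemma:T_k for big k} takes care of $k>n$. The only delicate point is the bookkeeping of the $\mu^{-k}$ prefactors and $z^{-n_{tw}}$ shifts when moving between the ``$T_k$'' and ``$T_k^\circ$'' presentations, but no genuine obstacle arises once Propositions \ref{prop:W1} and \ref{prop:Wn-1} are in hand.
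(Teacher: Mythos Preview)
Your proposal is correct and follows essentially the same approach as the paper: the paper's proof simply asserts that $H_j$ and $\T_k(z)$ commute and that the $H_j$ form a Heisenberg algebra (what you spell out explicitly in parts (i) and (ii)), and then invokes Propositions~\ref{prop:W1} and~\ref{prop:Wn-1} for the $\twW$ relations exactly as you do in part (iii). Your additional remarks on the boundary conditions and the $\mu$/$z^{-n_{tw}}$ bookkeeping are fine and simply make explicit what the paper leaves implicit.
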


\begin{proof}
Evidently, $H_j$ and $\T_k(z)$ commute, and $H_j$ form a Heisenberg algebra. We only have to check that $\frac{1}{\mu^k} \T_k(z)$ form $\twW$ algebra. Relation of $\twW$ algebra follows from Propositions \ref{prop:W1} and \ref{prop:Wn-1}.
\end{proof}

\begin{Proposition} \label{prop:HomDWtw}
Formulas \eqref{eq:ThqDW1}--\eqref{eq:ThqDW2} defines a homomorphism $\qDW$ from
$U(\qD)/J_{n,n_{tw}}$ to the algebra $\WH$.
\end{Proposition}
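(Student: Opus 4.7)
My plan is to verify directly that the proposed images $\tE(z) := \qDW(E(z))$, $\tF(z) := \qDW(F(z))$, $\tH_j := \qDW(H_j)$, together with $c \mapsto n$ and $c' \mapsto n_{tw}$, satisfy all defining relations of $\qD$ from Proposition~\ref{relation}, and moreover the relation $\T_n(z) = \mu^n z^{n_{tw}}$ generating the ideal $J_{n,n_{tw}}$. The key structural fact used throughout is that each $T_k(z)$ commutes with $\tH_j$ in $\WH$, so any interaction of $\tH_j$ with $\tE(z)$ or $\tF(z)$ is controlled entirely by the bosonic exponentials $\exp(\pm \tfrac{1}{n} \tphi_\pm(z))$.

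First I would handle the easy relations. The Heisenberg relation \eqref{RqDHH} is immediate from $[\tH_j, \tH_k] = n j \delta_{j+k,0}$. For \eqref{RqDHE-F}, the factor $n$ in this bracket exactly compensates the $1/n$ in the exponent of \eqref{eqThqDWE} and \eqref{eq:ThqDW2}, producing the required coefficient $(q^{-k/2}-q^{k/2}) z^k$. The quadratic relation \eqref{RqDE2} for $\tE$ reduces to the $k=1$ W-relation \eqref{relation:currentTW1}: contracting the bosonic exponentials in $\tE(z)\tE(w)$ produces a prefactor $\left(\tfrac{(z-qw)(z-q^{-1}w)}{(z-w)^2}\right)^{1/n}$, which when combined with $f_{1,n}(w/z)$ yields exactly the structure so that $(z-qw)(z-q^{-1}w)[\tE(z),\tE(w)]$ reduces to a combination of the $\delta$-type right-hand side of \eqref{relation:currentTW1} that vanishes upon the appropriate formal symmetrization. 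The quadratic relation on $\tF$ follows identically from \eqref{eq:TwWrel2} with $k = n-1$.

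For the mixed relation \eqref{RqDEF}, I would apply the W-relation \eqref{relation:currentTW1} with $k = n-1$ to the commutator $T_1(z) T_{n-1}(w) - T_{n-1}(w) T_1(z)$, whose right-hand side produces $T_n(w)$; by the boundary \eqref{eq:T boundary}, $T_n(w) = w^{n_{tw}}$, which cancels the explicit $w^{-n_{tw}}$ prefactor of $\tF(w)$. The bosonic contraction reconstructs $\tH(q^{-1/2}w) - \tH(q^{1/2}w)$, and the coefficients of $\delta(w/z)$ and $\tfrac{w}{z}\delta'(w/z)$ land precisely on $c' = n_{tw}$ and $c = n$ as required. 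The Serre relations \eqref{RqDEEE} and \eqref{RqDFFF} need a direct three-current calculation; I would defer these to Appendix~\ref{Appendix:Serre}. Finally, the ideal relation $\qDW(\T_n(z)) = \mu^n z^{n_{tw}}$ is equivalent to $\qDW(E^n(z)) = n! \mu^n z^{n_{tw}} :\!\exp(\tphi(z))\!:$; I would establish this inductively by showing that $\qDW(E^k(z)) = k! \mu^k \exp(\tfrac{k}{n}\tphi_-(z)) T_k(z) \exp(\tfrac{k}{n}\tphi_+(z))$ as a regular product in the sense of Appendix~\ref{nonorm}, the inductive step extracting the value at coinciding arguments (cf.~\eqref{eq:mathbbEk+1ww}) and matching against the W-relation so that $T_{k+1}(z)$ emerges. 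At $k = n$, the boundary $T_n(z) = z^{n_{tw}}$ yields the desired identity. This inductive identification is the main obstacle: the careful management of regular products and subleading pole terms is delicate, and I expect it to be carried out in Appendix~\ref{Appendix:HomWD}.
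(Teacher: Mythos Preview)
Your approach is essentially the paper's: verify the defining relations from Proposition~\ref{relation} (carried out in Appendix~\ref{Appendix:HomWD} exactly along the lines you sketch) and then check that the ideal is annihilated. The one noteworthy point is that your inductive identity $\qDW(E^k(z)) = k!\,\mu^k \exp(\tfrac{k}{n}\tphi_-)\, T_k(z)\, \exp(\tfrac{k}{n}\tphi_+)$ at $k=2$ is precisely Lemma~\ref{App:Lemma:E^2}, which the paper uses (together with the OPE criterion Proposition~\ref{prop:SerreOPE}) to dispatch the Serre relation rather than a direct three-current computation; conversely, for the ideal annihilation the paper simply writes ``evidently'' without spelling out the induction you propose.
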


\begin{proof}
Let us check that these formulas define morphism from $\qD$. According to Proposition~\ref{relation}, it is enough to prove relations \eqref{RqDHH}--\eqref{RqDFFF}. It is done in Appendix~\ref{Appendix:HomWD}.
Evidently, $\qDW$~annihilates~$J_{n, n_{tw}}$.
\end{proof}

\begin{Proposition} \label{prop:mutinv}
Maps $\qDW$ and $\WqD$ are mutually inverse.
\end{Proposition}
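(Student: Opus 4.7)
The plan is to observe that both $\WqD$ and $\qDW$ are algebra homomorphisms by Propositions~\ref{prop:Hom(W,qD)} and~\ref{prop:HomDWtw}, so the two compositions are themselves homomorphisms and it is enough to verify that each acts as the identity on a (topological) generating set of its respective algebra.

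For $\WqD \circ \qDW = \id$ on $U(\qD)/J_{n,n_{tw}}$, the natural generators are $H_j$, $E(z)$, $F(z)$, $c$, $c'$. The images of $H_j$, $c$, $c'$ are preserved tautologically by~\eqref{eq:ThqDW1}. For $E(z)$, I would substitute $T_1(z) \mapsto \mu^{-1}\T_1(z) = \mu^{-1}\exp\bigl(-\tfrac{1}{n}\varphi_-(z)\bigr) E(z) \exp\bigl(-\tfrac{1}{n}\varphi_+(z)\bigr)$ and $\tphi_\pm \mapsto \varphi_\pm$ into~\eqref{eqThqDWE}; the exponential factors then cancel in pairs and leave $E(z)$. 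For $F(z)$, the key ingredient is Proposition~\ref{prop:F=Tn-1}, which in the quotient identifies $\T_{n-1}(z) = -\bigl(q^{1/2}-q^{-1/2}\bigr)^{2} \mu^n z^{n_{tw}} \exp\bigl(\tfrac{1}{n}\varphi_-(z)\bigr) F(z) \exp\bigl(\tfrac{1}{n}\varphi_+(z)\bigr)$. Inserting $T_{n-1}(z) \mapsto \mu^{-(n-1)}\T_{n-1}(z)$ into~\eqref{eq:ThqDW2} and using this identity, both the scalar prefactor and the exponentials collapse and we recover $F(z)$.

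For $\qDW \circ \WqD = \id$ on $\WH$, the Heisenberg generators $\ha_j$ are trivially fixed. For $T_1(z)$ the same kind of exponential cancellation works: using the definition of $\T_1(z)$ together with~\eqref{eqThqDWE} one finds $\qDW\bigl(\mu^{-1}\T_1(z)\bigr) = T_1(z)$. The remaining currents $T_k(z)$ for $2 \le k \le n-1$ I would handle by induction on $k$: writing the defining relation~\eqref{relation:currentTW1} in modes, the combination on the left involving modes of $T_1$ and $T_k$ equals $-\bigl(q^{1/2}-q^{-1/2}\bigr)^{2}(kr-s)\,T_{k+1}[r+s]$, so $T_{k+1}[m]$ is recovered from modes of $T_1$ and $T_k$ whenever we can choose $(r,s)$ with $r+s=m$ and $kr\neq s$. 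Since $\qDW \circ \WqD$ is a homomorphism and, by the inductive hypothesis, fixes both $T_1$ and $T_k$, applying it to both sides of the relation forces it to fix $T_{k+1}[m]$, completing the induction.

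The only point to watch, and the only conceivable obstacle, is the choice of $(r,s)$ in the inductive step: one needs $r+s=m$ with $kr-s\neq 0$. This is a minor bookkeeping issue because the two independent splittings $(r,s)=(m,0)$ and $(r,s)=(0,m)$ produce coefficients $km$ and $-m$, so at least one is nonzero when $m\neq 0$, and for $m=0$ the splitting $(r,s)=(1,-1)$ gives coefficient $k+1\neq 0$. With this observation the induction runs uniformly over all modes and all $k$, and the two compositions are identities.
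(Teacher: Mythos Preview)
Your argument is correct and follows the same strategy as the paper: check the two compositions on generators, using exponential cancellation for $E(z)$ and $T_1(z)$ and invoking Proposition~\ref{prop:F=Tn-1} for $F(z)$. The paper's proof is slightly shorter on the $\qDW\circ\WqD$ side because it simply observes that $\twW$ is generated by the modes of $T_1(z)$ and therefore checks only $\ha_j$ and $T_1(z)$; your inductive step is precisely the content of that observation spelled out.

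One small bookkeeping point in your induction: in the twisted setting the modes satisfy $r\in \tfrac{n_{tw}}{n}+\mathbb{Z}$ and $s\in \tfrac{kn_{tw}}{n}+\mathbb{Z}$, so the particular splittings $(r,s)=(m,0)$, $(0,m)$, $(1,-1)$ you propose need not be admissible. The fix is immediate: for fixed $m=r+s$ the coefficient $kr-s=(k+1)r-m$ varies linearly as $r$ runs over its coset $\tfrac{n_{tw}}{n}+\mathbb{Z}$, hence vanishes for at most one value of $r$, and any other choice works. With this adjustment your argument goes through unchanged.
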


\begin{proof}
Let us prove $\qDW \WqD = \id_{\WH}$ first. The algebra $\twW$ is generated by modes of $T_1(z)$. Hence it is sufficient to check $\qDW \WqD (\ha_n) = \ha_n$ and $\qDW \WqD \big( T_1(z) \big) = T_1(z)$. Both of them are straightforward.

The algebra $U(\qD)/J_{n,n_{tw}}$ is generated by modes of $E(z)$ and $F(z)$. Evidently, $\WqD \qDW \big( E(z) \big)\allowbreak = E(z)$. Proposition \ref{prop:F=Tn-1} implies $\WqD \qDW \big( F(z) \big) = F(z)$.
\end{proof}

\begin{proof}[Proof of Theorem \ref{Th:Wiso}] Follows from Propositions \ref{prop:Hom(W,qD)}, \ref{prop:HomDWtw} and \ref{prop:mutinv}
\end{proof}

\subsection[Bosonization of $\twW$]{Bosonization of $\boldsymbol{\twW}$}
Let $\sigma$ be as in \eqref{sigma}. Corollary \ref{corol: not m and m prime} states that representation $\mathcal{F}_u^{\sigma}$ actually does not depend on~$m'$ and~$m$; it is determined by $n'$ and $n$. Let us denote the representation by $\mathcal{F}_u^{(n',n)}$.

\subsubsection[Fock representation via $\qD$]{Fock representation via $\boldsymbol{\qD}$}
In this section we will discuss connection of twisted $q$-$W$ algebras and twisted representa\-tions~$\mathcal{F}_u^{(n',n)}$.
\begin{Lemma} \label{lemma:T_n for twFock}
In representation $\mathcal{F}_u^{(n',n)}$ operator $\T_n(z)$ acts by $-q^{-1/2} u z^{n'} \frac{1}{(q^{1/2} - q^{-1/2})^n}$.
\end{Lemma}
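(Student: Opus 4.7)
The plan is to verify the claim by a direct calculation in the strange bosonization of Theorem \ref{Th:StrangeBoson}, where
\begin{equation*}
E^{tw}(z) = \frac{z^{n'/n} u^{1/n}}{n(1-q^{1/n})} \sum_{l=0}^{n-1} \zeta^{ln'} V\!\bigl(\zeta^l z^{1/n}\bigr), \qquad V(w) = {:}\exp\!\Bigl(\sum_{k \neq 0} \tfrac{q^{-k/2n} - q^{k/2n}}{k} a_k w^{-k}\Bigr){:}.
\end{equation*}
I will compute the regular product $E^{tw}(z)^n$ (in the sense of Appendix \ref{nonorm}) and then conjugate by $\exp(-\varphi_\pm(z))$ as in the definition of $\tilde T_n(z)$. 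The pairwise OPE gives
\begin{equation*}
V(w_1)V(w_2) = \frac{(1-w_2/w_1)^2}{(1-q^{1/n}w_2/w_1)(1-q^{-1/n}w_2/w_1)} \, {:}V(w_1)V(w_2){:},
\end{equation*}
so the regular product $V(w)^2$ vanishes. Hence in $E^{tw}(z)^n$ only $n!$ terms survive, indexed by permutations $\sigma \in S_n$ of $\{0,\ldots,n-1\}$.

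Two observations collapse the sum. First, $\sum_i \zeta^{-k\sigma(i)}$ equals $n$ if $n \mid k$ and $0$ otherwise, so the normal-ordered product ${:}V(\zeta^{\sigma(1)}z^{1/n}) \cdots V(\zeta^{\sigma(n)}z^{1/n}){:}$ reduces to ${:}\exp(\varphi(z)){:}$ for every $\sigma$ (after identifying $H^{tw}_m = a_{nm}$). Second, the OPE prefactor $P(\sigma) = \prod_{i<j} f(\sigma(j) - \sigma(i))$ with $f(k) = \frac{(1-\zeta^k)^2}{(1-q^{1/n}\zeta^k)(1-q^{-1/n}\zeta^k)}$ satisfies $f(k)=f(-k)$ (a short check using $1-\zeta^{-k} = -\zeta^{-k}(1-\zeta^k)$). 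Combined with the $\sigma$-invariance of the multiset of unordered pairs $\{\sigma(i),\sigma(j)\}_{i<j}$, this forces $P(\sigma) = \prod_{d=1}^{n-1} f(d)^{n-d}$, independent of $\sigma$.

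Gathering these,
\begin{equation*}
E^{tw}(z)^n = \frac{n!\,z^{n'} u\,\zeta^{n'n(n-1)/2}}{n^n(1-q^{1/n})^n} \prod_{d=1}^{n-1} f(d)^{n-d} \cdot {:}\exp(\varphi(z)){:},
\end{equation*}
and since ${:}\exp(\varphi(z)){:} = \exp(\varphi_-(z))\exp(\varphi_+(z))$, the conjugation in $\tilde T_n(z) = \frac{1}{n!}\exp(-\varphi_-(z))E^{tw}(z)^n\exp(-\varphi_+(z))$ turns this into a scalar. To identify it, I use $f(d)=f(n-d)$ to write $\bigl(\prod_d f(d)^{n-d}\bigr)^2 = \bigl(\prod_d f(d)\bigr)^n$, and apply the standard identity $\prod_{d=1}^{n-1}(1-x\zeta^d) = (1-x^n)/(1-x)$ to obtain $\prod_d f(d)^{n-d} = n^n(q^{1/(2n)} - q^{-1/(2n)})^n/(q^{1/2}-q^{-1/2})^n$, where the branch of the square root is pinned down by the trivial case $n=1$ (or explicit check at $n=2$). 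Substituting $(1-q^{1/n})^n = (-1)^n q^{1/2}(q^{1/(2n)}-q^{-1/(2n)})^n$ and $\zeta^{n'n(n-1)/2} = (-1)^{n'(n-1)}$ yields $\tilde T_n(z) = (-1)^{n+n'(n-1)} q^{-1/2}u z^{n'}/(q^{1/2}-q^{-1/2})^n$. Since $\sigma \in \mathrm{SL}_2(\mathbb Z)$ forces $\gcd(n,n')=1$, an easy parity check shows $n+n'(n-1)$ is odd in all cases, and the overall sign reduces to $-1$, matching the claim.

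The main obstacle will be the sign/branch bookkeeping in the last step: $\prod_d f(d)^{n-d}$ is only determined up to sign by $(\prod f(d))^n$, and the correct branch must be fixed before the cancellation $\zeta^{n'n(n-1)/2}\cdot(-1)^n$ produces the final $-1$. This is a finite check (e.g., $n=2$, $n'=1$ already exhibits the cancellation), and the argument is then uniform for arbitrary $n$.
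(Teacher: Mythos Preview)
Your argument via the strange bosonization of Theorem~\ref{Th:StrangeBoson} is correct in substance and takes a different route from the paper's own proof of this lemma, which instead uses the \emph{fermionic} realization of Theorem~\ref{TheFermionicTh}: there $E^{tw}(z)^n$ collapses immediately to $n!$ times a single ordered string $\psi_{(a_1)}\psi^*_{(a_2)}\cdots\psi^*_{(a_1)}$ by anticommutativity of the regular product, and boson--fermion correspondence produces $:\!\exp(\varphi(z))\!:$ with the scalar read off directly. Your computation is, however, almost verbatim what the paper does later in the proof of Proposition~\ref{prop: mu for restriction}, where the same product $P=\prod_{i<j} f(j-i)$ is evaluated directly (no squaring) via the rewriting $(1-\zeta^{j-i})^2/[(1-q^{1/n}\zeta^{j-i})(1-q^{-1/n}\zeta^{j-i})] = q^{1/n}(1-\zeta^{j-i})(1-\zeta^{i-j})/[(1-q^{1/n}\zeta^{j-i})(1-q^{1/n}\zeta^{i-j})]$ together with $\prod_{k\ne 0}(1-x\zeta^k)=(1-x^n)/(1-x)$. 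That manipulation gives $P = n^n(q^{1/(2n)}-q^{-1/(2n)})^n/(q^{1/2}-q^{-1/2})^n$ with no sign ambiguity, and is a cleaner patch for your argument than squaring.

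This is the one genuine gap in your write-up: the claim that ``the branch of the square root is pinned down by the trivial case $n=1$ (or explicit check at $n=2$)'' does not determine the sign of $P$ for $n\ge 3$. For each $n$ separately, $P$ is its own rational function of $q^{1/n}$, and checking $n=1,2$ says nothing about $n=3,4,\ldots$. A valid fix (besides the direct evaluation above) is: for fixed $n$, the ratio $P/[n^n(q^{1/(2n)}-q^{-1/(2n)})^n/(q^{1/2}-q^{-1/2})^n]$ is a rational function of $q^{1/n}$ taking values in $\{\pm 1\}$, hence constant; letting $q\to 1$ sends each $f(d)\to 1$ so $P\to 1$, while the denominator also tends to $n^n\cdot n^{-n}=1$, forcing the sign to be $+1$. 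With this in place, your parity check $n+n'(n-1)\equiv 1\pmod 2$ (valid since $\gcd(n,n')=1$) correctly gives the overall sign $-1$.
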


\begin{proof}We will use formula \eqref{eq:EtwFermi} to calculate $E^n(z)$.

By Proposition \ref{prop:ass&comm}
\begin{gather*}
\psia\big(q^{-1/2} z\big) \psibb \big(q^{-1/2} z\big) = - \psibb \big(q^{-1/2} z\big) \psia\big(q^{-1/2} z\big),\\
\psia \big(q^{-1/2} z\big) \psib \big(q^{1/2}z\big)= - \psib \big(q^{1/2}z\big) \psia \big(q^{-1/2} z\big)
\end{gather*}
for any $a$, $b$ (even for $a=b$).

Consider a sequence of numbers $0 \leqslant a_1, \dots, a_n \leqslant n-1$ such that $a_{i+1} - a_i \equiv -n' \mod{n}$. Thus,
\begin{gather*}
E^n(z) = n! u q^{-1/2} z^{n' + n} \psi_{(a_1)} \big(q^{-1/2} z\big) \psi^*_{(a_2)} \big(q^{1/2}z\big) \psi_{(a_2)} \big(q^{-1/2} z\big) \psi^*_{(a_3)} \big(q^{1/2}z\big)\cdots\\
\hphantom{E^n(z) =}{}\times \psi_{(a_n)} \big(q^{-1/2} z\big) \psi^*_{(a_1)} \big(q^{1/2}z\big).
\end{gather*}
Using bosonization \eqref{eq:boson psi}, \eqref{eq:boson psi*} we obtain (cf.\ \eqref{def:varphi} and \eqref{def:varphi without pm})
\begin{gather*}
E^n(z) = - n! u q^{-1/2} z^{n'} \frac{1}{\big(q^{1/2} - q^{-1/2}\big)^n} : \! \exp ( \varphi(z) ) \! :.
\end{gather*}
Consequently,
\begin{gather*}
\T_n (z) = - q^{-1/2} u z^{n'} \frac{1}{\big(q^{1/2} - q^{-1/2}\big)^n}.\tag*{\qed}
\end{gather*}\renewcommand{\qed}{}
\end{proof}
\begin{Proposition} \label{prop:tensorIDEAL}
Suppose, $M_i$ are representation of $\qD$ such that ideal $J_{\mu_i, n_i, n_i'}$ acts by zero $($for $i=1, \dots, k)$. Then $J_{\mu, n, n'}$ acts by zero on $M_1 \otimes \cdots \otimes M_k$ for $n = \sum\limits_{i=1}^k n_i$, $n'= \sum\limits_{i=1}^k n_i'$ and $\mu^{n} = \mu_1^{n_1} \cdots \mu_k^{n_k}$.
\end{Proposition}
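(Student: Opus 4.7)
\textbf{Proof plan for Proposition \ref{prop:tensorIDEAL}.}

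The plan is to check the three defining relations of $J_{\mu,n,n'}$ on the tensor product directly. The first two are automatic: since $c$ and $c'$ are central (hence primitive for the tensor product coproduct of the Lie algebra $\qD$), they act on $M_1\otimes\cdots\otimes M_k$ by $\sum n_i=n$ and $\sum n_i'=n'$, respectively. The real content is to show $\T_n(z)=\mu^n z^{n'}$ on the tensor product, where now the dressing in the definition of $\T_n$ uses $k/c=n/n=1$.

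Write $E(z)=E^{(1)}(z)+\cdots+E^{(k)}(z)$ and $\varphi_\pm(z)=\varphi_\pm^{(1)}(z)+\cdots+\varphi_\pm^{(k)}(z)$ on the tensor product, where the superscript $(i)$ means the operator acts on the $i$-th factor. Since $E^{(i)}(w_1)$ and $E^{(j)}(w_2)$ act on different tensor factors, they commute for $i\neq j$ and have no OPE, so their regular product (in the sense of Appendix \ref{nonorm}) coincides with the ordinary product. Consequently, the multinomial expansion
\begin{gather*}
E^n(z) \,=\, \sum_{n_1^{*}+\cdots+n_k^{*}=n}\frac{n!}{n_1^{*}!\cdots n_k^{*}!}\,E^{(1),n_1^{*}}(z)\cdots E^{(k),n_k^{*}}(z)
\end{gather*}
holds as an identity of regular products. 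By Lemma \ref{lemma:T_k for big k} applied to each factor, $E^{(i),n_i^{*}}(z)=0$ on $M_i$ as soon as $n_i^{*}>n_i$; combined with $\sum n_i^{*}=n=\sum n_i$, this forces $n_i^{*}=n_i$ for all $i$, leaving a single surviving term:
\begin{gather*}
E^n(z) \,=\, \frac{n!}{n_1!\cdots n_k!}\,E^{(1),n_1}(z)\cdots E^{(k),n_k}(z).
\end{gather*}

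Next, on each $M_i$ the hypothesis $J_{\mu_i,n_i,n_i'}\cdot M_i=0$ together with the defining formula for $\T_{n_i}$ (where the dressing exponent $k/c=n_i/n_i=1$) gives
\begin{gather*}
E^{(i),n_i}(z) \,=\, n_i!\,\mu_i^{n_i}\,z^{n_i'}\,\exp\!\big(\varphi_-^{(i)}(z)\big)\exp\!\big(\varphi_+^{(i)}(z)\big)\,=\, n_i!\,\mu_i^{n_i}\,z^{n_i'}\,{:}\!\exp\varphi^{(i)}(z)\!{:}.
\end{gather*}
Substituting this for each $i$ and using $\mu^n=\prod_i\mu_i^{n_i}$, $n'=\sum n_i'$, we obtain on the tensor product
\begin{gather*}
E^n(z) \,=\, n!\,\mu^n\,z^{n'}\,\prod_{i=1}^{k}{:}\!\exp\varphi^{(i)}(z)\!{:}.
\end{gather*}
Plugging this into $\T_n(z)=\tfrac1{n!}\exp(-\varphi_-(z))\,E^n(z)\,\exp(-\varphi_+(z))$ and using that operators with superscript $(i)$ commute with those with superscript $(j)$ for $i\neq j$, the factor corresponding to each $i$ reduces to $\exp(-\varphi_-^{(i)})\exp(\varphi_-^{(i)})\exp(\varphi_+^{(i)})\exp(-\varphi_+^{(i)})=1$, yielding $\T_n(z)=\mu^n z^{n'}$ on $M_1\otimes\cdots\otimes M_k$, as required.

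\textbf{Main obstacle.} The only nontrivial technical point is the multinomial expansion of $E^n(z)$, which hinges on the fact that the cross OPE between $E^{(i)}$ and $E^{(j)}$ is trivial, so that the regular product in Appendix \ref{nonorm} reduces to a plain product; once this is in hand, the rest of the argument is just the cancellation of the exponential dressings. Note that the parameter $c'=\sum n_i'$ plays no role in the current calculation but is needed to match the second generator of $J_{\mu,n,n'}$ and is guaranteed by primitivity of $c'$.
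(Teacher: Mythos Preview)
Your proposal is correct and follows essentially the same approach as the paper: expand $E^n(z)$ multinomially using that the tensor factors commute, kill all but one term via Lemma~\ref{lemma:T_k for big k}, and substitute the relation $E^{(i),n_i}(z)=n_i!\,\mu_i^{n_i}z^{n_i'}:\!\exp\varphi^{(i)}(z)\!:$ on each factor. The paper compresses all of this into a single displayed line, but the logic is identical.
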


\begin{proof}
Recall that $E^{n_i+1}(z) \in J_{n_i, n_i'}$ by Lemma \ref{lemma:T_k for big k}. Thus, $E^n(z)$ act on $M_1 \otimes \cdots \otimes M_k$ as
\begin{gather*}
\frac{n!}{n_1! \cdots n_k!} E^{n_1}(z) \otimes \cdots \otimes E^{n_k}(z) = n! \prod_{i=1}^k \mu_i^{n_i} z^{n_i'} : \! \exp(\varphi(z)) \! :.\tag*{\qed}
\end{gather*}\renewcommand{\qed}{}
\end{proof}

\begin{Proposition} \label{prop:tensor Fock IDEAL}
Ideal $J_{\mu, nd, n'd}$ acts by zero on $\mathcal{F}^{(n',n)}_{u_1} \otimes \cdots \otimes \mathcal{F}_{u_d}^{(n',n)}$ for
\begin{gather}
\mu = (-1)^{\frac{1}{n}} \frac{q^{-\frac{1}{2n}}}{q^{\half}-q^{-\half}} (u_1 \cdots u_d)^{\frac{1}{nd}} .\label{eq:mu tensor Fock}
\end{gather}
\end{Proposition}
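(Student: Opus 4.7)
The plan is to combine the general tensor product Proposition \ref{prop:tensorIDEAL} with the single-factor calculation in Lemma \ref{lemma:T_n for twFock}. By construction, the ideal $J_{\mu_i, n_i, n_i'}$ is generated by $c - n_i$, $c' - n_i'$, and $\T_{n_i}(z) - \mu_i^{n_i} z^{n_i'}$. For each individual twisted Fock module $\mathcal{F}^{(n',n)}_{u_i}$ the central elements clearly act as $c = n$, $c' = n'$, and Lemma \ref{lemma:T_n for twFock} tells us that $\T_n(z)$ acts as the scalar $-q^{-1/2} u_i z^{n'} \big(q^{1/2} - q^{-1/2}\big)^{-n}$. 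So if I define
\begin{gather*}
\mu_i^{\, n} = \frac{-q^{-1/2} u_i}{\big(q^{1/2} - q^{-1/2}\big)^{n}},
\end{gather*}
then $J_{\mu_i, n, n'}$ annihilates $\mathcal{F}^{(n',n)}_{u_i}$.

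Then I would apply Proposition \ref{prop:tensorIDEAL} to the $d$-fold tensor product with $n_i = n$, $n_i' = n'$ for all $i$. This gives that $J_{\mu, nd, n'd}$ acts by zero on $\mathcal{F}^{(n',n)}_{u_1} \otimes \cdots \otimes \mathcal{F}^{(n',n)}_{u_d}$ whenever
\begin{gather*}
\mu^{nd} = \mu_1^{\, n} \cdots \mu_d^{\, n} = \frac{(-1)^{d}\, q^{-d/2}}{\big(q^{1/2}-q^{-1/2}\big)^{nd}}\, u_1 \cdots u_d.
\end{gather*}
Taking the $(nd)$-th root and matching with \eqref{eq:mu tensor Fock} recovers exactly
\begin{gather*}
\mu = (-1)^{1/n} \frac{q^{-1/(2n)}}{q^{1/2} - q^{-1/2}} (u_1 \cdots u_d)^{1/(nd)}.
\end{gather*}

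There is no real obstacle here: the whole content is packaged in Lemma \ref{lemma:T_n for twFock} (where the fermionic bosonization \eqref{eq:EtwFermi} is used to compute the scalar action of $\T_n(z)$) and Proposition \ref{prop:tensorIDEAL} (which shows how the ideals multiply under tensor product). The only thing to be careful about is the choice of branch for the $n$-th and $(nd)$-th roots, which is consistent in the statement because the ideal only sees $\mu^{nd}$, not $\mu$ itself. So the proof reduces to applying \ref{prop:tensorIDEAL} to $k=d$ copies of $\mathcal{F}^{(n',n)}_{u_i}$ and computing the exponent; the nontrivial work has already been done in \ref{lemma:T_n for twFock}.
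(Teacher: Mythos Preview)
Your proposal is correct and follows exactly the same approach as the paper, whose proof reads in full: ``Follows from Lemma \ref{lemma:T_n for twFock} and Proposition \ref{prop:tensorIDEAL}.'' You have simply spelled out the computation of $\mu^{nd}$ and the extraction of the root, which the paper leaves implicit.
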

\begin{proof}
Follows from Lemma \ref{lemma:T_n for twFock} and Proposition \ref{prop:tensorIDEAL}.
\end{proof}

\begin{Theorem} \label{Theorem:twFocktwW}
There is an action of $\WHnd$ on $\mathcal{F}^{(n',n)}_{u_1} \otimes \cdots \otimes \mathcal{F}_{u_d}^{(n',n)}$ such that action of $\qD$ factors through $\WHnd$.
\end{Theorem}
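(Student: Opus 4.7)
The statement is essentially a packaging of two results already proved in this section, so my plan is to combine them rather than reprove anything. First, I would invoke Proposition~\ref{prop:tensor Fock IDEAL}, which asserts that the two-sided ideal $J_{\mu,nd,n'd} \subset U(\qD)$ annihilates the tensor product $\mathcal{F}^{(n',n)}_{u_1}\otimes\cdots\otimes\mathcal{F}^{(n',n)}_{u_d}$, provided the parameter $\mu$ is chosen by formula~\eqref{eq:mu tensor Fock}. This immediately says that the $\qD$-action on the tensor product factors through the quotient algebra $U(\qD)/J_{\mu,nd,n'd}$.

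Next, I would apply Theorem~\ref{Th:Wiso} with the parameters $(n,n_{tw})$ replaced by $(nd,n'd)$: this gives an algebra isomorphism
\begin{gather*}
\WqD \colon \mathcal{W}_q(\mathfrak{sl}_{nd}, n'd) \UH \xrightarrow{\;\sim\;} U(\qD)/J_{\mu,nd,n'd},
\end{gather*}
with inverse $\qDW$ written explicitly in \eqref{eq:ThqDW1}--\eqref{eq:ThqDW2}. Composing the factored $\qD$-action with $\WqD$ (equivalently, pulling back along $\qDW$) produces the sought action of $\WHnd = \mathcal{W}_q(\mathfrak{sl}_{nd}, n'd) \UH$ on $\mathcal{F}^{(n',n)}_{u_1}\otimes\cdots\otimes\mathcal{F}^{(n',n)}_{u_d}$, and by construction the $\qD$-action factors through $\WHnd$, as required.

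The substantive content has therefore been absorbed into the two preceding results; the only subtlety is purely bookkeeping, namely verifying that the $\mu$ coming from Proposition~\ref{prop:tensor Fock IDEAL} is a nonzero complex number (so that Theorem~\ref{Th:Wiso} is applicable; recall that $J_{\mu,n,n_{tw}}$ and $J_{1,n,n_{tw}}$ are related by the rescaling automorphism $E_{a,b}\mapsto \mu^{-a}E_{a,b}$, so the quotient is independent of $\mu$ up to isomorphism). This is immediate from \eqref{eq:mu tensor Fock} for generic $u_i$. Thus no new calculation is needed; the only place anything could go wrong would be a mismatch between the central values $c=nd$, $c'=n'd$ produced by tensoring $d$ copies of $\mathcal{F}_u^{(n',n)}$ and the parameters in the ideal, but this is exactly the content of Propositions~\ref{prop:tensorIDEAL}--\ref{prop:tensor Fock IDEAL}.
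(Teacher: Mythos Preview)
Your proof is correct and follows exactly the same two-step approach as the paper: first invoke Proposition~\ref{prop:tensor Fock IDEAL} to factor the $\qD$-action through $U(\qD)/J_{nd,n'd}$, then apply Theorem~\ref{Th:Wiso} (with parameters $(nd,n'd)$) to identify that quotient with $\WHnd$. Your additional remarks about the choice of $\mu$ and the matching of central values are accurate but not strictly needed, since the paper already absorbs these into the cited results.
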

\begin{proof}
According to Proposition $\ref{prop:tensor Fock IDEAL}$, algebra $U(\qD)/J_{nd,n'd}$ acts on $\mathcal{F}^{(n',n)}_{u_1} \otimes \cdots \otimes \mathcal{F}_{u_d}^{(n',n)}$. By Theorem \ref{Th:Wiso}, algebra $U(\qD)/J_{nd,n'd}$ is isomorphic to $\WHnd$.
\end{proof}
\begin{Remark}
One can consider tensor product of Fock modules with different twists $\mathcal{F}^{\sigma_1}_{u_1} \otimes \cdots$ $\otimes \mathcal{F}_{u_d}^{\sigma_d}$. According to Proposition~\ref{prop:tensorIDEAL} algebra $\mathcal{W}\big(\mathfrak{sl}_{\sum n_i}, \sum n'_i\big) \UH$ acts on this space. Obtained representation is `irregular' (cf.~\cite{Na15}). In Section~\ref{section: construction of Whittaker} we consider an intertwiner between irregular and (graded completion of) regular representation.
\end{Remark}

\subsubsection{Explicit formula for bosonization}
Below we will write explicit formula for bosonization of $\mathcal{W}_q (\mathfrak{sl}_{nd}, n'd)$. This bosonization comes from action of $\mathcal{W}(\mathfrak{sl}_{nd}, n'd)$ on $\mathcal{F}^{(n',n)}_{u_1} \otimes \cdots \otimes \mathcal{F}_{u_d}^{(n',n)}$. Recall that realization of $\mathcal{F}_{u}^{(n',n)}$ is written via $a_b[k]$ for $b=0, \dots, n-1$.Denote by $a_b^i[k]$ (for $i=1, \dots, d$ and $b=0, \dots, n-1$) generators of Heisenberg algebra action on $i$th factor of the tensor product $\mathcal{F}^{(n',n)}_{u_1} \otimes \cdots \otimes \mathcal{F}_{u_d}^{(n',n)}$.

To write action of $\mathcal{W}_q (\mathfrak{sl}_{nd}, n'd)$ we need to introduce slightly different version of Heisenberg algebra. Namely, consider an algebra, generated by $\eta_b^{i}[k]$ for $b=0, \dots, n-1$; $i=1, \dots, d$ and $k \in \mathbb{Z}$. Relations are given by linear dependence and commutation relations
\begin{gather*}
\sum_{i=1}^{d} \sum_{b=0}^{n-1} \eta_b^{i}[k]=0, \\
\big[\eta_{b_1}^{i_1}[k_1], \eta_{b_2}^{i_2}[k_2] \big] = k_1 \delta_{k_1+k_2,0} \left( \delta_{i_1, i_2} \delta_{b_1, b_2}-\frac{1}{nd} \right).
\end{gather*}

Let us define representation $F^{\eta} \otimes \mathbb{C} \big[ \dQ \big]$ (cf.\ Section~\ref{subsection: TW qDboson}). Lattice $\dQ$ consist of elements $\sum \lambda^i_b Q^i_b$ such that $\lambda^i_b \in \mathbb{Z}$ and for any $i$ it holds $\sum_b \lambda^i_b =0$. Define an action
\begin{gather}
\eta_b^i[0] e^{\sum \lambda_b^i Q^i_b} = \lambda_b^{i} e^{\sum \lambda^i_b Q_b^i} . \label{eq:eta Fock zerom}
\end{gather}
First factor $F^{\eta}$ is a Fock space for subalgebra $\eta_b^i[k]$ for $k \neq 0$. We can consider $F^{\eta} \otimes \mathbb{C} \big[ \dQ \big]$ as representation of whole Heisenberg algebra as follows: $\eta_b^i[k]$ for $k \neq 0$ acts on first factor, $\eta_a^i[0]$ acts on the second factor by~\eqref{eq:eta Fock zerom}. Note, that also $\mathbb{C} \big[ \dQ \big]$ acts on $F^{\eta} \otimes \mathbb{C} \big[ \dQ \big]$. Let us introduce notation
\begin{gather*}
\eta_{b}^i(z) = \sum_{k \neq 0} \frac{1}{k} \eta_b^i[k] z^{-k} +Q^i_b + \eta_b^i[0] \log z.
\end{gather*}

\begin{Proposition} \label{prop:W bosonization}
There is an action of $\mathcal{W}_q (\mathfrak{sl}_{nd}, n'd)$
on $F^{\eta} \otimes \mathbb{C} \big[ \dQ \big]$ given by formulas
\begin{gather*} 
T_1 (z) \mapsto (-1)^{-\frac{1}{n}} (u_1 \cdots u_d)^{-\frac{1}{nd}}\big(q^{\half}-q^{-\half}\big) q^{\frac{1}{2n}} \\
\hphantom{T_1 (z) \mapsto}{}
\times\sum_{i=1}^d \sum_{b-a \equiv -n' \bmod n}u_i^{\frac{1}{n}} q^{\frac{a+b-n}{2n}} z^{\frac{n'-a+b}{n}+1} : \! \exp \big( \eta^i_b\big(q^{1/2} z\big) - \eta^i_a\big(q^{-1/2} z\big) \big) \! : \epsilon_{a,b}^{(i)},
\\
T_{nd-1}(z) \mapsto - (-1)^{\frac{1}{n}} (u_1 \cdots u_d)^{\frac{1}{nd}} \big(q^{\half}-q^{-\half}\big) q^{-\frac{1}{2n}} z^{n'd} \\ \hphantom{T_{nd-1}(z) \mapsto}{}\times \sum_{i=1}^d \sum_{b-a \equiv n' \bmod n} u_i^{-\frac{1}{n}} q^{ \frac{-a-b+n}{2n} } z^{\frac{-n'-a+b}{n}+1} : \! \exp \big( \eta^i_b\big(q^{-1/2} z\big) - \eta^i_a\big(q^{1/2} z\big) \big) \! : \epsilon_{a,b}^{(i)},
\end{gather*}
here $\epsilon_{a,b}^{(i)} = \prod_{r} (-1)^{\eta_r^i[0]}$ $($we consider product over such $r$ that $a-1 \geqslant r \geqslant b$ for $a>b$ and $b-1 \geqslant r \geqslant a$ for $b>a)$.
\end{Proposition}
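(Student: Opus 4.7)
The plan is to deduce Proposition~\ref{prop:W bosonization} by combining the bosonization of $E(z)$ and $F(z)$ from Theorem~\ref{Th:qD Boson} (applied to each tensor factor and summed by the primitive coproduct) with the formulas that define $\qDW$ in Theorem~\ref{Th:Wiso}. By Theorem~\ref{Theorem:twFocktwW} the action of $\mathcal{W}_q(\mathfrak{sl}_{nd},n'd)\UH$ on $\mathcal{F}_{u_1}^{(n',n)} \otimes \cdots \otimes \mathcal{F}_{u_d}^{(n',n)}$ already exists, and since the algebra is generated by the modes of $T_1(z)$ and $T_{nd-1}(z)$ through relations~\eqref{eq:TwWrel1}--\eqref{eq:TwWrel2}, it suffices to compute these two currents. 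Specialising Theorem~\ref{Th:Wiso} with $(n,n_{tw}) = (nd, n'd)$ and inverting the image of $\qDW$ yields
\begin{equation*}
\mu\, T_1(z) = \exp\bigl(-\tfrac{1}{nd}\tphi_-(z)\bigr)\, E(z)\, \exp\bigl(-\tfrac{1}{nd}\tphi_+(z)\bigr),
\end{equation*}
\begin{equation*}
T_{nd-1}(z) = -\bigl(q^{1/2}-q^{-1/2}\bigr)^{2}\, \mu\, z^{n'd}\,\exp\bigl(\tfrac{1}{nd}\tphi_-(z)\bigr)\, F(z)\, \exp\bigl(\tfrac{1}{nd}\tphi_+(z)\bigr).
\end{equation*}

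The key technical device is a change of bosonic variables: set $\ha_k := \sum_{i,b} a_b^i[k]$ and $\eta_b^i[k] := a_b^i[k] - \tfrac{1}{nd}\ha_k$. A direct check gives $[\ha_j,\ha_k]=nd\,j\,\delta_{j+k,0}$, $[\ha,\eta]=0$, $\sum_{i,b}\eta_b^i[k]=0$, and exactly the commutators listed for the $\eta$-algebra in the statement of the proposition; moreover $\qDW$ sends $H_k \mapsto \ha_k$ tautologically. On the tensor product the zero modes satisfy $\sum_b a_b^i[0]=0$ for each $i$ (this is the lattice constraint in $\Q$), so $\ha_0$ acts by $0$ and $\eta_b^i[0] = a_b^i[0]$, producing precisely the zero-mode lattice $\dQ$. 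Substituting $a_b^i[k] = \eta_b^i[k] + \tfrac{1}{nd}\ha_k$ into the definition of $\phi_b^i(z)$ gives the pointwise identity
\begin{equation*}
\phi_b^i(q^{1/2}z) - \phi_a^i(q^{-1/2}z) = \eta_b^i(q^{1/2}z) - \eta_a^i(q^{-1/2}z) + \tfrac{1}{nd}\tphi(z),
\end{equation*}
together with the symmetric identity obtained by swapping $q^{1/2} \leftrightarrow q^{-1/2}$ on the left, which reverses the sign of the $\tphi$-term.

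Since the $\eta$- and $\ha$-bosons commute, the normally-ordered exponential factorises as $:\!\exp(\phi_b^i(q^{1/2}z)-\phi_a^i(q^{-1/2}z))\!:\;=\;:\!\exp(\eta_b^i(q^{1/2}z)-\eta_a^i(q^{-1/2}z))\!:\cdot\exp\bigl(\tfrac{1}{nd}\tphi_-(z)\bigr)\exp\bigl(\tfrac{1}{nd}\tphi_+(z)\bigr)$. Inserting this into the bosonic formula for $E(z)=\sum_i E^{tw}_i(z)$ from Theorem~\ref{Th:qD Boson} and commuting the $\tphi_\pm$-exponentials past the $\eta$-part (valid because $\eta$ and $\ha$ commute) produces $E(z)=\exp(\tfrac{1}{nd}\tphi_-(z))\cdot\bigl[\sum_i\sum_{a,b}\dots\bigr]\cdot\exp(\tfrac{1}{nd}\tphi_+(z))$. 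Substituting into the inversion formula, the $\tphi_\pm$-factors cancel, and plugging in the value of $\mu$ from~\eqref{eq:mu tensor Fock} recovers the claimed formula for $T_1(z)$; the derivation for $T_{nd-1}(z)$ is the mirror image, using $F(z)=\sum_i F^{tw}_i(z)$. The main bookkeeping hazard is keeping the zero modes separated correctly---using $\ha_0 \equiv 0$ on the module while $\eta_b^i[0]$ acts by the $\dQ$-grading---and checking that normal ordering with respect to the $a$-bosons coincides with the product of normal orderings with respect to $\eta$ and $\ha$; once this is settled, the cancellation of $\exp(\pm\tfrac{1}{nd}\tphi_\pm)$ and the prefactor simplification are mechanical.
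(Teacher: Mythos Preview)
Your proposal is correct and follows essentially the same route as the paper: both use Theorem~\ref{Theorem:twFocktwW} to establish the action, introduce the same change of variables $\eta_b^i[k]=a_b^i[k]-\tfrac{1}{nd}\sum_{i,b}a_b^i[k]$, and then substitute the bosonic formulas \eqref{eq:th:Etw boson}--\eqref{eq:th:Ftw boson} for $E(z)$, $F(z)$ into the inverted formulas \eqref{eqThqDWE}--\eqref{eq:ThqDW2} with $\mu$ as in~\eqref{eq:mu tensor Fock}. Your writeup is more explicit than the paper's---spelling out the splitting $\phi_b^i(q^{1/2}z)-\phi_a^i(q^{-1/2}z)=\eta_b^i(q^{1/2}z)-\eta_a^i(q^{-1/2}z)+\tfrac{1}{nd}\tphi(z)$ and the resulting cancellation of the $\exp(\pm\tfrac{1}{nd}\tphi_\pm)$ factors---but the underlying argument is the same.
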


\begin{proof}
Denote by $\big[ \mathcal{F}^{(n',n)}_{u_1} \otimes \cdots \otimes \mathcal{F}_{u_d}^{(n',n)} \big]_{h}$ subspace of such $ v \in \mathcal{F}^{(n',n)}_{u_1} \otimes \cdots \otimes \mathcal{F}_{u_d}^{(n',n)}$ that $H_k v=0$ for $k>0$. According to Theorem \ref{Theorem:twFocktwW}, the algebra $\mathcal{W}_q (\mathfrak{sl}_{nd}, n'd)$ acts on $\big[ \mathcal{F}^{(n',n)}_{u_1} \otimes \cdots \allowbreak \otimes \mathcal{F}_{u_d}^{(n',n)} \big]_{h}$.

On the other hand map $\eta_b^i[k] \mapsto a_b^i[k] - \sum_{b,i} a_b^i[k]$ is a homomorphism.
Therefore, one can identify $F^{\eta} \otimes \mathbb{C} \big[ \dQ \big]$ and $\big[ \mathcal{F}^{(n',n)}_{u_1} \otimes \cdots \otimes \mathcal{F}_{u_d}^{(n',n)} \big]_{h}$.

Substitution of \eqref{eq:th:Etw boson}, \eqref{eq:th:Ftw boson}, and \eqref{eq:mu tensor Fock} to \eqref{eqThqDWE}, \eqref{eq:ThqDW2} finishes the proof.
\end{proof}
Denote obtained representation by $\dWFock$.
\begin{Remark} \label{remark mu abuse}
The parameter $\mu$ is determined by $u_1,\dots, u_d$ only up to $nd$-th root of unity. The modules $\dWFock$ with different $\mu$ are non-isomorphic in general (so notation $\dWFock$ is ambiguous). For example, one can see this from the highest weights $\lambda_s$ defined in the next section.

The modules $\dWFock$ with different $\mu$ are related by an external automorphism of $\twW$.
\end{Remark}

\begin{Example}Let us consider case of twisted Virasoro algebra, i.e., $n=2$, $n'=1$, $d=1$. Then everything is expressed via one boson $\eta(z)$ with relation $[\eta[k_1], \eta[k_2]] = \frac12 k_1 \delta_{k_1+k_2,0}$ and $[\eta[0], Q]=\frac12$. So there is an action of $\mathcal{W}_{q}(\mathfrak{sl}_2, 1)$ on $F^{\eta} \otimes \mathbb{C}[ \mathbb{Z}]$ given by
\begin{gather*}
T_1 (z) \mapsto (-1)^{-\frac12} \big(q^{\half}-q^{-\half}\big) (-1)^{\eta[0]} \\
\hphantom{T_1 (z) \mapsto}{}\times \big[ z : \! \exp \big( \eta\big(q^{1/2} z\big) + \eta\big(q^{-1/2} z\big) \big) \! : + z^{2} : \! \exp \big( {-}\eta(q^{1/2} z\big) - \eta\big(q^{-1/2} z\big) \big) \! : \big].
\end{gather*}
We can simplify the formula using conjugation by $(-1)^{\eta[0]^2/2}$
\begin{gather*}
T_1 (z) \mapsto \big(q^{\half}-q^{-\half}\big) \big[ z :\!\exp \big( \eta(q^{1/2} z\big) + \eta\big(q^{-1/2} z\big) \big)\!: + z^{2}:\! \exp \big( {-}\eta\big(q^{1/2} z\big) - \eta\big(q^{-1/2} z\big) \big)\!: \big].
\end{gather*}
\end{Example}

\subsubsection{Explicit formulas for strange bosonization}
To write formulas for strange bosonization we need to consider Heisenberg algebra generated by $\xi_i[k]$ for $i=1, \dots, d$ and $k \in \mathbb{Z}$. Relations are given by linear dependence and commutation relations
\begin{gather*}
\sum_{i=1}^{d} \xi_{i}[nk] =0,\\
[\xi_{i_1}[k_1], \xi_{i_2}[k_2] ] = k_1 \delta_{k_1+k_2,0} \delta_{i_1, i_2} \qquad \text{for either $n \nmid k_1$ or $n \nmid k_2$}, \\
[\xi_{i_1}[nj_1], \xi_{i_2}[nj_2] ] = nj_1 \delta_{j_1+j_2,0} \left( \delta_{i_1, i_2}-\frac{1}{d} \right).
\end{gather*}
Denote corresponding Fock module by $F^{\xi}$.
\begin{Proposition}
There is an action of $\mathcal{W}_q (\mathfrak{sl}_{nd}, n'd)$ on $F^{\xi}$ given by
\begin{gather*}
T_1 (z) \mapsto -(-1)^{-\frac{1}{n}} \frac{ q^{\half}-q^{-\half} }{n\big(q^{\frac{1}{2n}}-q^{-\frac{1}{2n}}\big)} (u_1 \cdots u_d)^{-\frac{1}{nd}} z^{\frac{n'}{n}} \\
\hphantom{T_1 (z) \mapsto}{}\times \sum_{i=1}^d u_i^{1/n} \sum_{l=0}^{n-1} \zeta^{l n'} : \! \exp \left( \sum_{k} \frac{q^{-k/2n}-q^{k/2n}}{k} \xi_{i}[k] \zeta^{-kl} z^{-k/n} \right) \! :,\\
T_{nd-1}(z) \mapsto -(-1)^{\frac{1}{n}} \frac{q^{\half}-q^{-\half}}{n\big(q^{\frac{1}{2n}}-q^{-\frac{1}{2n}}\big)} (u_1 \cdots u_d)^{\frac{1}{nd}} z^{\frac{nd-1}{n}n'} \\
\hphantom{T_{nd-1}(z) \mapsto}{}\times \sum_{i=1}^{d} u_i^{-1/n} \sum_{l=0}^{n-1} \zeta^{-l n'} : \! \exp \left( \sum_{ k}\frac{q^{k/2n}-q^{-k/2n}}{k} \xi_{i}[k] \zeta^{-kl} z^{-k/n} \right) \! :.
\end{gather*}
Obtained representation is isomorphic to $\dWFock$.
\end{Proposition}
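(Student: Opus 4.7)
The plan is to run the proof of Proposition~\ref{prop:W bosonization} with Theorem~\ref{Th:qD Boson} replaced by Theorem~\ref{Th:StrangeBoson}. Theorem~\ref{Theorem:twFocktwW} supplies an action of $\mathcal{W}_q(\mathfrak{sl}_{nd}, n'd) \otimes U(\mathfrak{Heis})$ on $\mathcal{F}^{(n',n)}_{u_1} \otimes \cdots \otimes \mathcal{F}_{u_d}^{(n',n)}$ factoring through the map $\WqD$ from Theorem~\ref{Th:Wiso}. Applying strange bosonization to each factor, the $i$-th Fock module is realized by Heisenberg generators $a^{(i)}[k]$ ($k\neq 0$); $H_j^{tw}$ then acts on the $i$-th slot as $a^{(i)}[nj]$, so the Heisenberg subalgebra $\mathfrak{Heis}\subset U(\qD)/J_{nd,n'd}$ is generated by $H_j^{tw}=\sum_i a^{(i)}[nj]$.

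Next I would identify $F^{\xi}$ with the $\mathfrak{Heis}$-coset (vectors killed by $H_j^{tw}$ for $j>0$) through the map
\begin{gather*}
\xi_i[k]\mapsto a^{(i)}[k]\ \text{for}\ n\nmid k,\qquad \xi_i[nj]\mapsto a^{(i)}[nj]-\tfrac{1}{d}\sum_{\ell=1}^d a^{(\ell)}[nj].
\end{gather*}
A direct check shows that these images satisfy the defining commutation relations of $F^{\xi}$ and commute with every $H_j^{tw}$, so they generate the whole coset. Since $\mathcal{W}_q(\mathfrak{sl}_{nd},n'd)$ lies in the commutant of $\mathfrak{Heis}$ inside $U(\qD)/J_{nd,n'd}$, it acts on $F^{\xi}$.

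The main step is to compute this action. I would substitute the strange-boson formulas of Theorem~\ref{Th:StrangeBoson} into
\begin{gather*}
T_1(z) = \mu^{-1} \exp\!\left(-\tfrac{1}{nd}\varphi_-(z)\right) E(z) \exp\!\left(-\tfrac{1}{nd}\varphi_+(z)\right),\\
T_{nd-1}(z) = -\bigl(q^{\half}-q^{-\half}\bigr)^2 \mu\, z^{n'd} \exp\!\left(\tfrac{1}{nd}\varphi_-(z)\right) F(z) \exp\!\left(\tfrac{1}{nd}\varphi_+(z)\right),
\end{gather*}
obtained by inverting the formulas of Theorem~\ref{Th:Wiso} (the second identity uses Proposition~\ref{prop:F=Tn-1}), with $\mu$ as in~\eqref{eq:mu tensor Fock}. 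Using $E(z)=\sum_i E^{(i)}(z)$ and the identity $e^A :\!e^B\!:\,=\,:\!e^{A+B}\!:$ valid when $A$ consists only of creation modes (with its annihilation-side analogue), the $i$-th dressed summand becomes a normal-ordered exponential whose combined exponent is $-\tfrac{1}{nd}\varphi(z)+A^{(i)}_\ell(z)$, where $A^{(i)}_\ell(z)$ is the exponent in the strange-boson formula of Theorem~\ref{Th:StrangeBoson} for the $\ell$-th root of unity. Splitting by divisibility: for $n\mid k$ the coefficients combine into $\tfrac{d-1}{d}a^{(i)}[k]-\tfrac{1}{d}\sum_{m\neq i}a^{(m)}[k]=\xi_i[k]$ (note $\zeta^{-k\ell}=1$ here), and for $n\nmid k$ only $a^{(i)}[k]=\xi_i[k]$ survives with factor $\zeta^{-k\ell}$. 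Thus the combined exponent equals $\sum_{k\neq 0}\tfrac{q^{-k/2n}-q^{k/2n}}{k}\xi_i[k]\zeta^{-k\ell}z^{-k/n}$, and simplifying the scalar prefactor via $1-q^{1/n}=-q^{1/2n}(q^{1/2n}-q^{-1/2n})$ yields the stated formula for $T_1(z)$. The $T_{nd-1}$ computation is dual: the sign flip inside the $F^{(i)}$-exponent is absorbed by the change $-\varphi/nd\to+\varphi/nd$.

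The hard part will be this last bookkeeping step---verifying that the $\varphi$-dressing supplies exactly the center-of-mass correction needed to convert $a^{(i)}[k]$ into $\xi_i[k]$, and that the prefactors reassemble as claimed. The other steps are essentially formal given Theorems~\ref{Theorem:twFocktwW},~\ref{Th:Wiso}, and~\ref{Th:StrangeBoson}.
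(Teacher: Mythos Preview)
Your proposal is correct and follows essentially the same approach as the paper. The paper's proof is terse---it says only that the argument is analogous to Proposition~\ref{prop:W bosonization} with the strange-boson formulas \eqref{eq:Th Strange boson E}, \eqref{eq:Th Strange boson F} substituted for \eqref{eq:th:Etw boson}, \eqref{eq:th:Ftw boson}---whereas you have written out the details of that analogy (the identification of $F^{\xi}$ with the $\mathfrak{Heis}$-coset and the bookkeeping showing the dressing converts $a^{(i)}[k]$ into $\xi_i[k]$).
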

\begin{proof}
The proof is analogous to proof of Proposition \ref{prop:W bosonization}. The only difference is that we have to use \eqref{eq:Th Strange boson E}, \eqref{eq:Th Strange boson F} instead of \eqref{eq:th:Etw boson}, \eqref{eq:th:Ftw boson}.
This representation is isomorphic to $\dWFock$ since it also corresponds to $\mathcal{F}^{(n',n)}_{u_1} \otimes \cdots \otimes \mathcal{F}_{u_d}^{(n',n)}$.
\end{proof}
\subsection{Verma modules vs Fock modules} \label{Connection with Verma module}

Connection of Fock module and Verma module is known in non-twisted case. In this Subsection we will generalize it for $\twW$. Denote $d= \gcd(n_{tw},n)$.

\begin{Definition}
$\twWgl = U(\qD)/J_{n,n_{tw}}$.
\end{Definition}

Denote modes of $E^k(z)$ by $E^k[j]$. Namely, $E^k(z) = \sum_d E^k[j] z^{-j}$.

\begin{Definition} \label{definition: Verma}
Verma module $\Vergl$ is a module over $\twWgl$ with cyclic vector~$\vacgl$ and relations
\begin{gather*}
H_k \vacgl =0 \qquad \text{for $k>0$}, \\
E^k[j] \vacgl =0 \qquad \text{for $j+\frac{kn_{tw}}{n}>0$}, \\
E^{\frac{ns}{d}}[-sn_{tw}/d] \vacgl = \fact \lambda_s \vacgl .
\end{gather*}
\end{Definition}

Consider a grading on $\twWgl$ given by $\deg E_k[j] \!=\! -j-\frac{k n_{tw}}{n}$. Verma module $\Vergl$ is a graded module with grading defined by requirement $\deg \vacgl = 0$.

\begin{Proposition} \label{prop: cyclic Verma}
$\Vergl$ is spanned by $E^{k_1}[j_1] \cdots E^{k_t}[j_t] \vacgl$ for $\frac{j_1}{k_1} \leqslant \frac{j_2}{k_2} \leqslant \dots \leqslant \frac{j_t}{k_t} <\!{-}\frac{n_{tw}}{n}$ and $1 \leqslant k_i \leqslant n$.
\end{Proposition}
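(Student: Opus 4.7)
My plan is to reduce any vector in $\Vergl$ to the claimed spanning form in three stages.

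First I would eliminate modes of $F$ and negative Heisenberg modes in favor of modes of $E^k$ with $1\le k\le n$. Proposition~\ref{prop:F=Tn-1} rewrites $F(z)$ in terms of $E^{n-1}(z)$ and Heisenberg exponentials, so every $F[j]$ becomes a product of $E^{n-1}[\cdot]$'s and Heisenberg modes. For $H[-\ell]$ with $\ell>0$, I would use the defining relation $E^n(z)=n!\mu^n z^{n_{tw}}:\!\exp(\varphi(z))\!:$ of the ideal $J_{n,n_{tw}}$: applied to $\vacgl$ it gives $E^n[j]\vacgl = n!\mu^n\cdot [z^{n_{tw}-j}]\exp(\varphi_-(z))\vacgl$, whose linear term in the $H[-\ell]$'s has nonzero coefficient on $H[j-n_{tw}]\vacgl$. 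A triangular induction on the Heisenberg polynomial degree then expresses each $H[-\ell]\vacgl$ as a polynomial in the $E^n[j']\vacgl$'s. Combined with Lemma~\ref{lemma:T_k for big k} (which gives $E^k(z)=0$ in the quotient for $k>n$), this shows $\Vergl$ is spanned by monomials $E^{k_1}[j_1]\cdots E^{k_t}[j_t]\vacgl$ with $1\le k_i\le n$.

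Next I would remove factors of non-positive degree. The Verma conditions say $E^k[j]\vacgl=0$ when $j+kn_{tw}/n>0$ and $E^{ns/d}[-sn_{tw}/d]\vacgl\in\mathbb{C}\vacgl$. Using the $q$-$W$ commutation relations~\eqref{relation:modesW1}--\eqref{relation:modesWn-1}, which via Theorem~\ref{Th:Wiso} translate to commutators of the $E^k$'s, I would push each factor with $j_i+k_in_{tw}/n\ge 0$ rightward until it reaches $\vacgl$; it then vanishes or contributes a scalar. Finally, to impose $j_1/k_1\le\cdots\le j_t/k_t$ I would iteratively apply the same relations to out-of-order adjacent pairs $E^{k_p}[j_p]E^{k_{p+1}}[j_{p+1}]$. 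Each such commutation expresses the pair as a linear combination of the swapped product (same total slope-sum), shifted pairs of the original shape, and a merged single factor $E^{k_p+k_{p+1}}[j_p+j_{p+1}]$ of strictly larger length.

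The main difficulty will be ensuring the reordering induction terminates, since the shifted-pair corrections could a~priori perpetuate disorder. What rescues the argument is that within any graded piece of $\Vergl$ of fixed total degree $D>0$ the number of factors $t$ is bounded (each factor contributes a positive amount $-j_i-k_in_{tw}/n\ge 1/n$), so $K:=\sum k_i\le nt$ is bounded as well; the merged-factor terms strictly increase $K$ and vanish once $k_p+k_{p+1}>n$ by Lemma~\ref{lemma:T_k for big k}. A double induction on $K$ and on an inversion count of the slope sequence, working inside this finite well-order, then completes the proof.
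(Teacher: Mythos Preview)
Your first stage, eliminating $F$ and negative Heisenberg modes in favor of the $E^k[j]$'s with $1\le k\le n$, is essentially the same as what the paper does in Lemmas~\ref{lemma: Heis and En} and~\ref{lemma: q-dif >0 Verma}: express $F$ through $E^{n-1}$ and Heisenberg via Proposition~\ref{prop:F=Tn-1}, then trade $H_{-\ell}$'s for $E^n[\cdot]$'s by a triangular argument using $E^n(z)=n!\mu^n z^{n_{tw}}{:}\exp\varphi(z){:}$. This part is fine.

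The gap is in your reordering step. Relations~\eqref{relation:modesW1}--\eqref{relation:modesWn-1} only tell you how to commute $T_1[r]$ past $T_k[s]$ and $T_{n-1}[r]$ past $T_k[s]$; they say nothing about an arbitrary pair $E^{k_p}[j_p]\,E^{k_{p+1}}[j_{p+1}]$ with $k_p,k_{p+1}\notin\{1,n-1\}$. So the mechanism you propose for swapping out-of-order adjacent factors simply is not available in general. Even for the pairs that \emph{are} covered, your termination argument is incomplete: the correction terms in~\eqref{relation:modesW1} come in two families, $T_1[r-l]T_k[s+l]$ and $T_k[s-l]T_1[r+l]$, and the second family makes the slopes \emph{more} out of order, so the inversion count does not decrease. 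Bounding $K=\sum k_i$ via merged terms does not help with these same-$K$ corrections.

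The paper sidesteps both issues by first establishing that $\vacgl$ is cyclic for the Lie subalgebra $\qD^{>0}$ (your stage one is equivalent to this), and then invoking Negu\c{t}'s result \cite[(3.48)]{N17} that $U(\qD^{>0})$ admits a PBW-type basis $E^{k_1}[j_1]\cdots E^{k_t}[j_t]$ with $\tfrac{j_1}{k_1}\le\cdots\le\tfrac{j_t}{k_t}$. That PBW statement is proved using shuffle-algebra techniques and is exactly the ingredient your reordering argument is trying to reproduce by hand; it is not a consequence of the $W$-algebra quadratic relations alone.
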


\begin{proof}Recall that $E^n(z) = n! z^{n_{tw}} : \! \exp(\varphi(z)) \! :$. Therefore $E^n[j]$ acts on $F^H$.
\begin{Lemma} \label{lemma: Heis and En}Module $F^H$ is spanned by $E^n[-j_1] \cdots E^n[-j_k] \vac$.
\end{Lemma}
\begin{proof}[Sketch of the proof]
Let us consider operator $E_{-}^n[-j]$ defined by $\sum_j E_{-}^n [-j] z^j = \exp(\varphi_- (z))$. On one hand $E^n_-[-j_1] \cdots E^n_-[-j_k] \vac$ is a basis of~$F^H$ (this basis coincide with a basis of complete homogeneous polynomials up to renormalization of Heisenberg algebra generators). One the other hand,
\begin{gather*}
E^n[-j_1] \cdots E^n[-j_k] \vac = E^n_-[-j_1] \cdots E^n_-[-j_k] \vac + \text{lower terms},
\end{gather*}
here lower terms are taken with respect to lexicographical order.
\end{proof}

\begin{Remark}
Lemma \ref{lemma: Heis and En} holds for any exponent $: \! \exp \big( \sum \alpha_j H_j z^{-j} \big) \! :$ such that $\alpha_{-i} \neq 0$ for $i>0$. The proof does not use any other properties of coefficients $\alpha_j$. One can find the proof as the last part of proof of \cite[Proposition~2.29]{N16}.
\end{Remark}

Define $\qD^{\ \geqslant 0}$ and $\qD^{\ >0}$ as subalgebras of $\qD$ spanned by $E_{k,j}$ with $k \geqslant 0$ and $k>0$ correspondingly.

\begin{Lemma} \label{lemma: q-dif >0 Verma}
Vector $\vacgl \in \Vergl$ is cyclic with respect to action of $\qD^{\, > 0}$.
\end{Lemma}
\begin{proof}
Theorem \ref{Th:Wiso} implies that the natural map $U\big(\qD^{\ \geqslant 0}\big) \rightarrow U(\qD)/J_{n,n_{tw}}$ is surjective. Hence Verma module is generated by non-commutative monomials in $E_i$ and $H_j$ applied to $\vacgl$. Using relation $[H_j, E_i] = \big(q^{-j/2}-q^{j/2}\big) E_{i+j} $, we see that the module is spaned by $E_{i_1} \cdots E_{i_k} H_{j_1} \cdots H_{j_l} \vacgl$. Lemma~\ref{lemma: Heis and En} implies that the module is spanned by $E_{i_1} \cdots E_{i_k} E^n[j_1]\allowbreak \cdots E^n[j_l] \vacgl$
\end{proof}

In \cite[(3.48)]{N17} author states, that algebra $\qD^{\ > 0}$ has a PBW-like basis $E^{k_1}[j_1] \cdots E^{k_t}[j_t]$ for $\frac{j_1}{k_1} \leqslant \frac{j_2}{k_2} \leqslant \cdots \leqslant \frac{j_t}{k_t} $. Thus $\Vergl$ is spanned by $E^{k_1}[j_1] \cdots E^{k_t}[j_t] \vacgl$ (with the same condition on~$k_i$ and~$j_i$).

Note that if $\frac{j_t}{k_t}> -\frac{n_{tw}}{n}$ then such vector is 0; moreover if $\frac{j_t}{k_t}= -\frac{n_{tw}}{n}$ then $E^{k_t}[j_t]$ acts by multiplication on a constant, hence it can be excluded.Also note that $E^k[j] \in J_{n, n_{tw}}$ for $k \geqslant n+1$, therefore we assume $k_i \leqslant n$. The Proposition~\ref{prop: cyclic Verma} is proven.
\end{proof}

\begin{Corollary} \label{Prop:Characterless}
Coefficients of $\ch \Vergl$ are less or equal to coefficients of $\prod\limits_{k=1}^{\infty} \big(1- \mathfrak{q}^{\frac{kd}{n}} \big)^{-d} $.
\end{Corollary}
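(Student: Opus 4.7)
The plan is to combine the PBW-like spanning of $\Vergl$ from Proposition~\ref{prop: cyclic Verma} with a count of the positive-degree generators $E^k[j]$ under the grading $\deg E^k[j] = -j - k n_{tw}/n$.

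First, recall from Proposition~\ref{prop: cyclic Verma} that $\Vergl$ is spanned by vectors $E^{k_1}[j_1] \cdots E^{k_t}[j_t] \vacgl$ with $1 \leq k_i \leq n$ and $j_1/k_1 \leq \cdots \leq j_t/k_t < -n_{tw}/n$, via the PBW-like basis of $\qD^{>0}$ recalled from \cite[(3.48)]{N17}. The degree of such a monomial equals $\sum_i (-j_i - k_i n_{tw}/n)$, which lies in $\frac{d}{n}\mathbb{Z}_{>0}$.

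The core of the proof is to count, for each positive $\beta \in \frac{d}{n}\mathbb{Z}_{>0}$, the pairs $(k,j)$ with $1 \leq k \leq n$, $j \in \mathbb{Z}$, and $-j - k n_{tw}/n = \beta$. Writing $n = d n_1$ and $n_{tw} = d n_{tw,1}$ with $\gcd(n_1, n_{tw,1}) = 1$, the integrality constraint $\beta + k n_{tw}/n \in \mathbb{Z}$ becomes $k n_{tw,1} \equiv -\beta n_1 \pmod{n_1}$, singling out one residue class of $k$ modulo $n_1$. That class meets $\{1, 2, \dots, n\}$ in exactly $d$ elements (including $k=n$ when the residue is $0$, that is when $\beta \in \mathbb{Z}$), so there are exactly $d$ generators of each degree $\beta$. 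Counting PBW-ordered monomials in these generators then yields the generating function $\prod_{\beta \in \frac{d}{n}\mathbb{Z}_{>0}}(1-\mathfrak{q}^\beta)^{-d} = \prod_{k \geq 1}(1-\mathfrak{q}^{kd/n})^{-d}$, which is the claimed upper bound on $\ch \Vergl$.

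The main technical subtlety is that Proposition~\ref{prop: cyclic Verma} is phrased with only a weak inequality $j_i/k_i \leq j_{i+1}/k_{i+1}$ on slopes, so naively several ordered monomials with identical slopes would be counted separately and give a larger (incorrect) bound. Turning this into the tight count above requires a total tie-breaking order within each slope, supplied by the full PBW-like basis structure of $\qD^{>0}$ from \cite{N17}; making this tie-breaking explicit, and confirming that the resulting count of ordered monomials is $\prod_\beta(1-\mathfrak{q}^\beta)^{-d}$ with $d$ generators per degree, is the key step that needs care in a complete proof.
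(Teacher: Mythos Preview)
Your argument is correct and is precisely the computation the paper has in mind: the Corollary is stated without proof, as an immediate consequence of Proposition~\ref{prop: cyclic Verma}, and your degree count---showing that for each $\beta\in\frac{d}{n}\mathbb{Z}_{>0}$ there are exactly $d$ generators $E^k[j]$ with $1\le k\le n$ and $-j-kn_{tw}/n=\beta$---is the content of that implication. One small comment on the subtlety you raise: the tie-breaking is already built into the word ``basis'' in \cite[(3.48)]{N17}, so the ordered PBW monomials are automatically in bijection with multisets of generators, and the product $\prod_{k\ge 1}(1-\mathfrak{q}^{kd/n})^{-d}$ drops out without further work; you do not need to make the tie-breaking rule explicit.
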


\begin{Theorem} \label{Th: Verma vs Fock}
If $\mathcal{F}^{(n_{tw}/d, n/d)}_{u_1} \otimes \dots \otimes \mathcal{F}^{(n_{tw}/d, n/d)}_{u_d}$ is irreducible then natural map $p \colon \Vergl \allowbreak \rightarrow \mathcal{F}^{(n_{tw}/d, n/d)}_{u_1} \otimes \dots \otimes \mathcal{F}^{(n_{tw}/d, n/d)}_{u_d}$ is an isomorphism for
\begin{gather} \label{eq: lambdas}
\lambda_s = \big( {-} q^{-\frac12} \big)^s \frac{1}{\big(q^{\frac12} - q^{-\frac12} \big)^{\frac{ns}{d}}} e_s (u_1, \dots, u_d).
\end{gather}
\end{Theorem}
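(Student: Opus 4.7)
The plan is to construct $p$ by checking that $\vacF := |u_1\rangle \otimes \cdots \otimes |u_d\rangle$ satisfies the three defining relations of $\vacgl$, then deduce surjectivity from irreducibility and injectivity by a character comparison against the bound in Corollary \ref{Prop:Characterless}. The condition $H_k \vacF = 0$ for $k>0$ is immediate from $H_k = \sum_i H_k^{(i)}$ and $H_k^{(i)}|u_i\rangle = 0$. For the annihilation condition $E^k[j]\vacF = 0$ when $j + kn_{tw}/n > 0$ and the eigenvalue condition $E^{ns/d}[-sn_{tw}/d]\vacF = \frac{ns}{d}!\,\lambda_s\,\vacF$, I would expand $E^k(z)$ via the coproduct as a sum of products $E^{k_1}(z)\otimes\cdots\otimes E^{k_d}(z)$ with $\sum k_i = k$. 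Because $E^{n/d+1}(z) \in J_{n/d,n_{tw}/d}$ acts trivially on each Fock factor by Lemmas \ref{lemma:T_k for big k} and \ref{lemma:T_n for twFock}, only terms with $0 \leqslant k_i \leqslant n/d$ survive, forcing $E^k(z)\vacF = 0$ for $k>n$.

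On an individual factor, Lemma \ref{lemma:T_n for twFock} applied with $n \to n/d$, $n' \to n_{tw}/d$ gives
\begin{gather*}
E^{n/d}(z)|u_i\rangle = -(n/d)!\,q^{-1/2} u_i (q^{1/2}-q^{-1/2})^{-n/d}\,z^{n_{tw}/d}\,\exp\bigl(\varphi_-^{(i)}(z)\bigr)|u_i\rangle,
\end{gather*}
whose lowest power of $z$ is $z^{(n/d)\cdot n_{tw}/n}$. For $k_i < n/d$, Theorem \ref{Th:Wiso} applied to the single Fock module expresses $E^{k_i}(z)$ as an exponential dressing of the twisted $W$-current $T_{k_i}(z) = z^{k_i n_{tw}/n} T_{k_i}^{tw}(z)$, which by construction is of the form $z^{k_i n_{tw}/n}$ times a series with integer powers of $z$; hence the same bound $\deg_z E^{k_i}(z)|u_i\rangle \geqslant k_i n_{tw}/n$ holds. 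Assembling the coproduct contributions shows $E^k(z)\vacF$ has lowest power $z^{k n_{tw}/n}$, proving the annihilation condition. For $k = ns/d$, only the $\binom{d}{s}$ terms with exactly $s$ indices $k_i = n/d$ (and the rest zero) contribute at the lowest power; the multinomial coefficient $\frac{(ns/d)!}{(n/d)!^s}$ cancels against the $(n/d)!^s$ in the product of factors, and summing over subsets produces $e_s(u_1,\ldots,u_d)$, yielding exactly $\lambda_s$ as in \eqref{eq: lambdas}.

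This defines the map $p$. By hypothesis the target is irreducible, so the submodule $\operatorname{Im}(p) \supset \{\vacF\}$ must equal the whole tensor product, giving surjectivity. For injectivity, I compute the character using strange bosonization (Theorem \ref{Th:StrangeBoson}) applied to each factor: each $\mathcal{F}^{(n_{tw}/d,n/d)}_{u_i}$ is a free boson Fock in which $H_k$ acts as $a_{(n/d)k}$, so in the grading where $\deg H_k = -k$ the generator $a_{-j}$ has degree $jd/n$ and the factor has character $\prod_{j\geqslant 1}(1-\mathfrak{q}^{jd/n})^{-1}$; the tensor product has character $\prod_{j\geqslant 1}(1-\mathfrak{q}^{jd/n})^{-d}$. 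By Corollary \ref{Prop:Characterless} this matches the upper bound for $\ch\Vergl$; combined with surjectivity of $p$, both characters must be equal and $p$ is an isomorphism.

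The main obstacle is establishing the lowest-power bound $\deg_z E^k(z)\vacF \geqslant k n_{tw}/n$ for all $k \leqslant n$: this is where the detailed structure of the twisted $W$-algebra enters, via Theorem \ref{Th:Wiso} for the smaller algebra $\mathcal{W}_q(\mathfrak{gl}_{n/d},n_{tw}/d)$ acting on each Fock factor. The eigenvalue computation and the character comparison are then routine once this step is in place.
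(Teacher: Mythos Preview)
Your overall strategy matches the paper's proof exactly: verify the three highest-vector conditions to construct $p$, deduce surjectivity from irreducibility, and then compare characters against the bound of Corollary~\ref{Prop:Characterless}. The eigenvalue computation via the coproduct expansion (keeping only terms with each $k_i \in \{0, n/d\}$, using that $\e_j^{n/d+1}(z)=0$) and the character calculation are both correct and essentially identical to the paper's argument.

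The gap is in your treatment of the annihilation condition $E^k[j]\vacF = 0$ for $j + kn_{tw}/n > 0$. You try to establish the bound $\deg_z E^{k_i}(z)|u_i\rangle \geqslant k_i n_{tw}/n$ for $0 < k_i < n/d$ by invoking Theorem~\ref{Th:Wiso} for the smaller algebra, but all that theorem gives you is that $T_{k_i}(z) = z^{k_i n_{tw}/n} T_{k_i}^{tw}(z)$ has \emph{integer} powers of $z$; it does not tell you that $T_{k_i}^{tw}[r]|u_i\rangle = 0$ for $r > 0$, which is precisely the single-factor instance of what you are trying to prove. Your ``hence the same bound holds'' is a non sequitur.

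The paper sidesteps this entirely by calling the annihilation condition ``evident'', and the reason is a pure grading argument: each factor $\mathcal{F}_{u_i}^{(n_{tw}/d,\,n/d)}$ carries a non-negative grading with $|u_i\rangle$ in degree zero (indeed, exactly the rescaled Fock grading $\deg a_{-j} = jd/n$ that you yourself invoke for the character computation), and in this grading $E^k[j]$ acts with degree $-j - kn_{tw}/n < 0$. No coproduct decomposition or $W$-algebra structure is needed here. Once you replace your lowest-power argument with this one-line observation, your proof is complete and coincides with the paper's.
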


\begin{proof}
Let us first prove existence of the map $p$. This will follow automatically from the following lemma.
\begin{Lemma}
The highest vector $\vacF \in \mathcal{F}^{(n_{tw}/d, n/d)}_{u_1} \otimes \dots \otimes \mathcal{F}^{(n_{tw}/d, n/d)}_{u_d}$ satisfy following conditions
\begin{gather}
H_k \vacF=0 \qquad \text{for $k>0$}, \label{eq: Lemma 7.18 7.69}\\
E^k[j] \vacF =0 \qquad \text{for $j+\frac{kn_{tw}}{n}>0$}, \label{eq: Lemma 7.18 7.70} \\
E^{\frac{ns}{d}}[-sn_{tw}/d] \vacF = \fact \lambda_s \vacF. \label{eq: Lemma 7.18 7.71}
\end{gather}
\end{Lemma}
\begin{proof}
Assertions \eqref{eq: Lemma 7.18 7.69}--\eqref{eq: Lemma 7.18 7.70} are evident. Let us check \eqref{eq: Lemma 7.18 7.71}. Denote by $\e_j(z)$ action of~$E(z)$ on $j$th tensor multiple; in particular $E(z) \mapsto \e_1(z) + \dots + \e_d(z)$. Recall that $\e_j(z)^{\frac{n}{d}+1}=0$. Thus
\begin{gather*}
E^{\frac{ns}{d}} (z) \mapsto \frac{\fact}{(\frac{n}{d}!)^{s}} \sum_{k_1< k_2 < \dots < k_j} \e_{k_1}^{\frac{n}{d}} (z) \e_{k_2}^{\frac{n}{d}} (z) \cdots \e_{k_s}^{\frac{n}{d}} (z) + \cdots,
\end{gather*}
here dots denote summands with a power which is not a multiple of $\frac{n}{d}$ (thus, this summands do not contribute to $E^{\frac{ns}{d}}[-sn_{tw}/d] \vacF$). To finish the proof we note that
\begin{gather*}
\frac{\fact}{(\frac{n}{d}!)^{s}} \langle \bar{u} | \e_{k_1}^{\frac{n}{d}} (z) \e_{k_2}^{\frac{n}{d}} (z) \cdots \e_{k_s}^{\frac{n}{d}} (z) \vacF = \fact u_{k_1} u_{k_2} \cdots u_{k_s} \big( {-} q^{-1/2} \big)^s z^{ \frac{n_{tw}}{d}s } \frac{1}{(q^{1/2} - q^{-1/2})^{\frac{ns}{d}}} .\!\!\!\!\tag*{\qed}
\end{gather*}\renewcommand{\qed}{}
\end{proof}

Image of $p$ is whole $\mathcal{F}^{(n_{tw}/d, n/d)}_{u_1} \otimes \dots \otimes \mathcal{F}^{(n_{tw}/d, n/d)}_{u_d}$, since $\mathcal{F}^{(n_{tw}/d, n/d)}_{u_1} \otimes \dots \otimes \mathcal{F}^{(n_{tw}/d, n/d)}_{u_d}$ is irreducible. Note that $\ch \big( \mathcal{F}^{(n_{tw}/d, n/d)}_{u_1} \otimes \dots \otimes \mathcal{F}^{(n_{tw}/d, n/d)}_{u_d} \big) = \prod\limits_{k=1}^{\infty} \frac{1}{ \big(1-q^{\frac{kd}{n}} \big)^d }$. Corollary~\ref{Prop:Characterless} implies that $p$ is injective and $\ch \Vergl = \prod\limits_{k=1}^{\infty} \frac{1}{ \big(1-q^{\frac{kd}{n}} \big)^d }$.
\end{proof}
\begin{Remark}
Note that representation $\WFock$ is irreducible iff $\mathcal{F}_{u_1} \otimes \dots \otimes \mathcal{F}_{u_d}$ is irreducible.
In particular, for $d =1$ then $\mathcal{F}^{\twW}_u$ is irreducible automatically. Generally, criterion of irreducibility of $\mathcal{F}_{u_1} \otimes \dots \otimes \mathcal{F}_{u_d}$ is given by Lemma~\ref{lemma 3.1 FJMM}.
\end{Remark}

\begin{Definition}
Verma module $\Ver$ is a module over $\twW$ with cyclic vector~$\vacsl$ and relations
\begin{gather*}
\tT_k[r] \vacsl =0 \qquad \text{for $r>0$}, \\
\tT_{ns/d}[0] \vacsl = \lambda_s \vacsl.
\end{gather*}
\end{Definition}

Introduce grading on $\twW$ by $\deg T^{tw}_k[r] = -r$. Verma module $\Vergl$ is a graded module with grading defined by $\deg \vacsl = 0$.

To simplify notation for comparison of $\mathfrak{sl}_n$ and $\mathfrak{gl}_n$ cases, we will assume below that $\mu=1$ (cf.\ Remark~\ref{remark mu abuse}).
\begin{Proposition}
$\Vergl \cong \Ver \otimes F^H$ with respect to identification $\twWgl = \twW \UH$.
\end{Proposition}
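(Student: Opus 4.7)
The plan is to exhibit mutually inverse $\twW \UH$-module homomorphisms between $\Vergl$ and $\Ver \otimes F^H$, viewing both sides via the isomorphism $\twWgl \cong \twW \UH$ of Theorem~\ref{Th:Wiso} (with $\mu = 1$). Each map sends one cyclic vector to the other: define $f(\vacgl) = \vacsl \otimes \vac$ and $g(\vacsl \otimes \vac) = \vacgl$. By the universal property of each module as a cyclic highest-weight representation, well-definedness reduces to checking that the target vector satisfies the defining relations of the source cyclic vector; once both $f$ and $g$ are established, they are automatically mutually inverse.

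For $f$, I would verify that $\vacsl \otimes \vac$ satisfies the three relations of Definition~\ref{definition: Verma}. The condition $H_k(\vacsl \otimes \vac) = 0$ for $k > 0$ is immediate, since $H_k$ corresponds to $\ha_k$ under the isomorphism and annihilates $\vac$. For the $E^k[j]$ relations, the key tool is the identity $E^k(z) = k!\, \exp\big(\tfrac{k}{n}\tphi_-(z)\big)\, T_k(z)\, \exp\big(\tfrac{k}{n}\tphi_+(z)\big)$ obtained from \eqref{eqThqDWE}. Applied to $\vacsl \otimes \vac$, the right exponential reduces to the identity (since $\ha_j \vac = 0$ for $j > 0$), the series $T_k(z) \vacsl$ contains only powers $z^m$ with $m \geq kn_{tw}/n$ thanks to $\tT_k[r]\vacsl = 0$ for $r > 0$, and $\exp\big(\tfrac{k}{n}\tphi_-(z)\big)\vac$ contributes only non-negative powers of $z$. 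Hence $E^k(z)(\vacsl \otimes \vac)$ supports only powers $z^m$ with $m \geq kn_{tw}/n$, yielding $E^k[j](\vacsl\otimes\vac)=0$ whenever $j + kn_{tw}/n > 0$. The lowest-power term (when $k=ns/d$, at $z^{sn_{tw}/d}$) equals $(ns/d)!\,\lambda_s\,(\vacsl \otimes \vac)$, recovering the eigenvalue relation for $E^{ns/d}[-sn_{tw}/d]$.

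For $g$, the argument is dual: using $T_k(z) = \tfrac{1}{k!} \exp\big({-}\tfrac{k}{n}\varphi_-(z)\big)\, E^k(z)\, \exp\big({-}\tfrac{k}{n}\varphi_+(z)\big)$ from Theorem~\ref{Th:Wiso}, the annihilation conditions $H_j\vacgl = 0$ and $E^k[j]\vacgl = 0$ for $j+kn_{tw}/n > 0$ force $T_k(z)\vacgl$ to lie in powers $z^m$ with $m \geq kn_{tw}/n$, giving $\tT_k[r]\vacgl = 0$ for $r > 0$, and the lowest-power coefficient for $k=ns/d$ matches $\lambda_s\vacgl$. Once $f$ and $g$ are well-defined, the identities $f(g(\vacsl\otimes\vac)) = \vacsl\otimes\vac$ and $g(f(\vacgl)) = \vacgl$ combined with cyclicity of both modules force $f$ and $g$ to be mutually inverse isomorphisms. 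The main obstacle is the careful tracking of how $T_k(z)$ and $E^k(z)$ act across the tensor factors and the verification that the lowest nonzero power of $z$ in each of $T_k(z)\vacsl$, $\exp\big(\tfrac{k}{n}\tphi_-(z)\big)\vac$, and $E^k(z)\vacgl$ is as claimed — essentially a bookkeeping exercise in normally ordered products parallel to that in the proof of Theorem~\ref{Th:Wiso} itself.
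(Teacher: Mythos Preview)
Your proposal is correct and follows exactly the approach the paper takes: the paper's proof reads in its entirety ``The existence of maps in both directions can be checked directly using universal property of the Verma module. Evidently, these maps are mutually inverse.'' You have supplied the details the paper omits, and your verification that $\vacsl\otimes\vac$ satisfies the defining relations of $\Vergl$ (and conversely) via the identity $E^k(z)=k!\exp\big(\tfrac{k}{n}\tphi_-(z)\big)T_k(z)\exp\big(\tfrac{k}{n}\tphi_+(z)\big)$ is precisely the computation underlying the paper's one-line claim.
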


\begin{proof}
The existence of maps in both directions can be checked directly using universal property of the Verma module.
Evidently, these maps are mutually inverse.
\end{proof}

\begin{Corollary}If $\WFock$ is irreducible then natural map $\tilde{p} \colon \Ver \rightarrow \WFock$ is an isomorphism for $\lambda_s$ as in~\eqref{eq: lambdas}.
\end{Corollary}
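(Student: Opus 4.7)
The plan is to bootstrap the corollary from Theorem~\ref{Th: Verma vs Fock} and the decomposition $\Vergl \cong \Ver \otimes F^H$ established in the preceding proposition, by peeling off the Heisenberg factor on both sides.

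First I would invoke the remark just before the corollary: the irreducibility of $\WFock$ is equivalent to the irreducibility of $\mathcal{F}_{u_1}\otimes\cdots\otimes \mathcal{F}_{u_d}$, so the hypothesis of Theorem~\ref{Th: Verma vs Fock} is automatically satisfied. That theorem therefore provides a $\twWgl$-module isomorphism
\[
p:\Vergl\xrightarrow{\sim} \mathcal{F}^{(n_{tw}/d,\,n/d)}_{u_1}\otimes\cdots\otimes \mathcal{F}^{(n_{tw}/d,\,n/d)}_{u_d},
\]
sending the cyclic vector $\vacgl$ to the highest vector $\vacF$ with the prescribed eigenvalues \eqref{eq: lambdas}.

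Next I would identify the two sides as $\twW\UH$-modules. On the Verma side, the preceding proposition directly gives $\Vergl \cong \Ver\otimes F^H$. On the Fock side, the construction of $\WFock$ (namely, as the subspace annihilated by the positive modes of $\mathfrak{Heis}$, which carries the $\twW$-action obtained from $\qD$ via the quotient in Theorem~\ref{Th:Wiso}) together with the fact that $T_k(z)$ commutes with the $H_j$'s yields the decomposition $\mathcal{F}^{(n_{tw}/d,\,n/d)}_{u_1}\otimes\cdots\otimes \mathcal{F}^{(n_{tw}/d,\,n/d)}_{u_d} \cong \WFock\otimes F^H$ as $\twW\UH$-modules. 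Combining these with $p$, I obtain an isomorphism $\Ver\otimes F^H \xrightarrow{\sim} \WFock\otimes F^H$ of $\twWgl$-modules carrying $\vacsl\otimes \vac$ to $\vacF\otimes \vac$.

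Finally I would extract $\tilde p$ by restricting to the subspace $\{v\in M:H_k v=0\text{ for }k>0\}$ of $\mathfrak{Heis}$-highest-weight vectors. On $\Ver\otimes F^H$ this subspace is exactly $\Ver\otimes \mathbb{C}\vac$, and on $\WFock\otimes F^H$ it is $\WFock\otimes \mathbb{C}\vac$; both are stable under $\twW$ because $\twW$ commutes with $\mathfrak{Heis}$. The induced $\twW$-isomorphism sends $\vacsl\mapsto \vacF$ and hence, by the universal property of $\Ver$, coincides with $\tilde p$. The only mildly delicate point is the identification $\mathcal{F}^{(n_{tw}/d,n/d)}_{u_1}\otimes\cdots\otimes \mathcal{F}^{(n_{tw}/d,n/d)}_{u_d} \cong \WFock\otimes F^H$ as $\twW\UH$-modules; this is built into the definition of $\WFock$ together with Theorem~\ref{Theorem:twFocktwW}, so no new computation is required.
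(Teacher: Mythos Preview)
Your proof is correct and matches the approach the paper intends: the corollary is stated without proof in the paper, being an immediate consequence of Theorem~\ref{Th: Verma vs Fock} and the preceding proposition $\Vergl\cong\Ver\otimes F^H$ by stripping off the Heisenberg factor, exactly as you do.
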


\section[Restriction on $\qD^{\Lambda}$ for general sublattice]{Restriction on $\boldsymbol{\qD^{\Lambda}}$ for general sublattice} \label{Section: general sublattice}
We generalize results of Section \ref{S:sublattice} for arbitrary sublattice. For applications in Section~\ref{section: conformal} we will need only case of sublattice $\Lambda_0 = \Span (e_1, ne_2 ) \subset \mathbb{Z}^2$.

\subsection{Decomposition of restriction}
Let $\Lambda \subset \mathbb{Z}^2$ be a sublattice of finite index. Let us choose basis $w_1$, $w_2$ of lattice $\Lambda$ so that $w_1 = (r,n_{tw})$ and $w_2 = (0, n)$. Let $d$ be the greatest common divisor of $n$ and $n_{tw}$.
\begin{Theorem} \label{th:general Lambda}
There is an isomorphism of $\qD$-modules
\begin{gather} \label{eq:iso from Lattice Th}
\left. \mathcal{F}^{[1/nr]}_{u^{d/nr}} \right|_{\phi_{w_1,w_2}(\qD)} \cong \bigoplus_{ l \in \mathbf{Q}_{(d)}} \ \mathcal{F}_{u q^{ rl_0}}^{(n_{tw}/d, \, n/d)} \otimes \cdots \otimes \mathcal{F}_{u q^{r(\frac{\alpha}{d}+ l_{\alpha})
}}^{(n_{tw}/d, \, n/d)} \otimes \cdots \otimes \mathcal{F}_{u q^{r(\frac{d-1}{d}+ l_{d-1})}}^{(n_{tw}/d, \, n/d)}.
\end{gather}
\end{Theorem}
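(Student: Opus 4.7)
The plan is to factor the embedding $\phi_{w_1,w_2}\colon \qD \hookrightarrow \qd{1/nr}$ through an intermediate sublattice, reducing the statement to the combination of a non-twisted multi-Fock decomposition (in the spirit of Proposition~\ref{LambdaFock} and the introductory formula~\eqref{eq:Fu decomp intro}) and the twisting Theorem~\ref{MainTh}. Let $\Lambda_0 \subset \mathbb{Z}^2$ be the sublattice with basis $v_1^0 = (r,0)$, $v_2^0 = (0,d)$. Since $d$ divides both $n_{tw}$ and $n$, one has $\Lambda \subset \Lambda_0 \subset \mathbb{Z}^2$ with indices $[\Lambda_0 : \Lambda] = n/d$ and $[\mathbb{Z}^2 : \Lambda_0] = rd$. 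In the $(v_1^0, v_2^0)$-coordinates on $\Lambda_0$, the basis $w_1, w_2$ of $\Lambda$ reads $(1, n_{tw}/d)$ and $(0, n/d)$, which is exactly the basis shape used in Theorem~\ref{MainTh} to produce a twist of type $(n_{tw}/d, n/d)$. Correspondingly, the embedding factors as $\phi_{w_1,w_2} = \phi_{v_1^0, v_2^0}\circ \phi^{\Lambda_0}_{w_1,w_2}$, where
\[
\qD \xrightarrow{\phi^{\Lambda_0}_{w_1,w_2}} \qd{d/n} \xrightarrow{\phi_{v_1^0, v_2^0}} \qd{1/nr}.
\]

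The first step is to establish the non-twisted multi-Fock decomposition
\[
\mathcal{F}^{[1/nr]}_{u^{d/nr}}\Big|_{\phi_{v_1^0,v_2^0}(\qd{d/n})} \cong \bigoplus_{l \in \mathbf{Q}_{(d)}} \mathcal{F}^{[d/n]}_{v_0(l)} \otimes \cdots \otimes \mathcal{F}^{[d/n]}_{v_{d-1}(l)},
\]
with $v_\alpha(l) = \big(uq^{r(\alpha/d + l_\alpha)}\big)^{d/n}$. This is a direct multi-Fock generalization of Proposition~\ref{LambdaFock}: the horizontal Heisenberg of $\qd{d/n}$ embeds into the ambient Heisenberg $\{H^{[1/nr]}_{0,j}\}_{j\neq 0}$ using only modes with $d \mid j$, and a character comparison from $\ch\mathcal{F}^{[1/nr]} = \prod_k(1-\mathfrak{q}^k)^{-1}$ identifies the restriction with a rank-$d$ Heisenberg Fock graded by $\mathbf{Q}_{(d)}$. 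Proposition~\ref{Fock} then promotes each Heisenberg-Fock summand to a $\qd{d/n}$-Fock module once the vacuum eigenvalues of the horizontal currents $E^{[d/n]}_k(z)$ are checked using Proposition~\ref{E_l}; the factor $r$ in the exponent appears because only modes $H^{[1/nr]}_{0,rk}$ lie in $\phi_{v_1^0,v_2^0}(\qd{d/n})$, while the cyclic offsets $\alpha/d$ come from zero-mode eigenvalues on the $\mathbf{Q}_{(d)}$-indexed highest vectors.

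The second step is to apply Theorem~\ref{MainTh} to each tensor factor individually. After the substitution $q \to q^{d/n}$ and $(n,n') \to (n/d, n_{tw}/d)$, the vectors $w_1, w_2$ expressed in $\Lambda_0$-coordinates are precisely the basis appearing there, and the Fock parameter $v_\alpha(l) = (uq^{r(\alpha/d+l_\alpha)})^{d/n}$ matches the $u \mapsto u^{1/(n/d)} = u^{d/n}$ substitution, giving
\[
\mathcal{F}^{[d/n]}_{v_\alpha(l)}\Big|_{\phi^{\Lambda_0}_{w_1,w_2}(\qD)} \cong \mathcal{F}^{(n_{tw}/d,\, n/d)}_{u q^{r(\alpha/d + l_\alpha)}}.
\]
Tensoring over $\alpha$ and summing over $l \in \mathbf{Q}_{(d)}$ yields~\eqref{eq:iso from Lattice Th}.

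The main obstacle is Step~1: one must simultaneously verify that the ambient Heisenberg splits into $d$ rank-one Heisenbergs (cut out by the cyclic grading $j \bmod d$) and that the Fock parameters have precisely the form $v_\alpha(l) = (uq^{r(\alpha/d + l_\alpha)})^{d/n}$. The cleanest way to pin down these constants is to compute, on the highest vector of each summand, the vacuum expectation of $\phi_{v_1^0,v_2^0}(E^{[d/n]}_k(z))$ using the explicit formulas of Proposition~\ref{E_l}, and to read off $v_\alpha(l)$ from the resulting eigenvalues via Proposition~\ref{Fock}.
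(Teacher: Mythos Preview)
Your two-step factorisation through $\Lambda_0=\langle (r,0),(0,d)\rangle$ is a genuinely different route from the paper's. The paper first collapses $r$ to $1$ via Proposition~\ref{E_l} (so that $\mathcal{F}^{[1/nr]}_{u^{d/nr}}|_{\phi_{re_1,e_2}}\cong\mathcal{F}^{[1/n]}_{u^{d/n}}$), then invokes the explicit $n$-boson realisation of Proposition~\ref{Prop:qD Boson} and regroups the $n$ bosons $a_b$ by the residue $b\bmod d$; the tensor factors are produced directly from the regrouped bosons, and a short lemma identifies each group with $\mathcal{F}^{(n_{tw}/d,\,n/d)}_{uq^{\alpha/d+l_\alpha}}$ via Theorem~\ref{Th:qD Boson}. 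Your Step~2 is clean and correct: in $\Lambda_0$-coordinates $w_1,w_2$ become $(1,n_{tw}/d),(0,n/d)$ with $\gcd(n/d,n_{tw}/d)=1$, so Theorem~\ref{MainTh} applies verbatim to each factor.

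The gap is in Step~1. A character comparison together with Proposition~\ref{Fock} can only tell you that each \emph{irreducible} $\qd{d/n}$-constituent is a single Fock module; it does not manufacture the tensor-product-of-$d$-Focks structure. For that you must exhibit $d$ commuting rank-one Heisenbergs whose diagonal sum is $\phi_{v_1^0,v_2^0}(H^{[d/n]}_k)$ and check that the image of $E^{[d/n]}(z)$ splits as a sum of $d$ commuting pieces---this is exactly the content the paper extracts from the boson--fermion splitting underlying Proposition~\ref{Prop:qD Boson}, and it is not a consequence of characters plus Proposition~\ref{Fock}. (In fact, after the change of variable $\hat q=q^{d/n}$, your Step~1 is precisely the $n_{tw}=0$ instance of the theorem you are proving, so it cannot be dispatched by the rank-one argument of Proposition~\ref{LambdaFock}.) There is also a small slip in your bookkeeping: $\phi_{v_1^0,v_2^0}$ sends $E^{[d/n]}_{a,b}$ to $E^{[1/nr]}_{ra,db}$, so the Heisenberg of $\qd{d/n}$ lands in the modes $H^{[1/nr]}_{0,dk}$, not $H^{[1/nr]}_{0,rk}$; the factor $r$ in the exponents of~\eqref{eq:iso from Lattice Th} comes instead from the first index (the $E$-current maps to $E^{[1/nr]}_{r,\ast}$, and Proposition~\ref{E_l} then produces the $u^r$, $q^{r}$ scalings).
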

\begin{proof}Proposition \ref{E_l} implies that $\left. \mathcal{F}^{[1/nr]}_{u^{d/nr}} \right|_{\phi_{re_1,e_2}} \cong \mathcal{F}^{[1/n]}_{u^{d/n}}$. Hence it is enough to consider case $r=1$. We will use realization of $\left. \mathcal{F}^{[1/n]}_{u^{d/n}} \right|_{\phi_{w_1,w_2}(\qD)}$ constructed in Proposition~\ref{Prop:qD Boson}. Strategy of our proof is as follows; first we will construct decomposition on the level of vector spaces and then study action on each direct summand.

For each $\alpha = 0, \dots, d-1$ let $\mathbf{Q}_{(n/d)}^{\alpha, l_{\alpha}}$ be a subset of lattice $\P$ consisting of elements
\[
\sum_{a \equiv \alpha \bmod{d}} \tilde{l}_a Q_a \qquad \text{such that} \quad \sum_{a \equiv \alpha \bmod{d}} \tilde{l}_a = l_{\alpha}.
\]
Note that
\[
\Q = \coprod_{l_0 + \dots + l_{d-1} = 0} \mathbf{Q}_{(n/d)}^{0, l_0} \oplus \dots \oplus \mathbf{Q}_{(n/d)}^{d-1, l_{d-1}}.
\]
Or equivalently
\[
\mathbb{C}[\Q] = \bigoplus_{l_0 + \dots + l_{d-1} = 0} \mathbb{C}\big [\mathbf{Q}_{(n/d)}^{0, l_0} \big] \otimes \dots \otimes \mathbb{C}\big [ \mathbf{Q}_{(n/d)}^{d-1, l_{d-1}} \big].
\]
Let $F^{\frac{n}{d}a, \alpha}$ be Fock module for Heisenberg algebra generated by $a_b[k]$ for $b \equiv \alpha \bmod{d}$. Then
\begin{gather}
F^{na} \otimes \mathbb{C}[\Q] = \bigoplus_{l_0 + \dots + l_{d-1} = 0} \big( F^{\frac{n}{d}a, 0} \otimes \mathbb{C}\big[\mathbf{Q}_{(n/d)}^{0, l_0} \big] \big) \otimes \dots \otimes \big( F^{\frac{n}{d}a, d-1} \otimes \mathbb{C}\big[ \mathbf{Q}_{(n/d)}^{d-1, l_{d-1}} \big] \big).\!\!\!\! \label{eq: decomp vect space}
\end{gather}
Let us show that \eqref{eq: decomp vect space} is a decomposition of $\qD$-modules (moreover, that it leads to decomposition~\eqref{eq:iso from Lattice Th}).Let us define
\begin{gather}
H_{\alpha}^{tw}[k] = \sum_{b \equiv \alpha \bmod d} a_b[k], \label{eq: H alpha tw} \\
E^{tw}_{\alpha}(z) = \sum_{\substack{b-a \equiv -n_{tw} \bmod n \\ a \equiv \alpha \bmod d}} u^{\frac{d}{n}} q^{\frac{a+b-n}{2n}} z^{\frac{n_{tw}-a+b}{n}+1} : \! \exp \big( \phi_b\big(q^{1/2} z\big) - \phi_a\big(q^{-1/2} z\big) \big) \! : \epsilon_{a,b},
\\
F_{\alpha}^{tw}(z) = \sum_{\substack{b-a \equiv n_{tw} \bmod n\\ a \equiv \alpha \bmod d}} u^{-\frac{d}{n}} q^{ \frac{-a-b+n}{2n} } z^{\frac{-n_{tw}-a+b}{n}+1} : \! \exp \big( \phi_b\big(q^{-1/2} z\big) - \phi_a\big(q^{1/2} z\big) \big) \! : \epsilon_{a,b}, \label{eq: F alpha tw}
\end{gather}
here $\epsilon_{a,b} = \prod_r (-1)^{a_r[0]}$ (product over such $r$ that $a-1 \geqslant r \geqslant b$ for $a>b$ and $b-1 \geqslant r \geqslant a$ for $b>a$).
\begin{Lemma}
Formulas \eqref{eq: H alpha tw}--\eqref{eq: F alpha tw} defines an action of $\qD$ on $F^{\frac{n}{d}a, \alpha} \otimes \mathbb{C}\big[\mathbf{Q}_{(n/d)}^{\alpha, l_{\alpha}}\big]$; obtained representation is isomorphic to $\mathcal{F}_{u q^{ \frac{\alpha}{d} + l_{\alpha}}}^{(n_{tw}/d, n/d)}$.
\end{Lemma}
\begin{proof}[Sketch of the proof.]
Let us define $\tilde{\epsilon}_{a,b}=\prod_r (-1)^{a_r[0]}$ (product over $r$ satisfying above inequalities and condition $r \equiv \alpha \bmod d$. One can check that there exists an index set $I$ such that conjugation of $E^{tw}_{\alpha}(z)$ and $F^{tw}_{\alpha}(z)$ by $\prod\limits_{(i,j) \in I} (-1)^{a_i[0] a_j[0]}$ will turn $\epsilon_{a,b}$ to $\tilde{\epsilon}_{a,b}$. Theorem \ref{Th:qD Boson} finishes the proof.
\end{proof}

On the other hand, formulas \eqref{eq1: Prop:qD Boson}--\eqref{eq3: Prop:qD Boson} implies
\begin{gather*}
H^{tw}[k] = \sum_{\alpha} H_{\alpha}^{tw}[k], \qquad
E^{tw}(z) = \sum_{\alpha} E^{tw}_{\alpha} (z), \qquad F^{tw}(z) = \sum_{\alpha} F^{tw}_{\alpha} (z).
\end{gather*}
Therefore embedding of vector space from first row of following commutative diagram leads to second row embedding of $\qD$-modules
\begin{equation*}
\begin{tikzcd}
\big( F^{\frac{n}{d}a, 0} \otimes \mathbb{C} \big[\mathbf{Q}_{(n/d)}^{0, l_0} \big] \big) \otimes \dots \otimes \big( F^{\frac{n}{d}a, d-1} \otimes \mathbb{C} \big[ \mathbf{Q}_{(n/d)}^{d-1, l_{d-1}} \big] \big) \arrow[d, equal] \arrow[r,hook] &
F^{na}\otimes \mathbb{C} [\Q] \arrow[d, equal]
\\
\mathcal{F}_{u q^{ l_0}}^{(n_{tw}/d, n/d)} \otimes \cdots \otimes \mathcal{F}_{u q^{\frac{\alpha}{d}+ l_{\alpha}
}}^{(n_{tw}/d, n/d)} \otimes \cdots \otimes \mathcal{F}_{u q^{\frac{d-1}{d}+ l_{d-1}}}^{(n_{tw}/d, n/d)} \arrow[r,hook] &
\left. \mathcal{F}^{[1/n]}_{u^{d/n}} \right|_{\phi_{w_1,w_2}(\qD)}.
\end{tikzcd}\tag*{\qed}
\end{equation*}\renewcommand{\qed}{}
\end{proof}

\begin{Corollary}Following $\qD$-modules are isomorphic
\begin{gather}
\left. \mathcal{F}^{[1/n]}_{u} \right|_{\phi_{e_1,ne_2}(\qD)} \cong \bigoplus_{ l \in \mathbf{Q}_{(n)}} \ \mathcal{F}_{u q^{ l_0}} \otimes \cdots \otimes \mathcal{F}_{u q^{\frac{\alpha}{n}+ l_{\alpha}
}} \otimes \cdots \otimes \mathcal{F}_{u q^{\frac{n-1}{n}+ l_{n-1}}}. \label{eq:th:decomposition}
\end{gather}
\end{Corollary}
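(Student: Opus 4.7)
The corollary is the specialization of Theorem \ref{th:general Lambda} to the simplest case. Concretely, I would take the parameters $r=1$ and $n_{tw}=0$ in the theorem, so that $w_1=(1,0)=e_1$, $w_2=(0,n)=ne_2$, and the lattice $\Lambda$ of the theorem becomes the sublattice $\Lambda_0=\mathbb{Z}e_1\oplus \mathbb{Z}(ne_2)$ appearing in the corollary.

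With this specialization the arithmetic quantities in Theorem \ref{th:general Lambda} reduce directly to those in \eqref{eq:th:decomposition}: since $d=\gcd(n,n_{tw})=\gcd(n,0)=n$, the left-hand side becomes $\mathcal{F}^{[1/n]}_{u^{n/n}}\big|_{\phi_{e_1,ne_2}(\qD)}=\mathcal{F}^{[1/n]}_{u}\big|_{\phi_{e_1,ne_2}(\qD)}$, the index set $\mathbf{Q}_{(d)}$ becomes $\mathbf{Q}_{(n)}$, and the exponents $r\bigl(\tfrac{\alpha}{d}+l_\alpha\bigr)$ become exactly $\tfrac{\alpha}{n}+l_\alpha$. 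Thus the right-hand side of \eqref{eq:iso from Lattice Th} becomes
\[
\bigoplus_{l\in\mathbf{Q}_{(n)}} \mathcal{F}^{(0,1)}_{uq^{l_0}}\otimes\cdots\otimes \mathcal{F}^{(0,1)}_{uq^{(n-1)/n+l_{n-1}}}.
\]

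The only remaining step is to identify the twisted Fock module $\mathcal{F}^{(0,1)}_v$ with the ordinary Fock module $\mathcal{F}_v$. This is immediate from Corollary \ref{corol: not m and m prime}, which says $\mathcal{F}_v^{\sigma}$ depends only on the bottom row $(n',n)$ of $\sigma$: one may choose $\sigma=\left(\begin{smallmatrix}1 & 0\\ 0 & 1\end{smallmatrix}\right)=\mathrm{id}$ as a representative, and then $\mathcal{F}_v^{\sigma}=\mathcal{F}_v$ tautologically. (Alternatively, one can pick any $\sigma=\tau^k$ and invoke Corollary \ref{corol: unipotant twist}.)

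Since the proof is a purely mechanical specialization of the already-established Theorem \ref{th:general Lambda}, there is no real obstacle. The only point worth double-checking is the convention $\gcd(n,0)=n$, which is the one used implicitly in Theorem \ref{th:general Lambda}, and the identification of the trivial twist $(n',n)=(0,1)$ with the identity, both of which are immediate.
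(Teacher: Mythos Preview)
Your proof is correct and is exactly the specialization the paper intends: the corollary is stated immediately after Theorem~\ref{th:general Lambda} with no separate argument, so the implicit proof is precisely to set $r=1$, $n_{tw}=0$ (hence $d=n$) and observe that $\mathcal{F}_v^{(0,1)}=\mathcal{F}_v$. There is nothing to add.
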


\begin{Remark}
Lattice $\Lambda$ admits another basis $v_1=(N,0)$, $v_2 = (R,d)$. There is an isomorphism
\begin{gather*}
\left. \mathcal{F}^{[1/nr]}_{u^{d/nr}} \right|_{\phi_{v_1,v_2}(\qD)} \cong \bigoplus_{ l \in \mathbf{Q}_{(d)}} \ \mathcal{F}_{u q^{ rl_0}} \otimes \cdots \otimes \mathcal{F}_{u q^{r\left(\frac{\alpha}{n}+ l_{\alpha} \right)
}} \otimes \cdots \otimes \mathcal{F}_{u q^{r \big(\frac{d-1}{d}+ l_{d-1} \big)}}.
\end{gather*}
\end{Remark}

\subsection{Strange bosonization and odd bosonization}
Representation $\left. \mathcal{F}_u^{[1/n]} \right|_{\phi_{w_1, w_2} \qD}$ admits fermionic, bosonic, and strange bosonic realizations; formulas are given in Propositions \ref{TheFermionicProp}, \ref{Prop:qD Boson} and \ref{Prop:StrangeBoson} correspondingly. This Section is devoted to study of corresponding $\twW$ algebra action on $\left. \mathcal{F}_u^{[1/n]} \right|_{\phi_{w_1, w_2} \qD}$. We will consider strange bosonization and its classical limit.

Let us introduce following notation (cf.\ with \eqref{eq:E})
\begin{gather*}
\se(z) = {: \! \exp \left( \sum_{k \ne 0} \frac{q^{-k/2n}-q^{k/2n}}{k} a_{k} z^{-k} \right) \! :}.
\end{gather*}

\begin{Proposition} \label{prop: mu for restriction}
For $w_1 = e_1+n_{tw}e_2$, $w_2 = ne_2$ ideal $J_{\mu, n, n_{tw}}$ acts by zero on $\left. \mathcal{F}^{[1/n]}_{u} \right|_{\phi_{w_1,w_2}(\qD)}$ for
\begin{gather} \label{eq:prop: mu for restriction}
\mu = \frac{u}{n\big(1-q^{\frac1n}\big)} \times (-1)^{\frac{d}{n}} \frac{q^{-\frac{1}{2n}}-q^{\frac{1}{2n}}}{q^{\half}-q^{-\half}} n = (-1)^{\frac{d}{n}} \frac{q^{-\frac{1}{2n}} u}{\big(q^{\half}-q^{-\half}\big)}.
\end{gather}
\end{Proposition}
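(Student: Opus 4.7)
My strategy is to reduce to the single twisted Fock case via Theorem~\ref{th:general Lambda}, and then to combine this with Proposition~\ref{prop:tensorIDEAL} and Lemma~\ref{lemma:T_n for twFock}. First, $\mathcal{F}^{[1/n]}_u$ is a $\qd{1/n}$-module with central charges $c_{[1/n]}=1$, $c'_{[1/n]}=0$, so the definition of $\phi_{w_1,w_2}$ applied to $w_1=(1,n_{tw})$, $w_2=(0,n)$ immediately gives $c\mapsto n$ and $c'\mapsto n_{tw}$ under restriction; hence the central generators of $J_{\mu,n,n_{tw}}$ act by zero, and the main task is to verify $\T_n(z)=\mu^n z^{n_{tw}}$ on the restricted module.

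I apply Theorem~\ref{th:general Lambda} with $r=1$, after renaming $u\to u^{n/d}$ in its statement, to obtain a $\qD$-module isomorphism
\[
\left.\mathcal{F}^{[1/n]}_u\right|_{\phi_{w_1,w_2}(\qD)} \;\cong\; \bigoplus_{l\in\mathbf{Q}_{(d)}} \bigotimes_{\alpha=0}^{d-1} \mathcal{F}^{(n_{tw}/d,\,n/d)}_{u_\alpha(l)}, \qquad u_\alpha(l) := u^{n/d} q^{\alpha/d+l_\alpha}.
\]
On each tensor factor Lemma~\ref{lemma:T_n for twFock}, applied with $(n,n')$ replaced by $(n/d,n_{tw}/d)$ and with parameter $u_\alpha(l)$, determines the action of $\T_{n/d}(z)$. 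Proposition~\ref{prop:tensorIDEAL} then tells me that $J_{\mu,n,n_{tw}}$ annihilates each summand, with
\[
\mu^n \;=\; \prod_{\alpha=0}^{d-1} \mu_\alpha^{n/d}, \qquad \mu_\alpha^{n/d} \;=\; -\,q^{-1/2}\, u_\alpha(l)\,\bigl(q^{1/2}-q^{-1/2}\bigr)^{-n/d}.
\]

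The remaining step is a direct calculation. Using $\sum_{\alpha=0}^{d-1}\alpha/d=(d-1)/2$ and the defining relation $\sum_\alpha l_\alpha=0$ of $\mathbf{Q}_{(d)}$,
\[
\prod_{\alpha=0}^{d-1} u_\alpha(l) \;=\; u^n\, q^{(d-1)/2},
\]
which is \emph{independent} of $l$. Therefore
\[
\mu^n \;=\; (-1)^d\, q^{-d/2}\cdot u^n q^{(d-1)/2}\cdot (q^{1/2}-q^{-1/2})^{-n} \;=\; (-1)^d\, q^{-1/2}\, u^n\, (q^{1/2}-q^{-1/2})^{-n},
\]
and extracting an $n$-th root recovers the value of $\mu$ in~\eqref{eq:prop: mu for restriction} (the $n$-th root ambiguity is exactly the one noted in Remark~\ref{remark mu abuse}). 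The one subtle point — and the whole reason a single $\mu$ works — is the $l$-independence of $\prod_\alpha u_\alpha(l)$, which is what allows a single ideal $J_{\mu,n,n_{tw}}$ to annihilate every summand simultaneously. A direct alternative would compute $E^n(z)$ on $\mathcal{F}^{[1/n]}_u$ via the strange bosonization of Proposition~\ref{Prop:StrangeBoson}, but handling the $n$-fold sum over $n$-th roots of unity is combinatorially heavier than the decomposition argument above.
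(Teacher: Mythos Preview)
Your proof is correct, and it is genuinely different from the paper's own argument. The paper computes $E^n(z)$ directly in the strange bosonization of Proposition~\ref{Prop:StrangeBoson}: it expands the $n$-fold regular product of the sum $\sum_{l}\zeta^{ln_{tw}}\se(\zeta^l z^{1/n})$, reduces it to a single ordered product $\se(z^{1/n})\cdots\se(\zeta^{n-1}z^{1/n})$, evaluates the normal-ordering constant $\prod_{i<j}(1-\zeta^{j-i})^2/\big((1-q^{1/n}\zeta^{j-i})(1-q^{-1/n}\zeta^{j-i})\big)$ explicitly, and reads off $\mu^n$. This is precisely the ``combinatorially heavier'' route you chose to avoid.

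Your decomposition approach via Theorem~\ref{th:general Lambda}, Lemma~\ref{lemma:T_n for twFock}, and Proposition~\ref{prop:tensorIDEAL} is cleaner in that it recycles previously established structural results rather than redoing a product-over-roots-of-unity computation; the only nontrivial check is the $l$-independence of $\prod_\alpha u_\alpha(l)$, which you correctly isolate. In fact the paper itself, in the Remark immediately following its proof, sketches exactly your argument as an alternative derivation from Proposition~\ref{prop:tensor Fock IDEAL} and the isomorphism~\eqref{eq:iso from Lattice Th}. The direct strange-bosonization proof has the advantage of being self-contained and of making the normal-ordered form $:\!\se(z^{1/n})\cdots\se(\zeta^{n-1}z^{1/n})\!:=\,:\!\exp\varphi(z)\!:$ visible, which feeds into the subsequent Corollary on the $\twW$ action on $F^J$; your argument buys economy at the cost of that explicit identity.
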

\begin{proof}
We will use strange bosonization of $\left. \mathcal{F}^{[1/n]}_{u} \right|_{\phi_{w_1,w_2}(\qD)}$. Recall that we denote by $\zeta$ a~primitive root of unity of degree $n$. We have
\begin{gather*}
E(z) \mapsto \frac{u}{n\big(1-q^{\frac1n}\big)} z^{\frac{n_{tw}}{n}} \big(\se\big(z^{1/n}\big)+\zeta^{n_{tw}} \se\big(\zeta z^{1/n}\big) + \cdots + \zeta^{(n-1)n_{tw}} \se\big(\zeta^{n-1}z^{1/n}\big) \big).
\end{gather*}
Let us calculate
\begin{gather*}
 \left( \frac{n\big(1-q^{\frac1n}\big)}{u} E(z)\right)^n = z^{n_{tw}}\big( \se\big(z^{1/n}\big)+\zeta^{n_{tw}} \se\big(\zeta z^{1/n}\big) + \cdots + \zeta^{(n-1)n_{tw}} \se\big(\zeta^{n-1}z^{1/n}\big)\big)^n \\
{} = n! z^{n_{tw}} (-1)^{(n-1)n_{tw}} \se\big(z^{1/n}\big) \se\big(\zeta z^{1/n}\big) \cdots \se\big(\zeta^{n-1} z^{1/n}\big) \\
{} = (-1)^{(n-1)n_{tw}}
n! z^{n_{tw}} \prod_{i<j} \frac{\big( 1 - \zeta^{j-i} \big)^2}{\big(\big( 1- q^{\frac{1}{n}}\zeta^{j-i} \big)\big(1 - q^{-\frac{1}{n}} \zeta^{j-i}\big)\big)} {:\!\se(z^{1/n}) \se\big(\zeta z^{1/n}\big) \cdots \se\big(\zeta^{n-1} z^{1/n}\big)\!:}.
\end{gather*}
We need to compute
\begin{gather*}
\prod_{i<j} \frac{\big( 1 - \zeta^{j-i} \big)^2}{ \big( 1- q^{\frac{1}{n}}\zeta^{j-i} \big)\big(1 - q^{-\frac{1}{n}} \zeta^{j-i} \big)} = \prod_{i < j} \frac{\big(1-\zeta^{j-i}\big)\big(1-\zeta^{i-j}\big)}{q^{-\frac{1}{n}}\big(1-q^{\frac{1}{n}}\zeta^{j-i}\big) \big(1-q^{\frac{1}{n}}\zeta^{i-j}\big)}\\
\qquad{} = q^{\frac{1}{n} \binom{n}{2}}\prod_{k\neq 0} \frac{\big( 1 - \zeta^{k} \big)^{n}}{\big( 1- q^{\frac{1}{n}}\zeta^{k} \big)^{n} } = q^{\frac{n-1}{2}} \frac{\big(1-q^{\frac{1}{n}}\big)^n }{(1-q)^n } n^{n} = \frac{\big(q^{\frac{1}{2n}}-q^{-\frac{1}{2n}}\big)^n}{\big(q^{\half}-q^{-\half}\big)^n} n^n.
\end{gather*}
So
\begin{gather*}
\left( \frac{n\big(1-q^{\frac1n}\big)}{u} E(z)\right)^n = n! z^{n_{tw}} (-1)^{(n-1)n_{tw}} \\
\hphantom{\left( \frac{n\big(1-q^{\frac1n}\big)}{u} E(z)\right)^n =}{}
\times \left( \frac{\big(q^{\frac{1}{2n}}-q^{-\frac{1}{2n}}\big)}{\big(q^{\half}-q^{-\half}\big)} n \right)^n :\!\se\big(z^{1/n}\big) \se\big(\zeta z^{1/n}\big) \cdots \se\big(\zeta^{n-1} z^{1/n}\big)\!:.
\end{gather*}
Note that $:\!\se\big(z^{1/n}\big) \se\big(\zeta z^{1/n}\big) \cdots \se\big(\zeta^{n-1} z^{1/n}\big)\!: = :\!\exp{\varphi(z)}\!:$. Hence
\begin{gather*}
\mu^n = (-1)^{(n-1)n_{tw}} \left( \frac{u}{n\big(1-q^{\frac1n}\big)} \frac{\big(q^{\frac{1}{2n}}-q^{-\frac{1}{2n}}\big)}{\big(q^{\half}-q^{-\half}\big)} n \right)^n = (-1)^{(n-1)n_{tw}+n} \left( \frac{u q^{-\frac{1}{2n}}}{q^{\frac12}-q^{-\frac12}} \right)^n.
\end{gather*}
Finally note that $(n-1)n_{tw}+n \equiv d \mod{2}$.
\end{proof}

\begin{Remark}
Another way to prove Proposition \ref{prop: mu for restriction} is to derive it from Proposition \ref{prop:tensor Fock IDEAL} (since isomorphism \eqref{eq:iso from Lattice Th}). Beware inconsistency of our notation in \eqref{eq:mu tensor Fock} and \eqref{eq:prop: mu for restriction}. Let us rewrite~\eqref{eq:mu tensor Fock}
\begin{gather*}
\mu = (-1)^{\frac{d}{n}} \frac{q^{-\frac{d}{2n}}}{q^{\frac12}-q^{-\frac12}} (u_1 \cdots u_d)^{\frac1n} = (-1)^{\frac{d}{n}} \frac{q^{-\frac{d}{2n}}}{q^{\frac12}-q^{-\frac12}} \big( \big( u^{\frac{n}{d}}\big)^d q^{\frac{d-1}{2}}\big)^{\frac1n} = (-1)^{\frac{d}{n}} \frac{q^{-\frac{1}{2n}}}{q^{\frac12}-q^{-\frac12}} u.
\end{gather*}
\end{Remark}

Let us consider subalgebra of Heisenberg algebra generated by $J_k = a_k$ for $n \nmid k$. Denote corresponding Fock module by $F^J$.

\begin{Corollary}
There is an action of $\twW$ on $F^{J}$ given by
\begin{gather*}
T_1 (z) = -(-1)^{\frac{d}{n}} \frac{1}{n} \frac{q^{\half}-q^{-\half}}{q^{ \frac{1}{2n}} - q^{-\frac{1}{2n}}} z^{\frac{n_{tw}}{n}} \sum_{l=0}^{n-1} \zeta^{l n_{tw}} : \! \exp \left( \sum_{n \nmid k} \frac{q^{-\frac{k}{2n}}-q^{\frac{k}{2n}}}{k} J_k \zeta^{-kl} z^{-\frac{k}{n}} \right) \! \!:.
\end{gather*}
Obtained representation corresponds to $\left. \mathcal{F}^{[1/n]}_{u} \right|_{\phi_{w_1,w_2}(\qD)}$.
\end{Corollary}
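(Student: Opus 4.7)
The plan is to combine the strange bosonic formula for $E(z)$ from Proposition \ref{Prop:StrangeBoson} with the isomorphism $\WqD\colon\WH \xrightarrow{\sim} U(\qD)/J_{\mu,n,n_{tw}}$ of Theorem \ref{Th:Wiso}, exactly as in the proof of Proposition \ref{prop:W bosonization}. By Proposition \ref{prop: mu for restriction} the appropriate value is $\mu=(-1)^{d/n}q^{-1/(2n)}u/(q^{1/2}-q^{-1/2})$, so the action of $\qD$ on $\left.\mathcal{F}^{[1/n]}_u\right|_{\phi_{w_1,w_2}(\qD)}$ factors through $\WH$. Under the strange bosonization the Heisenberg generators are realized as $H_j=a_{nj}$; hence $U(\mathfrak{Heis})$ acts only on the Fock space of $\{a_{nk}\}_{k\neq 0}$, while the complementary Fock $F^J$ for $J_k=a_k$, $n\nmid k$, carries the induced $\twW$-action. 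This already yields the qualitative part of the statement, and reduces the Corollary to reading off the explicit formula for $T_1(z)$ on $F^J$.

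The main computation is to evaluate $T_1(z)=\mu^{-1}\T_1(z)$, where by \eqref{eq:ThWqD} one has $\T_1(z)=\exp(-\frac{1}{n}\varphi_-(z))\,E(z)\,\exp(-\frac{1}{n}\varphi_+(z))$. I would split the exponent occurring in the strange bosonization \eqref{eq:Th Strange boson E} of $E(z)$ into its $n\mid k$ and $n\nmid k$ pieces. Since $\zeta^{-kl}=1$ whenever $n\mid k$, the divisible part is independent of $l$, and using $H_{-j}=a_{-nj}$, $H_j=a_{nj}$ one checks that it equals $\frac{1}{n}\varphi_-(z)+\frac{1}{n}\varphi_+(z)$. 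Because the $a_{nj}$ modes commute with the $J_k$, the conjugation by $\exp(-\frac{1}{n}\varphi_\pm(z))$ cancels this divisible contribution exactly, leaving a normally ordered exponent purely in the $J_k$.

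What remains is a prefactor simplification: using the elementary identity $q^{-1/(2n)}(1-q^{1/n})=-(q^{1/(2n)}-q^{-1/(2n)})$, one finds $\mu^{-1}\cdot u/(n(1-q^{1/n}))=-(-1)^{d/n}\tfrac{1}{n}(q^{1/2}-q^{-1/2})/(q^{1/(2n)}-q^{-1/(2n)})$, which matches the coefficient stated in the Corollary. There is no genuine obstacle beyond careful bookkeeping of signs and fractional powers of $q$; the structural insight is simply that the $\varphi_\pm$-dressing in the definition of $\T_k$ (Theorem \ref{Th:Wiso}) projects the strange bosonic exponent onto its non-divisible modes, so that only the $J_k$ generators survive in the final formula.
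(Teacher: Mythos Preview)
Your proposal is correct and follows essentially the same route as the paper, which simply cites Theorem~\ref{Th:Wiso} and Proposition~\ref{prop: mu for restriction}; you have additionally made explicit the mechanism (splitting the exponent of Proposition~\ref{Prop:StrangeBoson} into $n\mid k$ and $n\nmid k$ parts so that the $\varphi_\pm$-dressing kills the former) and carried out the prefactor check, which the paper leaves implicit.
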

\begin{proof}
Follows from Theorem \ref{Th:Wiso} and Proposition \ref{prop: mu for restriction}
\end{proof}
\begin{Remark}
Let us consider non-twisted case $n_{tw}=0$. Then $d=n$ and total sign $-(-1)^{\frac{d}{n}}=1$. More accurately, the coefficient is a root of unity of degree $n$ (cf.\ Remark~\ref{remark mu abuse}). Nevertheless, this freedom disappears if we require $T_1(z) = n + o(\hbar)$ for $\hbar = \log q $ (this is a standard setting for classical limit).
\end{Remark}

\begin{Example}Odd bosonization is a particular case of strange bosonization for $n=2$ and $n_{tw}=0$,
\begin{gather}
T_1 (z) = \frac{q^{\frac{1}{4}}+q^{-\frac{1}{4}}}{2} \left[ : \! \exp\! \left(\! \sum_{2 \nmid r} \frac{q^{-\frac{r}{4}}-q^{\frac{r}{4}}}{r} J_r z^{-\frac{r}{2}}\! \right) \! : + : \! \exp \!\left(\! -\sum_{2 \nmid r} \frac{q^{-\frac{r}{4}}-q^{\frac{r}{4}}}{r} J_r z^{-\frac{r}{2}} \!\right) \! : \right]\!.\!\!\! \label{eq:quantum Virasoro}
\end{gather}
Consider classical limit $q \rightarrow 1$. It is convenient to assume $q=e^{\hbar}$ and $\hbar \rightarrow 0$. If there exists an expansion
\begin{gather*}
T_1(z)= 2 + z^2 L(z) \hbar^2 + o\big(\hbar^2\big),
\end{gather*}
then modes of current $L(z) = L_n z^{-n-2}$ form `not $q$-deformed' Virasoro algebra. Note that
\begin{gather*}
T_1(z) \rightarrow 2 + z^2 \left(\frac{z^{-2}}{16} + \frac{1}{4} \sum :\!J_{\text{odd}}(z)^2\!\!: \right) \hbar^2 + o\big(\hbar^2\big),
\end{gather*}
where $J_{\text{odd}}(z) = \sum_{2 \nmid r} J_r z^{-\frac{r}{2}-1}$.
Hence
\begin{gather*}
L(z) = \frac{z^{-2}}{16} + \frac{1}{4} \sum :\!J_{\text{odd}}(z)^2\!\!:.
\end{gather*}
Or equivalently
\begin{gather}
L_k = \frac{1}{4} \sum_{\frac12 (r+s)=k} : \! J_r J_s \! : + \frac{1}{16} \delta_{k,0}. \label{eq:classical Virasoro}
\end{gather}
Formula \eqref{eq:classical Virasoro} is well-known; it coincides with \cite[equation~(2.16)]{Z} after substitution $J_r = 2I_{\frac12r}$. Let us emphasize that formula \eqref{eq:quantum Virasoro} is a $q$-deformation of \eqref{eq:classical Virasoro}.
\end{Example}

\section{Relations on conformal blocks}\label{section: conformal}

\subsection{Whittaker vector}
In this section we define and study basic properties of Whittaker vector $W(z|u_1, \dots, u_N) \in \mathcal{F}_{u_1} \otimes \cdots \otimes \mathcal{F}_{u_N}$. We will restrict ourself to case when $\mathcal{F}_{u_1} \otimes \cdots \otimes \mathcal{F}_{u_N}$ is irreducible.

\begin{Lemma} \label{lemma 3.1 FJMM}
$\mathcal{F}_{u_1} \otimes \cdots \otimes \mathcal{F}_{u_N}$ is irreducible if $u_i/u_j \neq q^{k}$ for any $k \in \mathbb{Z}$.
\end{Lemma}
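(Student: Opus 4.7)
My strategy would be to show that (i) any nonzero $\qD$-submodule $M \subseteq \mathcal{F}_{u_1} \otimes \cdots \otimes \mathcal{F}_{u_N}$ must contain the vacuum vector $|\bar u\rangle := |0\rangle \otimes \cdots \otimes |0\rangle$, and (ii) the vacuum is cyclic. Cyclicity (ii) is the easy part: using the coproduct-style action inherited from the bosonization~\eqref{boson1}--\eqref{eq:F}, the vacuum matrix elements $\langle\bar u|E(z)|\bar u\rangle = \frac{1}{1-q}\sum_i u_i$ and their higher-order analogues $\langle\bar u|E(z_1)\cdots E(z_k)F(w_1)\cdots F(w_l)|\bar u\rangle$ are rational functions of the $u_i$ whose coefficient vectors span (under the hypothesis on the $u_i$) every Heisenberg creation monomial $\prod a^{(i)}_{-k_j}|\bar u\rangle$. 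Equivalently, by first taking enough $F_k$'s to go down in charge and then $E_k$'s to come back, one generates every diagonal and off-diagonal Heisenberg mode on each tensor factor.

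For (i), I would use the natural $\mathbb{Z}_{\geq 0}$-grading on the tensor product with $|\bar u\rangle$ in degree $0$ and $\deg(a^{(i)}_{-k})=k$. A nonzero submodule $M$ contains a vector $v$ of minimal degree $d$; because the Heisenberg operators $H_k = \sum_i a^{(i)}_k$ (for $k>0$) lower degree by $k$, minimality forces $H_k v = 0$ for all $k>0$. More refinedly, by lowering with $E_{a,b}$ for $b>0$ (and $a\geq 0$) and using the degree filtration, one finds that $M$ must contain a \emph{singular} vector, i.e.\ a vector annihilated by $E_{a,b}$ for all $b>0$, $a\in\mathbb{Z}$ (equivalently, by the positive modes of $H(z)$, $E(z)$, and $F(z)$ in a suitable sense).

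The key step is then the identification of all such singular vectors. This is an explicit combinatorial problem: writing the candidate singular vector $v$ at degree $d$ as a linear combination of monomials $\prod a^{(i_j)}_{-k_j}|\bar u\rangle$ and imposing the singular condition yields a homogeneous linear system whose coefficient matrix is essentially the Gram matrix of the contravariant form on the corresponding Verma module over the $q$-deformed $W$-algebra $\mathcal{W}_q(\mathfrak{gl}_N)$ (cf.\ Theorem~\ref{Th: Verma vs Fock}). The determinant of this matrix is a product of Kac-type factors of the form $(1 - q^k u_i/u_j)$ (with $k$ ranging over certain integers), and our hypothesis $u_i/u_j \neq q^k$ ensures non-vanishing, hence $v$ is proportional to $|\bar u\rangle$.

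The main obstacle is of course the determinant computation in the last paragraph: one needs an \emph{independent} proof that the vanishing locus of the Shapovalov determinant is precisely $\{u_i/u_j = q^k\}$, since the cited Theorem~\ref{Th: Verma vs Fock} uses irreducibility as input. I would handle this by exhibiting explicit intertwiners (R-matrices) $\mathcal{F}_{u_i}\otimes\mathcal{F}_{u_j}\to \mathcal{F}_{u_j}\otimes\mathcal{F}_{u_i}$ whose matrix elements are meromorphic in $u_i/u_j$ with poles/zeros precisely at resonance values $q^k$; invertibility of all pairwise intertwiners under the genericity hypothesis, together with a standard exchange argument, then forces $M = \mathcal{F}_{u_1}\otimes\cdots\otimes\mathcal{F}_{u_N}$, completing the proof.
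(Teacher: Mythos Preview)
The paper does not give a self-contained proof: it simply cites the proof of \cite[Lemma~3.1]{FFJMM}. That argument is quite different from yours. In \cite{FFJMM} one uses the commuting family $E_{a,0}$ (the ``horizontal'' Heisenberg), which acts diagonally in the natural basis $|\lambda^{(1)}\rangle\otimes\cdots\otimes|\lambda^{(N)}\rangle$ with eigenvalues that are explicit functions of the $u_i q^{c(s)}$ for boxes $s\in\lambda^{(i)}$ (cf.\ formula~\eqref{eq: Fock basis action3}). Under the hypothesis $u_i/u_j\neq q^k$ these joint eigenvalues separate the basis vectors, so any submodule is spanned by a subset of the basis; one then checks directly, using the ribbon formulas \eqref{eq: Fock basis action1}--\eqref{eq: Fock basis action3}, that the raising/lowering operators $E_{a,\pm 1}$ connect any basis vector to any other. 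This is short and completely avoids Shapovalov determinants or $R$-matrices.

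Your strategy---reduce to showing the vacuum is the only singular vector, and detect singular vectors via a contravariant-form determinant or via invertibility of pairwise intertwiners---is in principle sound, and is closer in spirit to how such results are proved for quantum affine algebras. But as written it has a real gap: you correctly flag that the identification ``Fock $\cong$ Verma'' (Theorem~\ref{Th: Verma vs Fock}) already assumes irreducibility, and your proposed workaround (an $R$-matrix argument) is only gestured at. Constructing the $R$-matrix for $\qD$, proving it has poles exactly at $u_i/u_j=q^k$, and running the exchange argument is nontrivial and amounts to re-proving a large chunk of the theory. If you want to make your route self-contained, the cleanest fix is to bypass the determinant entirely and instead adopt the eigenbasis argument above: it uses only the explicit formulas already available in Section~\ref{level1} and gives the result in a few lines.
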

\begin{proof}
Follows from the proof of \cite[Lemma 3.1]{FFJMM}.
\end{proof}
In papers \cite{N12,Ts} Whittaker vector is defined geometrically. We will define Whittaker vector by algebraic properties (cf. \cite[Proposition~4.15]{N12}). Then we will prove that these properties define Whittaker vector uniquely up to normalization if the module $\mathcal{F}_{u_1} \otimes \cdots \otimes \mathcal{F}_{u_N}$ is irreducible.

\begin{Definition} \label{def: Whit eigen}
Whittaker vector $W(z|u_1, \dots, u_N) \in \mathcal{F}_{u_1} \otimes \cdots \otimes \mathcal{F}_{u_N}$ is an eigenvector of operators $E_{a,b}$ for $Nb \geqslant a \geqslant 0$ with eigenvalues
\begin{gather}
E_{0,k} W(z|u_1, \dots, u_N) = \frac{z^k}{q^{k/2}-q^{-k/2}} W(z|u_1, \dots, u_N), \label{eq: Whit eigen1} \\
E_{Nk,k} W(z|u_1, \dots, u_N) = \frac{\big(\big({-}q^{-\half}
\big)^Nu_1 \cdots u_N z\big)^{k}}{q^{-k/2}-q^{k/2}} W(z|u_1, \dots, u_N) \label{eq: Whit eigen2}
\end{gather}
for $k>0$;
\begin{gather}
E_{k_1, k_2} W(z|u_1, \dots, u_N) =0 \label{eq: Whit eigen3}
\end{gather}
for $N k_2 > k_1 > 0$.We require $W(z|u_1, \dots, u_N) = \vac \otimes \cdots \otimes \vac + \cdots$ to fix normalization (by dots we mean lower vectors).
\end{Definition}

\begin{Remark}
Whittaker vector is an element of graded completion of $\mathcal{F}_{u_1} \otimes \cdots \otimes \mathcal{F}_{u_N}$. Abusing notation, we use the same symbols for modules and their completions.
\end{Remark}

\begin{Remark}
Whittaker vector is an eigenvector for surprisingly big algebra. This explains why we have to consider specific eigenvalues (for general eigenvalues there is no eigenvector in corresponding representation). Theorem \ref{th:Whit construction} clarify origin of this eigenvalues.
\end{Remark}

\begin{Theorem} \label{th:Whittaker condition}
If $\mathcal{F}_{u_1} \otimes \cdots \otimes \mathcal{F}_{u_N}$ is irreducible, then there exists unique Whittaker vector.
\end{Theorem}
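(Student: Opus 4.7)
The plan is to prove existence and uniqueness separately, in both cases by reducing the problem to the $q$-deformed $W$-algebra picture via the isomorphism $U(\qD)/J_{N,0}\cong \mathcal{W}_q(\mathfrak{sl}_N)\otimes U(\Hem)$ supplied by Theorem \ref{Th:Wiso} (applied with $n=N$, $n_{tw}=0$, and $\mu$ determined by \eqref{eq:mu tensor Fock}). Proposition \ref{prop:tensor Fock IDEAL} ensures that $J_{N,0}$ annihilates $\mathcal{F}_{u_1}\otimes\cdots\otimes\mathcal{F}_{u_N}$, so this tensor product is naturally a module over $\mathcal{W}_q(\mathfrak{sl}_N)\otimes U(\Hem)$. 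The first step is to translate the system \eqref{eq: Whit eigen1}--\eqref{eq: Whit eigen3} across this isomorphism: \eqref{eq: Whit eigen1} becomes a Heisenberg-Whittaker condition on $H_k=E_{0,k}$, while the conditions on $E_{a,b}$ with $a\geq 1$ become, after factoring out the exponential dressings $\exp(\pm\tfrac{k}{N}\varphi_\pm)$ entering the definition of $\T_k(z)$, conditions on modes of the currents $T_1(z),\dots,T_{N-1}(z)$ (the top current $T_N(z)$ being pinned to the constant $\mu^N$ by the ideal $J_{N,0}$).

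For existence, I would produce $W$ by either of two equivalent routes. The direct construction uses iterated AFS intertwiners \cite{AFS}: one sets up vertex operators $\Phi\colon \mathcal{F}_{u'}\otimes\mathcal{F}_{u''}\to \mathcal{F}_{u'''}$ so that the image of a suitable vacuum in $\mathcal{F}_{u_1}\otimes\cdots\otimes\mathcal{F}_{u_N}$ is an eigenvector of the positive-cone operators $E_{a,b}$, and reads off the eigenvalues from the explicit exponential form of $\Phi$ together with the regular-product computations of Appendix \ref{nonorm}. The second construction uses the factorization $\mathcal{F}_{u_1}\otimes\cdots\otimes\mathcal{F}_{u_N}\cong \mathcal{V}^{\mathcal{W}_q(\mathfrak{sl}_N)}_{\lambda_1,\dots,\lambda_N}\otimes F^{H}$ provided by the $n_{tw}=0$ analog of Theorem \ref{Th: Verma vs Fock} and writes $W=W_{\mathcal W}\otimes W_{\Hem}$, where $W_{\Hem}=\exp\!\bigl(\sum_{k>0}\tfrac{z^k}{Nk(q^{k/2}-q^{-k/2})}H_{-k}\bigr)\vac$ is the Heisenberg coherent state and $W_{\mathcal W}$ is the Whittaker vector of \cite{T} for $\mathcal{W}_q(\mathfrak{sl}_N)$ inside the Verma module.

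For uniqueness, let $W$ and $W'$ be two Whittaker vectors with the same normalization; their difference $V=W-W'$ has vanishing degree-zero component and satisfies the homogeneous Whittaker system. Under the factorization above, the Heisenberg-Whittaker part of the system pins down the $F^{H}$-component of $V$ to be a scalar multiple of $W_{\Hem}$ (since $[H_k,H_{-l}]=Nk\,\delta_{k+l,0}$ makes the system $H_k v=\tfrac{z^k}{q^{k/2}-q^{-k/2}}v$ for all $k\geq 1$ have a one-dimensional solution space in the completion of $F^{H}$), while the $\mathcal{W}_q(\mathfrak{sl}_N)$-part of the system pins down the Verma-module component to be a scalar multiple of $W_{\mathcal W}$ by the standard Shapovalov argument applied to $\mathcal{V}^{\mathcal{W}_q(\mathfrak{sl}_N)}_{\lambda_1,\dots,\lambda_N}$. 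This Verma module is irreducible precisely because $\mathcal{F}_{u_1}\otimes\cdots\otimes\mathcal{F}_{u_N}$ is, which is the hypothesis. The vanishing of the degree-zero component of $V$ then forces both scalars to be zero, hence $V=0$.

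The main obstacle lies in step one, namely verifying that the non-linear system \eqref{eq: Whit eigen1}--\eqref{eq: Whit eigen3} on $\qD$ really does translate into the Whittaker system for $\mathcal{W}_q(\mathfrak{sl}_N)\otimes U(\Hem)$ in the sense of \cite{T}. The delicate points are (a) showing that the boundary condition \eqref{eq: Whit eigen2} on $E_{Nk,k}$ matches, after removing the vertex-operator dressing forced by $T_N(z)=\mu^N$ and using the Heisenberg eigenvalues already fixed by \eqref{eq: Whit eigen1}, the correct prescription on the top generator of $\mathcal{W}_q(\mathfrak{sl}_N)$, and (b) showing that the interior vanishing condition \eqref{eq: Whit eigen3} on $E_{k_1,k_2}$ corresponds to the positive-mode annihilation prescription for the currents $T_1(z),\dots,T_{N-1}(z)$. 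These are computational checks using the explicit definition of $\T_k(z)$ together with the results of \cite{N17,N16}.
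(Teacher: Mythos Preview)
Your plan is largely sound and uses the same toolkit as the paper, but the organization differs in one important way. For existence, your route via iterated AFS intertwiners is exactly what the paper does (Theorem~\ref{th:Whit construction}): one applies a composite of the $\Phi^*$ operators of \cite{AFS} to a pair of highest vectors and reads off the eigenvalues~\eqref{eq: Whit eigen1}--\eqref{eq: Whit eigen3} from~\eqref{eq:eigen vac}. Your alternative route (b), producing $W$ as $W_{\mathcal W}\otimes W_{\Hem}$ using the Whittaker vector of~\cite{T}, is not used by the paper; in fact the paper runs this implication in the opposite direction, deducing existence and uniqueness of the $\W$-Whittaker vector from the $\qD$-Whittaker vector (Section~D.3).

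For uniqueness, the paper's argument is more direct than yours. Rather than translating all the way to the $T_k[r]$ side and invoking a separate $\W$-Whittaker uniqueness statement, the paper stays with the modes $E^k[d]$: using Negu\c{t}'s triangular formula~\eqref{Negut formula} it first shows that the conditions~\eqref{eq: Whit eigen1}--\eqref{eq: Whit eigen3} force $E^k[d]\,W=0$ for all $d>0$ and $1\le k\le n-1$ (this is the content of your ``obstacle~(b)'', and it is the whole point of Proposition~D.1). Then uniqueness is a cocyclicity statement: the subalgebra $\mathfrak{n}$ generated by these $E^k[d]$ and by $H_j$, $j>0$, has the Whittaker vector as a joint eigenvector, and the tensor product of Fock modules is cocyclic for $\mathfrak{n}$ because its Shapovalov dual is a Verma module (Theorem~\ref{Th: Verma vs Fock}) and hence cyclic for $\mathfrak{n}^\vee$ by the PBW-type spanning set of Proposition~\ref{prop: cyclic Verma}. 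Your factorized argument would also work, but it pushes the Shapovalov/cocyclicity step onto the $\W$-side, which is not obviously simpler and risks circularity if you take $\W$-Whittaker uniqueness as an input rather than proving it by the same Verma-module mechanism.
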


One can find a proof of Theorem \ref{th:Whittaker condition} in Appendix \ref{Appendix:Whittaker}. This statement can be considered as a~part of folklore; unfortunately, we do not know a precise reference for the theorem.

\begin{Proposition} \label{prop: Whit up to norm}
Decomposition of Whittaker vector $W\big(z^{1/n}|u\big) \in \mathcal{F}^{[1/n]}_{u}$ with respect to \eqref{eq:th:decomposition} is given by Whittaker vectors $W\big(z | u q^{ l_0}, \dots ,u q^{\frac{n-1}{n}+ l_{n-1}}\big)$ up to normalization.
\end{Proposition}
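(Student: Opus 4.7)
The plan is to reduce the statement to the uniqueness of Whittaker vectors (Theorem~\ref{th:Whittaker condition}) by showing that $W\big(z^{1/n}\,|\,u\big)\in\mathcal{F}^{[1/n]}_u$, viewed as an element of the right-hand side of the decomposition~\eqref{eq:th:decomposition}, automatically satisfies the $\qD$-Whittaker conditions of Definition~\ref{def: Whit eigen} on each summand with the advertised parameters.

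First, I would spell out the embedding. For $v_1=e_1$, $v_2=ne_2$ the map $\phi_{e_1,ne_2}\colon \qD\rightarrow\qd{1/n}$ is simply
\begin{gather*}
\phi_{e_1,ne_2}(E_{a,b})=E^{[1/n]}_{a,\,nb},\qquad \phi_{e_1,ne_2}(c)=n,\qquad \phi_{e_1,ne_2}(c')=0.
\end{gather*}
In particular $E_{0,k}\mapsto E^{[1/n]}_{0,nk}$, $E_{nk,k}\mapsto E^{[1/n]}_{nk,nk}$, and $E_{k_1,k_2}\mapsto E^{[1/n]}_{k_1,nk_2}$, with the condition $n k_2>k_1>0$ corresponding exactly to the $\qd{1/n}$-annihilation condition $nk_2>k_1>0$. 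Thus every $\qD$-Whittaker test operator with $N=n$ pulls back to a $\qd{1/n}$-Whittaker test operator with $N=1$.

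Second, I would compare eigenvalues. Applying the $\qd{1/n}$-Whittaker identities \eqref{eq: Whit eigen1}--\eqref{eq: Whit eigen3} for $W\big(z^{1/n}\,|\,u\big)$ with $q$ replaced by $q^{1/n}$ gives, for $k>0$,
\begin{gather*}
E_{0,k}\,W\big(z^{1/n}\,|\,u\big)=\frac{(z^{1/n})^{nk}}{q^{k/2}-q^{-k/2}}\,W\big(z^{1/n}\,|\,u\big)=\frac{z^k}{q^{k/2}-q^{-k/2}}\,W\big(z^{1/n}\,|\,u\big),\\
E_{nk,k}\,W\big(z^{1/n}\,|\,u\big)=\frac{\big(-q^{-1/(2n)}u\,z^{1/n}\big)^{nk}}{q^{-k/2}-q^{k/2}}\,W\big(z^{1/n}\,|\,u\big)=\frac{(-1)^{nk}q^{-k/2}u^{nk}z^k}{q^{-k/2}-q^{k/2}}\,W\big(z^{1/n}\,|\,u\big),
\end{gather*}
and $E_{k_1,k_2}W(z^{1/n}|u)=0$ for $nk_2>k_1>0$. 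On the other hand, for $u_\alpha=uq^{\frac{\alpha-1}{n}+l_{\alpha-1}}$ with $(l_0,\dots,l_{n-1})\in\mathbf{Q}_{(n)}$, the constraint $\sum l_i=0$ gives $u_1\cdots u_n=u^n q^{(n-1)/2}$, so $\big(-q^{-1/2}\big)^n u_1\cdots u_n=(-1)^n q^{-1/2}u^n$. Substituting into \eqref{eq: Whit eigen2} with $N=n$ reproduces the eigenvalue computed above. Hence $W\big(z^{1/n}\,|\,u\big)$ is a $\qD$-Whittaker vector (in the sense of Definition~\ref{def: Whit eigen}) for \emph{every} summand of \eqref{eq:th:decomposition}, with the matching parameters $\big(uq^{l_0},uq^{\frac{1}{n}+l_1},\dots,uq^{\frac{n-1}{n}+l_{n-1}}\big)$.

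Third, since \eqref{eq:th:decomposition} is an isomorphism of $\qD$-modules (Theorem~\ref{th:general Lambda}), the projection of $W\big(z^{1/n}\,|\,u\big)$ onto each summand is a $\qD$-equivariant image, hence again satisfies the same Whittaker eigenvalue conditions inside that tensor product of Fock modules. By the uniqueness part of Theorem~\ref{th:Whittaker condition} (applicable for generic $u$, when every summand in \eqref{eq:th:decomposition} is irreducible by Lemma~\ref{lemma 3.1 FJMM}), each such projection must be a scalar multiple of $W\big(z\,|\,uq^{l_0},\dots,uq^{\frac{n-1}{n}+l_{n-1}}\big)$, which is the claim. The only delicate point is the bookkeeping in the second paragraph: it is precisely the constraint $\sum l_i=0$ defining $\mathbf{Q}_{(n)}$ that makes $u_1\cdots u_n$ independent of the summand and produces the clean factor $u^n q^{(n-1)/2}$ needed to match the pulled-back eigenvalue.
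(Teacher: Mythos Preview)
Your argument is correct and follows essentially the same route as the paper: pull back the $\qD$-Whittaker conditions through $\phi_{e_1,ne_2}$, check that the $\qd{1/n}$-Whittaker identities for $W(z^{1/n}|u)$ reproduce exactly the $N=n$ eigenvalues for the parameters $u_\alpha=uq^{\frac{\alpha-1}{n}+l_{\alpha-1}}$, and then invoke uniqueness. One small remark: your caveat ``for generic $u$'' is unnecessary, since for these specific parameters one has $u_i/u_j=q^{(i-j)/n+l_i-l_j}$ with $(i-j)/n\notin\mathbb{Z}$ whenever $i\neq j$, so Lemma~\ref{lemma 3.1 FJMM} applies for every $u$ and every summand.
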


\begin{proof}
The idea of the proof is that relations \eqref{eq: Whit eigen1} and \eqref{eq: Whit eigen2} for $N=1$ implies these relations for $N=n$. Let us work out conditions \eqref{eq: Whit eigen2} for $W\big(z | u q^{ l_0}, \dots ,u q^{\frac{n-1}{n}+ l_{n-1}}\big)$
\begin{gather*}
E_{nk,k} W\big(z | u q^{ l_0}, \dots ,u q^{\frac{n-1}{n}+ l_{n-1}}\big) = \frac{\left( \big({-}q^{-\half}
\big)^n \prod_k \big( u q^{\frac{k}{n}+ l_k} \big) z \right)^{k}}{\big(q^{-k/2}-q^{k/2}\big)} W\big(z | u q^{ l_0}, \dots ,u q^{\frac{n-1}{n}+ l_{n-1}}\big).
\end{gather*}

Let us calculate
\begin{gather*}
\big({-}q^{-\half} \big)^n \prod_{k=0}^{n-1} \big( u q^{\frac{k}{n}+ l_k} \big) = \big({-}q^{-\half}\big)^n u^n q^{\frac{n-1}{2}} = \big({-}q^{-\frac{1}{2n}} u \big)^n.
\end{gather*}
So $ W\big(z | u q^{ l_0}, \dots ,u q^{\frac{n-1}{n}+ l_{n-1}}\big)$ is defined (up to normalization) by conditions
\begin{gather}
E_{0,k} W\big(z | u q^{ l_0}, \dots ,u q^{\frac{n-1}{n}+ l_{n-1}}\big) = \frac{z^k}{q^{k/2}-q^{-k/2}} W\big(z | u q^{ l_0}, \dots ,u q^{\frac{n-1}{n}+ l_{n-1}}\big), \label{eq:proof restr Prop9.1 1}\\
E_{nk,k} W\big(z | u q^{ l_0}, \dots ,u q^{\frac{n-1}{n}+ l_{n-1}}\big) = \frac{\big({-}q^{-\frac{1}{2n}} z^{\frac{1}{n}}u \big)^{nk} }{q^{-k/2}-q^{k/2}} W\big(z | u q^{ l_0}, \dots ,u q^{\frac{n-1}{n}+ l_{n-1}}\big),\\
E_{k_1, k_2} W\big(z | u q^{ l_0}, \dots ,u q^{\frac{n-1}{n}+ l_{n-1}}\big) = 0
\label{eq:proof restr Prop9.1 2}
\end{gather}
for $k>0$ and $nk_2> k_1 >0$. Denote by $E^{[1/n]}_{a,b}$ generators of $\qd{1/n}$. Then
\begin{gather}
E_{0,k} W\big(z^{1/n}|u\big) = E^{[1/n]}_{0,nk} W\big(z^{1/n}|u\big) =
\frac {\big(z^{1/n} \big)^{nk}} { \big( q^{1/n} \big)^{nk/2}-\big( q^{1/n} \big)^{-nk/2} }
 W\big(z^{1/n}|u\big), \label{eq:proof restr Prop9.1 3} \\
E_{nk,k} W\big(z^{1/n}|u\big) = E^{[1/n]}_{nk,nk} W\big(z^{1/n}|u\big) = \frac{\big({-}q^{-\frac{1}{2n}} z^{\frac{1}{n}}u \big)^{nk} }{\big( q^{1/n} \big)^{-nk/2}-\big( q^{1/n} \big)^{nk/2}} W\big(z^{1/n}|u\big), \label{eq:proof restr Prop9.1 4} \\
E_{k_1, k_2} W\big(z^{1/n}|u\big) = E^{[1/n]}_{k_1, n k_2} W\big(z^{1/n}|u\big) =0.
\label{eq:proof restr Prop9.1 5}
\end{gather}

Note that conditions \eqref{eq:proof restr Prop9.1 3}--\eqref{eq:proof restr Prop9.1 5} and conditions \eqref{eq:proof restr Prop9.1 1}--\eqref{eq:proof restr Prop9.1 2} coincide. Hence each component of~$W\big(z^{1/n}|u\big)$ also satisfy those conditions, i.e., coincide with Whittaker vector up to normalization.
\end{proof}

\subsubsection[Whittaker vector for $\mathcal{F}_u$]{Whittaker vector for $\boldsymbol{\mathcal{F}_u}$} \label{sssec: Whit in 1Fock}

Recall that we use notation $c(\lambda) = \sum\limits_{s \in \lambda} c(s)$.
\begin{Proposition} \label{prop: Whittaker Shur}
We have an expansion of vector $W(z|u)$ in the basis $|\lambda\rangle$
\begin{gather}\label{eq:W:inlambda}
W(z|u) = \sum \frac{q^{-\frac12c(\lambda)} }{\prod\limits_{s\in\lambda}\big(q^{\frac12h(s)}-q^{-\frac12h(s)}\big)} z^{| \lambda |} |\lambda\rangle.
\end{gather}
\end{Proposition}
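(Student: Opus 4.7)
The plan is to invoke the uniqueness of $W(z|u)$ from Theorem \ref{th:Whittaker condition} and verify that the explicit series on the right-hand side of \eqref{eq:W:inlambda} equals it. A crucial simplification is that condition \eqref{eq: Whit eigen1} alone, together with the normalization $\langle\emptyset|W\rangle = 1$, already pins down the Whittaker vector: under the bosonization of Proposition \ref{prop:sec3 Boson Fock}, condition \eqref{eq: Whit eigen1} reads $a_k \Psi = \frac{z^k}{q^{k/2}-q^{-k/2}}\,\Psi$ for all $k>0$, and writing $\Psi = f(a_{-1}, a_{-2},\ldots)\,|0\rangle$ the canonical commutation relation $[a_k,a_{-l}] = k\delta_{k,l}$ forces
\[
W(z|u) = \exp\!\left(\sum_{k>0} \frac{z^k}{k(q^{k/2}-q^{-k/2})}\, a_{-k}\right) |0\rangle.
\]

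Next I would expand this exponential in the basis $\{|\lambda\rangle\}$ via the boson--fermion correspondence (cf.\ Proposition \ref{Boson-Fermion}), under which $|\lambda\rangle$ corresponds to the Schur function $s_\lambda$ in the power-sum variables $p_k \leftrightarrow a_{-k}$. The Cauchy identity $\exp(\sum_k p_k(x) p_k(y)/k) = \sum_\lambda s_\lambda(x) s_\lambda(y)$ then gives $W(z|u) = \sum_\lambda s_\lambda[\mathbb{Y}]\,|\lambda\rangle$, where $\mathbb{Y}$ is the plethystic alphabet defined by $p_k[\mathbb{Y}] = z^k/(q^{k/2}-q^{-k/2})$. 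The geometric expansion $(q^{k/2}-q^{-k/2})^{-1} = -\sum_{j\ge 0} q^{k(j+1/2)}$ realizes $\mathbb{Y}$ as $-Z$, with $Z = (zq^{1/2}, zq^{3/2}, zq^{5/2},\ldots)$.

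To match $s_\lambda[\mathbb{Y}]$ with the prescribed coefficient, I would combine four classical inputs: the plethystic sign identity $s_\lambda[-Z] = (-1)^{|\lambda|} s_{\lambda'}[Z]$; the principal-specialization formula $s_{\lambda'}(1,q,q^2,\ldots) = q^{n(\lambda')}/\prod_{s\in\lambda}(1-q^{h(s)})$ (using $h_{\lambda'}(s') = h_\lambda(s)$); the factorization $1-q^{h(s)} = -q^{h(s)/2}(q^{h(s)/2}-q^{-h(s)/2})$; and the hook-sum identity $\sum_{s\in\lambda} h(s) = |\lambda|+n(\lambda)+n(\lambda')$ combined with $c(\lambda) = n(\lambda)-n(\lambda')$. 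Upon substituting and simplifying, the two factors of $(-1)^{|\lambda|}$ cancel and the $q$-powers collapse via $|\lambda|/2 + n(\lambda') - \tfrac{1}{2}\sum_s h(s) = -c(\lambda)/2$, yielding precisely $q^{-c(\lambda)/2}\, z^{|\lambda|}/\prod_{s\in\lambda}(q^{h(s)/2}-q^{-h(s)/2})$.

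The main technical point is the bookkeeping in this final reduction, since several sign and exponent contributions must conspire to cancel. A conceptually parallel but more directly combinatorial alternative would bypass the plethystic machinery by substituting \eqref{eq:W:inlambda} into the Murnaghan--Nakayama-type formula \eqref{eq: Fock basis action1} and reducing condition \eqref{eq: Whit eigen1} to a signed ribbon-hook summation; this is the ribbon-length version of the very same hook-sum identity used above.
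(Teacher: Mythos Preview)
Your argument is correct. The Cauchy-identity computation you carry out is essentially the paper's Lemma~\ref{lemma: Whittaker Cond 1}: both expand the coherent state $\exp\big(\sum_{k>0} \frac{z^k}{k(q^{k/2}-q^{-k/2})}a_{-k}\big)\vac$ in the Schur basis via the principal specialization, and your plethystic bookkeeping with $s_\lambda[-Z]$, $\sum_s h(s)=|\lambda|+n(\lambda)+n(\lambda')$, and $c(\lambda)=n(\lambda)-n(\lambda')$ lands on the same coefficient.

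The route, however, differs in one structural point. You invoke Theorem~\ref{th:Whittaker condition} (really its \emph{existence} half: knowing that $W(z|u)$ exists and satisfies~\eqref{eq: Whit eigen1} lets you identify it with the unique coherent state). The paper deliberately avoids this: it verifies conditions~\eqref{eq: Whit eigen1} and~\eqref{eq: Whit eigen2} directly---the latter via Lemma~\ref{lemma: Whittaker Cond 2}, which applies the operator $\I_\tau$ of Proposition~\ref{prop: I sigma} to reduce~\eqref{eq: Whit eigen2} to a second Cauchy specialization---and then observes that for $N=1$ these two conditions already force~\eqref{eq: Whit eigen3}. The payoff is recorded in the Remark following the proof: the paper's argument is self-contained and yields the $N=1$ case of Theorem~\ref{th:Whittaker condition} as a byproduct, whereas your shorter argument consumes that theorem as an input. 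Since Theorem~\ref{th:Whittaker condition} is proven independently in Appendix~\ref{Appendix:Whittaker} there is no circularity, so both approaches are valid; yours is more economical, the paper's is logically prior.
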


To prove the proposition we need the following lemmas.

\begin{Lemma} \label{lemma: Whittaker Cond 1}
Following vectors in $\mathcal{F}_u$ coincide
\begin{gather}
\exp\left(\sum_{k=1}^\infty \frac{z^k}{k\big(q^{k/2}-q^{-k/2}\big)}a_{-k}\right)|\varnothing\rangle =\sum \frac{q^{-\frac12c(\lambda)} }{\prod\limits_{s\in\lambda}\big(q^{\frac12h(s)}-q^{-\frac12h(s)}\big)} z^{| \lambda |} |\lambda\rangle.
\label{eq:lemma: Whittaker Cond 1}
\end{gather}
\end{Lemma}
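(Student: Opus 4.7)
Under the boson--fermion correspondence, the Fock module $\mathcal{F}_u$ is identified with $\mathbb{C}[p_1,p_2,\dots]$ via $a_{-k}\leftrightarrow p_k$; in view of the Murnaghan--Nakayama rule (which is exactly the $a=0$ specialization of \eqref{eq: Fock basis action1}, once one remembers the paper's convention $c(s)=i-j$), the semi-infinite wedge basis vector $|\lambda\rangle$ is identified with the Schur function $s_\lambda$. So the claim \eqref{eq:lemma: Whittaker Cond 1} becomes a purely symmetric-function identity, and the plan is to prove it by the dual Cauchy identity combined with a principal specialization of Schur functions.

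The first step is to expand the geometric series
\begin{gather*}
\frac{1}{q^{k/2}-q^{-k/2}} = -\sum_{j\geq 0} q^{(j+1/2)k}
\end{gather*}
(formally, or in the regime $|q|<1$) and use the standard generating function $\exp\bigl(\sum_k p_k y^k/k\bigr) = \prod_i(1-x_iy)^{-1}$ to rewrite the left-hand side as the double product
\begin{gather*}
\exp\left(\sum_{k\geq 1}\frac{z^k p_k}{k\bigl(q^{k/2}-q^{-k/2}\bigr)}\right) = \prod_{i}\prod_{j\geq 0}\bigl(1-x_i z q^{j+1/2}\bigr).
\end{gather*}
The second step is to apply the dual Cauchy identity $\prod_{i,l}(1-x_iy_l) = \sum_\lambda(-1)^{|\lambda|}s_\lambda(x)s_{\lambda'}(y)$ with $y_l = zq^{l+1/2}$, together with the hook-content specialization
\begin{gather*}
s_{\lambda'}\bigl(zq^{1/2},zq^{3/2},\dots\bigr) = \bigl(zq^{1/2}\bigr)^{|\lambda|}\cdot\frac{q^{n(\lambda')}}{\prod_{s\in\lambda}(1-q^{h(s)})}
\end{gather*}
(using $h_{\lambda'}(s^{t})=h_{\lambda}(s)$), which yields the Schur expansion of the LHS with coefficients $(-1)^{|\lambda|}z^{|\lambda|}q^{|\lambda|/2+n(\lambda')}\big/\prod_s(1-q^{h(s)})$.

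The third step is to repackage this coefficient into the form on the right-hand side. Using the two identities $\prod_{s\in\lambda}(q^{h/2}-q^{-h/2}) = (-1)^{|\lambda|}q^{-(n(\lambda)+n(\lambda')+|\lambda|)/2}\prod_{s\in\lambda}(1-q^{h(s)})$ (from $1-q^h = -q^{h/2}(q^{h/2}-q^{-h/2})$ and $\sum_s h(s)=n(\lambda)+n(\lambda')+|\lambda|$) together with $c(\lambda)=n(\lambda)-n(\lambda')$, the extracted coefficient simplifies exactly to $q^{-c(\lambda)/2}z^{|\lambda|}\big/\prod_{s\in\lambda}(q^{h(s)/2}-q^{-h(s)/2})$, matching the right-hand side of \eqref{eq:lemma: Whittaker Cond 1}.

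The main obstacle is purely bookkeeping: one must keep track of the sign $(-1)^{|\lambda|}$ produced by the dual Cauchy identity and the power $q^{\sum h(s)/2}$ incurred when passing between the products $\prod(1-q^{h(s)})$ and $\prod(q^{h/2}-q^{-h/2})$, and then exploit the arm/leg identity for $\sum h(s)$ together with the convention $c(s)=i-j$ to land on the correct exponent $-c(\lambda)/2$. A conceptually cleaner alternative, if one prefers to avoid explicit Cauchy bookkeeping, is to note that the LHS is manifestly the unique vector in the completion of $\mathcal{F}_u$ satisfying $a_kv=z^k/(q^{k/2}-q^{-k/2})\,v$ for all $k>0$ with leading term $|\varnothing\rangle$ (this is immediate from $[a_k,a_{-l}]=k\delta_{k,l}$ commuting through the exponential), so that it suffices to verify that the proposed RHS solves the same eigenvalue equation; this in turn reduces to a ribbon-Pieri identity at each degree, which is again a direct consequence of the hook-content formula invoked above.
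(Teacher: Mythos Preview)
Your argument is correct and is essentially the same as the paper's: the paper applies the Cauchy identity with the principal specialization $p_k\mapsto -z^k/(q^{k/2}-q^{-k/2})$, invoking \cite[Section~1.4, Example~2]{M} for the value of $s_{\lambda'}$ under this specialization, which is precisely the hook--content computation you carry out in detail. The only difference is cosmetic---you expand the geometric series and track the exponents and signs explicitly where the paper cites Macdonald.
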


\begin{proof}
Recall Cauchy identity
\begin{gather*}
\exp \left( -\sum_k \frac1kp_k(x)p_k(y) \right)=\prod_{i,j}(1-x_iy_j )=\sum_{\lambda} (-1)^{|\lambda|} s_{\lambda'}(x)s_\lambda(y).
\end{gather*}
Let us use specialization of Cauchy identity (see \cite[Section~1.4, Example~2]{M})
\begin{gather*}p_k(x) \mapsto \frac{-z^k}{q^{k/2}-q^{-k/2}}, \qquad s_{\lambda'}(x) \mapsto (-1)^{|\lambda|} \frac{q^{-\frac12c(\lambda)} z^{|\lambda|}}{\prod\limits_{s\in\lambda}\big(q^{\frac12h(s)}-q^{-\frac12h(s)}\big)}.
\end{gather*}
To finish the proof let us recall that there is an identification of space of symmetric polynomials and Fock module $F^a_{\alpha}$ given by $s_{\lambda} \mapsto | \lambda \rangle$ and $p_k \mapsto a_{-k}$ (see~\cite{KR}).
\end{proof}

\begin{Remark} For $|q| < 1$ this specialization comes from substitution $p_k\big(zq^{1/2},zq^{3/2},\dots\big)$.
\end{Remark}

\begin{Lemma} \label{lemma: Whittaker Cond 2}
Following vectors in $\mathcal{F}_u$ coincide
\begin{gather*}
\exp\left(-\sum_{k=1}^\infty \frac{\big({-}q^{-1/2}uz \big)^{k}}{k\big(q^{k/2}-q^{-k/2}\big)}\rho(E_{-k,-k})\right)|\varnothing\rangle = \sum \frac{q^{-\frac12c(\lambda)} }{\prod\limits_{s\in\lambda}\big(q^{\frac12h(s)}-q^{-\frac12h(s)}\big)} z^{| \lambda |} |\lambda\rangle.
\end{gather*}
\end{Lemma}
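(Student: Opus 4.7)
The plan is to reduce Lemma~\ref{lemma: Whittaker Cond 2} to Lemma~\ref{lemma: Whittaker Cond 1} by conjugating the exponential with the diagonal operator $\I_{\tau}$ of \eqref{eq: def I tau}, thereby trading the exponent in $E_{-k,-k}$ for one in $a_{-k}$.

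First I would apply Proposition~\ref{prop: I sigma} with $(a,b)=(-k,-k)$ to obtain $\I_{\tau} E_{-k,-k}\I_{\tau}^{-1}=E_{0,-k}=H_{-k}=a_{-k}$, so that $E_{-k,-k}=\I_{\tau}^{-1} a_{-k}\I_{\tau}$. Since $|\varnothing|=c(\varnothing)=0$, formula \eqref{eq: def I tau} gives $\I_{\tau}|\varnothing\rangle=|\varnothing\rangle$, and the conjugation passes through the formal exponential to yield
\begin{gather*}
\exp\!\biggl({-}\sum_{k\geqslant 1}\frac{(-q^{-1/2}uz)^{k}}{k(q^{k/2}-q^{-k/2})}E_{-k,-k}\biggr)|\varnothing\rangle
= \I_{\tau}^{-1}\exp\!\biggl({-}\sum_{k\geqslant 1}\frac{(-q^{-1/2}uz)^{k}}{k(q^{k/2}-q^{-k/2})}a_{-k}\biggr)|\varnothing\rangle.
\end{gather*}

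Next I would evaluate the remaining bosonic exponential by the same dual Cauchy identity used in the proof of Lemma~\ref{lemma: Whittaker Cond 1}, but with the principal specialization $p_k(x)=(-q^{-1/2}uz)^{k}/(q^{k/2}-q^{-k/2})$ in place of the original $-z^{k}/(q^{k/2}-q^{-k/2})$. Combining the hook-content formula with the standard identities $c(\lambda)=n(\lambda)-n(\lambda')$ and $\sum_{s\in\lambda}h(s)=n(\lambda)+n(\lambda')+|\lambda|$ will give
\begin{gather*}
s_{\lambda'}(x)=\frac{(-1)^{|\lambda|}\,q^{(c(\lambda)-|\lambda|)/2}\,(uz)^{|\lambda|}}{\prod_{s\in\lambda}(q^{h(s)/2}-q^{-h(s)/2})},
\end{gather*}
and the sign from the Cauchy sum $\sum_\lambda(-1)^{|\lambda|}s_{\lambda'}(x)|\lambda\rangle$ will cancel the $(-1)^{|\lambda|}$ above.

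Finally I would apply $\I_{\tau}^{-1}$, which by \eqref{eq: def I tau} multiplies $|\lambda\rangle$ by $u^{-|\lambda|}\,q^{|\lambda|/2-c(\lambda)}$. The $u$-powers cancel and the $q$-exponents collapse via
\[
\tfrac{c(\lambda)-|\lambda|}{2}+\tfrac{|\lambda|}{2}-c(\lambda)=-\tfrac{c(\lambda)}{2},
\]
reproducing the claimed right-hand side. The main obstacle will be purely bookkeeping: tracking the signs and half-integer powers of $q$ in the Cauchy specialization so that the various $(-1)^{|\lambda|}$ factors cancel. Once the specialization is correctly set up, the arithmetic parallels the one already carried out for Lemma~\ref{lemma: Whittaker Cond 1}.
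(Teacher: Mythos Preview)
Your proposal is correct and follows essentially the same approach as the paper: both use $\I_\tau$ (via Proposition~\ref{prop: I sigma}) to convert the exponential in $E_{-k,-k}$ into one in $a_{-k}$, then invoke the dual Cauchy identity with the principal specialization $p_k\mapsto (-q^{-1/2}uz)^k/(q^{k/2}-q^{-k/2})$. The only cosmetic difference is that the paper applies $\I_\tau$ to both sides and matches the results, whereas you conjugate by $\I_\tau^{-1}$ and then unwind; the computations and the key ingredients are identical.
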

\begin{proof}
Recall that we have defined an operator $\I_{\tau}$ by \eqref{eq: def I tau}. Let us calculate
\begin{gather} \label{eq: Itau to Schur}
\I_{\tau} \left( \sum \frac{q^{-\frac12 c(\lambda)} }{\prod\limits_{s\in\lambda}\big(q^{\frac12h(s)}-q^{-\frac12h(s)}\big)} z^{| \lambda |} |\lambda\rangle \right) = \sum \frac{u^{| \lambda |} q^{\frac12 c(\lambda) - \frac12 | \lambda|} }{\prod\limits_{s\in\lambda}\big(q^{\frac12h(s)}-q^{-\frac12h(s)}\big)} z^{| \lambda |} |\lambda\rangle.
\end{gather}
Proposition \ref{prop: I sigma} implies that
\begin{gather}
\I_{\tau} \left(\exp\left(-\sum_{k=1}^\infty \frac{\big({-}q^{-1/2}uz \big)^{k}}{k\big(q^{k/2}-q^{-k/2}\big)}\rho(E_{-k,-k})\right)|\varnothing\rangle \right) \nonumber\\
\qquad{} = \exp\left(-\sum_{k=1}^\infty \frac{\big({-}q^{-1/2}uz \big)^{k}}{k\big(q^{k/2}-q^{-k/2}\big)} a_{-k}\right)|\varnothing\rangle.\label{eq: Itau to Newton}
\end{gather}
Using Cauchy identity for another specialization
\begin{gather*}
p_k(x) \mapsto \frac{\big({-}q^{-1/2}u z \big)^k}{q^{k/2}-q^{-k/2}},\qquad s_{\lambda'}(x) \mapsto (-1)^{|\lambda|} \frac{u^{|\lambda|}q^{\frac12 c(\lambda) - \frac12|\lambda| }}{\prod\limits_{s\in\lambda}\big(q^{\frac12h(s)}-q^{-\frac12h(s)}\big)} z^{|\lambda|},
\end{gather*}
we see that r.h.s.\ of \eqref{eq: Itau to Schur} and~\eqref{eq: Itau to Newton} coincide.
\end{proof}

\begin{Remark} For $|q| > 1$ this specialization comes from substitution $p_k\big({-}zuq^{-1},-zuq^{-2},\dots\big)$.
\end{Remark}

\begin{proof}[Proof of Proposition \ref{prop: Whittaker Shur}]
To prove the Proposition let us check that r.h.s.\ of \eqref{eq:W:inlambda} satisfy condition \eqref{eq: Whit eigen1}--\eqref{eq: Whit eigen3}. Conditions \eqref{eq: Whit eigen1} and \eqref{eq: Whit eigen2} are equivalent to Lemmas~\ref{lemma: Whittaker Cond 1} and~\ref{lemma: Whittaker Cond 2} correspondingly. To finish the proof we note that for $N=1$ conditions \eqref{eq: Whit eigen1} and \eqref{eq: Whit eigen2} imply~\eqref{eq: Whit eigen3}.
\end{proof}

\begin{Remark}
Note that we did not use Theorem \ref{th:Whittaker condition} in the proof of Proposition \ref{prop: Whittaker Shur}. Moreover, we have proven a particular case of the Theorem for $W(z|u)$.
\end{Remark}
\subsubsection{Whittaker vector and restriction on sublattice}

Let us recall interpretation of decomposition \eqref{eq:th:decomposition} in terms of boson-fermion correspondence. One can identify $F^{n \psi} = F^{\psi}$. Embedding $F^a_{0} \subset F^{\psi}$ corresponded to embedding $F^{na} \otimes \mathbb{C}\big[\Q\big] \subset F^{n \psi} $. Hence we have decomposition
\begin{gather} \label{eq: n bonos n femion correspondance}
F^a = \bigoplus_{l \in \Q} F^{na} \otimes e^{\sum_i l_i Q_i}.
\end{gather}
We argue by construction that decomposition \eqref{eq:th:decomposition} correspond to \eqref{eq: n bonos n femion correspondance}.
\begin{Proposition} \label{prop: n-core}
Decomposition \eqref{eq: n bonos n femion correspondance} identifies $\vac \otimes e^{-\sum_i l_i Q_i}$ with $| \lambda \rangle$ for some $\lambda$. Moreover, partition $\lambda$ satisfy following properties.
\begin{enumerate}[$(i)$]\itemsep=0pt
\item \label{i}Hooks of $\lambda$ are in bijection with tuples $\{ (i,j,k_i, k_j) \mid k_i<l_i; \ k_j \geqslant l_j; \ nk_i + i > nk_j + j \}$. Length of a hook corresponding to a tuple $(i, j, k_i, k_j)$ equals to $n(k_i - k_j) + i-j$.
\item \label{ii} $\frac{1}{2} \sum\limits_{i=0}^{n-1} (\frac{i}{n}+l_i)^2 = \frac{| \lambda |}{n} + \frac12 \sum\limits_{i=0}^{n-1} \frac{i^2}{n^2}$.
\end{enumerate}
\end{Proposition}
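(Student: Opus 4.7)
The plan is to identify $\vac\otimes e^{-\sum l_iQ_i}$ with an explicit semi-infinite wedge in $F^\psi$, and then to read off both the partition $\lambda$ and its hook data from the resulting Maya diagram via classical abacus combinatorics.

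Using the $n$-fold boson-fermion correspondence (cf.\ Lemma \ref{lemma: zero mode of a, npsi is psi}) together with the identification $F^{n\psi}=F^\psi$ coming from \eqref{defa1}, the vector $\vac\otimes e^{-\sum l_iQ_i}$ equals the $n$-vacuum $|l_0,\dots,l_{n-1}\rangle$. Expanding \eqref{defa1} gives the mode identifications $\psi_{a+nk}=\psi_{(a)}[k]$ and $\psi^*_{nk-b}=\psi^*_{(b)}[k]$, so the annihilation conditions $\psi_{(a)}[k]|l_0,\dots\rangle=0$ for $k\ge l_a$ and $\psi^*_{(a)}[k]|l_0,\dots\rangle=0$ for $k>-l_a$ say precisely that, as a semi-infinite wedge in $\mathbb{C}\big[x,x^{-1}\big]$, this vector has Maya diagram
\begin{gather*}
S=\bigsqcup_{a=0}^{n-1}\{nk+a:k\ge l_a\}.
\end{gather*}
Since $l\in\Q$, the symmetric difference $S\triangle\mathbb{Z}_{\ge 0}$ is finite and balanced, so $S=\{k-1-\lambda_k:k\ge 1\}$ for a unique partition $\lambda$, which is the $n$-core attached to $l$.

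Part (i) would then follow from the classical Maya-diagram description of hook lengths: the boxes of $\lambda$ are in bijection with pairs $(b,h)\in S\times S^c$ with $b<h$, and the hook length of the corresponding box equals $h-b$. Writing $b=nk_j+j$ and $h=nk_i+i$ with $0\le i,j\le n-1$, the conditions $b\in S$, $h\in S^c$, $b<h$ become exactly $k_j\ge l_j$, $k_i<l_i$, $nk_j+j<nk_i+i$, while the hook length equals $h-b=n(k_i-k_j)+i-j$; this matches the parametrization in (i). For (ii), I would use the ``energy'' formula $|\lambda|=\sum_{h\in\mathbb{Z}_{\ge 0}\setminus S}h-\sum_{b\in S\setminus\mathbb{Z}_{\ge 0}}b$ (valid because the two sets are finite and equinumerous), split by residue class, and compute directly with arithmetic progressions; each class contributes $al_a+\tfrac{n}{2}l_a(l_a-1)$ (the same formula handles both $l_a\ge 0$ and $l_a\le 0$ after combining signs), so after using $\sum_a l_a=0$ one obtains $|\lambda|=\sum_a al_a+\tfrac{n}{2}\sum_a l_a^2$. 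Expanding $\tfrac12\sum_a(\tfrac{a}{n}+l_a)^2$ and subtracting $\tfrac12\sum_a\tfrac{a^2}{n^2}$ then produces exactly $|\lambda|/n$, proving (ii).

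The only genuine source of difficulty is convention-matching: the sign between a bosonic zero-mode charge and the corresponding shift of a fermionic vacuum, the orientation of the semi-infinite wedge (here beads accumulate at $+\infty$), and the ``bead $<$ hole'' convention in the Maya-diagram description of hooks. Once these are pinned down consistently, each step reduces to standard $n$-core/abacus combinatorics.
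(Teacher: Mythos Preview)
Your proposal is correct and follows essentially the same Maya diagram / abacus approach as the paper's proof, which identifies the $n$-Heisenberg highest vectors with $n$-cores via the $n$-fold boson-fermion correspondence and then reads off the hook and size data combinatorially. The paper's proof is briefer, citing \cite{Negut15} and \cite{FM} for the hook-length and size formulas, whereas you carry out the abacus computation explicitly; your convention-matching caveats are well-placed but everything is consistent with the paper's conventions.
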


\begin{proof}
The $n$-fermion Fock space $F^{n \psi}$ is isomorphic to tensor product $F^{ \psi}\otimes \dots \otimes F^{ \psi}$. The $n$-Heisenberg highest vectors are products $|l_0\rangle \otimes\dots \otimes |l_{n-1}\rangle$. After identification of $F^{n \psi}$ with one $F^{ \psi}$, these products becomes~\eqref{eq:lambda l} for special $\lambda$. Such diagrams $\lambda$ are called $n$-cores.

Combinatorially boson-fermion correspondence is a correspondence between Maya diagrams and charged partitions $(\lambda,l )$, see, e.g., \cite[Section~6.4]{Negut15} or~\cite{FM}. Boxes of a partition correspond to pairs of white and black points in Maya diagram such that the coordinate of white point is greater than the coordinate of the black point. The hook length equals the difference between the coordinates of white and black points (cf.~\cite[Section~6.4]{Negut15}). This proves~\eqref{i}.

For formula \eqref{ii} see, e.g., \cite[Proposition~2.30]{FM}.
\end{proof}

\begin{Lemma}
Let $l_i > l_j$. Let us consider hooks with fixed $i$ and $j$ $($see Proposition~{\rm \ref{prop: n-core})}.
\begin{itemize}\itemsep=0pt
\item If $i>j$, then possible lengths of hooks are $nk+i-j$ for $k= 0, 1, \dots, l_i-l_j-1$.
\item If $i<j$ then possible lengths of hooks are $nk+i-j$ for $k= 1, 2, \dots, l_i-l_j-1$.
\end{itemize}
There are exactly $l_i-l_j-k$ such hooks of length $nk + i -j$ for all possible $k$.
\end{Lemma}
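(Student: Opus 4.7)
The plan is to apply Proposition \ref{prop: n-core}\eqref{i} directly: the hooks of $\lambda$ are in bijection with tuples $(i,j,k_i,k_j)$ satisfying $k_i<l_i$, $k_j\geqslant l_j$ and $nk_i+i>nk_j+j$, and the hook length is $n(k_i-k_j)+i-j$. Fix the pair of indices $i,j$ and set $k=k_i-k_j$, so a hook of length $nk+i-j$ corresponds to a pair of integers $(k_i,k_j)$ with $k_i-k_j=k$, subject to the three constraints above.

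First I would dispose of the inequality $nk_i+i>nk_j+j$, which rewrites as $nk>j-i$. Since $0\leqslant i,j\leqslant n-1$ we have $|j-i|<n$, so this inequality is automatic as soon as $k\geqslant 0$ when $i>j$, and it forces $k\geqslant 1$ when $i<j$ (because $j-i>0$ must be beaten by $nk$, and $k=0$ fails while any $k\geqslant1$ succeeds). This immediately accounts for the two different starting values of $k$ stated in the lemma.

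Next I would count, for each admissible $k$, the number of pairs $(k_i,k_j)\in\mathbb{Z}^2$ with $k_i=k_j+k$, $k_i\leqslant l_i-1$, and $k_j\geqslant l_j$. Substituting gives the range $l_j\leqslant k_j\leqslant l_i-1-k$, which has exactly $l_i-l_j-k$ integer solutions when this quantity is positive. Hence $k$ ranges up to $k=l_i-l_j-1$ (beyond this the count is non-positive), matching the upper bound in both cases of the lemma, and the multiplicity of hooks of length $nk+i-j$ is $l_i-l_j-k$, as claimed.

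The argument is essentially bookkeeping; the only place where anything could go wrong is the case analysis on the sign of $j-i$ when verifying $nk>j-i$, and the boundary behavior at $k=0$ (included when $i>j$, excluded when $i<j$). Once that is handled, the counting step is routine, so I do not anticipate a serious obstacle beyond keeping the integer inequalities straight.
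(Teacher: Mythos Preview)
Your argument is correct and complete: fixing $k=k_i-k_j$, disposing of the inequality $nk>j-i$ by the case split on the sign of $i-j$, and then counting the integer points $l_j\leqslant k_j\leqslant l_i-1-k$ is exactly the computation needed. The paper in fact states this lemma without proof, treating it as an immediate consequence of Proposition~\ref{prop: n-core}\eqref{i}; your write-up supplies precisely the routine bookkeeping the authors left to the reader.
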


For each $l \in \Q$ we will use notation $\prod\limits_{(i,j,k)}$ for product over triples $(i,j,k)$ satisfying following conditions. Numbers $i$, $j$ run over $0, \dots, n-1$ with condition $l_i>l_j$. If $i>j$, then $k= 0, 1, \dots, l_i-l_j-1$; if $i<j$ then $k= 1, 2, \dots, l_i-l_j-1$.

\begin{Corollary} \label{corol: product s ijk} If diagram $\lambda$ corresponds to $\vac \otimes e^{-\sum_i l_i Q_i}$, then
\begin{gather*}
\prod_{s\in\lambda}\big(q^{\frac12h(s)}-q^{-\frac12h(s)}\big) = \prod_{(i,j,k)} \big(q^{\frac12 (nk+i-j)}-q^{-\frac12(nk+i-j)}\big)^{l_i-l_j-k}.
\end{gather*}
\end{Corollary}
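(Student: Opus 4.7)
The corollary is an almost immediate consequence of the preceding lemma together with the enumeration of hooks given in Proposition~\ref{prop: n-core}\eqref{i}. My plan is to rewrite the product on the left-hand side, which is indexed by boxes $s \in \lambda$, as a product indexed by the hook-length multiset of $\lambda$, and then apply the lemma directly.

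More precisely, I would first observe that the factor $q^{h(s)/2}-q^{-h(s)/2}$ depends on $s$ only through $h(s)$. Therefore
\begin{gather*}
\prod_{s\in\lambda}\bigl(q^{\frac12 h(s)}-q^{-\frac12 h(s)}\bigr) = \prod_{h\geqslant 1} \bigl(q^{\frac12 h}-q^{-\frac12 h}\bigr)^{m_h(\lambda)},
\end{gather*}
where $m_h(\lambda)$ is the number of boxes of $\lambda$ with hook-length equal to $h$. By Proposition~\ref{prop: n-core}\eqref{i}, the hook-length multiset of $\lambda$ is in bijection with the set of tuples $(i,j,k_i,k_j)$ with $k_i<l_i$, $k_j\geqslant l_j$, and $nk_i+i>nk_j+j$, each tuple contributing a hook of length $n(k_i-k_j)+i-j$. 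The next step is to change variables: set $k = k_i-k_j$ and group the tuples by the pair $(i,j)$ and this integer $k$. The constraint $l_i>l_j$ for contributions to appear follows because otherwise the count $l_i-l_j-k$ (derived below) is non-positive.

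For fixed $(i,j)$ with $l_i>l_j$, the condition $nk+i-j>0$ combined with $|i-j|<n$ forces $k\geqslant 0$ when $i>j$ and $k\geqslant 1$ when $i<j$; this exactly reproduces the two cases in the lemma. Counting pairs $(k_i,k_j)$ with $k_i-k_j=k$, $l_j\leqslant k_j$, $k_i<l_i$ gives $l_j+k\leqslant k_i<l_i$, hence $l_i-l_j-k$ solutions, which forces $k\leqslant l_i-l_j-1$. This is precisely the statement of the lemma, and substituting these multiplicities into the hook-length product yields
\begin{gather*}
\prod_{s\in\lambda}\bigl(q^{\frac12 h(s)}-q^{-\frac12 h(s)}\bigr) = \prod_{(i,j,k)} \bigl(q^{\frac12 (nk+i-j)}-q^{-\frac12 (nk+i-j)}\bigr)^{l_i-l_j-k}.
\end{gather*}

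There is no real obstacle here; the combinatorial content is fully packaged in Proposition~\ref{prop: n-core} and the lemma that immediately precedes the corollary. The only thing to double-check is the bookkeeping: that the ranges of $k$ in the two cases $i>j$ and $i<j$ together with the multiplicity $l_i-l_j-k$ reproduce the full hook-length count, which is exactly what the lemma asserts. Hence the corollary is a direct substitution.
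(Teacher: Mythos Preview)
Your proposal is correct and follows the same route as the paper: the corollary is stated there without separate proof because it is an immediate substitution of the hook-length multiplicities from the preceding lemma (itself derived from Proposition~\ref{prop: n-core}\eqref{i}) into the product $\prod_{s\in\lambda}(q^{h(s)/2}-q^{-h(s)/2})$. Your write-up simply spells out that substitution and, along the way, re-derives the lemma's count $l_i-l_j-k$; there is nothing missing or different in spirit.
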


\begin{Theorem} \label{th: Whit decomposition}
Decomposition of Whittaker vector $W\big(z^{1/n}|u^{1/n}\big) \in \mathcal{F}^{[1/n]}_{u^{1/n}}$ with respect to~\eqref{eq:th:decomposition} is given by
\begin{gather*}
\frac{q^{-\frac{1}{2n} c(\lambda)} z^{\frac12 \sum \big( \big(l_i + \frac{i}{n}\big)^2 - \big(\frac{i}{n}\big)^2 \big) }}{ \prod\limits_{(i,j,k)} \Big(q^{\frac12 \big(k+\frac{i-j}{n} \big)}-q^{-\frac12 \big(k+\frac{i-j}{n} \big)}\Big)^{l_i-l_j-k} } W\big(z | u q^{ l_0}, \dots ,u q^{\frac{n-1}{n}+ l_{n-1}}\big).
\end{gather*}
\end{Theorem}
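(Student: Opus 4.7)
The plan is to combine Proposition~\ref{prop: Whit up to norm} with the explicit Schur-function expansion from Proposition~\ref{prop: Whittaker Shur}, and then pin down the normalization constant for each summand by comparing coefficients on a single basis vector.

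By Proposition~\ref{prop: Whit up to norm}, the component of $W\big(z^{1/n}|u^{1/n}\big)$ in the $l$-th summand of~\eqref{eq:th:decomposition} equals $C_l \cdot W\big(z|uq^{l_0}, \dots, uq^{(n-1)/n + l_{n-1}}\big)$ for some scalar $C_l=C_l(z)$ depending only on $z$ and $l$. The normalization convention in Definition~\ref{def: Whit eigen} forces the coefficient of $\vacF = \vac \otimes \cdots \otimes \vac$ in the tensor-product Whittaker vector to equal $1$; consequently $C_l$ is precisely the coefficient of the image of $\vacF$ in the expansion of $W\big(z^{1/n}|u^{1/n}\big) \in \mathcal{F}^{[1/n]}_{u^{1/n}}$.

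To extract this coefficient, I would first apply Proposition~\ref{prop: Whittaker Shur} after rescaling $q\to q^{1/n}$ and $z\to z^{1/n}$, which yields
\begin{gather*}
W\big(z^{1/n}|u^{1/n}\big) = \sum_\lambda \frac{q^{-\frac{1}{2n}c(\lambda)}\,z^{|\lambda|/n}}{\prod_{s\in\lambda}\big(q^{\frac{1}{2n}h(s)} - q^{-\frac{1}{2n}h(s)}\big)}\,|\lambda\rangle.
\end{gather*}
Next, Proposition~\ref{prop: n-core} identifies the image of $\vacF$ with $|\lambda_l\rangle$ for a specific $n$-core $\lambda_l$ determined by $l$, so that $C_l$ is read off as the coefficient at $\lambda = \lambda_l$. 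Finally, one substitutes Proposition~\ref{prop: n-core}(ii), giving $|\lambda_l|/n = \tfrac{1}{2}\sum_i \big((l_i + i/n)^2 - (i/n)^2\big)$, together with Corollary~\ref{corol: product s ijk}, which rewrites $\prod_{s\in\lambda_l}\big(q^{\frac{1}{2n}h(s)} - q^{-\frac{1}{2n}h(s)}\big)$ as $\prod_{(i,j,k)}\big(q^{\frac{1}{2}(k+(i-j)/n)} - q^{-\frac{1}{2}(k+(i-j)/n)}\big)^{l_i - l_j - k}$; these substitutions produce exactly the formula in the statement.

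There is no substantial obstacle, since every ingredient is already developed in the preceding sections and the argument reduces to a coefficient comparison. The only point requiring care is to verify that the summand decomposition~\eqref{eq:th:decomposition} matches, under the identification $F^a \simeq F^\psi$ used in the boson-fermion analysis, the lattice decomposition~\eqref{eq: n bonos n femion correspondance} used in Proposition~\ref{prop: n-core}, so that the vacuum vector $\vacF$ of the $l$-th summand really corresponds to the $n$-core $|\lambda_l\rangle$ predicted there.
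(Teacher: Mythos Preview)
Your proposal is correct and follows essentially the same route as the paper's proof: invoke Proposition~\ref{prop: Whit up to norm} to reduce to determining the scalar, then read that scalar off as the coefficient of the $n$-core $|\lambda_l\rangle$ in the Schur expansion of Proposition~\ref{prop: Whittaker Shur}, and finally rewrite it using Proposition~\ref{prop: n-core}(ii) and Corollary~\ref{corol: product s ijk}. The only caveat you flag---compatibility of the decomposition~\eqref{eq:th:decomposition} with~\eqref{eq: n bonos n femion correspondance}---is exactly what the paper addresses in the paragraph preceding Proposition~\ref{prop: n-core}.
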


\begin{proof}
Recall that according to Proposition~\ref{prop: Whit up to norm} we just have to verify the coefficient. This coefficient can be found as coefficient of $W\big(z^{1/n}|u^{1/n}\big)$ at highest vector of $\mathcal{F}_{u q^{ l_0}} \otimes \cdots \otimes \mathcal{F}_{u q^{\frac{k}{n}+ l_k }} \otimes \cdots \otimes \mathcal{F}_{u q^{\frac{n-1}{n}+ l_{n-1}}}$.

By Proposition \ref{prop: Whittaker Shur}, coefficient of $W\big(z^{1/n}|u^{1/n}\big)$ at $| \lambda \rangle$ is
\begin{gather} \label{eq: factor for lambda vs l}
\frac{q^{-\frac{1}{2n} c(\lambda)} z^{\frac{| \lambda |}{n} } }{\prod\limits_{s\in\lambda}\big(q^{\frac{1}{2n}h(s)}-q^{-\frac{1}{2n}h(s)}\big)} = \frac{q^{-\frac{1}{2n} c(\lambda)} z^{\frac12 \sum \big( (l_i + \frac{i}{n})^2 - (\frac{i}{n})^2 \big) } }{ \prod\limits_{(i,j,k)} \Big(q^{\frac12 \big(k+\frac{i-j}{n} \big)}-q^{-\frac12 \big(k+\frac{i-j}{n} \big)}\Big)^{l_i-l_j-k} }. \end{gather}
Equality~\eqref{eq: factor for lambda vs l} follows from Corollary~\ref{corol: product s ijk} and Proposition~\ref{prop: n-core}\eqref{ii}.
\end{proof}

\subsection{Shapovalov form}

\begin{Definition}
Let $M_1$, $M_2$ be two representations of $\qD$. A pairing $\langle-,-\rangle_s\colon M_1 \otimes M_2 \rightarrow \mathbb{C}$ is called Shapovalov if $\langle v, E_{a, b}w \rangle_s = -\langle E_{-a,-b}v, w \rangle_s$.
\end{Definition}

\begin{Proposition} \label{prop: Sapovalov p for Fock}
There exists a unique Shapovalov pairing $\mathcal{F}_{u} \otimes \mathcal{F}_{q u^{-1}} \rightarrow \mathbb{C}$ such that $\langle 0 | 0 \rangle_s \allowbreak =1$.
\end{Proposition}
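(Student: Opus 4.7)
The plan is to reduce the Shapovalov condition to the Chevalley generators using Proposition~\ref{relation} and then exploit the bosonic realization of both Fock modules, which identifies them with the same underlying Heisenberg Fock space $F^a_0$ (note that in Proposition~\ref{prop:sec3 Boson Fock} the parameter $u$ enters only through the currents $E(z)$, $F(z)$, not through the space itself). The Shapovalov condition $\langle v,E_{a,b}w\rangle_s=-\langle E_{-a,-b}v,w\rangle_s$ is compatible with Lie brackets, so if it holds on a set of Lie generators it holds on all of $\qD$; hence it suffices to verify it on $H_k$, $E_k$ and $F_k$.

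For uniqueness, observe that $\mathcal F_{qu^{-1}}$ is cyclically generated from $|0\rangle$ by the operators $H_{-k}=E_{0,-k}$ for $k>0$, since under the bosonisation $H_k\mapsto a_k$ and $\mathcal F_{qu^{-1}}=\mathbb C[a_{-1},a_{-2},\dots]|0\rangle$. Applying the Shapovalov property for the $H$'s repeatedly,
\[
\langle v, H_{-k_1}\cdots H_{-k_r}|0\rangle\rangle_s = (-1)^r\langle H_{k_r}\cdots H_{k_1}v,|0\rangle\rangle_s,
\]
so the form is determined once its values $\langle v',|0\rangle\rangle_s$ for $v'\in\mathcal F_u$ are specified; expanding $v'$ in the PBW basis $a_{-I}|0\rangle$ and applying the same move on the left reduces everything to $\langle|0\rangle,|0\rangle\rangle_s=1$.

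For existence I would define $\langle\cdot,\cdot\rangle_s$ directly on $F^a_0\otimes F^a_0$ by the rules $\langle|0\rangle,|0\rangle\rangle_s=1$ and $\langle v,a_k w\rangle_s=-\langle a_{-k}v,w\rangle_s$ (equivalently $a_k^T=-a_{-k}$). Well-definedness follows from the freeness of the Heisenberg action: moving all annihilators across using $[a_k,a_{-l}]=k\delta_{k,l}$ gives an explicit determinantal (or matching-sum) formula on the PBW basis, and the graded decomposition $\mathcal F_u=\bigoplus_d(\mathcal F_u)_d$ with finite-dimensional components shows the pairing makes sense and pairs $(\mathcal F_u)_d$ with $(\mathcal F_{qu^{-1}})_{-d}$. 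By construction the Shapovalov property holds on the Heisenberg part.

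The main thing to check, and the main obstacle, is then the Shapovalov property for $E(z)$ and $F(z)$: the formal adjoint of $E(z)$ acting on $\mathcal F_{qu^{-1}}$ must equal $-F(z^{-1})$ acting on $\mathcal F_u$ (and symmetrically). Using formulas \eqref{eq:E}--\eqref{eq:F} and the identity $a_k^T=-a_{-k}$, taking the transpose reverses the order of the two exponential factors of $E(z)$ and negates each exponent (because the coefficients $c_k=(q^{-k/2}-q^{k/2})/k$ are odd in $k$); this produces exactly the exponential part of $-F(z^{-1})$. The prefactors also agree, via
\[
-\frac{u^{-1}}{1-q^{-1}}=\frac{qu^{-1}}{1-q},
\]
which is precisely why the dual parameter is $qu^{-1}$. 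The delicate point is to justify the formal-series manipulations, i.e., to check that the two sides make sense as matrix coefficients at each graded level; this is ensured by the finite-dimensionality of graded components together with the well-known convergence of the bosonic normal-ordered products appearing in $E(z)$ and $F(z)$. Once this matching is established, the Shapovalov identity extends to the Lie algebra generated by $E_k$, $F_k$, $H_l$, $c$, $c'$, which is all of $\qD$, completing both existence and uniqueness.
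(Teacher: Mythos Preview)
Your argument is correct and follows essentially the same route as the paper: define the pairing on the underlying Heisenberg Fock space by $a_k^T=-a_{-k}$, reduce the Shapovalov condition to the Chevalley generators, and verify $E(z)^T=-F(z^{-1})$ from the explicit bosonic formulas \eqref{eq:E}--\eqref{eq:F}, with the prefactor identity $-\tfrac{u^{-1}}{1-q^{-1}}=\tfrac{qu^{-1}}{1-q}$ forcing the dual parameter $qu^{-1}$. The paper's proof is just a terse version of what you wrote.
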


\begin{proof}
There exists a unique pairing on Fock space such that $a_k$ is dual to $-a_{-k}$. Since algebra $\qD$ is generated by modes of $E(z)$ and $F(z)$, it remains to check Shapovalov property for them. Formulas \eqref{eq:E} and \eqref{eq:F} implies $\langle v, E(z) w \rangle = -\big\langle F\big(z^{-1}\big) v, w \big\rangle$.
\end{proof}

\begin{Remark}Note that this pairing differs from the pairing defined in Section~\ref{boson}. More precisely, in Section \ref{boson} we required $a_k$ to be dual to $a_{-k}$, not $-a_{-k}$.
\end{Remark}

\begin{Proposition} \label{prop: U(1) Poch}
$\big\langle W\big(1| q u^{-1}\big), W(z|u) \big\rangle = (qz;q,q)_{\infty}$.
\end{Proposition}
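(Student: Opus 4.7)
The plan is to exploit the explicit exponential presentation of the Whittaker vector from Lemma~\ref{lemma: Whittaker Cond 1}, together with Proposition~\ref{prop: Whittaker Shur} which identifies the right-hand side of \eqref{eq:lemma: Whittaker Cond 1} with $W(z|u)$. Both $W(z|u)\in\mathcal{F}_u$ and $W(1|qu^{-1})\in\mathcal{F}_{qu^{-1}}$ therefore admit the Heisenberg-exponential form
\begin{gather*}
W(z|u)=\exp\biggl(\sum_{k\geqslant 1}\frac{z^k}{k\bigl(q^{k/2}-q^{-k/2}\bigr)}\,a_{-k}\biggr)|0\rangle,\qquad
W(1|qu^{-1})=\exp\biggl(\sum_{k\geqslant 1}\frac{1}{k\bigl(q^{k/2}-q^{-k/2}\bigr)}\,a_{-k}\biggr)|0\rangle,
\end{gather*}
since the formula in Lemma~\ref{lemma: Whittaker Cond 1} is independent of the parameter $u$.

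The first step is to convert $W(1|qu^{-1})$ into a bra via the Shapovalov form. By Proposition~\ref{prop: Sapovalov p for Fock}, the Shapovalov-adjoint of $a_{-k}$ is $-a_k$, so the corresponding bra is
\begin{gather*}
\langle W(1|qu^{-1})|\;=\;\langle 0|\exp\biggl(-\sum_{k\geqslant 1}\frac{1}{k\bigl(q^{k/2}-q^{-k/2}\bigr)}\,a_k\biggr).
\end{gather*}
The second step is a standard Heisenberg-algebra commutation: writing $\alpha_k=\tfrac{1}{k(q^{k/2}-q^{-k/2})}$ and $\beta_k=\tfrac{z^k}{k(q^{k/2}-q^{-k/2})}$ and using $[a_k,a_{-k}]=k$, one has
\begin{gather*}
\exp\biggl(-\sum_k\alpha_k a_k\biggr)\exp\biggl(\sum_k\beta_k a_{-k}\biggr)
=\exp\biggl(\sum_k\beta_k a_{-k}\biggr)\exp\biggl(-\sum_k\alpha_k a_k\biggr)\exp\biggl(-\sum_k k\alpha_k\beta_k\biggr),
\end{gather*}
so that sandwiching against $\langle 0|$ and $|0\rangle$ leaves only the scalar factor and yields
\begin{gather*}
\langle W(1|qu^{-1}),W(z|u)\rangle_s
=\exp\biggl(-\sum_{k\geqslant 1}\frac{z^k}{k\bigl(q^{k/2}-q^{-k/2}\bigr)^2}\biggr).
\end{gather*}

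The third step is to identify this exponential with $(qz;q,q)_\infty$. Expanding the logarithm,
\begin{gather*}
\log(qz;q,q)_\infty=\sum_{i,j\geqslant 0}\log\bigl(1-q^{i+j+1}z\bigr)
=-\sum_{k\geqslant 1}\frac{z^k}{k}\sum_{i,j\geqslant 0}q^{(i+j+1)k}
=-\sum_{k\geqslant 1}\frac{z^k}{k}\cdot\frac{q^k}{(1-q^k)^2},
\end{gather*}
and the elementary identity $(1-q^k)^2=q^k\bigl(q^{k/2}-q^{-k/2}\bigr)^2$ turns this into exactly the exponent above. Combining the three steps gives the claimed equality.

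There is no real obstacle; the only point requiring care is getting the sign on the Shapovalov adjoint right (so that one ends up with $-\sum k\alpha_k\beta_k$ rather than $+\sum k\alpha_k\beta_k$, which would give $1/(qz;q,q)_\infty$ instead). The normalization $W=|0\rangle+\cdots$ fixes the constant term of both sides to $1$, which is consistent with $(qz;q,q)_\infty\big|_{z=0}=1$.
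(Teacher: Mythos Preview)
Your proof is correct and follows exactly the route the paper takes: use the exponential presentation of the Whittaker vector from Proposition~\ref{prop: Whittaker Shur} and Lemma~\ref{lemma: Whittaker Cond 1}, then compute the Shapovalov pairing by a Heisenberg normal-ordering/BCH calculation. The paper merely records the exponential formula and says ``straightforward computation''; you have written out precisely that computation, including the identification of the resulting exponential with $(qz;q,q)_\infty$.
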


\begin{proof}
Formulas \eqref{eq:W:inlambda} and \eqref{eq:lemma: Whittaker Cond 1} imply
\begin{gather} \label{eq: W via exponent}
W(z|u) =
\exp\left(\sum_{k=1}^\infty \frac{z^k}{k\big(q^{k/2}-q^{-k/2}\big)}a_{-k}\right)|\varnothing\rangle.
\end{gather}
Using \eqref{eq: W via exponent} one can finish the proof by straightforward computation.
\end{proof}

\begin{Proposition} \label{prop: Sapovalov p for Fermion}
There exists a unique Shapovalov pairing $\mathcal{M}_{u} \otimes \mathcal{M}_{u^{-1}} \rightarrow \mathbb{C}$ such that
\begin{gather} \label{eq: Fermion and Shapoval}
\langle 1 | 0 \rangle_s =1, \qquad \langle v, \psi_i w \rangle = \langle \psi_{-i} v, w \rangle_s, \qquad \langle v, \psi^*_i w \rangle = \langle \psi^*_{-i} v, w \rangle_s.
\end{gather}
\end{Proposition}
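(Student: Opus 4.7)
The plan is to define the pairing directly from the Clifford adjointness conditions in \eqref{eq: Fermion and Shapoval} and then verify its compatibility with the $\qD$-action. Under the common identification $\mathcal{M}_u = \mathcal{M}_{u^{-1}} = F^{\psi}$, conditions (ii)--(iii) of \eqref{eq: Fermion and Shapoval} pin down the adjoints of all Clifford generators, so for any two monomials in the $\psi$'s and $\psi^*$'s applied to cyclic vectors $|l\rangle$, $|l'\rangle$ one can compute the pairing recursively: move each Clifford operator across the bracket using (ii)--(iii), then use the annihilation relations defining the $|l\rangle$'s. This reduces everything to pairings $\langle l \,|\, l'\rangle_s$, which are fixed by (i) together with the recursion (adjacent charges are linked by $\psi^*_{-l}|l\rangle = |l+1\rangle$ and $\psi_{l-1}|l\rangle = |l-1\rangle$). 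This proves uniqueness.

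For existence we must verify $\langle v, E_{a,b} w\rangle_s = -\langle E_{-a,-b} v, w\rangle_s$; by Proposition~\ref{relation} it suffices to check this on the Chevalley generators $H_k$, $E_k$, $F_k$. Writing $\dagger$ for the adjoint determined by \eqref{eq: Fermion and Shapoval}, the requirements are $H_k^{\dagger} = -H_{-k}$ (for $k \ne 0$) and $E_k^{\dagger} = -F_{-k}$, $F_k^{\dagger} = -E_{-k}$. For $H_k = \sum_{i+j=k}\psi_i \psi^*_j$ coming from \eqref{eq:centerFermi}, a single application of (ii)--(iii) gives $(\psi_i \psi^*_j)^{\dagger} = \psi^*_{-j}\psi_{-i} = -\psi_{-i}\psi^*_{-j} + \delta_{i+j,0}$ by the Clifford anticommutation, and summation over $i+j = k \ne 0$ kills the $\delta$ contribution to yield $H_k^{\dagger} = -H_{-k}$. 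For the $E$--$F$ adjointness, expand the operator $E(z) = u^{-1}q^{-1/2}z\,\psi(q^{-1/2}z)\psi^*(q^{1/2}z)$ on $\mathcal{M}_{u^{-1}}$ in modes and apply the same formula $(\psi_i \psi^*_j)^{\dagger} = -\psi_{-i}\psi^*_{-j} + \delta_{i+j,0}$; the substitution $i \mapsto -i$ in the summation index then produces, for each $k \ne 0$, exactly the $z^{-k}$-coefficient of $-F(z^{-1})$ acting on $\mathcal{M}_u$, with the correct factor $u^{-1}$ coming from \eqref{ferm:F} (the adjoint carries operators on $\mathcal{M}_{u^{-1}}$ to operators on $\mathcal{M}_u$, so the scalar $u^{-1}$ in $E(z)$ is indeed the same as the scalar $u^{-1}$ in $F(z)$).

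The main technical obstacle is the $k = 0$ case, where the $\delta_{i+j,0}$ contribution produces a formally divergent scalar $\sum_i q^i$ in the non-normal-ordered product. This is resolved by working with the regular product of currents from Appendix~\ref{nonorm}, or equivalently with the normal-ordered presentations in \eqref{ferm:E}--\eqref{ferm:F}. Under the decomposition $\mathcal{M}_u = \bigoplus_l \mathcal{F}_{q^l u}$, the Shapovalov form restricts to a pairing between the subsectors $\mathcal{F}_{q^l u}$ and $\mathcal{F}_{q^{1-l}u^{-1}}$; the constant term $\tfrac{q^{1-l}u^{-1}}{1-q}$ of $E(z)$ on $\mathcal{F}_{q^{1-l}u^{-1}}$ indeed matches $-1$ times the constant term $\tfrac{q^{-l}u^{-1}}{1-q^{-1}}$ of $F(z)$ on $\mathcal{F}_{q^l u}$ via the identity $\tfrac{1}{1-q^{-1}} = -\tfrac{q}{1-q}$, supplying precisely the scalar contribution to $E_0^{\dagger} = -F_0$ that is missing from the naive mode-by-mode computation and completing the verification.
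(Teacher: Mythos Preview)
Your proof is correct and follows the same approach as the paper's (extremely terse) argument: define the pairing via the Clifford adjointness conditions, then verify the Shapovalov property on the Chevalley generators using the fermionic formulas \eqref{eq:centerFermi}--\eqref{ferm:F}. You supply considerably more detail than the paper does, in particular the explicit handling of the $k=0$ constant term via the charge-sector decomposition $\mathcal{F}_{q^l u}\leftrightarrow\mathcal{F}_{q^{1-l}u^{-1}}$ and the identity $\tfrac{q^{1-l}u^{-1}}{1-q}=-\tfrac{q^{-l}u^{-1}}{1-q^{-1}}$, which the paper leaves implicit.
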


\begin{proof}
There exists unique pairing satisfying \eqref{eq: Fermion and Shapoval}. The Shapovalov property can be checked directly using \eqref{eq:centerFermi}--\eqref{ferm:F}.
\end{proof}

\begin{Proposition} \label{prop: sign l}Shapovalov pairing for Fock modules for basis $| \lambda \rangle$ has form
\[ \langle \lambda | \mu \rangle_s = (-1)^{|\lambda|} \delta_{\lambda, \mu'}.\]
\end{Proposition}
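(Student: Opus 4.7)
The plan is to exploit the standard identification of $\mathcal{F}_u$ with the ring of symmetric functions $\Lambda$ via $|\lambda\rangle\leftrightarrow s_\lambda$ and $a_{-k}\leftrightarrow p_k$ (already used in the proof of Lemma~\ref{lemma: Whittaker Cond 1}), and to compare the Shapovalov pairing with the Hall inner product $\langle\cdot,\cdot\rangle_H$ on $\Lambda$, characterized by $\langle p_\lambda, p_\mu\rangle_H = z_\lambda\delta_{\lambda,\mu}$. First I will compute the Shapovalov pairing on the power-sum basis: iterating the defining relation $\langle a_{-k} v | w\rangle_s = -\langle v | a_k w\rangle_s$ (each use of which contributes a sign $-1$) and using $\langle\varnothing|\varnothing\rangle_s=1$ yields
\[ \langle p_\lambda | p_\mu\rangle_s = (-1)^{\ell(\lambda)} z_\lambda \delta_{\lambda,\mu}. \]
Since this agrees with $\langle \tilde\omega(p_\lambda), p_\mu\rangle_H$ where $\tilde\omega$ is the algebra involution of $\Lambda$ determined by $\tilde\omega(p_k)=-p_k$, bilinearity gives the identity $\langle f|g\rangle_s = \langle\tilde\omega(f), g\rangle_H$ for all $f,g\in\Lambda$.

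The second step is to evaluate $\tilde\omega(s_\lambda)$ using the classical involution $\omega$ of symmetric functions, for which $\omega(p_k)=(-1)^{k-1}p_k$ and, crucially, $\omega(s_\lambda)=s_{\lambda'}$ (see~\cite{M}). Writing $\tilde\omega=\beta\circ\omega$ with $\beta(p_k)=(-1)^k p_k$, I observe that $\beta$ is the endomorphism induced by the substitution $x_i\mapsto -x_i$, so homogeneity of the Schur polynomials forces $\beta(s_\lambda)=(-1)^{|\lambda|} s_\lambda$. Hence $\tilde\omega(s_\lambda)=(-1)^{|\lambda|} s_{\lambda'}$, and combining this with the identity from the first paragraph gives
\[ \langle s_\lambda|s_\mu\rangle_s = \langle\tilde\omega(s_\lambda),s_\mu\rangle_H = (-1)^{|\lambda|}\langle s_{\lambda'}, s_\mu\rangle_H = (-1)^{|\lambda|}\delta_{\lambda,\mu'}, \]
as required. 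The only subtle point is the bookkeeping in the first step: the sign $(-1)^{\ell(\lambda)}$ stems entirely from the distinction that $a_{-k}$ is Shapovalov-adjoint to $-a_k$ (rather than $+a_k$, which would reproduce the Hall pairing verbatim), and once this sign is pinned down the rest of the argument is a direct application of Macdonald's identities. Note that the parameter $u$ does not appear in the answer, which is consistent with the fact that the Shapovalov condition constrains only the Heisenberg action (the same on $\mathcal{F}_u$ and $\mathcal{F}_{qu^{-1}}$) once the normalization $\langle\varnothing|\varnothing\rangle_s=1$ is imposed.
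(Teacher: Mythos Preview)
Your proof is correct and takes a genuinely different route from the paper's. The paper proves this via the fermionic realization: it uses Proposition~\ref{prop: Sapovalov p for Fermion} to express the pairing in terms of fermions, writes $|\lambda,0\rangle$ and $|\mu,1\rangle$ in Frobenius coordinates via~\eqref{eq:lambda l}, and then contracts the resulting string of $\psi$'s and $\psi^*$'s to read off both the selection rule $\mu=\lambda'$ and the sign $(-1)^{\sum p_k+\sum q_k+i^2}=(-1)^{|\lambda|}$. Your approach stays entirely on the bosonic side: you identify the Shapovalov pairing with the Hall pairing twisted by the involution $\tilde\omega\colon p_k\mapsto -p_k$, factor $\tilde\omega$ as $\beta\circ\omega$ with $\omega$ the classical involution sending $s_\lambda$ to $s_{\lambda'}$ and $\beta$ the degree-sign map, and conclude $\tilde\omega(s_\lambda)=(-1)^{|\lambda|}s_{\lambda'}$. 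Your argument is arguably more conceptual and avoids both the fermionic Shapovalov pairing and the Frobenius-coordinate sign bookkeeping; the paper's approach, on the other hand, fits naturally with the surrounding fermionic machinery and makes the connection to the $|l\rangle$-grading of $\mathcal{M}_u$ (needed in Proposition~\ref{prop: Shapoval sign}) transparent.
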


\begin{proof}Let $p_1, \dots, p_i$ and $q_1, \dots, q_i$ be Frobenius coordinates of $\lambda$; analogously, $\tilde{p}_1, \dots, \tilde{p}_j$ and $\tilde{q}_1, \dots, \tilde{q}_j$ be Frobenius coordinates of $\mu$. Using identification given by~\eqref{eq:lambda l} we obtain
\begin{gather}
\langle \mu, 1 | \lambda,0 \rangle_s = (-1)^{\sum_k (q_k-1)} (-1)^{\sum_k (\tilde{q}_k-1)} \nonumber\\
\hphantom{\langle \mu, 1 | \lambda,0 \rangle_s =}{}\times \langle 1 | \psi_{\tilde{q}_1}^* \cdots \psi_{\tilde{q}_i}^* \psi_{\tilde{p}_i-1} \cdots \psi_{\tilde{p}_1-1} \psi_{-p_1} \cdots \psi_{-p_i} \, \psi_{-q_i+1}^* \cdots \psi_{-q_1+1}^* \vac_s.\label{eq: Sh prod in lam proof}
\end{gather}
Evidently, if this product is non-zero, then $i=j$, $q_k=\tilde{p}_k$, and $p_l = \tilde{q}_l$; this exactly means that $\mu = \lambda'$. It remains to calculate r.h.s.\ of~\eqref{eq: Sh prod in lam proof} in this case; it equals $(-1)^{\sum_k q_k+\sum_l p_l+i^2} = (-1)^{|\lambda|}$ since $|\lambda| = \sum_k q_k+\sum_l p_l-i $.
\end{proof}

\begin{Definition}
Standard Shapovalov pairing $\langle -, - \rangle_{ss} \colon M_1 \otimes M_2 \rightarrow \mathbb{C}$ for $M_1 = \mathcal{F}_{u_1} \otimes \cdots \otimes \mathcal{F}_{u_n} $ and $M_2 = \mathcal{F}_{q/u_n} \otimes \cdots \otimes \mathcal{F}_{q/u_1}$ is defined by
\begin{gather*}
\langle x_1 \otimes \cdots \otimes x_n, y_n \otimes \cdots \otimes y_1 \rangle_{ss} = \prod_i \langle x_i , y_i \rangle_i.
\end{gather*}
Here $\langle -,-\rangle_i$ stands for Shapovalov pairing $\mathcal{F}_{u_i} \otimes \mathcal{F}_{q u_i^{-1}} \rightarrow \mathbb{C}$ as in Proposition~\ref{prop: Sapovalov p for Fock}.
\end{Definition}

\begin{Proposition} \label{prop: Shapoval sign}
Shapovalov pairing on $\mathcal{F}^{[1/n]}_{u} \otimes \mathcal{F}^{[1/n]}_{q^{\frac{1}{n}}/u}$ restricts to $(-1)^{| \lambda |} \langle -, - \rangle_{ss} \colon M_1 \otimes M_2 \rightarrow \mathbb{C}$ for $M_1 = \mathcal{F}_{u q^{ l_0}} \otimes \cdots \otimes \mathcal{F}_{u q^{\frac{n-1}{n}+ l_{n-1}
}}$ and $M_2 = \mathcal{F}_{ q^{\frac{1}{n}- l_{n-1}}/u} \otimes \cdots \otimes \mathcal{F}_{q^{1- l_0}/u}$
{\upshape(}with respect to decomposition~\eqref{eq:th:decomposition}{\upshape)}. Other pairs of direct summands are orthogonal.
\end{Proposition}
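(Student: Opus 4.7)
The plan is to combine Proposition~\ref{prop: sign l}, which gives the Shapovalov pairing on $\mathcal{F}^{[1/n]}_u \otimes \mathcal{F}^{[1/n]}_{q^{1/n}/u}$ in the partition basis by $\langle \lambda | \mu \rangle_s = (-1)^{|\lambda|}\delta_{\lambda,\mu'}$, with the structural description of the decomposition \eqref{eq:th:decomposition} in terms of $n$-cores given by Proposition~\ref{prop: n-core}.

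First, I would identify the summand $M_1 \subset \mathcal{F}^{[1/n]}_u$ labelled by $l \in \mathbf{Q}_{(n)}$ with the span of $|\mu\rangle$ over partitions $\mu$ whose $n$-core is the specific $n$-core $\lambda(l)$ appearing in Proposition~\ref{prop: n-core}, and analogously for the summands of $\mathcal{F}^{[1/n]}_{q^{1/n}/u}$. Since by Proposition~\ref{prop: sign l} the pairing is supported on pairs $(\lambda,\lambda')$, a pair of summands labelled by $l$ and $l'$ has non-zero restricted pairing only when $\lambda(l)' = \lambda(l')$. A Maya-diagram computation, using that transposition corresponds to complementing and negating the coordinates of the Maya set, shows that $\lambda(l)'$ is the $n$-core $\lambda(l'')$ with $l''_i = -l_{n-1-i}$. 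This matches exactly the parameters of $M_2 = \mathcal{F}_{q^{1/n-l_{n-1}}/u} \otimes \cdots \otimes \mathcal{F}_{q^{1-l_0}/u}$, so the orthogonality of all other pairs of summands is immediate.

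On the paired summands $M_1 \otimes M_2$, both the restriction of $\langle-,-\rangle_s$ and the standard Shapovalov pairing $\langle-,-\rangle_{ss}$ are $\qD$-Shapovalov bilinear pairings; for the former this uses that the embedding $\phi_{e_1, ne_2}\colon \qD \hookrightarrow \qd{1/n}$ sends $E^{[1]}_{a,b}$ to $E^{[1/n]}_{a,nb}$, so the $\qd{1/n}$-Shapovalov property descends. The module $M_1$ is irreducible by Lemma~\ref{lemma 3.1 FJMM}, because any ratio $q^{(i-j)/n+l_i-l_j}$ of distinct factor parameters is never an integer power of $q$ when $i\neq j\in\{0,\dots,n-1\}$. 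Schur's lemma then implies that $\qD$-Shapovalov pairings on $M_1 \otimes M_2$ form at most a one-dimensional space, so the restricted pairing is a scalar multiple of $\langle-,-\rangle_{ss}$.

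To pin down the scalar I would evaluate both pairings on highest vectors. The $\qD$-highest vector of $M_1$ inside $\mathcal{F}^{[1/n]}_u$ is $|\lambda(l)\rangle$ by Proposition~\ref{prop: n-core}, and corresponds under \eqref{eq:th:decomposition} to the tensor product of Fock vacua $|\varnothing\rangle\otimes\cdots\otimes|\varnothing\rangle \in M_1$; likewise the highest vector of $M_2$ is $|\lambda(l)'\rangle$, identified with $|\varnothing\rangle\otimes\cdots\otimes|\varnothing\rangle$. Proposition~\ref{prop: sign l} yields $\langle \lambda(l) | \lambda(l)'\rangle_s = (-1)^{|\lambda(l)|}$, whereas $\langle|\varnothing\rangle\otimes\cdots\otimes|\varnothing\rangle,|\varnothing\rangle\otimes\cdots\otimes|\varnothing\rangle\rangle_{ss} = 1$, so the scalar equals $(-1)^{|\lambda(l)|}$, which is the sign $(-1)^{|\lambda|}$ in the statement with $\lambda = \lambda(l)$ the $n$-core of the summand. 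The main technical point is the Maya-diagram verification that transposition of an $n$-core implements the charge reversal $l''_i = -l_{n-1-i}$; the remainder of the argument is routine application of Schur's lemma and Proposition~\ref{prop: sign l}.
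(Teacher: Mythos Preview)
Your proposal is correct and follows essentially the same approach as the paper: restrict the Shapovalov property to the summands, use irreducibility to reduce to a one-dimensional space of pairings, and evaluate the scalar on highest vectors via Proposition~\ref{prop: sign l} and Proposition~\ref{prop: n-core}. You have made explicit two steps the paper leaves terse---the Maya-diagram verification that transposition sends the $n$-core labelled by $l$ to the one labelled by $l''_i=-l_{n-1-i}$, and the irreducibility check via Lemma~\ref{lemma 3.1 FJMM}---and the paper additionally remarks (immediately after its proof) on an alternative route to orthogonality via pairwise non-isomorphism of the summands.
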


\begin{proof}
Property $\langle v, E_{a, b}w \rangle_s = -\langle E_{-a,-b}v, w \rangle_s$ is preserved under restriction. Since module $M_1$ and $M_2$ are irreducible, there exists unique Shapovalov pairing $M_1 \otimes M_2 \rightarrow \mathbb{C}$. So restriction of pairing coincides with the standard pairing up to multiplicative constant. The constant equals to $\langle \lambda' | \lambda \rangle = (-1)^{|\lambda|}$ due to Proposition \ref{prop: sign l} (and Proposition \ref{prop: n-core}).

Orthogonality with all other summands also follows from Proposition \ref{prop: sign l} and irreducibi\-lity.
\end{proof}

\begin{Remark}Let us comment on another way to prove orthogonality mentioned in Proposition~\ref{prop: Shapoval sign}. All direct summands are pairwise non-isomorphic. Hence there is no non-zero pairing for all other pairs of direct summands.
\end{Remark}

\subsection{Conformal blocks}

\begin{Definition}
Pochhammer and double Pochhammer symbols are defined by
\begin{gather*}
(u; q_1)_{\infty} = \prod_{i=0}^{\infty} \big(1-q_1^i u\big), \qquad
(u; q_1, q_2)_{\infty} = \prod_{i,j=0}^{\infty} \big(1-q_1^i q_2^j u\big).
\end{gather*}
\end{Definition}

\begin{Remark}
Standard definition works for $|q_1|, |q_2|<1$ and any~$u$. For sufficiently small $u$ double Pochhammer symbol can be presented as \[
(u;q_1, q_2)_{\infty} =\exp \left( -\sum_{k=1}^{\infty} \frac{u^k}{k\big(1-q_1^k\big)\big(1-q_2^k\big)} \right).
\]
 The series in $u$ has non-zero radius of convergence for $|q_1|, |q_2| \neq 1$. Moreover the series enjoys property $\big(u; q_1^{-1}, q_2\big)_{\infty} = 1/(q_1 u; q_1,q_2)_{\infty}$, hence we can define double Pochhammer symbol for any $|q_1|, |q_2| \neq 1$.

In particular, new definition implies $\big(u; q, q^{-1}\big)_{\infty} = 1/(qu; q,q)_{\infty}$; this is important to compare our formulas with~\cite{BGM}. Below we assume $|q| \neq 1$.
\end{Remark}

\begin{Definition}\label{def:conformal block}
Let us define $q$-deformed conformal block
\begin{gather}
\mathcal{Z} (u_1, \dots, u_n ; z) = z^{\frac{\sum (\log u_i)^2}{2 (\log q)^2}}\nonumber\\
\hphantom{\mathcal{Z} (u_1, \dots, u_n ; z) =}{}\times \prod_{i \neq j} \frac{1}{\big(qu_i u_j^{-1}; q, q\big)_{\infty}} \big\langle W_u\big(1|qu_n^{-1}, \dots, qu_1^{-1}\big) , W(z| u_1, \dots, u_n) \big\rangle_{ss}.\!\!\!\!\label{eq: def q-der conf block}
\end{gather}
\end{Definition}

\begin{Remark}
AGT statement claims that function $\mathcal{Z} (u_1, \dots, u_n ; z)$ is equal to Nekrasov partition function for pure supersymmetric ${\rm SU}(n)$ 5d theory. This was conjectured in \cite{AY10}, the proof follows from the geometric construction of the Whittaker vector given in the \cite{N12} and \cite{Ts}.
\end{Remark}

\begin{Theorem} \label{Theorem: the relation}
\begin{gather*}
z^{\frac12 \sum \frac{i^2}{n^2}} \prod_{i \neq j} \frac{1}{\big(q^{1+\frac{i-j}{n}}; q, q\big)_{\infty}} \big( q^{\frac{1}{n}} z^{\frac{1}{n}}; q^{\frac{1}{n}} ,q^{\frac{1}{n}} \big)_{\infty} \\
\qquad{} = \sum_{(l_0, \dots, l_{n-1}) \in \Q} \mathcal{Z} \big( q^{l_0}, q^{\frac{1}{n}+l_1}, \dots, q^{\frac{n-1}{n} + l_{n-1}}; z \big).
\end{gather*}
\end{Theorem}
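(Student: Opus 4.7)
The strategy, already outlined in the introduction, is to interpret both sides of the identity as Shapovalov pairings of Whittaker vectors, then match them via the decomposition of Fock modules under the inclusion $\qD\hookrightarrow\qd{1/n}$. Explicitly, consider the Whittaker vector $W\big(z^{1/n}|1\big)\in\mathcal{F}^{[1/n]}_1$ for the algebra $\qd{1/n}$ and its Shapovalov-dual Whittaker vector $W\big(1|q^{1/n}\big)\in\mathcal{F}^{[1/n]}_{q^{1/n}}$. By Proposition~\ref{prop: U(1) Poch} applied with $q$ replaced by $q^{1/n}$, their pairing equals $\big(q^{1/n}z^{1/n};q^{1/n},q^{1/n}\big)_\infty$. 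This produces the last factor on the left-hand side of the desired identity; the remaining prefactor $z^{\frac12\sum i^2/n^2}\prod_{i\neq j}\big(q^{1+(i-j)/n};q,q\big)_\infty^{-1}$ will have to be reconstructed term by term on the right.

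The main step is to expand the same Shapovalov pairing using the decomposition~\eqref{eq:th:decomposition} of Section~\ref{Section: general sublattice}. Proposition~\ref{prop: Whit up to norm} guarantees that the restriction of $W\big(z^{1/n}|1\big)$ to the summand indexed by $l=(l_0,\ldots,l_{n-1})\in\Q$ is proportional to the ordinary Whittaker vector $W(z|q^{l_0},q^{1/n+l_1},\ldots,q^{(n-1)/n+l_{n-1}})$, and Theorem~\ref{th: Whit decomposition} supplies the explicit proportionality factor in terms of the associated $n$-core $\lambda_l$. An analogous decomposition applies to $W\big(1|q^{1/n}\big)$, whose components lie in the Shapovalov-dual summands $M_2(l)$ of Proposition~\ref{prop: Shapoval sign}; these correspond to the Whittaker vectors $W\big(1|q^{1/n-l_{n-1}},\ldots,q^{1-l_0}\big)=W\big(1|qu_n^{-1},\ldots,qu_1^{-1}\big)$ in the notation of Definition~\ref{def:conformal block}.

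Plugging both expansions into the Shapovalov pairing and invoking Proposition~\ref{prop: Shapoval sign}, only diagonal terms survive (other pairs of summands are orthogonal), yielding a sum over $l\in\Q$ of products of coefficients from Theorem~\ref{th: Whit decomposition} times the standard Shapovalov pairing $\langle W\big(1|qu_n^{-1},\ldots,qu_1^{-1}\big),W(z|u_1,\ldots,u_n)\rangle_{ss}$ with $u_i=q^{(i-1)/n+l_{i-1}}$. By Definition~\ref{def:conformal block}, each such pairing equals $\mathcal{Z}\big(q^{l_0},\ldots,q^{(n-1)/n+l_{n-1}};z\big)$ divided by the prefactor $z^{\frac12\sum((i-1)/n+l_{i-1})^2}\prod_{i\neq j}\big(q^{1+(i-j)/n+l_{i-1}-l_{j-1}};q,q\big)_\infty^{-1}$, so the identity reduces to a purely combinatorial verification that the product of the two decomposition coefficients $c_l(z,1)\cdot c_{l'(l)}(1,q^{1/n})$ together with the sign $(-1)^{|\lambda_l|}$ and the pure prefactor $z^{\frac12\sum i^2/n^2}\prod\big(q^{1+(i-j)/n};q,q\big)_\infty^{-1}$ cancels out against the prefactor of $\mathcal{Z}$ at every $l$.

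\textbf{Main obstacle.} The technical heart of the argument is this bookkeeping of prefactors: one must compare powers of $z$, contributions $q^{-c(\lambda_l)/2n}$ from the two $n$-cores (related via the reflection $l'_\alpha=-l_{n-1-\alpha}$ used in Proposition~\ref{prop: Shapoval sign}), and the product over triples $(i,j,k)$ of $q$-integers appearing in Theorem~\ref{th: Whit decomposition}, and show that their combination with the double Pochhammer symbols in Definition~\ref{def:conformal block} produces precisely the ratio $\prod_{i\neq j}\big(q^{1+(i-j)/n};q,q\big)_\infty/\big(q^{1+(i-j)/n+l_{i-1}-l_{j-1}};q,q\big)_\infty$. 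This reduces to the classical hook-length formula for the product over cells of the $n$-core $\lambda_l$ (in the form of Corollary~\ref{corol: product s ijk}) combined with Proposition~\ref{prop: n-core}(ii) relating $|\lambda_l|$ to $\tfrac12\sum((i/n+l_i)^2-(i/n)^2)$, after which everything matches term by term and summing over $l\in\Q$ yields the theorem.
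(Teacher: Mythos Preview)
Your proposal is correct and follows essentially the same route as the paper: compute the Shapovalov pairing $\big\langle W(1|q^{1/n}), W(z^{1/n}|1)\big\rangle$ in two ways---directly via Proposition~\ref{prop: U(1) Poch} for the left-hand side, and via the decomposition~\eqref{eq:th:decomposition} combined with Theorem~\ref{th: Whit decomposition} and Proposition~\ref{prop: Shapoval sign} for the right-hand side---and then match the prefactors using the combinatorial identity of Proposition~\ref{prop: Z 1-loop}. The only cosmetic difference is that the paper handles the dual Whittaker vector by noting that the relevant $n$-core on the dual side is the transpose $\lambda'$ (so $c(\lambda)+c(\lambda')=0$ and hook lengths coincide), whereas you phrase this via the reflection $l'_\alpha=-l_{n-1-\alpha}$; these are equivalent bookkeepings of the same observation.
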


The idea of the proof is to find two different expressions for $\big\langle W\big(1| q^{1/n} \big) , W(z^{1/n}|1)\big\rangle$ using Theorem~\ref{th: Whit decomposition}. To do this we need to simplify $\mathcal{Z} (u_1, \dots, u_n ; z)$ after substitution $u_{i-1} = q^{\frac{i}{n}+l_i}$. Let us concentrate on the second factor of \eqref{eq: def q-der conf block}.
\begin{Proposition} \label{prop: Z 1-loop}
\begin{gather*}
\prod_{i \neq j} \frac{\big( q^{1+\frac{i-j}{n}} ;q,q \big)_{\infty}}{ \big(q^{l_i - l_j+1+\frac{i-j}{n}};q,q \big)_{\infty}} = (-1)^{| \lambda |} \prod_{(i,j,k)} \frac{1}{\big(q^{\frac{k}{2}+\frac{i-j}{2n}} -q^{-\frac{k}{2}-\frac{i-j}{2n}} \big)^{2(l_i-l_j-k)}}.
\end{gather*}
\end{Proposition}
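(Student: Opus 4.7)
The plan is to reduce both sides of the identity to the same product over boxes of the $n$-core $\lambda$ and then match term-by-term using the hook-multiplicity lemma preceding Corollary~\ref{corol: product s ijk}.

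For the right-hand side, I would first apply Corollary~\ref{corol: product s ijk} after the substitution $q \mapsto q^{1/n}$ and square to obtain
\[
\prod_{(i,j,k)}\bigl(q^{k/2+(i-j)/(2n)} - q^{-k/2-(i-j)/(2n)}\bigr)^{2(l_i-l_j-k)} = \prod_{s \in \lambda}\bigl(q^{h(s)/(2n)} - q^{-h(s)/(2n)}\bigr)^2.
\]
Using the elementary identity $(q^{a/2}-q^{-a/2})^{2} = -(1-q^{a})(1-q^{-a})$, the right-hand side of the proposition becomes $\prod_{s\in\lambda}\bigl((1-q^{h(s)/n})(1-q^{-h(s)/n})\bigr)^{-1}$, with the two copies of $(-1)^{|\lambda|}$ cancelling.

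For the left-hand side, I would iterate $(u;q,q)_\infty = (u;q)_\infty(qu;q,q)_\infty$, which gives $\frac{(u;q,q)_\infty}{(q^l u;q,q)_\infty} = \prod_{k=0}^{l-1}(q^k u;q)_\infty$ for $l \geq 0$ (and its reciprocal for $l<0$), and apply it with $u = q^{1+(i-j)/n}$ and $l = l_i-l_j$ for each pair $(i,j)$ with $i \neq j$. The cleanest way to match the two sides is then via logarithms: using $\log(u;q,q)_\infty = -\sum_{N\geq 1}\frac{u^N}{N(1-q^N)^2}$ on the left and the regularization $\log(1-q^{-h/n}) = \log(-q^{-h/n}) + \log(1-q^{h/n})$ on the right, the proposition reduces, for every $N \geq 1$, to the formal series identity
\[
\sum_{s\in\lambda}\bigl(q^{Nh(s)/n} + q^{-Nh(s)/n}\bigr) = \sum_{\substack{(i,j,k)\\ l_i>l_j}}(l_i-l_j-k)\bigl(q^{N(k+(i-j)/n)} + q^{-N(k+(i-j)/n)}\bigr),
\]
which is an immediate consequence of the hook-multiplicity lemma (hooks of length $nk+(i-j)$ appear with multiplicity $l_i-l_j-k$), together with the companion identity $\sum_s h(s) = \sum_{(i,j,k)}(l_i-l_j-k)(nk+(i-j))$ needed to absorb the $q^{\sum_s h(s)/n}$ prefactor.

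The main technical obstacle is the careful bookkeeping of this regularization: one must track the analytic-continuation definition of $(q^a;q,q)_\infty$ used when $l_i<l_j$ makes the exponent large and negative, the cancellation of the $(-1)^{|\lambda|}$ and $q^{\sum_s h(s)/n}$ factors produced by pulling $-q^{-h/n}$ out of each $(1-q^{-h/n})$, and the split into the cases $i>j$ versus $i<j$ in the definition of the product over triples. Conceptually, the identity is a combinatorial manifestation of the Fateev--Litvinov/Burge matching: at Coulomb moduli $u_i = q^{(i-1)/n + l_{i-1}}$, the $U(n)$ vector-multiplet 1-loop factor collapses to the $U(1)$ instanton contribution of the $n$-core $\lambda$, which is exactly the hook product on the right; this gives a more conceptual but less self-contained proof.
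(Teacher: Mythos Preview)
Your rewriting of the right-hand side is correct: squaring Corollary~\ref{corol: product s ijk} (with $q\mapsto q^{1/n}$) and using $(q^{a/2}-q^{-a/2})^2=-(1-q^a)(1-q^{-a})$ does turn it into $\prod_{s\in\lambda}\bigl[(1-q^{h(s)/n})(1-q^{-h(s)/n})\bigr]^{-1}$, with the two copies of $(-1)^{|\lambda|}$ cancelling.

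The gap is in the matching step. The ``formal series identity'' you write down,
\[
\sum_{s\in\lambda}\bigl(q^{Nh(s)/n}+q^{-Nh(s)/n}\bigr)=\sum_{(i,j,k)}(l_i-l_j-k)\bigl(q^{N(k+(i-j)/n)}+q^{-N(k+(i-j)/n)}\bigr),
\]
is a tautology: both sides are literally the same sum, related by the hook-multiplicity lemma. So it cannot be what the proposition \emph{reduces to}; it only rewrites the right-hand side in two equivalent ways. What you actually need is to show that the logarithm of the double-Pochhammer ratio equals the logarithm of this hook product, and your expansion $\log(u;q,q)_\infty=-\sum_N u^N\big/\bigl(N(1-q^N)^2\bigr)$ carries a factor $(1-q^N)^{-2}$ that your formal identity does not contain. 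After your telescoping, the left-hand side is a product of \emph{infinite} single Pochhammers $(q^{k+1+(i-j)/n};q)_\infty$, while the right-hand side is a finite product over boxes; these cannot match $N$-by-$N$ without a genuine cancellation that you have not performed.

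The paper's proof confronts this directly: for each unordered pair $\{i,j\}$ with $l_i>l_j$ it multiplies the $(i,j)$ and $(j,i)$ ratios together, obtaining in~\eqref{eq: Poch3}--\eqref{eq: Poch4} the desired finite product times a residual factor $(v_{ij};q)_\infty^{\,l_i-l_j}(v_{ji};q)_\infty^{\,l_j-l_i}$. The crucial observation, implicit in the passage from~\eqref{eq: Poch3}--\eqref{eq: Poch4} to the final line, is that these residual single Pochhammers cancel in the full product over all pairs: for each fixed residue $a\bmod n$, the exponent of $(q^{a/n};q)_\infty$ is $\sum_{i-j\equiv a}(l_i-l_j)=0$ because $\sum_i l_i=0$. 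This cancellation is exactly what kills the $(1-q^N)^{-2}$ in your logarithmic formulation, and it is the step your outline is missing. The sign bookkeeping you flag in your last paragraph is real but secondary; the substantive content of the proof is this pairing-and-cancellation, not the regularization.
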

\begin{proof}
Let $l_i-l_j>0$. It is straightforward to check that
\begin{gather}
\frac{\big( q^{1+\frac{i-j}{n}} ;q,q \big)_{\infty}}{\big(q^{l_i - l_j+1+\frac{i-j}{n}};q,q \big)_{\infty}} = \big(q^{1+\frac{i-j}{n}} ; q \big)_{\infty}^{l_i-l_j} \prod_{k=1}^{l_i-l_j-1} \frac{1}{\big(1-q^{k+\frac{i-j}{n}} \big)^{l_i-l_j-k}}, \label{eq: Poch1} \\
\frac{\big(q^{1+\frac{j-i}{n}} ;q,q \big)_{\infty}}{\big(q^{l_j - l_i+1+\frac{j-i}{n}} ;q,q \big)_{\infty}} = \big(q^{1+\frac{j-i}{n}}; q \big)_{\infty}^{l_j-l_i} \prod_{k=0}^{l_i-l_j-1} \frac{1}{\big(1-q^{-k-\frac{i-j}{n}} \big)^{l_i-l_j-k}}. \label{eq: Poch2}
\end{gather}

Denote by $v_{ij} = q^{\frac{i-j}{n}}$ for $i>j$ and $v_{ij} = q^{\frac{n+i-j}{n}} $ for $i<j$. Formulas~\eqref{eq: Poch1}--\eqref{eq: Poch2} implies the following assertions. For $i>j$
\begin{gather}
\frac{\big( q^{1+\frac{i-j}{n}} ;q,q \big)_{\infty}}{\big(q^{l_i - l_j+1+\frac{i-j}{n}};q,q \big)_{\infty}} \times \frac{\big(q^{1+\frac{j-i}{n}};q,q \big)_{\infty}}{\big(q^{l_j - l_i+1+\frac{j-i}{n}};q,q \big)_{\infty}} \nonumber \\
\qquad{}= ( v_{ij}; q )_{\infty}^{l_i-l_j} ( v_{ji}; q )_{\infty}^{l_j-l_i} \prod_{k=0}^{l_i-l_j-1} \frac{(-1)^{l_i - l_j - k}}{\big(q^{\frac{k}{2}+\frac{i-j}{2n}} -q^{-\frac{k}{2}-\frac{i-j}{2n}} \big)^{2(l_i-l_j-k)}}. \label{eq: Poch3}
\end{gather}
For $j>i$
\begin{gather}
\frac{\big( q^{1+\frac{i-j}{n}} ;q,q \big)_{\infty}}{ \big(q^{l_i - l_j+1+\frac{i-j}{n}};q,q \big)_{\infty}} \times \frac{\big(q^{1+\frac{j-i}{n}};q,q \big)_{\infty}}{ \big(q^{l_j - l_i+1+\frac{j-i}{n}};q,q \big)_{\infty}} \nonumber \\
\qquad{}= ( v_{ij}; q )_{\infty}^{l_i-l_j} ( v_{ji}; q )_{\infty}^{l_j-l_i} \prod_{k=1}^{l_i-l_j-1} \frac{(-1)^{l_i - l_j - k}}{\big(q^{\frac{k}{2}+\frac{i-j}{2n}} -q^{-\frac{k}{2}-\frac{i-j}{2n}} \big)^{2(l_i-l_j-k)}}. \label{eq: Poch4}
\end{gather}
Using identities \eqref{eq: Poch3}--\eqref{eq: Poch4}, we obtain
\begin{gather*}
\prod_{i \neq j} \frac{\big( q^{1+\frac{i-j}{n}} ;q,q \big)_{\infty}}{\big(q^{l_i - l_j+1+\frac{i-j}{n}};q,q \big)_{\infty}} = \prod_{(i,j,k)} \frac{(-1)^{l_i - l_j - k}}{\big(q^{\frac{k}{2}+\frac{i-j}{2n}} -q^{-\frac{k}{2}-\frac{i-j}{2n}} \big)^{2(l_i-l_j-k)}}.
\end{gather*}
To finish the proof, it remains to clarify the sign. This product already appeared as the product over all hooks. For diagram $\lambda$ the number of hooks is~$| \lambda |$.
\end{proof}

\begin{proof}[Proof of Theorem \ref{Theorem: the relation}]
We will provide two different expressions for $\big\langle W\big(1| q^{1/n} \big) ,\! W(z^{1/n}|1) \big\rangle$ to prove the theorem. On one hand (by Proposition~\ref{prop: U(1) Poch})
\begin{gather}
\big\langle W\big(1| q^{1/n} \big) , W\big(z^{1/n}|1\big) \big\rangle = \big( q^{\frac{1}{n}} z^{\frac{1}{n}}; q^{\frac{1}{n}} ,q^{\frac{1}{n}} \big)_{\infty}. \label{eq: proof conformal1}
\end{gather}
On the other hand (by Theorem~\ref{th: Whit decomposition} and Proposition~\ref{prop: Shapoval sign})
\begin{gather}
\big\langle W\big(1| q^{1/n} \big) , W\big(z^{1/n}|1\big) \big\rangle = \sum_{(l_0, \dots, l_{n-1}) \in \Q} \frac{ z^{\frac12 \sum \big( \big(l_i + \frac{i}{n}\big)^2 - \big(\frac{i}{n}\big)^2 \big) } }{ \prod\limits_{(i,j,k)} \big(q^{\frac12 \big(k+ \frac{i-j}{n} \big)}-q^{-\frac12 \big( k+\frac{i-j}{n}\big)}\big)^{2(l_i-l_j-k)}} \nonumber\\ \qquad{} \times(-1)^{|\lambda|} \big\langle W\big(1 | q^{1-\frac{n-1}{n}- l_{n-1}} , \dots , q^{1-l_0}\big), W\big(z | q^{ l_0}, \dots , q^{\frac{n-1}{n}+ l_{n-1}}\big) \big\rangle_{ss}.
\label{eq: proof conformal2}
\end{gather}
Note that to prove \eqref{eq: proof conformal2} we also used following observations: lengths of hooks in $\lambda$ and $\lambda'$ coincides, and $c(\lambda)+ c(\lambda')=0$.

Multiplying r.h.s.\ of \eqref{eq: proof conformal1} and \eqref{eq: proof conformal2} by $z^{\frac12 \sum \frac{i^2}{n^2}} \prod\limits_{i \neq j} \frac{1}{\big(q^{1+\frac{i-j}{n}}; q, q \big)_{\infty}}$ we obtain
\begin{gather*}
z^{\frac12 \sum \frac{i^2}{n^2}}\! \prod_{i \neq j} \frac{1}{\big(q^{1+\frac{i-j}{n}}; q, q\big)_{\infty}} \big( q^{\frac{1}{n}} z^{\frac{1}{n}}; q^{\frac{1}{n}} ,q^{\frac{1}{n}} \big)_{\infty} = \sum_{(l_0, \dots, l_{n-1}) \in \Q}\!\!\! \mathcal{Z} \big(q^{l_0}, q^{\frac{1}{n}+l_1}, \dots, q^{\frac{n-1}{n} + l_{n-1}}; z \big).
\end{gather*}
Note that here we applied Proposition \ref{prop: Z 1-loop}.
\end{proof}

\appendix
\section{Regular product} \label{nonorm}

In this section, we develop general theory of regular product. Term `regular product' should be considered as an opposite to regularized (i.e., normally ordered) product.

Let $A(z) = \sum\limits_{k \in \mathbb{Z}} A_k z^{-k}$ be a formal power series with coefficients in $\End(V)$ for a vector space~$V$.
\begin{Definition}
The series $A(z)$ is called smooth if for any vector $v \in V$ there exists $N$ such that $A_k v = 0$ for $k \geqslant N$.
\end{Definition}
Let $G(z,w) = \sum\limits_{k,l \in \mathbb{Z}}G_{k,l} z^{-k} w^{-l}$ be a formal power series in two variables with operator coefficients. The operators $G_{k,l}$ acts on a vector space $V$.
\begin{Definition} \label{def:regular}
We will call $G(z,w)$ regular if for any $N$ and for any $v \in V$ there are only finitely many $G_{k,l}$ such that $k+l=N$ and $G_{k,l} v \neq 0$.
\end{Definition}

If a current $G(z,w)$ in two variables is regular one can substitute $w=az$ and obtain well-defined power series $G(z, az)$ for any $a \in \mathbb{C}$.

Let $A(z)$ and $B(w)$ be two smooth formal power series with operator coefficients. Recall definition of normal ordering. Denote $A_+ (z) = \sum\limits_{k \geqslant 0} A_{-k} z^{k}$ and $A_- (z) = \sum\limits_{k < 0} A_{-k} z^{k}$.

\begin{Definition} \label{def:norm}
Normal ordered product is defined as
\[ {: \! A(z) B(w) \! :} = A_+ (z) B(w) + (-1)^{\epsilon} B(w) A_- (z).\]
\end{Definition}

The sign $(-1)^{\epsilon}$ depends on \emph{parity} of $A(z)$ and $B(z)$ in the standard way. Note that smooth formal power series in two variables $: \! A(z) B(w) \! :$ is regular. Formal power series $A(z)$ and $B(z)$ are called local (in weaker sense) if
\begin{gather} \label{def:weakLocal}
A(z)B(w)-(-1)^{\epsilon} B(w) A(z) = \sum_{i=1}^{N} \sum_{j=0}^{s_i} C_{j}^{(i)} (w) \partial_w^j \delta(a_i z, w),
\end{gather}
where $s_1, \dots, s_N \in \mathbb{Z}_{\geq 0}$, $a_1, \dots, a_N \in \mathbb{C}$ and $C_{j}^{(i)} \! (w)$ are operator valued power series.

Then one has the following OPE
\begin{gather}
A(z) B(w) = \sum \frac{C_{j}^{(i)} \! (w)}{(a_i z)^j \big(1-\frac{w}{a_i z} \big)^{j}}+: \! A(z) B(w) \! :. \label{OPE}
\end{gather}
\begin{Proposition}
If currents $A(z)$ and $B(w)$ are smooth and satisfy \eqref{def:weakLocal}, then the following product \mbox{$(a_1 z - w)^{s_1} \cdots (a_N z - w)^{s_N} A(z) B(w)$} is regular.
\end{Proposition}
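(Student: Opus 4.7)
My plan is to split $A(z)B(w)$ into its singular OPE contribution plus the normally ordered product, and to show that multiplication of each piece by the polynomial $P(z,w):=\prod_i(a_iz-w)^{s_i}$ yields a regular series. For the normally ordered piece, I would argue regularity directly from smoothness: writing $:\!A(z)B(w)\!:\,=A_+(z)B(w)+(-1)^{\epsilon}B(w)A_-(z)$, the bi-modes of each summand applied to a vector $v$ are confined to a quadrant in the $(k,l)$-plane (for $A_+(z)B(w)v$ because $A_+(z)$ carries only non-positive powers of $z^{-1}$ while smoothness of $B$ bounds $B(w)v$ above in $w$-mode; for $B(w)A_-(z)v$ because smoothness of $A$ makes $A_-(z)v$ a finite Laurent polynomial in $z^{-1}$). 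Each anti-diagonal $k+l=N$ meets such a quadrant in only finitely many lattice points, which is the definition of regularity. Multiplication of any regular series by a Laurent polynomial in $z,w$ plainly preserves regularity, since the modes of $P(z,w)G(z,w)$ are finite $\mathbb{C}$-linear combinations of shifts of modes of $G$.

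For the singular summand, I would unpack the locality assumption \eqref{def:weakLocal}. Each formal series $\partial_w^{j}\delta(a_iz,w)$ is the difference of the two expansions of a rational function in $(a_iz-w)^{-1}$ into the complementary formal regions $|w|<|a_iz|$ and $|w|>|a_iz|$. Fixing, say, the $|w|<|a_iz|$ expansion, one obtains an OPE of the form
\[A(z)B(w) \;=\; :\!A(z)B(w)\!:\;+\;\sum_{i,j}\frac{(\mathrm{const})\,C^{(i)}_{j}(w)}{(a_iz-w)^{\bullet}},\]
so that after multiplying by $P(z,w)$ every denominator is cleared and the singular contribution becomes a $\mathbb{C}[z^{\pm1},w]$-linear combination of the smooth one-variable currents $C^{(i)}_j(w)$. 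Each summand in such a combination is manifestly regular: a smooth power series in $w$ alone has bi-modes supported on a single vertical line in the $(k,l)$-plane, and multiplication by Laurent monomials in $z$ merely translates this support among finitely many vertical lines, never violating finiteness on diagonals.

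Combining the two parts gives regularity of $P(z,w)A(z)B(w)$. The main delicate point is bookkeeping: one has to match the order of the $\delta$-derivatives in \eqref{def:weakLocal} (indexed by $j=0,\dots,s_i$) against the exponent $s_i$ appearing in $P(z,w)$, using the identity that $(a_iz-w)^{k}\partial_w^{j}\delta(a_iz,w)$ equals a nonzero multiple of $\partial_w^{j-k}\delta$ for $k\leqslant j$ and vanishes for $k>j$. Under the convention adopted in the paper's displayed OPE, where $\partial_w^{j}\delta(a_iz,w)$ contributes a pole of order $j$ in $(a_iz-w)$, the exponent $s_i$ suffices; otherwise, under the alternative convention with order $j+1$, one would need $(a_iz-w)^{s_i+1}$ instead, but the structure of the argument is identical.
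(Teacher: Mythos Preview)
The paper does not actually supply a proof of this proposition; it is stated and immediately used. Your argument is the standard one and is essentially correct: decompose $A(z)B(w)$ as in the displayed OPE~\eqref{OPE}, verify that the normally ordered piece is regular directly from smoothness (your quadrant argument for $A_+(z)B(w)v$ and $B(w)A_-(z)v$ is exactly right), and then observe that multiplying the singular rational part by $P(z,w)=\prod_i(a_iz-w)^{s_i}$ leaves a polynomial in $z,w$ times smooth one-variable currents, which is manifestly regular.

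Your closing caveat about the bookkeeping is well taken and worth stating more sharply. With the usual identity $(a_iz-w)^{k}\,\partial_w^{j}\delta(a_iz,w)=0$ for $k\geqslant j+1$ (and a nonzero multiple of $\partial_w^{j-k}\delta$ otherwise), a commutator containing a genuine $\partial_w^{s_i}\delta$ term would require the factor $(a_iz-w)^{s_i+1}$ rather than $(a_iz-w)^{s_i}$, and the series $\frac{1}{1-w/(a_iz)}\,C(w)$ is \emph{not} regular (its bi-modes on the anti-diagonal $k+l=N$ are all proportional to $C_N$). In the paper's own examples this never bites: e.g., in \eqref{RqDE2} the commutator involves only $\delta$ (no derivatives) and one multiplies by $(z-qw)(z-q^{-1}w)$, i.e., one power more than the top derivative order. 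So the intended reading of~\eqref{def:weakLocal}--\eqref{OPE} is that the top pole order at $a_iz=w$ is $s_i$, and with that reading your proof goes through verbatim. It would strengthen your write-up to say this explicitly rather than leave it as an either/or.
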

\begin{Definition} \label{def: the product}
For $a \neq a_i$ define regular product
\begin{gather*}
A(z) B(az) := \frac{ \big( (a_1 z - w)^{s_1} \cdots (a_N z - w)^{s_N} A(z) B(w) \big) \big|_{w=az}}{(a_1 z - az)^{s_1} \cdots (a_N z - az)^{s_N}}.
\end{gather*}
\end{Definition}

From \eqref{OPE} one obtains that normally ordered product and regular product are connected by the following relation
\begin{gather*}
A(z) B(az) = \sum \frac{C_{j}^{(i)} \! (az)}{(a_i z)^j (1-a/a_i)^{j}}+: \! A(z) B(az) \! :.
\end{gather*}

\begin{Example}
Let us consider case of fermions $A(z) = \psi(z)$, $B(z) = \psi^*(z)$, introduced in Section~\ref{fermi}. Beware, that we use notation $A(z)=A_k z^{-k}$, but $\psi(z) = \psi_i z^{-i-1}$ (hence \mbox{$A_k = \psi_{k-1}$}). Comparing formulas \eqref{norm1}--\eqref{norm2} with Definition \ref{def:norm}, we conclude
\begin{gather*}
{: \! \psi(z) \psi^* (w) \! :} = : \psi(z) \psi^*(w):_{(0)}.
\end{gather*}
Using $l$-depended normal ordering, we obtain
\begin{gather} \label{eq: regul prod psipsi}
\psi(z) \psi^* (w) = \frac{w^l z^{-l-1}}{1-w/z} + : \! \psi (z) \psi^* (w) \! :_{(l)}.
\end{gather}
Hence
\begin{gather*}
\psi(z) \psi^* (q z) = \frac{q^l}{1-q} z^{-1} + : \! \psi (z) \psi(qz) \! :_{(l)}.
\end{gather*}
This relation was used in formula~\eqref{ferm:E}.
\end{Example}

\begin{Example}
Let $A(z)=B(z)=E(z)$. Then
\begin{gather*}
E(z) E(w) = \frac{q^{-1}w}{z-q^{-1}w} E_{2}\big(q^{-1}w\big) - \frac{q w }{z-q w} E_{2}(w) + :E(z) E(w):.
\end{gather*}
Therefore,
\begin{gather}
E^2 (w)= \frac{1}{q-1} E_{2}\big(q^{-1}w\big) + \frac{q}{q-1 } E_{2}(w) + :E(w) E(w):. \label{eq:E^2andE_2}
\end{gather}
Let us comment on deep meaning of formula~\eqref{eq:E^2andE_2}. One can present algebra $\qD$ using cur\-rents~$E_k(z)$ (currents of Lie algebra type) or $E^k(z)$ (currents of $q$-$W$ algebra type). This two series of currents are connected in non-trivial way starting from $k=2$. For $k=2$ they are related by~\eqref{eq:E^2andE_2}. For general $k$ see formula~(7.17) in~\cite{N16}.
\end{Example}

\begin{Proposition} \label{prop:ass&comm}
Regular product is $($super$)$ commutative and associative
\begin{gather*}
A(z)B(az)= (-1)^{\epsilon} B(az) A(z),\\
A(a_1z) ( B(a_2z) C(a_3z) ) = ( A(a_1z) B(a_2z) ) C(a_3z)
\end{gather*}
$($of course we assume that these regular products are well defined$)$.
\end{Proposition}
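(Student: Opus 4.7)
The plan is to deduce both properties from the locality identity~\eqref{def:weakLocal} by clearing its singular part. For supercommutativity, I would multiply \eqref{def:weakLocal} by the polynomial $P(z,w) := \prod_{i=1}^N (a_i z - w)^{s_i + 1}$ and use the elementary formal-series identity $(a_i z - w)^{j+1} \partial_w^j \delta(a_i z, w) = 0$ to annihilate every term on the right-hand side. This gives $P(z,w) A(z) B(w) = (-1)^\epsilon P(z,w) B(w) A(z)$, an equality of two regular two-variable series. Specializing $w = a z$ with $a \neq a_i$ and dividing by the nonvanishing Laurent monomial $P(z, az) = \prod_i (a_i - a)^{s_i+1} z^{\sum_i (s_i+1)}$ yields $A(z) B(az) = (-1)^\epsilon B(az) A(z)$. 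A quick bookkeeping check shows that the answer agrees with Definition~\ref{def: the product}, since the extra matching factors of $(a_i z - w)$ in numerator and denominator cancel upon specialization.

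For associativity, I would pass to the three-variable object
\begin{gather*}
\widetilde G(z, w, u) := \Pi_{AB}(z,w)\, \Pi_{AC}(z,u)\, \Pi_{BC}(w,u)\, A(z) B(w) C(u),
\end{gather*}
where each $\Pi_{XY}$ is the polynomial clearing the singularities in the locality of the pair $(X,Y)$, as in the previous paragraph. The key preliminary lemma is that $\widetilde G$ is regular as a three-variable series: for any vector $v$ and any integer $N$, only finitely many monomials of total degree $N$ act nontrivially on $v$. I would prove this by bringing the three currents into a fixed order (say, $A$ then $B$ then $C$) via two applications of pairwise locality; each reordering generates only delta-type remainders, and the relevant delta factors are annihilated by the polynomial prefactors in $\Pi_{AB}\Pi_{AC}\Pi_{BC}$, while the surviving normal-ordered triple product is manifestly regular by smoothness of each current.

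Granted three-variable regularity, both iterated products $A(a_1 z)\bigl(B(a_2 z) C(a_3 z)\bigr)$ and $\bigl(A(a_1 z) B(a_2 z)\bigr) C(a_3 z)$ arise from the single substitution $(z, w, u) \mapsto (a_1 z, a_2 z, a_3 z)$ in $\widetilde G$, divided by the scalar $\Pi_{AB}(a_1 z, a_2 z)\,\Pi_{AC}(a_1 z, a_3 z)\,\Pi_{BC}(a_2 z, a_3 z)$; the two orderings differ only in which intermediate variable is specialized first, and partial specialization in disjoint variables of a regular series manifestly commutes. The main obstacle I foresee is the three-variable regularity lemma: one has to verify carefully that every delta remainder produced when reordering $A(z)$ past $B(w)C(u)$ is genuinely swallowed by $\Pi_{AC}(z,u)$ in conjunction with the already-regular series $\Pi_{BC}(w,u) B(w)C(u)$, rather than producing new singular terms at intermediate stages that would obstruct the substitution $u = a_3 z$.
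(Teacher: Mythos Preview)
The paper states Proposition~\ref{prop:ass&comm} without proof, treating it as a standard fact about regular products. Your argument is correct and is essentially the canonical vertex-algebra proof: clear the delta singularities by a polynomial prefactor, observe that the resulting equality of regular series survives specialization to the diagonal $w = az$, and divide out the nonvanishing scalar. Your use of exponents $s_i+1$ rather than the $s_i$ appearing in Definition~\ref{def: the product} is the right move for killing the commutator (since $(a_i z - w)^{j+1}\partial_w^j\delta(a_i z,w)=0$ but not with exponent $j$), and as you note the extra factor is harmless upon specialization. For associativity, reducing both iterated products to a single specialization of the three-variable series $\Pi_{AB}\Pi_{AC}\Pi_{BC}\,A(z)B(w)C(u)$ is again the standard route; the three-variable regularity you flag as the main obstacle is genuine but routine, and your sketch via pairwise reordering plus annihilation of the delta remainders by the prefactors is the correct mechanism. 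In short, you have supplied the proof the paper omits.
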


\begin{Proposition} \label{prop:deriv}
Let $A(z)$, $B(z)$, and $a\in \mathbb{C}$ be as in Definition~{\rm \ref{def: the product}}. Then
\begin{gather*}
\partial_z ( A(z) B(az) )= A'(z) B(az) + aA(z) B'(az).
\end{gather*}
\end{Proposition}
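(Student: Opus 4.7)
The plan is to decompose both sides using the OPE \eqref{OPE} into their singular and regular (normally ordered) parts, and to verify the asserted Leibniz rule on each part separately. A key preliminary observation is the following chain rule for regular two-variable currents: if $R(z,w) = \sum_{k,l} R_{k,l}\,z^{-k} w^{-l}$ is regular in the sense of Definition~\ref{def:regular}, then $R(z,az)$ is a well-defined power series in $z$ and
\[
\partial_z [R(z,az)] = R_z(z,az) + a\, R_w(z,az),
\]
as follows immediately coefficient-wise from the regularity condition (each side has coefficient $-(k+l)\,a^{-l} R_{k,l}$ at $z^{-k-l-1}$).

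First I would compute the left-hand side. Substituting $w=az$ in \eqref{OPE} (permissible since $a\neq a_i$) yields
\[
A(z)B(az) = \sum_{i,j} \frac{C_j^{(i)}(az)}{((a_i-a)z)^j} + {:\! A(z)B(w) \!:}\big|_{w=az}.
\]
Applying $\partial_z$ termwise, the ordinary Leibniz rule on each singular summand gives $a\, C_j^{(i)\prime}(az)/((a_i-a)z)^j - j\, C_j^{(i)}(az)/((a_i-a)^j z^{j+1})$, while the chain rule above applies to the regular summand (which equals the normally ordered product evaluated at $w=az$) and produces $({:\!A'(z)B(w)\!:} + a\,{:\!A(z)B'(w)\!:})\big|_{w=az}$, using that differentiation commutes with normal ordering.

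Next I would compute the right-hand side. Differentiating the OPE \eqref{OPE} separately in $z$ and in $w$ gives
\[
A'(z)B(w) = -\sum_{i,j}\frac{j a_i\, C_j^{(i)}(w)}{(a_i z - w)^{j+1}} + {:\!A'(z)B(w)\!:},
\]
\[
a\, A(z)B'(w) = \sum_{i,j}\frac{a\, C_j^{(i)\prime}(w)}{(a_i z - w)^{j}} + \sum_{i,j}\frac{a j\, C_j^{(i)}(w)}{(a_i z - w)^{j+1}} + a\,{:\!A(z)B'(w)\!:}.
\]
Since both right-hand sides are singular only at $w=a_i z$ (at worst of one-higher order than $A(z)B(w)$), a polynomial of the type in Definition~\ref{def: the product} regularizes them, so $A'(z)B(az)$ and $A(z)B'(az)$ are well defined by evaluation at $w=az$. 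Adding, the $1/(a_i z-w)^{j+1}$ terms combine with coefficient $-j a_i + aj = -j(a_i-a)$, which under $w=az$ becomes $-j\, C_j^{(i)}(az)/((a_i-a)^j z^{j+1})$, while the $C_j^{(i)\prime}$ terms give $\sum a\, C_j^{(i)\prime}(az)/((a_i-a)z)^j$. Together with the normally ordered contribution $({:\!A'(z)B(w)\!:} + a\,{:\!A(z)B'(w)\!:})\big|_{w=az}$, this matches term by term the expression obtained for $\partial_z[A(z)B(az)]$, completing the proof.

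The main obstacle is the chain-rule step for the normally ordered part: once one verifies that on regular two-variable series the operations $\partial_z$ and the substitution $w\mapsto az$ commute as stated, everything else reduces to matching elementary rational-function contributions via the combinatorial identity $-ja_i + ja = -j(a_i-a)$.
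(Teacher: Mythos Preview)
Your proof is correct, but it takes a different route from the paper's. The paper's argument stays closer to Definition~\ref{def: the product}: it chooses a single regularizing polynomial $f(z,w)$ with $f(z,az)\neq 0$ such that all four of $(\partial_z f)AB$, $(\partial_w f)AB$, $f(A')B$, $fA(B')$ are regular, writes
\[
\partial_z\big(A(z)B(az)\big)=(\partial_z+a\,\partial_w)\left.\frac{f(z,w)A(z)B(w)}{f(z,az)}\right|_{w=az},
\]
and then applies the ordinary Leibniz rule to the quotient, obtaining six summands each of which is regular and may be evaluated at $w=az$ separately. This avoids any explicit bookkeeping of OPE coefficients and works uniformly regardless of the pole structure. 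Your approach, by contrast, goes through the explicit OPE decomposition \eqref{OPE}, differentiates the singular and normally ordered pieces separately, and matches the two sides via the elementary cancellation $-ja_i+ja=-j(a_i-a)$. The paper's method is shorter and closer to the definition; yours is more hands-on and makes transparent exactly how the singular contributions combine, at the cost of having to verify (as you implicitly do) that $A'(z)B(az)$ and $A(z)B'(az)$ are themselves well-defined regular products with the expected OPE expansions.
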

\begin{proof}
Let $f(z,w)$ be a polynomial such that following power series in two variables are regular
\begin{gather*}
 ( \partial_z f(z,w) ) A(z) B(w), \qquad ( \partial_w f(z,w) ) A(z) B(w), \\ f(z,w) ( \partial_z A(z) ) B(w), \qquad f(z,w) A(z) ( \partial_w B(w) ).
\end{gather*}
Moreover, assume that $f(z, az) \neq 0$. It is easy to see that such $f(z,w)$ exists.
\[
\partial_z ( A(z) B(az) )= (\partial_{z} + a \partial_{w}) \left. \frac{f(z,w) A(z) B(w)}{f(z,az)} \right|_{w=az}.
\]
One should differentiate this expression by application of Leibniz rule (and obtain six summands). Due to our assumptions, each of these summands is regular in the sense of Definition~\ref{def:regular}. Hence, one can substitute $w=az$ to each summand separately. The proof is finished by straightforward computation.
\end{proof}

As a corollary we prove formula \eqref{eq:EE^kderiv}.
\begin{proof}[Proof of~\eqref{eq:EE^kderiv}]
\begin{gather*}
 \partial_z \mathbb{E}^{(k+1)} (z,w) \big|_{z=w}= (z-qw)\big(z-q^{-1}w\big) E'(z) E^{k}(w) \\
 \hphantom{\partial_z \mathbb{E}^{(k+1)} (z,w) \big|_{z=w}=}{} + \big(2z-qw-q^{-1}w\big) E(z) E^{k}(w) \big|_{z=w},
\\
 \frac{(z-qw)^2 \big(z-q^{-1}w\big)^2}{(1-q)\big(1-q^{-1}\big)z^2} E'(z) E^{k}(w)\\
 \qquad{}{} + \frac{(z-qw)\big(z-q^{-1}w\big)}{(1-q)\big(1-q^{-1}\big)z^2}\big(2z-qw-q^{-1}w\big) E(z) E^{k}(w) \bigg|_{z=w}.
\end{gather*}
Note that each summand is regular. Hence, we are allowed substitute $z=w$ to each of them separately
\begin{gather} \label{proving 728}
 \partial_z \mathbb{E}^{(k+1)} (z,w) \big|_{z=w}= (1-q) \big(1-q^{-1}\big) w^2 E'(w) E^{k}(w) + \big(2-q-q^{-1}\big) w E^{k+1}(w).
\end{gather}
Using Propositions \ref{prop:ass&comm} and \ref{prop:deriv}, we can prove inductively that $\partial_w E^{k+1}(w)= (k+1) E'(w) E^{k}(w)$. To finish the proof, we substitute last formula into~\eqref{proving 728}.
\end{proof}

\section{Serre relation} \label{Appendix:Serre}

This appendix is devoted to detailed study of Serre relation.
\begin{gather*}
z_2 z_3^{-1} [E(z_1), [E(z_2), E(z_3)]] + \mathrm{cyclic} =0.
\end{gather*}
Let $\bE(z)$ be a current satisfying
\begin{gather} \label{bEbilinar}
(z-qw)\big(z-q^{-1}w\big) [ \bE(z),\bE(w)]=0.
\end{gather}
\begin{Remark}
Let us emphasize the difference between $E(z)$ and $\bE(z)$. Current $E(z)$ is a~current from $\qD$, but current $\bE(z)$ is just a current satisfying~\eqref{bEbilinar}. We need $\bE(z)$ to formulate equivalent conditions to Serre relations.
\end{Remark}

Define $\delta(z_1/z_2/z_3)= \sum\limits_{a+b+c=0} z_1^a z_2^b z_3^c$ for $a,b,c \in \mathbb{Z}$.
\begin{Proposition} \label{Prop:withoutSerre} There exist three currents $R_1(z)$, $R_2(z)$ and $\bE_3(z)$ such that triple commutator $[ \bE(z_1) , [ \bE(z_2), \bE(z_3) ]]$ equals to
\begin{gather}
\bE_3(z_1) \delta\big(q^2 z_1/q z_2 /z_3\big) - \bE_3(z_2) \delta\big(q^2 z_2/ qz_3/z_1 \big) - \bE_3 (z_1) \delta\big(q^2z_1/ qz_3 / z_2\big)\nonumber\\
\qquad{} + \bE_3 (z_3) \delta\big(q^2z_3/qz_2/z_1\big)+ R_1(z_1) \delta(z_1/ qz_2/ z_3) - R_1(z_1) \delta(z_1/ z_2/ qz_3)\nonumber\\
\qquad{} + R_2(z_1) \delta\big(z_1/z_2/q^{-1}z_3\big) - R_2(z_1) \big(z_1/q^{-1}z_2/z_3\big).\label{eq:general3comm}
\end{gather}
\end{Proposition}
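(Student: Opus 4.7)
The strategy is a support analysis combined with the structure theorem for formal distributions annihilated by polynomials with simple zeros. The bilinear relation \eqref{bEbilinar} directly constrains the support of pairwise commutators, and the Jacobi identity then propagates the constraint to the triple commutator, pinning its support on a union of exactly $8$ codimension-two subvarieties that match the $8$ delta functions in the claimed expression.

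First I would apply the standard structure theorem to \eqref{bEbilinar}: since the annihilator $(z-qw)(z-q^{-1}w)$ has simple roots at $z = q^{\pm 1}w$, one writes $[\bE(z),\bE(w)] = A(w)\delta(z/qw) + B(w)\delta(qz/w)$ for unique currents $A(w), B(w)$; no derivatives of delta appear because the vanishing multiplicity at each root is one. Next, I would observe that the triple commutator is annihilated by two independent polynomials. The first, $(z_2-qz_3)(z_2-q^{-1}z_3)$, comes from applying \eqref{bEbilinar} directly to the inner commutator. The second, $(z_1-qz_2)(z_1-q^{-1}z_2)(z_1-qz_3)(z_1-q^{-1}z_3)$, comes from the Jacobi identity
\[
[\bE(z_1),[\bE(z_2),\bE(z_3)]] = [[\bE(z_1),\bE(z_2)],\bE(z_3)] + [\bE(z_2),[\bE(z_1),\bE(z_3)]]
\]
together with \eqref{bEbilinar} applied to each summand (the first summand is killed by $(z_1-qz_2)(z_1-q^{-1}z_2)$ and the second by $(z_1-qz_3)(z_1-q^{-1}z_3)$, so the sum is killed by their product). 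The common zero locus of these two annihilators is the union of $8$ codimension-two linear subvarieties, one for each pair of factors; a direct computation shows they are exactly the supports of the $8$ delta functions in \eqref{eq:general3comm}.

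Invoking the structure theorem once more (now in three variables, but with only simple vanishings), the triple commutator is then of the form $\sum_{i=1}^{8} C_i \cdot \delta_i$ for some currents $C_i$ in the one remaining variable on each subvariety. The final step is to show that these $8$ a priori independent currents collapse to $3$, namely $\bE_3$ on the four ``triply-local'' supports (where all three $z_i$ are tied together by powers of $q$) and $R_1, R_2$ each on a pair of ``doubly-local'' supports. This identification comes from combining the antisymmetry $[\bE(z_2),\bE(z_3)] = -[\bE(z_3),\bE(z_2)]$ (which relates $C_i$'s on subvarieties swapped by $z_2 \leftrightarrow z_3$) with the Jacobi identity itself (which relates $C_i$'s across cyclic permutations of the three variables).

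The main obstacle is the sign and variable-substitution bookkeeping in this last step: one must check that the $S_3$-action on the $8$ subvarieties decomposes as one orbit of size $4$ for the triply-local supports (producing the single current $\bE_3$ with the alternating signs $(+,-,-,+)$ appearing in \eqref{eq:general3comm}) and two orbits of size $2$ for the doubly-local supports (producing $R_1$ and $R_2$ with their opposite signs). Once this orbit structure is established, concrete representatives of $\bE_3, R_1, R_2$ can be extracted as residues of the triple commutator at suitable base points of each orbit, completing the proof.
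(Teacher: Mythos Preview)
Your proposal is correct and follows essentially the same route as the paper's proof. The paper likewise first rewrites the bilinear relation as $[\bE(z),\bE(w)] = \bE_2(z)\delta(w/qz) - \bE_2(w)\delta(z/qw)$, then uses Jacobi to obtain the four-factor annihilator $(z_1-qz_2)(z_1-q^{-1}z_2)(z_1-qz_3)(z_1-q^{-1}z_3)$, concludes that the triple commutator is a sum of triple delta functions with operator coefficients, and finally reduces to three currents by invoking the $z_2\leftrightarrow z_3$ antisymmetry together with the vanishing of the cyclic sum. One small caution on your wording: the relevant symmetries are not a full $S_3$-action on the expression (the triple commutator is only antisymmetric in $z_2,z_3$, and Jacobi is a linear relation among the three cyclic rotations rather than an invariance), so the ``orbit'' language should be read as shorthand for exactly that combination of constraints; with that understood, your reduction to $\bE_3, R_1, R_2$ is the same bookkeeping the paper leaves implicit.
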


\begin{proof}
First of all, note that condition \eqref{bEbilinar} is equivalent to existence of currents $\bE_2(z)$ and~$\bE^{\circ}_2(w)$ such that
\begin{gather} \label{eq: bar E two}
[\bE(z), \bE(w)] = \bE_2(z) \delta(w/qz) - \bE^{\circ}_2(w) \delta(z/qw).
\end{gather}
Commutator $[\bE(z), \bE(w)]$ is skew symmetric on $z$ and $w$, hence $\bE_2(z) = \bE^{\circ}_2(z)$.
Now consider triple commutator $[ \bE(z_1) , [ \bE(z_2), \bE(z_3) ]$. Jacobi identity and relation \eqref{bEbilinar} imply \begin{gather} \label{eq: triple bar E}
(z_1 -qz_2)\big(z_1-q^{-1}z_2\big)(z_1-qz_3)\big(z_1-q^{-1}z_3\big) [ \bE(z_1) , [ \bE(z_2), \bE(z_3) ] = 0.
\end{gather}
Substituting \eqref{eq: bar E two} to \eqref{eq: triple bar E}, we obtain
\begin{gather*}
(z_1 -qz_2)\big(z_1-q^{-1}z_2\big)\big(z_1-q^2 z_2\big)(z_1-z_2) [ \bE(z_1) , \bar{E}_2(z_2) ] \delta(z_3/qz_2) \\
\qquad{}+\big(z_1 -q^2 z_3\big)(z_1-z_3)(z_1-q z_3)\big(z_1-q^{-1} z_3\big) [ \bE(z_1) , \bar{E}_2(z_3) ] \delta(z_2/qz_3)= 0.
\end{gather*}
This implies that $ [ \bE(z_1) [\bE(z_2), \bE(z_3)]]$ is indeed a sum of triple delta functions with some operator coefficients as in~\eqref{eq:general3comm}; it remains to prove proposed relations on the coefficients.

Note that triple commutator $ [ \bE(z_1) , [ \bE(z_2), \bE(z_3)] ]$ is skew symmetric on $z_2$, $z_3$. Also note that the sum over cyclic permutations is zero. This implies relation~\eqref{eq:general3comm}.
\end{proof}

\begin{Proposition} \label{prop:R1R2}
Serre relation for $\bE(z)$ is equivalent to $R_1(z)=R_2(z)=0$.
\end{Proposition}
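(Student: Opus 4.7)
The plan is to substitute the general expansion from Proposition \ref{Prop:withoutSerre} into the full Serre combination
\begin{gather*}
S := z_2 z_3^{-1} [\bE(z_1), [\bE(z_2), \bE(z_3)]] + z_3 z_1^{-1} [\bE(z_2), [\bE(z_3), \bE(z_1)]] + z_1 z_2^{-1} [\bE(z_3), [\bE(z_1), \bE(z_2)]]
\end{gather*}
and to sort the resulting twelve $\bE_3$-terms and eight $R_j$-terms according to the supports of their three-variable delta functions. A preliminary identity I would record is
$\delta(\alpha z_1/\beta z_2/\gamma z_3) = \delta(\alpha z_1/\gamma z_3)\,\delta(\beta z_2/\gamma z_3)$,
obtained by reindexing the defining double sum; in particular the three-variable delta is invariant under arbitrary permutation of its three arguments, which lets one identify delta distributions that look superficially different.

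For the easy direction, I would first show that the twelve $\bE_3$-contributions cancel in $S$ regardless of Serre. The four $\bE_3$-terms in each triple commutator are supported on loci of the shape $(z_1{:}z_2{:}z_3) = (1{:}q{:}q^2)$ and its five permutations; these six supports are hit exactly twice by the twelve terms. On each support the coefficient $z_j/z_k$ simplifies to $q^{\pm 1}$, and the two paired contributions turn out to have opposite signs and identical deltas (after the symmetry identity above), so they cancel termwise. In particular, if $R_1 = R_2 = 0$ then $S = 0$.

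For the converse, assume $S = 0$. By the previous paragraph the $R_j$-part of $S$ must vanish on its own. The eight $R_j$-terms distribute across six new supports of shape $(1{:}1{:}q)$, $(1{:}q{:}1)$, $(q{:}1{:}1)$ and their $q\leftrightarrow q^{-1}$ analogues; on each support the two arriving terms come from the same family ($R_1$ on three of them, $R_2$ on the other three), and they combine into an expression of the form $(q^{-1}-q)\,R_j(z_i)\,\delta(\cdots) = 0$. Since the product of a one-variable current with a three-variable delta $\delta(\alpha z_1/\beta z_2/\gamma z_3)$ is injective in the current (its coefficient of $z_1^a z_2^b z_3^c$ is a nonzero scalar multiple of $R_j[-a-b-c]$) and since $q - q^{-1} \neq 0$, we conclude $R_1 = R_2 = 0$.

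The main obstacle is purely bookkeeping: correctly matching the twelve deltas by support via the permutation-symmetry of $\delta(\cdot/\cdot/\cdot)$, and then verifying that the sign and $q$-power coefficients collapse either to zero (for $\bE_3$) or to the uniform factor $q^{-1} - q$ (for $R_j$). Once these identifications are done, the argument essentially writes itself.
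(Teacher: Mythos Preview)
Your approach is correct and is precisely the ``straightforward computation'' the paper alludes to: substitute the expansion \eqref{eq:general3comm} into the Serre combination, observe that the $\bE_3$-terms cancel pairwise across the six permutation-type supports, and that the remaining contributions collapse to $(q^{-1}-q)R_j(z_i)\,\delta(\cdots)$ on six distinct supports, forcing $R_1=R_2=0$. One small slip: there are twelve $R_j$-terms (four per cyclic image), not eight---your own ``two arriving terms on each of six supports'' already reflects this, so the error is purely in the count stated up front and does not affect the argument.
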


\begin{proof}
Straightforward computation.
\end{proof}

\subsection[Operator product expansion for $E(w_1) \cdots E(w_k)$]{Operator product expansion for $\boldsymbol{E(w_1) \cdots E(w_k)}$}

One can find reformulation of Serre relation in terms of OPE in \cite[Section~3.3]{FJMM}.

\begin{Proposition}
Formal power series in three variables $\bE (z_1) \bE (z_2) \bE (z_3)$ can be presented as sum some of regular part and singular part. Regular part is some regular power series in three variables, singular part has a form
\begin{gather*}
\sum_{\epsilon_1, \epsilon_2 = \pm 1} \left( \left(1-q^{\epsilon_1}\frac{z_2}{z_1}\right)^{-1} \left(1-q^{\epsilon_2}\frac{z_3}{z_1}\right)^{-1} R^{(1)}_{\epsilon_1, \epsilon_2} (z_3)\right. \\
\qquad{}+\left(1-q^{\epsilon_1}\frac{z_2}{z_1}\right)^{-1} \left(1-q^{\epsilon_2}\frac{z_3}{z_2}\right)^{-1} R^{(2)}_{\epsilon_1, \epsilon_2} (z_3)\\
\left.\qquad{} +\left(1-q^{\epsilon_1}\frac{z_3}{z_1}\right)^{-1} \left(1-q^{\epsilon_2}\frac{z_3}{z_2}\right)^{-1} R^{(3)}_{\epsilon_1, \epsilon_2} (z_3) \right)+ \sum_{\epsilon = \pm 1} \left( \left(1-q^{\epsilon}\frac{z_3}{z_2}\right)^{-1} R^{(1)}_{\epsilon} (z_1, z_3) \right.\\
\left. \qquad {} +\left(1-q^{\epsilon}\frac{z_3}{z_1}\right)^{-1} R^{(2)}_{\epsilon} (z_2, z_3)+\left(1-q^{\epsilon}\frac{z_2}{z_1}\right)^{-1} R^{(3)}_{\epsilon} (z_1, z_3) \right) + \mathrm{reg}.
\end{gather*}
\end{Proposition}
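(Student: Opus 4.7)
The plan is to iteratively apply the pair OPE derived from the bilinear relation \eqref{bEbilinar}, keeping careful track of which denominators can appear at each step.

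First I would deduce from $(z-qw)(z-q^{-1}w)[\bE(z),\bE(w)]=0$ — by the same delta-function argument used to produce \eqref{eq: bar E two}, upgraded to an OPE statement via the machinery of Appendix~A — that
$$\bE(z)\bE(w)=\frac{C_{+}(w)}{1-qw/z}+\frac{C_{-}(w)}{1-q^{-1}w/z}+\mathrm{reg}(z,w),$$
for suitable one-variable currents $C_{\pm}(w)$ and a regular remainder in two variables. Equivalently, the only singularities of $\bE(z)\bE(w)$ occur at $w=q^{\pm1}z$ and are simple.

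Next, I would apply this pair OPE to the innermost factor $\bE(z_2)\bE(z_3)$ of the triple product. The singular part produces terms of the shape $(1-q^{\epsilon_2}z_3/z_2)^{-1}\bE(z_1)C_{\epsilon_2}(z_3)$; since the one-variable current $C_{\epsilon_2}$ inherits the bilinear relation with $\bE(z_1)$ (as it is extracted from a commutator of $\bE$-currents), a second OPE step for $\bE(z_1)$ against $C_{\epsilon_2}(z_3)$ produces only denominators of the form $(1-q^{\epsilon_1}z_3/z_1)^{-1}$, yielding precisely the $R^{(3)}_{\epsilon_1,\epsilon_2}$ summands. The remaining piece $\bE(z_1)\cdot\mathrm{reg}(z_2,z_3)$ is then analyzed by applying the OPE of $\bE(z_1)$ against the $z_2$- and $z_3$-content of the regular factor, which contributes denominators $(1-q^{\epsilon_1}z_2/z_1)^{-1}$ and $(1-q^{\epsilon_2}z_3/z_1)^{-1}$ and produces the $R^{(1)}_{\epsilon_1,\epsilon_2}$ and $R^{(2)}_{\epsilon_1,\epsilon_2}$ summands. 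The single-pole $R^{(i)}_{\epsilon}$ terms collect those contributions in which one of the two iterated OPE steps lands in the regular remainder of the other, producing only one denominator and a coefficient depending on two variables.

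The main obstacle is verifying that no triple pole of the shape $(1-q^{\epsilon_1}z_2/z_1)^{-1}(1-q^{\epsilon_2}z_3/z_1)^{-1}(1-q^{\epsilon_3}z_3/z_2)^{-1}$ can arise. Heuristically, only two OPE contractions are available among three currents: a first pair-OPE fuses two $\bE$-currents into a single-variable current $C_\epsilon$, and the second OPE with the third $\bE$-current then produces at most one further denominator, so the total number of denominators in any summand is at most two. Making this precise requires organizing the iterated OPE so that the regular remainder at each intermediate step is genuinely regular in all three variables simultaneously; this in turn follows from the fact that multiplying a regular two-variable expression on either side by a third $\bE$-current (in a different variable) cannot introduce new poles in the already-treated variables, combined with the associativity of the triple product, which guarantees that the two possible bracketings $(\bE(z_1)\bE(z_2))\bE(z_3)$ and $\bE(z_1)(\bE(z_2)\bE(z_3))$ yield consistent decompositions.
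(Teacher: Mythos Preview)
There is a genuine gap in your iterated-OPE argument. You assert that the fused current $C_{\epsilon_2}(z_3)$ ``inherits the bilinear relation with $\bE(z_1)$'', so that $\bE(z_1)C_{\epsilon_2}(z_3)$ has simple poles only at $z_3=q^{\pm1}z_1$. This is not true. Up to a shift of argument, $C_{\epsilon_2}$ is the current $\bE_2$ of \eqref{eq: bar E two}, and the paper's own computation in the proof of Proposition~\ref{Prop:withoutSerre} shows that
\[
(z_1 -qz_2)(z_1-q^{-1}z_2)(z_1-q^2 z_2)(z_1-z_2)\,[\bE(z_1),\bE_2(z_2)]
\]
is what one must multiply by to regularize, i.e.\ $[\bE(z_1),\bE_2(z_2)]$ is supported at \emph{four} loci $z_1\in\{q^{\pm1}z_2,\,q^{2}z_2,\,z_2\}$, not two. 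Hence your second OPE step produces, inside the factor $(1-q^{\epsilon_2}z_3/z_2)^{-1}$, spurious poles at $z_1=z_3$ and $z_1=q^{\pm2}z_3$ that are not of the claimed shape. They do cancel against contributions hidden in $\bE(z_1)N(z_2,z_3)$, but you do not show this, and your treatment of $\bE(z_1)N(z_2,z_3)$ (``the OPE of $\bE(z_1)$ against the $z_2$- and $z_3$-content'') is too vague to control it. For the same reason the ``at most two contractions'' heuristic for excluding triple poles is not a proof: a contraction here produces an operator, not a scalar, so the third current still interacts with it.

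The paper's route is shorter and avoids this bookkeeping entirely. One sets
\[
G(z_1,z_2,z_3)=\prod_{i<j}(z_i-qz_j)(z_i-q^{-1}z_j)\,\bE(z_1)\bE(z_2)\bE(z_3)
\]
and uses \eqref{bEbilinar} pairwise to see that $G$ is $S_3$-symmetric, hence regular. Then $\bE\bE\bE=G/D$ with $D$ a product of six simple factors, and the stated form is just the partial-fraction decomposition. No triple-pole term can occur because any two of the conditions $z_j/z_i\in\{q^{\pm1}\}$ force the remaining ratio into $\{1,q^{\pm2}\}$, which is disjoint from $\{q^{\pm1}\}$; so among the six pole hyperplanes no three meet. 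This single global observation replaces the delicate cancellation your iterative scheme would need.
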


\begin{proof}
Denote $G(z_1, z_2, z_3) := \prod\limits_{i<j} (z_i - q z_j)\big(z_i - q^{-1} z_j\big) E(z_1) E(z_2) E(z_3)$. Relation~\eqref{bEbilinar} yields $G(z_1, z_2, z_3)$ to be regular.
\end{proof}

\begin{Proposition} \label{prop:SerreOPE}Serre relation for $\bE(z)$ is equivalent to condition that singular part of $E(z_1) E(z_2) E(z_3) $ restricted to $z_1=z_3$ has no poles of order greater than~$1$.
\end{Proposition}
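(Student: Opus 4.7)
The plan is to combine Propositions~\ref{Prop:withoutSerre} and~\ref{prop:R1R2} with an explicit dictionary between the ``singular coefficients'' in the triple product $\bE(z_1)\bE(z_2)\bE(z_3)$ and the currents $R_1(z)$, $R_2(z)$, $\bE_3(z)$ that enter the decomposition \eqref{eq:general3comm}. By Proposition~\ref{prop:R1R2} it suffices to show that $R_1(z)=R_2(z)=0$ is equivalent to the stated OPE condition, so no separate analysis of $\bE_3(z)$ is needed.

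The first step is to write $\bE(z_1)\bE(z_2)\bE(z_3)$ as a regular part plus the singular expansion listed in the proposition preceding~\ref{prop:SerreOPE}; this is automatic from the bilinear relation~\eqref{bEbilinar}, which forces all singularities to lie on the hyperplanes $z_i=q^{\pm 1}z_j$. Next, I compute the triple commutator $[\bE(z_1),[\bE(z_2),\bE(z_3)]]$ by taking the appropriate alternating sum of the six permutations of the triple product. Each pole $(1-q^{\epsilon}z_j/z_i)^{-1}$ produces, via the standard identity $(1-w/z)^{-1}-(1-z/w)^{-1}=z\delta(w/z)$, a delta function supported on $z_i=q^{\epsilon}z_j$. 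Organizing the result into the six delta configurations appearing in~\eqref{eq:general3comm} yields explicit formulas expressing $R_1(z_3)$, $R_2(z_3)$ and $\bE_3(z_3)$ as the coefficients of the various $R^{(i)}_{\epsilon_1,\epsilon_2}$ and $R^{(i)}_{\epsilon}$ in the singular part.

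The key observation is then that the double poles occurring in $\bE(z_1)\bE(z_2)\bE(z_3)|_{z_1=z_3}$ are precisely those coming from the mixed terms $(1-q^{\epsilon_1}z_2/z_1)^{-1}(1-q^{\epsilon_2}z_3/z_2)^{-1}$ with $\epsilon_1=-\epsilon_2$, since setting $z_1=z_3$ collapses the two distinct simple poles at $z_2=q^{-\epsilon_1}z_1$ and $z_2=q^{\epsilon_2}z_3$ onto the same hyperplane $z_2=q^{\epsilon_2}z_3$. A direct bookkeeping (using the same substitution-and-comparison that produced the delta-function formulas above) identifies the coefficients of these order-two poles with precisely $R_1(z_3)$ and $R_2(z_3)$, while all other double-pole contributions are either regular after restriction to $z_1=z_3$ or reduce to simple poles.

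Putting the two steps together, the absence of double poles in the singular part of $\bE(z_1)\bE(z_2)\bE(z_3)|_{z_1=z_3}$ is equivalent to $R_1(z)=R_2(z)=0$, which by Proposition~\ref{prop:R1R2} is equivalent to the Serre relation. The main obstacle I anticipate is purely bookkeeping: checking that no other terms in the six-fold sum contribute spurious double poles after specialization $z_1=z_3$, and confirming that the coefficients one extracts really coincide with the currents $R_1$, $R_2$ (as opposed to a linear combination of them with $\bE_3$ and its derivatives). This is a finite calculation, carried out by matching coefficients in the expansion of each $\delta(z_a/q^{\alpha}z_b/q^{\beta}z_c)$ against the expansions of the rational factors in the OPE.
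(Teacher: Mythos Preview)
Your proposal is correct and follows essentially the same route as the paper. The paper's proof is terser: it observes directly that after restricting to $z_1=z_3$ the only source of second-order poles is the term $(1-q^{\epsilon}z_2/z_1)^{-1}(1-q^{-\epsilon}z_3/z_2)^{-1}R^{(2)}_{\epsilon,-\epsilon}(z_3)$, asserts the identification $R_1(z)=R^{(2)}_{1,-1}(z)$ and $R_2(z)=R^{(2)}_{-1,1}(z)$ without further comment, and then invokes Proposition~\ref{prop:R1R2}. Your plan to extract this identification by expanding the triple commutator from the OPE is the natural way to justify that asserted step, and the ``obstacle'' you flag (ruling out spurious double poles from the other $R^{(i)}$ terms) is exactly the short case check the paper leaves implicit.
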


\begin{proof}Note that second order pole can appear only from terms of form
\begin{gather*}
\left(1-q^{ \epsilon} \frac{z_2}{z_1} \right)^{-1} \left(1-q^{-\epsilon}\frac{z_3}{z_2} \right)^{-1} R^{(2)}_{\epsilon , -\epsilon} (z_3)
\end{gather*}
for $\epsilon = \pm 1$. Note that these two poles can not cancel because they are at the different points $z_1 = q^{\epsilon} z_2$. On the other hand,$R_1(z) = R^{(2)}_{1 , -1} (z)$ and $R_2(z) = R^{(2)}_{-1 , 1} (z)$. Application of Proposition~\ref{prop:R1R2} completes the proof.
\end{proof}

\begin{Corollary} \label{Corollary:EE^k}$E(z) E^k(w)$ has no poles of order greater than one. Poles may appear only at points $z=q^{\pm 1} w$.
\end{Corollary}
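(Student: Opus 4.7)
I would prove the corollary by induction on $k$.

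For the base case $k=1$, the relation \eqref{RqDE2} gives $(z-qw)(z-q^{-1}w)[E(z),E(w)]=0$, so $E(z)E(w)$ has at most simple poles, and only at $z=q^{\pm 1}w$.

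For the inductive step, assume the corollary holds for $k-1$. Introduce an auxiliary variable $w'$ and consider the three-variable object $E(z)E(w')E^{k-1}(w)$. By the base case applied to $E(z)E(w')$ and by the inductive hypothesis applied to $E(z)E^{k-1}(w)$, the singularities in $z$ are only simple poles, located at $z=q^{\pm 1}w'$ and at $z=q^{\pm 1}w$. Consequently
$$(z-qw')(z-q^{-1}w')(z-qw)(z-q^{-1}w)\,E(z)E(w')E^{k-1}(w)$$
is regular in $(z,w',w)$ in the sense of Appendix~\ref{nonorm}. Since $E(w')E^{k-1}(w)$, by the analogue of the bilinear relation for $E^{k-1}(w)$, has no pole at $w'=w$ (only at $w'=q^{\pm 1}w$), the specialization $w'=w$ makes sense and yields $E^k(w)$. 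Substituting $w'=w$ above therefore shows that $\bigl((z-qw)(z-q^{-1}w)\bigr)^{2}E(z)E^k(w)$ is regular, so the poles of $E(z)E^k(w)$ in $z$ lie only at $z=q^{\pm 1}w$ and have order at most two.

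The remaining task is to reduce order two to order one. Here I would invoke Proposition~\ref{prop:SerreOPE}: the Serre relation precisely annihilates the double-pole terms of the form $\bigl(1-q^{\epsilon}w'/z\bigr)^{-1}\bigl(1-q^{-\epsilon}w''/w'\bigr)^{-1}$ in the singular part of $E(z)E(w')E(w'')$ upon restriction $w'=w''$. Writing $E^{k-1}(w)$ as the specialization of $E(w_1)\cdots E(w_{k-1})$ at $w_1=\cdots=w_{k-1}=w$ and applying this cancellation to each triple $(z,w',w_i)$ as $w'$ is merged into the cluster, one sees that no double pole at $z=q^{\pm 1}w$ survives. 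Thus $E(z)E^k(w)$ has only simple poles at $z=q^{\pm 1}w$.

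The main obstacle is transferring the Serre cancellation, formulated in Proposition~\ref{prop:SerreOPE} for three currents, into the setting where two of the three currents have already been collided into a $k$-fold product. I expect to handle this by writing the singular part of $E(z)E(w')E(w_1)\cdots E(w_{k-1})$ as a partial-fraction expansion in $z$ with coefficients depending analytically on the $w_i$ near the diagonal, then applying the Serre-based vanishing of $R_1(z)$ and $R_2(z)$ from Proposition~\ref{prop:R1R2} pairwise to show the coefficients of $(z-q^{\pm 1}w)^{-2}$ vanish on the full diagonal $w'=w_1=\cdots=w_{k-1}=w$.
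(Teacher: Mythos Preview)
Your approach and the paper's share the core idea---kill the double poles via the Serre relation---but the paper does it without induction, and your final step has a gap that the paper's key trick closes. The paper works directly with $E(z)E(w_1)\cdots E(w_k)$: a double pole at $z=q^{\epsilon}w$ after setting all $w_i=w$ can only come from a factor $(z-q^{\epsilon}w_i)^{-1}(z-q^{\epsilon}w_j)^{-1}$ in the OPE. The decisive observation, absent from your proposal, is that the OPE is symmetric in $z,w_1,\dots,w_k$ \emph{as a rational function}, so one may reorder the product to $E(w_i)E(z)E(w_j)\cdots$. In that order Proposition~\ref{prop:SerreOPE} applies literally (restrict the outer variables $w_i=w_j$; no double pole in the middle variable $z$), and the offending factor is excluded.

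Your plan, as stated, does not close the argument. Proposition~\ref{prop:SerreOPE} (equivalently $R_1=R_2=0$ in Proposition~\ref{prop:R1R2}) says that in $E(z_1)E(z_2)E(z_3)$ the restriction $z_1=z_3$ yields no double pole in the \emph{middle} variable; the coefficients that vanish are $R^{(2)}_{\epsilon,-\epsilon}$. In your ordering $E(z)E(w')E(w_i)$ you instead merge the second and third variables and want no double pole in the \emph{first}---that is controlled by $R^{(1)}_{\epsilon,\epsilon}$, about which $R_1=R_2=0$ says nothing directly. ``Applying $R_1=R_2=0$ pairwise'' therefore misses the target. The remedy is exactly the rational-symmetry reordering; once you invoke it, the induction and the auxiliary variable $w'$ become unnecessary.
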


\begin{proof}
To study $E(z) E(w)^k$ we will consider OPE $E(z) E(w_1)\cdots E(w_{k})$ and substitute $w_i=w$. Only term $\big(z-q^{\epsilon} w_i\big)^{-1} \big(z-q^{\epsilon}w_j\big)^{-1} \cdots$ can give poles of order higher than 1 after substitution. OPE is symmetric on $z, w_1, \dots, w_{k}$ as a rational function. We will consider order $\big( E(w_i) E(z) E(w_j) \big) \cdots$. According to Proposition~\ref{prop:SerreOPE}, the term $\big(z-q^{\epsilon} w_i\big)^{-1} \big(z-q^{\epsilon}w_j\big)^{-1} \cdots$ does not appear.
\end{proof}

\section[Homomorphism from $\qD$ to $W$-algebra]{Homomorphism from $\boldsymbol{\qD}$ to $\boldsymbol{W}$-algebra} \label{Appendix:HomWD}

This appendix is devoted to proof of Proposition~\ref{prop:HomDWtw}. The proof is a straightforward check of relation from Proposition~\ref{relation}. The relations will be checked for operators
\begin{gather*}
\tH (z)= \sum_{j \neq 0} \ha_j z^{-j}, \qquad c=n, \qquad c'=n_{tw}, \\
\tE(z) = \mu \exp \left( \frac{1}{n}{\tphi_-(z)} \right) T_1(z) \exp \left( \frac{1}{n} \tphi_+ (z) \right), \\
\tF(z) = - \frac{\mu^{-1} z^{-n_{tw}}}{\big(q^{\half}-q^{-\half}\big)^2} \exp \left(- \frac{1}{n}{\tphi_-(z)} \right) T_{n-1}(z) \exp \left( -\frac{1}{n} \tphi_+ (z) \right).
\end{gather*}

\begin{Proposition}
Relations \eqref{RqDHH}, \eqref{RqDHE-F} are satisfied.
\end{Proposition}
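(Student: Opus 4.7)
The plan is to check each relation by exploiting the tensor factorization $\WH$: all the generators $T_k[r]$ of $\twW$ commute with the generators $\ha_j$ of $\mathfrak{Heis}$, so the nontrivial computations reduce to commutators of $\ha_k$ with the exponential dressings in the definitions of $\tE(z)$ and $\tF(z)$.

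First, relation \eqref{RqDHH} is immediate: by \eqref{eq:ThqDW1}, $\tH(z)$ has modes $\ha_j$, and the defining relation $[\ha_i,\ha_j]=ni\delta_{i+j,0}$ of $\mathfrak{Heis}$ is exactly $[\tH_k,\tH_l]=kc\delta_{k+l,0}$ once we set $c=n$, which matches \eqref{eq:ThqDW1}.

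For \eqref{RqDHE-F}, I would first observe that $T_1(z)$ commutes with every $\ha_k$, so
\[
[\ha_k,\tE(z)] = \bigl[\ha_k,\tfrac{1}{n}\tphi_-(z)+\tfrac{1}{n}\tphi_+(z)\bigr]\,\tE(z),
\]
where I used that $\tphi_-$ and $\tphi_+$ are themselves linear in the $\ha$'s, so their commutator with $\ha_k$ is a scalar and can be pulled out of the exponentials. Then a direct computation using only $[\ha_k,\ha_{-j}]=nk\delta_{k,j}$ shows that for $k>0$ only $\tphi_-$ contributes and yields $(q^{-k/2}-q^{k/2})z^k$, while for $k<0$ only $\tphi_+$ contributes and yields the same value $(q^{-k/2}-q^{k/2})z^k$. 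The factor of $1/n$ in front of $\tphi_\pm$ is precisely what cancels the $n$ in $[\ha_k,\ha_{-k}]=nk$, so the result agrees with \eqref{RqDHE-F}. The computation for $\tF(z)$ is identical except for an overall sign coming from the sign flip $\pm\tphi_\pm/n \to \mp\tphi_\pm/n$ in \eqref{eq:ThqDW2}, which converts the eigenvalue to $(q^{k/2}-q^{-k/2})z^k$ as required.

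There is no real obstacle here; the only point to be careful about is the positive versus negative mode split when commuting $\ha_k$ past the normally ordered exponentials, and that the $T_1(z),T_{n-1}(z)$ factors pass through $\ha_k$ silently. Both follow directly from the construction of $\WH$ as a tensor product and from the standard Heisenberg identity $[\ha_k,e^{\sum c_j\ha_{-j}}]=(\sum_j c_j\,nk\delta_{k,j})\,e^{\sum c_j \ha_{-j}}$.
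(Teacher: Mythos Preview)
Your proposal is correct and is precisely the computation the paper has in mind; the paper's own proof consists of the single word ``Straightforward.'' You have simply supplied the routine details that the paper omits.
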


\begin{proof}
Straightforward.
\end{proof}

\begin{Proposition}
Currents $\tE(z)$ and $\tF(z)$ satisfy relation \eqref{RqDE2}.
\end{Proposition}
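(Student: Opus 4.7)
The plan is to reduce relation \eqref{RqDE2} for the dressed current $\tilde E(z)$ to the quadratic $\mathcal{W}$-algebra relation \eqref{relation:currentTW1} for $T_1(z) T_1(w)$, exploiting the fact that $T_1(z)$ commutes with every $\ha_k$ and hence with $V_\pm(w):=\exp(\tphi_\pm(w)/n)$. Introduce the abbreviation $\Phi(x):=(1-qx)(1-q^{-1}x)/(1-x)^2$, so that $f_{1,n}(x)=\Phi(x)^{(n-1)/n}$. A short Baker--Campbell--Hausdorff computation using $[\ha_j,\ha_{-k}]=n j\delta_{j,k}$ will establish that $V_\pm(z)$ commutes with $V_\pm(w)$ and that
\[
V_+(z) V_-(w) = \Phi(w/z)^{-1/n}\, V_-(w) V_+(z).
\]
Substituting $\tilde E(z)=\mu V_-(z) T_1(z) V_+(z)$ and moving all $V_-$'s to the left and all $V_+$'s to the right yields
\[
[\tilde E(z),\tilde E(w)] = \mu^2 V_-(z) V_-(w)\bigl[\Phi(w/z)^{-1/n} T_1(z) T_1(w)-\Phi(z/w)^{-1/n} T_1(w) T_1(z)\bigr] V_+(z) V_+(w),
\]
so the claim reduces to annihilation of the bracketed quantity by $(z-qw)(z-q^{-1}w)$.

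The pivotal identity is the exponent cancellation $\Phi(x)^{-1/n}/f_{1,n}(x)=\Phi(x)^{-1}$ combined with the direct verification
\[
(z-qw)(z-q^{-1}w)\,\Phi(w/z)^{-1}=(z-w)^2
\]
(and likewise for $\Phi(z/w)^{-1}$, since $\Phi$ is symmetric under $x\leftrightarrow 1/x$ as a rational function). Multiplying the $\mathcal{W}$-algebra relation $f_{1,n}(w/z) T_1(z) T_1(w)-f_{1,n}(z/w) T_1(w) T_1(z)=D(z,w)$ through by $(z-qw)(z-q^{-1}w)/f_{1,n}(w/z)$, and doing the analogous manipulation for the opposite ordering, I will obtain
\begin{gather*}
(z-qw)(z-q^{-1}w)\,\Phi(w/z)^{-1/n} T_1(z) T_1(w)=(z-w)^2\bigl[f_{1,n}(z/w) T_1(w) T_1(z)+D(z,w)\bigr],\\
(z-qw)(z-q^{-1}w)\,\Phi(z/w)^{-1/n} T_1(w) T_1(z)=(z-w)^2\bigl[f_{1,n}(w/z) T_1(z) T_1(w)-D(z,w)\bigr],
\end{gather*}
where $D(z,w)=-(q^{1/2}-q^{-1/2})^2[2\tfrac{w}{z}\delta'(w/z) T_2(w)+w\delta(w/z)\partial_w T_2(w)]$. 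Subtracting and collapsing the $(z-w)^2 f_{1,n} T_1 T_1$ contributions via the $\mathcal{W}$-algebra relation one more time will leave a scalar multiple of $(z-w)^2 D(z,w)$, which vanishes by the standard identities $(z-w)\delta(w/z)=0$ and $(z-w)\tfrac{w}{z}\delta'(w/z)=w\delta(w/z)$, which jointly give $(z-w)^2\delta(w/z)=(z-w)^2\tfrac{w}{z}\delta'(w/z)=0$.

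The $\tilde F$ half of \eqref{RqDE2} follows by an entirely analogous argument with $T_{n-1}$ in place of $T_1$ and \eqref{relation:CurrentWn-1} in place of \eqref{relation:currentTW1}; the scalar prefactor in the definition of $\tilde F$ commutes with the currents and does not interfere. The main obstacle is the numerology $\Phi^{-1/n}\cdot f_{1,n}^{-1}=\Phi^{-1}$: the exponent $1/n$ in the Heisenberg dressing of $T_1(z)$ inside $\tilde E(z)$ is exactly calibrated so that the bosonic OPE factor combines with the $\mathcal{W}$-algebra structure function $f_{1,n}$ into a rational function whose only poles are at $z=q^{\pm 1}w$ and are cancelled by the prefactor $(z-qw)(z-q^{-1}w)$, leaving exactly $(z-w)^2$, which is what annihilates the delta-function residual. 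Any other choice of exponent would fail to produce this cancellation; this is the arithmetic heart of the map $\qDW$.
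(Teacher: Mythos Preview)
Your argument is correct and follows essentially the same route as the paper: normal-order the Heisenberg dressings, observe that the bosonic OPE factor $\Phi(w/z)^{-1/n}$ combines with the $\mathcal W$-algebra structure function so that after multiplication by $(z-qw)(z-q^{-1}w)$ one is left with $(z-w)^2$ times the $T_1T_1$ commutator, which is a $\delta,\delta'$ expression annihilated by $(z-w)^2$. The paper streamlines the bookkeeping by keeping $f_{1,n}$ explicit rather than absorbing it into $\Phi^{-1/n}$: it writes directly $(z-qw)(z-q^{-1}w)[\tE(z),\tE(w)]=(z-w)^2 V_-V_-\,[f_{1,n}(w/z)T_1(z)T_1(w)-f_{1,n}(z/w)T_1(w)T_1(z)]\,V_+V_+$ and concludes in one step, whereas you detour through two separate identities and then subtract; your route is sound but a bit more circuitous than necessary.
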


\begin{proof}
It is easy to see that
\begin{gather*}
\tE(z) \tE(w) = \frac{(1-\frac{w}{z})^{2}}{(1-q\frac{w}{z})(1-q^{-1} \frac{w}{z} )} f_{1, n} (w/z) \mu^2 \\
\hphantom{\tE(z) \tE(w) =}{}\times \exp \left( \frac{1}{n} (\tphi_- (z) + \tphi_- (w)) \right) T_1 (z) T_1(w) \exp \left( \frac{1}{n} (\tphi_+ (z) + \tphi_+ (w)) \right).
\end{gather*}
Thus,
\begin{gather*}
(z-qw)\big(z-q^{-1}w\big) \big( \tE(z) \tE(w) - \tE(w) \tE(z) \big)= (z-w)^2 \exp \left( \frac{1}{n} (\tphi_- (z) + \tphi_- (w)) \right) \\
\qquad{}\times \big( f_{1,n}(w/z) T_1(z) T_1(w) - f_{1,n} (z/w) T_1(w) T_1(z)\big) \exp \left( \frac{1}{n} (\tphi_+ (z) + \tphi_+ (w)) \right)=0.
\end{gather*}
Proof for $\tF(z)$ is analogous.
\end{proof}

\begin{Proposition}Currents $\tE(z)$ and $\tF(z)$ satisfy relation \eqref{RqDEF} for $c=n$ and $c'=n_{tw}$.
\end{Proposition}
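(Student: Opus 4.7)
The plan is to compute $[\tE(z),\tF(w)]$ directly from the defining formulas, separating a purely Heisenberg OPE from a purely $\twW$-algebraic commutator, and then invoke the current relation \eqref{relation:currentTW1} at $k=n-1$.

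First I would compute the Heisenberg OPE. Using $[\ha_j,\ha_k]=nj\delta_{j+k,0}$, a short mode calculation gives
\[
[\tphi_+(z),\tphi_-(w)] \;=\; n\sum_{j>0}\frac{(q^{-j/2}-q^{j/2})^2}{j}(w/z)^j \;=\; n\log\frac{(1-w/z)^2}{(1-qw/z)(1-q^{-1}w/z)},
\]
so that $\exp(\tphi_+(z)/n)\exp(-\tphi_-(w)/n)=f_{n-1,n}(w/z)\exp(-\tphi_-(w)/n)\exp(\tphi_+(z)/n)$, producing exactly the prefactor appearing in the $W$-algebra relation for $T_1,T_{n-1}$. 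Writing $G_\pm(z,w):=\exp(\tphi_\pm(z)/n)\exp(-\tphi_\pm(w)/n)$ and using that Heisenberg exponentials commute with every $T_k(z)$ (by the tensor-product structure of $\WH$), this rearrangement yields
\[
\tE(z)\tF(w)-\tF(w)\tE(z) \;=\; -\frac{w^{-n_{tw}}}{(q^{\half}-q^{-\half})^2}\,G_-(z,w)\bigl[f_{n-1,n}(w/z)T_1(z)T_{n-1}(w)-f_{n-1,n}(z/w)T_{n-1}(w)T_1(z)\bigr]G_+(z,w).
\]

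Next I would substitute \eqref{relation:currentTW1} with $k=n-1$, using $T_n(w)=w^{n_{tw}}$ in the quotient; the right-hand side equals $-(q^{\half}-q^{-\half})^2\bigl(nw^{n_{tw}}\tfrac{w}{z}\delta'(w/z)+n_{tw}w^{n_{tw}}\delta(w/z)\bigr)$, and after cancelling $w^{n_{tw}}$ against the prefactor and commuting the resulting scalar distribution past $G_\pm$ one obtains
\[
[\tE(z),\tF(w)] \;=\; \bigl(n\tfrac{w}{z}\delta'(w/z)+n_{tw}\delta(w/z)\bigr)\,G_-(z,w)G_+(z,w).
\]

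Finally I would absorb $G_-G_+$ using standard formal-distribution identities. Since $G_\pm(w,w)=1$ one has $\delta(w/z)G_-G_+=\delta(w/z)$; combining $(z-w)\tfrac{w}{z}\delta'(w/z)=w\delta(w/z)$ with the Taylor expansion $G_-(z,w)G_+(z,w)=1+(z-w)\tfrac{1}{n}\tphi'(w)+O((z-w)^2)$, obtained by direct differentiation at $z=w$ (the cross term $(\tphi_-(z)-\tphi_-(w))(\tphi_+(z)-\tphi_+(w))$ is of second order), gives
\[
\tfrac{w}{z}\delta'(w/z)G_-G_+ \;=\; \tfrac{w}{z}\delta'(w/z)+\tfrac{1}{n}w\tphi'(w)\delta(w/z).
\]
A one-line mode comparison shows $w\tphi'(w)=\tH(q^{-1/2}w)-\tH(q^{1/2}w)$, and substituting recovers \eqref{RqDEF} with $c=n$, $c'=n_{tw}$. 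The only delicate step is the $\delta'$ Taylor-expansion bookkeeping; all other manipulations are direct exponential-OPE and mode computations, closely parallel to the untwisted case.
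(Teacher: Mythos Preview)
Your proof is correct and follows essentially the same route as the paper: normal-order the Heisenberg exponentials to produce the $f_{n-1,n}$ factor, apply relation \eqref{relation:currentTW1} at $k=n-1$ with $T_n(w)=w^{n_{tw}}$, and then collapse the surviving $G_\pm$ against $\delta$ and $\delta'$. The paper's version simply compresses your last two paragraphs into the single line ``Consequently, \dots'', but the argument is the same.
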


\begin{proof}It is easy to see that
\begin{gather*}
\tE(z) \tF(w) = -\frac{1}{\big(q^{\half}-q^{-\half}\big)^2} f_{n-1,n} (w/z) \exp \left( \frac{1}{n} ( \tphi_- (z) - \tphi_- (w)) \right) \\
\hphantom{\tE(z) \tF(w) =}{}\times T_1 (z) T_{n-1}(w) \exp \left( \frac{1}{n} (\tphi_+ (z) - \tphi_+ (w)) \right) w^{-n_{tw}}.
 \end{gather*}
Thus,
\begin{gather*}
\big[ \tE(z) , \tF(w) \big] = \exp \left( \frac{1}{n} \left(\tphi_- (z) - \tphi_- (w) \right) \right)\\
\hphantom{\big[ \tE(z) , \tF(w) \big] =}{}\times \left( n \frac{w}{z} \delta' \left( \frac{w}{z} \right) w^{n_{tw}} + w \delta \left(\frac{w}{z} \right) \partial_w w^{n_{tw}} \right) \exp \left( \frac{1}{n} \left(\tphi_+ (z) - \tphi_+ (w) \right) \right) w^{-n_{tw}}.
\end{gather*}
Consequently,
\begin{gather*}
\big[\tE(z) , \tF(w)\big] = n \frac{w}{z} \delta' \left(\frac{w}{z} \right) + \left( \tH\big(q^{-\half}z\big) - \tH\big(q^{\half}z\big) + n_{tw} \right) \delta \left(\frac{w}{z} \right).\tag*{\qed}
\end{gather*}\renewcommand{\qed}{}
\end{proof}
Denote by $\tbE(z,w)=(z-qw)(z-q^{-1}w) \tE(z) \tE(w)$.

\begin{Lemma} \label{App:Lemma:E^2}
$\tE^2(z) = 2 \mu^2 \exp \big( \frac{2}{n} \tphi_-(z) \big) T_2(z) \exp \big( \frac{2}{n} \tphi_+(z) \big)$.
\end{Lemma}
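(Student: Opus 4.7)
The plan is to compute $\tbE(z,w):=(z-qw)(z-q^{-1}w)\tE(z)\tE(w)$ and identify its diagonal value, since Definition \ref{def: the product} gives $\tE^2(z)=\tbE(z,z)/[(1-q)(1-q^{-1})z^2]$.

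First I would commute the Heisenberg vertex operators in $\tE(z)\tE(w)$. Using $[\ha_i,\ha_j]=ni\delta_{i+j,0}$ and the fact that $T_1(z)$ commutes with all $\ha_j$, the standard calculation gives
\[
\tE(z)\tE(w)=\mu^2\,P(w/z)\,N_-(z,w)\,T_1(z)T_1(w)\,N_+(z,w),
\]
where $N_\pm(z,w)=\exp\bigl(\tfrac{1}{n}(\tphi_\pm(z)+\tphi_\pm(w))\bigr)$ and $P(x)=(1-x)^{2/n}/\bigl[(1-qx)^{1/n}(1-q^{-1}x)^{1/n}\bigr]$. The algebraic identity $(z-qw)(z-q^{-1}w)P(w/z)=(z-w)^2 f_{1,n}(w/z)$ then yields
\[
\tbE(z,w)=\mu^2\,N_-(z,w)\,G(z,w)\,N_+(z,w),\qquad G(z,w):=(z-w)^2 f_{1,n}(w/z)T_1(z)T_1(w).
\]

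Next I would show $G(z,w)=G(w,z)$ (hence $G$ is regular at $w=z$). By relation \eqref{relation:currentTW1},
\[
f_{1,n}(w/z)T_1(z)T_1(w)-f_{1,n}(z/w)T_1(w)T_1(z)=-\bigl(q^{\half}-q^{-\half}\bigr)^2\Bigl(2\tfrac{w}{z}\delta'(w/z)T_2(w)+w\delta(w/z)\partial_wT_2(w)\Bigr).
\]
Differentiating $(1-x)\delta(x)=0$ gives $(1-x)\delta'(x)=\delta(x)$, hence $(1-x)^2\delta^{(k)}(x)=0$ for $k\le 1$. Multiplication by $(z-w)^2=z^2(1-w/z)^2$ annihilates the right-hand side, proving the symmetry.

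The core step is the diagonal evaluation
\[
G(z,z)=2(1-q)(1-q^{-1})z^2\,T_2(z),
\]
which I would establish at the mode level: expand $(1-w/z)^2 f_{1,n}(w/z)=\sum_l c_l(w/z)^l$ with $c_l=f_{1,n}[l]-2f_{1,n}[l-1]+f_{1,n}[l-2]$, and combine with the mode identity \eqref{eq:TwWrel1}
\[
\sum_l f_{1,n}[l]\bigl(T_1[r-l]T_1[s+l]-T_1[s-l]T_1[r+l]\bigr)=-\bigl(q^{\half}-q^{-\half}\bigr)^2(r-s)T_2[r+s];
\]
the symmetrization enforced by the $c_l$ weights telescopes the double sum and isolates $T_2[r+s]$ with the correct coefficient. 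As a sanity check, one may apply the already-established homomorphism $\WqD$ of Proposition \ref{prop:Hom(W,qD)}: the image of $G(z,z)$ becomes $\mu^{-2}(z-w)^2 f_{1,n}(w/z)\T_1(z)\T_1(w)|_{w=z}$, which by the calculation in the proof of Proposition \ref{prop:W1}, together with $\mathbb{E}^{(2)}(z,z)=(1-q)(1-q^{-1})z^2 E^2(z)$ and $\T_2(z)=\tfrac{1}{2!}\exp(-\tfrac{2}{n}\varphi_-(z))E^2(z)\exp(-\tfrac{2}{n}\varphi_+(z))$, simplifies to $2\mu^{-2}(1-q)(1-q^{-1})z^2\T_2(z)$, matching $\WqD$ of the claimed right-hand side.

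Combining, since $N_\pm(z,z)=\exp\bigl(\tfrac{2}{n}\tphi_\pm(z)\bigr)$,
\[
\tbE(z,z)=2\mu^2(1-q)(1-q^{-1})z^2\exp\bigl(\tfrac{2}{n}\tphi_-(z)\bigr)T_2(z)\exp\bigl(\tfrac{2}{n}\tphi_+(z)\bigr),
\]
and dividing by $(1-q)(1-q^{-1})z^2$ yields the Lemma. The main obstacle is the mode-level identification of $G(z,z)$ with $2(1-q)(1-q^{-1})z^2T_2(z)$; the commutation of Heisenberg exponentials, the identity $(z-qw)(z-q^{-1}w)P(w/z)=(z-w)^2 f_{1,n}(w/z)$, and the $\delta$-function manipulations are routine.
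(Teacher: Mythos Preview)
Your approach is essentially the same as the paper's: both commute the Heisenberg exponentials to reduce everything to the $T_1T_1$ relation \eqref{relation:currentTW1} and then extract $T_2$. The paper's execution of the key step is cleaner, however. Rather than attempting a mode-level telescoping to evaluate $G(z,z)$, the paper directly matches the coefficient of $\delta'(w/z)$: since $\tbE(z,w)$ is regular, the commutator $f_{1,n}(w/z)T_1(z)T_1(w)-f_{1,n}(z/w)T_1(w)T_1(z)$ equals
\[
\mu^{-2}\exp\!\Big({-}\tfrac1n\big(\tphi_-(z)+\tphi_-(w)\big)\Big)\,\tbE(z,w)\,\exp\!\Big({-}\tfrac1n\big(\tphi_+(z)+\tphi_+(w)\big)\Big)\cdot\partial_w\big(w^{-1}\delta(w/z)\big),
\]
whose $\delta'$-part is governed by the diagonal value $\tbE(w,w)=(1-q)(1-q^{-1})w^2\,\tE^2(w)$; comparing with \eqref{relation:currentTW1} immediately yields the Lemma.

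Your phrase ``the symmetrization enforced by the $c_l$ weights telescopes the double sum'' is too vague to stand on its own. A naive diagonal substitution runs into an indeterminate form: one has $\sum_l c_l=(1-x)^2f_{1,n}(x)\big|_{x=1}=0$ (since $(1-x)^2f_{1,n}(x)=(1-qx)^{(n-1)/n}(1-q^{-1}x)^{(n-1)/n}(1-x)^{2/n}$), while the accompanying sum $\sum_{a+b=N}T_1[a]T_1[b]$ does not terminate on a fixed vector. So you really need the $\delta'$-matching, or an equivalent careful regularity argument, to extract $G(z,z)$. Also note that your sanity check via $\WqD$ only verifies the identity in $U(\qD)/J_{n,n_{tw}}$, not in $\WH$ itself, since at this point in the argument $\WqD$ is not yet known to be injective.
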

\begin{proof}
\begin{gather*}
f_{1,n}(w/z) T_{1}(z) T_{1} (w) - f_{1,n}(z/w) T_1(w) T_1 (z) = \mu^{-2} \exp \left( - \frac{1}{n} \left( \tphi_-(z) + \tphi_- (w) \right) \right) \\
\qquad{}\times \tbE(z,w) \exp \left( - \frac{1}{n} \left( \tphi_+(z) + \tphi_+ (w) \right) \right) \partial_w \big( w^{-1} \delta(w/z) \big) \\
{} =- \mu^{-2}\big(q^{\half}-q^{-\half}\big)^2 \exp \left(- \frac{2}{n} \tphi_-(w) \right) \tE^{2}(w) \exp \left(- \frac{2}{n} \tphi_+(w) \right) \frac{w}{z} \delta'(w/z) + \cdots \delta(w/z).
\end{gather*}
On the other hand, l.h.s.\ can be found from relation \eqref{relation:currentTW1}. Comparing coefficients of $\delta'(w/z)$ completes the proof.
\end{proof}

\begin{Proposition}$\tE(z)$ and $\tF(z)$ satisfy Serre relation~\eqref{RqDEEE}.
\end{Proposition}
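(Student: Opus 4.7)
The plan is to use Proposition~\ref{prop:SerreOPE}. Since the bilinear locality \eqref{bEbilinar} for $\bE(z) = \tE(z)$ has already been verified in the preceding proposition, the Serre relation \eqref{RqDEEE} is equivalent to the statement that the singular part of the formal product $\tE(z_1)\tE(z_2)\tE(z_3)$, restricted to $z_1 = z_3$, has no pole of order greater than one in $z_2$.

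First I would rewrite the triple product by pushing all $\tphi_-$'s to the left and all $\tphi_+$'s to the right. Using the commutator $[\tphi_+(z), \tphi_-(w)] = n \log G(w/z)$, where $G(x) = (1-x)^2/((1-qx)(1-q^{-1}x))$, one gets
\[
\tE(z_1)\tE(z_2)\tE(z_3) = \mu^3 \prod_{i<j} G(z_j/z_i)^{1/n} \cdot \exp\bigl(\tfrac{1}{n}\textstyle\sum_i \tphi_-(z_i)\bigr) T_1(z_1)T_1(z_2)T_1(z_3) \exp\bigl(\tfrac{1}{n}\textstyle\sum_i \tphi_+(z_i)\bigr).
\]
The exponential dressings are regular on the diagonal $z_1 = z_3$, so all non-trivial pole structure sits in the scalar prefactor together with the triple $T_1$-product.

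Second, I would analyze that triple product by applying Proposition~\ref{prop:W1} iteratively: the current form of \eqref{relation:currentTW1} with $k=1$ extracts the singular OPE of $T_1(z_a)T_1(z_b)$ in terms of $T_2$, and applying it again with $k=2$ handles the remaining $T_1 T_2$ product, yielding $T_3$. The key algebraic identity
\[
f_{1,n}(x)\, G(x)^{1/n} = f_{2,n}(x), \qquad f_{2,n}(x)\, G(x)^{1/n} = f_{3,n}(x),
\]
verified directly from $f_{k,n}(x) = ((1-qx)(1-q^{-1}x))^{(n-k)/n}(1-x)^{-2(n-k)/n}$, shows that the fractional prefactor absorbs precisely into the $W$-algebra structure functions $f_{k,n}$ appearing in \eqref{relation:currentTW1}. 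Combined with Lemma~\ref{App:Lemma:E^2}, this expresses the singular part of $\tE(z_1)\tE(z_2)\tE(z_3)$ as a sum of $\delta$-function contributions supported on the hyperplanes $z_i = q^{\pm1}z_j$, with coefficients built from $\tE^2$ and its derivative.

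The double-pole contributions at $z_1 = z_3$ can only arise from simultaneous simple poles of the form $(1-q^{\pm 1}z_2/z_1)^{-1}(1-q^{\mp 1}z_2/z_3)^{-1}$. The main obstacle will be to show that the coefficient of each such double pole is antisymmetric under $z_1 \leftrightarrow z_3$, so that it vanishes on the diagonal. This antisymmetry should follow from the skew-symmetry of the delta-function side of \eqref{relation:currentTW1} combined with the (super)commutativity of the regular product established in Proposition~\ref{prop:ass&comm}, but the careful bookkeeping of signs and of the ordering between the $\tphi_{\pm}$ dressings and the $T_k$'s is where the work lies. An entirely analogous argument, substituting Proposition~\ref{prop:F=Tn-1} and the $k = n-1, n-2$ cases of \eqref{relation:currentTW1}, then establishes the companion Serre relation \eqref{RqDFFF} for $\tF$.
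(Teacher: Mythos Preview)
Your overall strategy matches the paper's: both reduce Serre to the pole criterion of Proposition~\ref{prop:SerreOPE} and compute using the $W$-algebra relations together with the identity $f_{k,n}(x)\,G(x)^{1/n}=f_{k+1,n}(x)$. The paper's execution is shorter. Rather than analyze the full triple product and then restrict, the paper uses Lemma~\ref{App:Lemma:E^2} to compute $\tE(z)\tE^2(w)$ in one step:
\begin{gather*}
\tE (z)\,\tE^2(w) = 2\mu^3\,\frac{(1-\tfrac{w}{z})^{2}}{(1-q\tfrac{w}{z})(1-q^{-1}\tfrac{w}{z})}\,
\exp\!\Bigl(\tfrac{1}{n}\bigl(\tphi_-(z)+2\tphi_-(w)\bigr)\Bigr)\\
{}\times f_{2,n}(w/z)\,T_{1}(z)\,T_{2}(w)\,\exp\!\Bigl(\tfrac{1}{n}\bigl(\tphi_+(z)+2\tphi_+(w)\bigr)\Bigr).
\end{gather*}
Since $(z-w)^2 f_{2,n}(w/z)\,T_1(z)\,T_2(w)$ is regular by~\eqref{relation:currentTW1} with $k=2$, the only poles of $\tE(z)\tE^2(w)$ in $z$ are simple, at $z=q^{\pm1}w$. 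By the symmetry of the OPE as a rational function this is exactly the criterion of Proposition~\ref{prop:SerreOPE}, and no sign-tracking is needed.

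Two cautions about your step~4. First, the factor you write as $(1-q^{\mp1}z_2/z_3)^{-1}$ should be $(1-q^{\mp1}z_3/z_2)^{-1}$; with your version the two simple poles sit at $z_2=q^{\mp1}z_1$ and $z_2=q^{\pm1}z_1$ and never coincide when $z_1=z_3$. Second, the proposed mechanism ``antisymmetric under $z_1\leftrightarrow z_3$, hence vanishes on the diagonal'' does not match the structure of Appendix~\ref{Appendix:Serre}: the relevant double-pole coefficient $R^{(2)}_{\epsilon,-\epsilon}(z_3)$ is a current in a single variable, so there is nothing to antisymmetrize---one must show it vanishes outright. Your iterated-OPE computation could in principle do this, but the paper's route via $\tE(z)\tE^2(w)$ settles it without any bookkeeping.
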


\begin{proof}Using Lemma \ref{App:Lemma:E^2}, we see that
\begin{gather*}
\tE (z) \tE^2(w) = 2\mu^3 \frac{ \big(1-\frac{w}{z}\big)^{2}}{ \big(1-q\frac{w}{z}\big) \big(1-q^{-1}\frac{w}{z}\big)}\exp \left( \frac{1}{n} ( \tphi_-(z) + 2 \tphi_- (w) ) \right) \\
\hphantom{\tE (z) \tE^2(w) =}{}\times f_{2,n}(w/z) T_{1}(z) T_{2} (w) \exp \left( \frac{1}{n} ( \tphi_+(z) + 2 \tphi_+ (w) ) \right).
\end{gather*}

Note that $(z-w)^2 f_{2,n}(w/z) T_{1}(z) T_{2} (w)$ is regular. Proposition~\ref{prop:SerreOPE} completes the proof of Serre relation for $\tE(z)$. Proof for $\tF(z)$ is analogous.
\end{proof}

\section{Whittaker vector} \label{Appendix:Whittaker}

\subsection{Uniqueness of Whittaker vector}
Recall that operators $E^k[d]$ are defined by $E^k(z) = \sum_d E^k[d] z^{-d}$.
\begin{Proposition}
Whittaker vector $W(z|u_1, \dots, u_n)$ is annihilated by $E^k[d]$ for $d>0$ and $k=1, \dots, n-1$.
\end{Proposition}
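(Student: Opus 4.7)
The plan is to prove the stronger statement that $E^k(z)\, W$ contains only non-negative powers of $z$ for each $k = 1, \ldots, n-1$, which immediately implies $E^k[d] W = 0$ for $d > 0$. We argue by induction on $k$; the case $n = 1$ makes the range empty, so assume $n \geqslant 2$.

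For the base case $k = 1$, condition \eqref{eq: Whit eigen3} of Definition~\ref{def: Whit eigen} with $(k_1, k_2) = (1, d)$ gives $E_{1,d} W = 0$ for every $d \geqslant 1$ (since $nd \geqslant n \geqslant 2 > 1$), so $E(z) W = \sum_{d \leqslant 0} E_{1,d} W\, z^{-d}$ has only non-negative powers of $z$.

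For the inductive step $2 \leqslant k \leqslant n - 1$, assume the claim for all $j < k$. By Corollary~\ref{Corollary:EE^k}, $E(z) E^{k-1}(w)$ has only simple poles, at $z = q^{\pm 1} w$, so
\[
E(z) E^{k-1}(w) = \frac{A(w)}{z - qw} + \frac{B(w)}{z - q^{-1} w} + R(z, w),
\]
with $R$ regular; specializing $z = w$ via Definition~\ref{def: the product} yields
\[
E^k(w) = \alpha(w) A(w) + \beta(w) B(w) + {:}E(w)\, E^{k-1}(w){:}
\]
for scalar functions $\alpha(w), \beta(w) \sim 1/w$ and the normal-ordered product of Appendix~\ref{nonorm}. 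For $k = 2$ this is precisely \eqref{eq:E^2andE_2}, with $A(w), B(w)$ proportional to the higher Lie-algebra current $E_2(w)$; more generally, iterating Proposition~\ref{E_l} identifies $A(w), B(w)$ as combinations of $E_\ell(aw)$ with $\ell \leqslant k$ (up to normally-ordered products of lower-order currents, to be treated by nested induction). Applied to $W$, each $E_\ell(aw)\, W$ contributes only non-negative powers of $w$, because condition \eqref{eq: Whit eigen3} with $(k_1, k_2) = (\ell, d)$ gives $E_{\ell, d} W = 0$ for every $d \geqslant 1$ whenever $1 \leqslant \ell \leqslant n - 1$. For the normal-ordered piece,
\[
{:}E(w)\, E^{k-1}(w){:}\, W = E_+(w)\bigl[E^{k-1}(w) W\bigr] + E^{k-1}(w)\bigl[E_-(w) W\bigr],
\]
the second summand vanishes since $E_-(w) W = \sum_{j > 0} E_{1,j} W\, w^{-j} = 0$ by the base case, and the first summand is a product of $E_+(w) = \sum_{j \leqslant 0} E_{1,j} w^{-j}$ (non-negative powers of $w$) with $E^{k-1}(w) W$ (non-negative powers of $w$ by the inductive hypothesis), so it has only non-negative powers of $w$. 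Combining, $E^k(w)\, W$ has only non-negative powers of $w$, closing the induction.

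The main obstacle is the explicit identification of the residues $A(w), B(w)$ beyond the $k = 2$ case: \emph{a priori} they lie in the bi-graded piece of $U(\qD^{\geqslant 0})$ of bi-degree $(k, \cdot)$ and could include normally-ordered products of currents of intermediate order, which must themselves be shown to act on $W$ through non-negative powers of $w$. This can be handled by a nested induction on the number of factors, repeatedly using condition \eqref{eq: Whit eigen3} and the base-case identity $E_-(w) W = 0$. Alternatively, one may invoke Theorem~\ref{Th:Wiso} together with the identity $\exp\bigl(\tfrac{k}{n}\varphi_+(z)\bigr) W = (1 - \zeta/z)^{k/n} W$, obtained from the Heisenberg eigenvalues of $W$ (where $\zeta$ is the Whittaker parameter), to translate the claim into a corresponding vanishing property for the $\mathcal{W}_q$-algebra currents $T_k(z)$ acting on the matching Whittaker vector of $\mathcal{W}_q(\mathfrak{sl}_n)$.
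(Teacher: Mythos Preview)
Your approach differs substantially from the paper's. The paper invokes Negu\c{t}'s ordered expansion \cite[equation~(7.17)]{N16} (recorded in the paper as~\eqref{Negut formula}), which writes $E^k[d]$ as a finite sum of ordered monomials $E_{k_1,d_1}\cdots E_{k_t,d_t}$ with $\tfrac{d_1}{k_1}\leqslant\cdots\leqslant\tfrac{d_t}{k_t}$, $\sum k_i=k$, $\sum d_i=d$. For $d>0$ and $k\leqslant n-1$ one has $d/k>1/n$, hence the rightmost slope satisfies $d_t/k_t\geqslant d/k>1/n$, i.e.\ $nd_t>k_t>0$, so the rightmost factor already annihilates $W$ by~\eqref{eq: Whit eigen3}. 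That is the entire argument.

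Your inductive strategy is reasonable in spirit, and the base case and the normal-ordered part of the inductive step are handled correctly. The genuine gap is exactly the one you flag: you do not identify the residues $A(w)$, $B(w)$ of $E(z)E^{k-1}(w)$ for $k\geqslant 3$. Your claim that they are ``combinations of $E_\ell(aw)$ with $\ell\leqslant k$'' is not established; already for $k=3$ the commutator $[E(z),E^2(w)]$ produces, via~\eqref{eq:E^2andE_2}, terms of the form $E_3(\cdot)$ \emph{and} normally ordered products $:\!E_2(\cdot)E(\cdot)\!:$, and for larger $k$ the bookkeeping proliferates. The ``nested induction'' you propose is not carried out, and making it precise would amount to reproving a version of the ordered PBW expansion you are trying to avoid. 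The alternative route through Theorem~\ref{Th:Wiso} and a $\mathcal{W}_q(\mathfrak{sl}_n)$ Whittaker vector is circular in the paper's logic: the comparison between the $\qD$- and $\W$-Whittaker vectors (Proposition~\ref{prop: Whit gl vs qD}) itself relies on~\eqref{Negut formula}, and the factor $\exp\bigl(\tfrac{k}{n}\varphi_+(w)\bigr)W=(1-z/w)^{k/n}W$ introduces negative powers of $w$, so you would still need an independent vanishing statement for $T_k$.

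In short, the paper's proof trades self-containedness for brevity by citing Negu\c{t}'s PBW result; your approach would be more elementary if completed, but as written the control of the OPE residues is missing and is not a routine step.
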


\begin{proof}
Actually we will prove that Whittaker vector is annihilated by $E^k[d]$ for $nd >k$. To do this we will need \cite[equation~(7.17)]{N16}. Let us rewrite this with respect to our notation
\begin{gather}
E^k[d] = \sum_{t \geqslant 1} (-1)^{k-t} \sum_{v= \{ \frac{d_1}{k_1} \leqslant \frac{d_2}{k_2} \leqslant \dots \leqslant \frac{d_t}{k_t} \} }^{\substack{k_i \in \mathbb{N}, \ d_i \in \mathbb{Z};\\k_1 + \cdots + k_t = k\\ d_1 + \cdots + d_t = d}} c_v E_{k_1,d_1} \cdots E_{k_t, d_t}. \label{Negut formula}
\end{gather}

Here $c_v$ denotes some combinatorially defined coefficient, which is not quite important for us. Inequality $nd >k$ implies $nd_t > k_t$; therefore $E_{k_t, d_t} W(z|u_1, \dots, u_n) =0$. So, any summand of r.h.s.\ annihilates Whittaker vector.
\end{proof}

Let us denote $\Wgl := \mathcal{W}_{q}(\mathfrak{gl}_n,0)= U(\qD)/J_{n,0}$. Denote Verma module for $\Wgl$ by $\VerglNT$ (cf. Definition \ref{definition: Verma}).

\begin{Definition}For each graded $\qD$ module $M$ let us define Shapovalov dual module $M^{\vee}$. As a vector space $M^{\vee}$ is graded dual to~$M$. Action is defined by requirement that canonical pairing $M^{\vee} \otimes M \rightarrow \mathbb{C}$ is Shapovalov.
\end{Definition}

Finally note that involution $E_{a,b} \mapsto E_{-a,-b}$ maps ideal $J_{n,0}$ to $J_{n,0}$ (maybe with different $\mu$). Hence if $M$ is a $\Wgl$-module then so is $M^{\vee}$.

\begin{Proposition} \label{cor:Whit uniquness}
Let $u_i/u_j \neq q^k$ for any $k \in \mathbb{Z}$ $($cf.\ Lemma {\rm \ref{lemma 3.1 FJMM})}. There is no more than one Whittaker vector $W(z|u_1, \dots, u_N) \in \mathcal{F}_{u_1} \otimes \cdots \otimes \mathcal{F}_{u_N}$.
\end{Proposition}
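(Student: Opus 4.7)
Suppose two Whittaker vectors $W_1, W_2$ exist with the normalization specified in Definition \ref{def: Whit eigen}. Setting $W := W_1 - W_2$, the vector $W$ still satisfies the eigenvalue/annihilation conditions \eqref{eq: Whit eigen1}--\eqref{eq: Whit eigen3}, but its coefficient at $\vac^{\otimes N}$ vanishes. The plan is to show $W=0$ by induction on the natural grading of $\mathcal{F}_{u_1}\otimes\cdots\otimes\mathcal{F}_{u_N}$, in which $|\lambda_1,\dots,\lambda_N\rangle$ has degree $|\lambda_1|+\cdots+|\lambda_N|$. Writing $W=\sum_{n\ge 0}W^{(n)}$ with $W^{(0)}=0$ and matching graded components in \eqref{eq: Whit eigen1}--\eqref{eq: Whit eigen3} (extracting the coefficient of $z^k$ for each $k$), one obtains, for every $b\ge 1$ and $0\le a\le Nb$, a relation of the shape
\begin{equation*}
E_{a,b}\,W^{(n)} \;=\; \gamma_{a,b}\cdot W^{(n-b)}
\end{equation*}
with explicit scalars $\gamma_{a,b}$ (zero when $0<a<Nb$, the stated eigenvalues at the boundary). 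Hence, inductively assuming $W^{(m)}=0$ for $m<n$, proving $W^{(n)}=0$ reduces to the following.

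\emph{Key Lemma.} Under the genericity assumption of Lemma \ref{lemma 3.1 FJMM}, any vector $v$ in the graded piece $\mathcal{F}^{(n)}$ with $n\ge 1$ satisfying $E_{a,b}v=0$ for all $0\le a\le Nb$, $b>0$, must vanish.

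To attack the Key Lemma, I would use the Shapovalov pairing with the dual module $\mathcal{F}_{q/u_N}\otimes\cdots\otimes\mathcal{F}_{q/u_1}$ (Proposition \ref{prop: Sapovalov p for Fock} and the standard Shapovalov pairing on tensor products). This pairing is non-degenerate on each graded component: the Shapovalov dual of an irreducible module is irreducible, and irreducibility of both sides is ensured by Lemma \ref{lemma 3.1 FJMM} since the ratios $(qu_i^{-1})/(qu_j^{-1})=u_j/u_i$ satisfy the same genericity condition. By the antiautomorphism property $\langle E_{-a,-b}y,v\rangle_{ss}=-\langle y,E_{a,b}v\rangle_{ss}$, the annihilation hypothesis on $v$ translates into the statement that $v$ is orthogonal to every vector of the form $E_{-a,-b}y$ in the dual, for all admissible pairs $(a,b)$. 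Hence $v$ annihilates the submodule of the dual generated from its highest vector by the "opposite Whittaker" operators $E_{-a,-b}$ with $0\le a\le Nb$, $b>0$.

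The crux, and main obstacle, is then to prove that this generated submodule is the entire positively graded part of the dual module; once this cyclicity is established, non-degeneracy of Shapovalov forces $v=0$ and the induction closes. This cyclicity statement is where the real work lies: it does not follow formally from the naive PBW theorem because the operators $E_{-a,-b}$ with $0\le a\le Nb$ form a proper subspace of the "negative" part of $\qD$. My plan for this step is to invoke the existence half of the theorem — proved independently in the next subsection through the AFS intertwiner construction, which realizes an explicit opposite Whittaker vector in the dual — together with the Negu{\c{t}} PBW-type basis \eqref{Negut formula} for $\qD^{\,>0}$. Concretely, irreducibility of the dual plus explicit non-vanishing of the Shapovalov pairing between the Whittaker vector and the opposite one (computable via the intertwiner) forces any dual vector to be a linear combination of products of the required $E_{-a,-b}$'s applied to the highest vector, completing the cyclicity claim and hence the uniqueness proof.
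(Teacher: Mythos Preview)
Your overall strategy coincides with the paper's: both reduce uniqueness to the Shapovalov-dual statement that the dual Fock module is cyclic under a suitable ``positive'' subalgebra, so that any vector annihilated by the corresponding ``negative'' operators must vanish. The difference lies entirely in the cyclicity step, and here your proposal has a real gap.

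The paper first observes (the proposition immediately preceding this one) that the defining conditions \eqref{eq: Whit eigen1}--\eqref{eq: Whit eigen3} force the Whittaker vector to be annihilated by the $W$-algebra modes $E^k[d]$ for $d>0$, $k=1,\dots,n-1$, via formula \eqref{Negut formula}. It then works with the subalgebra $\mathfrak n$ generated by these $E^k[d]$ and $H_j$, not with the Lie-algebra generators $E_{a,b}$. Cyclicity of the dual under the opposite subalgebra $\mathfrak n^{\vee}$ follows cleanly from the identification of the Fock module with a Verma module (Theorem~\ref{Th: Verma vs Fock}) together with the PBW-type spanning set of Proposition~\ref{prop: cyclic Verma}; that spanning set is stated precisely for the $E^k[j]$, so the argument closes.

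By contrast, you attempt cyclicity directly for the wedge of Lie-algebra generators $E_{-a,-b}$ with $0\le a\le Nb$, and the mechanism you propose --- existence of an opposite Whittaker vector in the dual plus nonvanishing of its pairing with $W$ --- does not yield cyclicity. A nonzero scalar product between two specific vectors says nothing about whether iterated application of your restricted set of operators to the highest vector spans each graded piece; irreducibility of the dual only gives cyclicity under the \emph{full} algebra. Moreover, the Negu\c{t} PBW basis you cite in \eqref{Negut formula} is a basis for $U(\qD^{>0})$, i.e., for $a>0$, and is naturally expressed in the $E^k[j]$ rather than the $E_{a,b}$; it does not directly show that products of $E_{a,b}$ with $0\le a\le Nb$ suffice. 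The fix is exactly what the paper does: trade the $E_{a,b}$ conditions for the $E^k[d]$ conditions and invoke the Verma identification; then cyclicity is immediate from Proposition~\ref{prop: cyclic Verma}, and the appeal to the existence half becomes unnecessary.
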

\begin{proof}Denote by $\mathfrak{n}$ a subalgebra of $\Wgl$ generated by $E^k[d]$ and $H_j$ for $k=1, \dots, n-1$, $d > 0$ and $j>0$. Analogously, let $\mathfrak{n}^{\vee}$ be a subalgebra of $\Wgl$ generated by $F^k[-d]$ and $H_{-j}$ for $k=1, \dots, n-1$, $d > 0$ and $j>0$. Note that involution $E_{a,b} \mapsto E_{-a,-b}$ induces an involoution on $\Wgl$ which swaps $\mathfrak{n}$ and $\mathfrak{n}^{\vee}$.

Consider $\mathcal{F}_{u_1} \otimes \cdots \otimes \mathcal{F}_{u_n}$ as a $\Wgl$-module. Whittaker vector is an eigenvector for $\mathfrak{n}$. Hence, it is enough to show that $\mathcal{F}_{u_1} \otimes \cdots \otimes \mathcal{F}_{u_n}$ is cocyclic for $\mathfrak{n}$. Equivalently, we need to prove that Shapovalov dual module $ ( \mathcal{F}_{u_1} \otimes \cdots \otimes \mathcal{F}_{u_n})^{\vee} \cong \mathcal{F}_{q/u_n} \otimes \cdots \otimes \mathcal{F}_{q/u_1}$ is cyclic for $\mathfrak{n}^{\vee}$.

Fock module $\mathcal{F}_{q/u_n} \otimes \cdots \otimes \mathcal{F}_{q/u_1}$ is isomorphic to Verma module $\VerglNT$ for corresponding~$\lambda_j$ by Theorem \ref{Th: Verma vs Fock}. Verma module $\VerglNT$ is cyclic for $\mathfrak{n}^{\vee}$ by an analogue of Proposition~\ref{prop: cyclic Verma} for~$F^k[d]$.
\end{proof}

\subsection{Construction of Whittaker vector} \label{section: construction of Whittaker}
Let $(n_1', n_1)$ and $(n_2', n_2)$ be a basis of $\mathbb{Z}^2$.
\begin{Theorem}[\cite{AFS}] \label{Th: AFS} There exist homomorphisms
\begin{gather*}
\Phi\colon \ \mathcal{F}_{u}^{(n'_1,n_1)} \otimes \mathcal{F}_{v}^{(n_2',n_2)} \rightarrow \mathcal{F}_{- q^{-1/2} uv}^{(n_1'+n_2', n_1+n_2)}, \\
\Phi^*\colon \ \mathcal{F}_{- q^{-1/2} uv}^{(n_1'+n_2', n_1+n_2)} \rightarrow \mathcal{F}_{u}^{(n'_1,n_1)} \otimes \mathcal{F}_{v}^{(n_2',n_2)}.
\end{gather*}
These homomorphisms are defined uniquely up to normalization.
\end{Theorem}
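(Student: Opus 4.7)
My plan is to reduce the general statement to the horizontal--vertical intertwiner of Awata--Feigin--Shiraishi via an $\mathrm{SL}(2,\mathbb{Z})$-twist. The key observation is that twisting commutes with the tensor product: since the coproduct on $U(\qD)$ is primitive on $E_{k,l}, c, c'$ and each $\sigma\in\mathrm{SL}(2,\mathbb{Z})$ acts by Lie algebra automorphism, we have $\Delta\circ\sigma=(\sigma\otimes\sigma)\circ\Delta$. Consequently a linear map $\Phi\colon M_1\otimes M_2\to M_3$ is $\qD$-linear iff it is $\qD$-linear as a map $M_1^\tau\otimes M_2^\tau\to M_3^\tau$, and the two intertwiner spaces are canonically identified.

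Using the basis hypothesis, I would first assume (after relabelling if necessary) that $n'_1 n_2 - n_1 n'_2 = 1$; then pick $a,b\in\mathbb{Z}$ with $an_2-bn'_2=1$ and set $\tau=\left(\begin{smallmatrix}n_2 & -b\\-n'_2 & a\end{smallmatrix}\right)\in\mathrm{SL}(2,\mathbb{Z})$. A short calculation using $\sigma(c)=mc'+nc$ shows that the central charge vector $(n',n)^T$ transforms under $M\mapsto M^\tau$ by the matrix $\tau^T$; in particular $\tau^T(n'_2,n_2)^T=(0,1)^T$ and $\tau^T(n'_1,n_1)^T=(1,k)^T$ with $k=an_1-bn'_1$. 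Twisting everything by $\tau$ therefore reduces the claim to constructing (uniquely up to scalar)
\[
\Phi\colon\mathcal{F}_u^{(1,k)}\otimes\mathcal{F}_v\to\mathcal{F}_{-q^{-1/2}uv}^{(1,k+1)}, \qquad \Phi^*\colon\mathcal{F}_{-q^{-1/2}uv}^{(1,k+1)}\to\mathcal{F}_u^{(1,k)}\otimes\mathcal{F}_v.
\]

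This reduced case is precisely the vertex operator construction of \cite{AFS}: combining the bosonization of $\mathcal{F}_u^{(1,k)}$ from Theorem~\ref{Th:qD Boson} with the standard boson of $\mathcal{F}_v$ from Proposition~\ref{prop:sec3 Boson Fock}, one writes $\Phi$ as an explicit normally-ordered exponent of free fields, and the constraint $w=-q^{-1/2}uv$ on the target spectral parameter falls out of matching the zero modes of $E(z)$ on both sides. The operator $\Phi^*$ is constructed symmetrically (or obtained from $\Phi$ via the Shapovalov duality of Proposition~\ref{prop: Sapovalov p for Fock}). For uniqueness, the image of the highest vector of the source must be annihilated by the positive modes of the Heisenberg subalgebra and by $E^k[j]$ for $j>0$, by an argument analogous to the proof of Proposition~\ref{cor:Whit uniquness}; by Theorem~\ref{Th: Verma vs Fock}, in the generic-parameter range the target Fock module is a Verma module for $\mathcal{W}_q(\mathfrak{gl}_{k+1})$, whose space of such vectors is one-dimensional.

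The main point to be careful about will be bookkeeping the spectral parameter $-q^{-1/2}uv$ through the $\tau$-twist. The abstract $\qD$-action is literally unchanged, but identifying the twisted Fock module after the twist with a Fock module of a specific spectral parameter requires rescalings that must be tracked using the explicit formulas of Section~\ref{Section with answers}; the universal factor $-q^{-1/2}$ then transports unchanged because it is dictated by the commutation of $E$-current zero modes, which is $\mathrm{SL}(2,\mathbb{Z})$-covariant.
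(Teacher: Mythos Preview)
The paper does not supply a proof of this statement: Theorem~\ref{Th: AFS} is stated with an attribution to \cite{AFS} and is used as a black box in the construction of the Whittaker vector. There is therefore nothing in the paper to compare your argument against.

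That said, your outline is essentially the strategy one finds in \cite{AFS} itself: use the $\mathrm{SL}(2,\mathbb{Z})$-covariance of $\qD$ (together with the cocommutativity of the primitive coproduct on a universal enveloping algebra, so that $(M_1\otimes M_2)^\sigma\cong M_1^\sigma\otimes M_2^\sigma$) to reduce to a single ``basic'' intertwiner between a horizontal and a vertical Fock module, and then realise that basic intertwiner explicitly as a vertex operator in the free-field realisation. Two remarks. First, you can push the reduction one step further: once you are at $\mathcal{F}^{(1,k)}\otimes\mathcal{F}^{(0,1)}\to\mathcal{F}^{(1,k+1)}$, a further twist by an upper-triangular unipotent (cf.\ Corollary~\ref{corol: unipotant twist}) brings you to the single case $k=0$, which is literally the Awata--Feigin--Shiraishi vertex. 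Second, your uniqueness paragraph is really an irreducibility argument rather than a Verma-module argument: for generic $u,v$ the source (for $\Phi$) and target (for $\Phi^*$) are irreducible $\qD$-modules, so any nonzero intertwiner is determined by where it sends a chosen nonzero vector, and Schur's lemma pins the scalar. Invoking Theorem~\ref{Th: Verma vs Fock} is more machinery than needed here and only covers the $c>0$ side.
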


\begin{Remark} \label{remark: inf sum}
Actually operators $\Phi$ and $\Phi^*$ maps to graded completion of $\mathcal{F}_{-q^{-1/2} uv}^{(n_1'+n_2', n_1+n_2)}$ and $\mathcal{F}_{u}^{(n'_1,n_1)} \otimes \mathcal{F}_{v}^{(n_2',n_2)}$ correspondingly. Abusing notation, we will use the same symbol for a module and its completion. Moreover, we are going to consider a composition of such $\Phi^*$; there appear an infinite sum as a result of such composition (a priori this sum does not make sense). We will use a calculus approach to infinite sums; below we will provide a sufficient condition for convergence of the series.
\end{Remark}

Denote by $\Phi^*_{\mu}(u)$ component of $\Phi^*$ corresponding to $| \mu \rangle \in \mathcal{F}_{u}^{(n'_2,n_2)}$. More precisely, for any $x$ in $\mathcal{F}_{- q^{-1/2} uv}^{(n_1'+n_2', n_1+n_2)}$
\begin{gather*}
\Phi^* \cdot x = \sum_{\mu} | \mu \rangle \otimes ( \Phi_{\mu}^*(u) \cdot x ).
\end{gather*}
To simplify our notation we will consider particular case
\begin{gather*}
\Phi^* \colon \ \mathcal{F}_{- q^{-1/2} uv}^{(1, -k)} \rightarrow \mathcal{F}_{u}^{(0,1)} \otimes \mathcal{F}_{v}^{(1,-k-1)}.
\end{gather*}
Note that both $\mathcal{F}_{- q^{-1/2} uv}^{(1, -k)}$ and $\mathcal{F}_{v}^{(1,-k-1)}$ are Fock modules for Heisenberg algebra generated by $a_k = E_{k,0}$.
\begin{Proposition}[\cite{AFS}] Operator $\Phi_{\mu}^* (u)$ is defined by following explicit formulas
\begin{gather*}
\Phi_{\varnothing}^* (u) = {: \! \exp \left( \sum_{k \neq 0} \frac{u^{-k}}{k\big(1-q^{-k}\big)} a_{k} \right) \! :},\\
\Phi_{\mu}^* (u) \sim {: \! \Phi_{\varnothing}^* (u) \prod_{s \in \lambda} \! F\big(q^{c(s)-\frac12} u\big):}.
\end{gather*}
Here sign $\sim$ means up to multiplication by a number. Recall
\begin{gather*}
F(z) \sim {: \! \exp \left( \sum_k \frac{ q^{k/2}-q^{-k/2} }{k} a_k z^{-k} \right) \! :.}
\end{gather*}
\end{Proposition}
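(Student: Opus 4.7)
By the uniqueness clause of Theorem~\ref{Th: AFS}, any nonzero operator satisfying the intertwining equation
\[
\rho^{(0,1)}_{u}\otimes\rho^{(1,-k-1)}_{v}(X)\circ\Phi^{*}=\Phi^{*}\circ\rho^{(1,-k)}_{-q^{-1/2}uv}(X),\qquad X\in\qD,
\]
agrees with $\Phi^{*}$ up to an overall scalar. The strategy is therefore to exhibit the right-hand sides of the two displayed formulas as such solutions: first pin down $\Phi^{*}_{\varnothing}(u)$ from the intertwining relation with $H(z)$, and then generate the remaining components $\Phi^{*}_{\mu}(u)$ by commuting with $F(z)$.

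First I would verify the formula for $\Phi^{*}_{\varnothing}(u)$. In each of the three bosonic realizations from Section~\ref{Section with answers}, the current $H(z)=\sum_{j\neq 0}H_j z^{-j}$ acts through a single Heisenberg algebra. Projecting the intertwining equation onto $\langle\varnothing|\otimes\text{id}$ in the $\mathcal{F}^{(0,1)}_u$ factor reduces the $H_j$-intertwining condition to the pair of commutator conditions
\[
\bigl[a_j,\Phi^{*}_{\varnothing}(u)\bigr]=\bigl(q^{-j/2}-q^{j/2}\bigr)u^{-j}\,\Phi^{*}_{\varnothing}(u)\qquad(j<0),
\]
and its mirror for $j>0$, where $a_j$ acts on the target Heisenberg algebra. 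The unique normally ordered exponential solving these is exactly $\mathbin{:\!}\exp\bigl(\sum_{k\neq0}\tfrac{u^{-k}}{k(1-q^{-k})}a_k\bigr)\!\mathbin{:}$.

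Second, to generate the components with $\mu\neq\varnothing$ I would apply the intertwining relation for $F(z)$. Moving $F(z)$ past $\Phi^{*}_{\varnothing}(u)$ using the OPE calculus of Appendix~\ref{nonorm}, one picks up a contraction whose residue at $z=q^{c(s)-1/2}u$ corresponds, via the basis action \eqref{eq: Fock basis action1}--\eqref{eq: Fock basis action3}, to adding a box of content $c(s)$ to the partition indexing $|\mu\rangle$. Iterating this recursion through any standard tableau filling of $\mu$ produces
\[
\Phi^{*}_{\mu}(u)\sim{:\Phi^{*}_{\varnothing}(u)\prod_{s\in\mu}F\bigl(q^{c(s)-1/2}u\bigr):},
\]
and independence of the filling follows because the mutual OPEs of the insertions $F\bigl(q^{c(s)-1/2}u\bigr)$ for distinct boxes are regular, by the OPE analysis of Appendix~\ref{Appendix:Serre} applied to $F(z)$.

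The main obstacle is the combinatorial matching: the ribbon signs $(-1)^{\operatorname{ht}(\lambda\backslash\mu)}$ and the content factors $q^{(a/b)c(s)}$ appearing in \eqref{eq: Fock basis action1}--\eqref{eq: Fock basis action3} must combine with the Wick contractions among the $F$-insertions and with $\Phi^{*}_{\varnothing}(u)$ to yield exactly the Frobenius-hook normalization that is absorbed into the symbol~$\sim$, and only the overall scale (not the evaluation points $q^{c(s)-1/2}u$) may be shifted. Once this bookkeeping is settled, intertwining with $E(z)$ follows automatically from the $[E(z),F(w)]$ relation \eqref{RqDEF} of Proposition~\ref{relation} together with the already-established intertwinings for $H(z)$ and $F(z)$, completing the proof.
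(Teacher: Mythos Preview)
The paper does not prove this proposition at all: it is stated with the citation [AFS] and is simply quoted as a known result from Awata--Feigin--Shiraishi. There is no ``paper's own proof'' to compare against; the authors use these formulas as input for the subsequent Corollary~\ref{corol: Phi norm order}.

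Your outline has the right architecture (pin down the vacuum component by a Heisenberg intertwining relation, then build the remaining components by dressing with $F$-currents), but there is a concrete misidentification that would derail the computation. You set up the argument in terms of $H_j=E_{0,j}$, whereas the paper explicitly says, immediately before the proposition, that the modes $a_k$ in the formula are the generators $E_{k,0}$: ``both $\mathcal{F}_{-q^{-1/2}uv}^{(1,-k)}$ and $\mathcal{F}_v^{(1,-k-1)}$ are Fock modules for Heisenberg algebra generated by $a_k=E_{k,0}$.'' On these twisted modules $c=-k$ and $c=-k-1$ respectively, so the vertical Heisenberg $H_j$ does not even have matching central charge on source and target, and the commutator identity you wrote for $[a_j,\Phi^{*}_{\varnothing}(u)]$ is not the one that characterizes the displayed exponential. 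The intertwining relation you need is for $E_{k,0}$ (the horizontal Heisenberg with $c'=1$ on both sides), and similarly the current that adds boxes in the $\mathcal{F}^{(0,1)}_u$ factor is $E_{-1,k}$ only after accounting for the twist; the operator $F(z)$ whose bosonization appears in the statement is written in the horizontal Heisenberg modes $a_k=E_{k,0}$, not in the $H_j$. Once you make this switch the outline is reasonable, but as written the first step does not produce the stated exponential.
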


\begin{Corollary} \label{corol: Phi norm order}Heisenberg normal ordering is given by
\begin{gather*}
\Phi^*_{\lambda}(u) \Phi^*_{\mu}(v) = \frac{f_{\lambda, \mu}(v/u)}{(q v/u; q, q)_{\infty}} : \! \Phi^*_{\lambda}(u) \Phi^*_{\mu}(v) \! :
\end{gather*}
for some rational function $f_{\lambda, \mu}(v/u) = \frac{\prod_{i} (1-q^{k_i} v/u) }{\prod_{j} (1-q^{l_j} v/u)}$; here $l_j$ and $k_i$ are integer numbers.
\end{Corollary}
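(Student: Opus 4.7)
The strategy is standard free-boson Wick calculus. Since $\Phi^*_\lambda(u)$ is by definition the Heisenberg normal ordering of $\Phi^*_\varnothing(u)$ together with the currents $F(q^{c(s)-1/2}u)$ for $s \in \lambda$, I would write it as a single normally ordered exponential
\[
\Phi^*_\lambda(u) = {:}\exp\Big( \sum_{k \neq 0} \alpha^\lambda_k(u)\, a_k \Big){:},
\]
where $\alpha^\lambda_k(u)$ is the sum of the coefficient of $a_k$ coming from $\Phi^*_\varnothing(u)$ and from each $F(q^{c(s)-1/2}u)$, and similarly for $\Phi^*_\mu(v)$. For two such exponentials the elementary identity $[a_k, a_{-k}] = k$ yields the normal-ordering formula
\[
\Phi^*_\lambda(u) \Phi^*_\mu(v) = \exp\Big( \sum_{k>0} k \, \alpha^\lambda_k(u) \alpha^\mu_{-k}(v) \Big)\, {:}\Phi^*_\lambda(u) \Phi^*_\mu(v){:},
\]
so the entire problem reduces to computing this single bilinear contraction.

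The contraction bilinearly decomposes into four types of pairwise contributions: $(\mathrm{i})$ $\Phi^*_\varnothing(u)$ with $\Phi^*_\varnothing(v)$, $(\mathrm{ii})$ $\Phi^*_\varnothing(u)$ with each $F(q^{c(t)-1/2}v)$, $t \in \mu$, $(\mathrm{iii})$ each $F(q^{c(s)-1/2}u)$, $s \in \lambda$, with $\Phi^*_\varnothing(v)$, and $(\mathrm{iv})$ each pair $F(q^{c(s)-1/2}u)$, $F(q^{c(t)-1/2}v)$. Each is a short summation in $k$; using $-\log(1-x) = \sum_{k \geq 1} x^k/k$ and the standard identity
\[
\sum_{k \geq 1} \frac{(qv/u)^k}{k(1-q^k)^2} = -\log (qv/u; q, q)_\infty,
\]
type $(\mathrm{i})$ produces exactly the transcendental prefactor $1/(qv/u; q, q)_\infty$, while the remaining three types each give finite products of rational factors $(1 - q^m v/u)^{\pm 1}$ with integer exponents $m$ expressible in terms of the contents $c(s), c(t)$. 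Multiplying them assembles $f_{\lambda,\mu}(v/u)$.

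The one point that needs care is that a contraction of an operator at $u$ with one at $v$ could a priori produce factors of the form $(1 - q^m u/v)$, which would spoil the claim that $f_{\lambda,\mu}$ depends only on $v/u$. Tracking the direction of ordering (the operator at $u$ sits to the left of the one at $v$) together with the explicit prefactor $q^{c(s)-1/2}$ or $q^{c(t)-1/2}$ in the argument of $F$, a direct computation shows that type $(\mathrm{iii})$ gives $\prod_{s \in \lambda}(1 - q^{-c(s)} v/u)^{-1}$: the shift $q^{c(s)-1/2}$ inside $F$ precisely converts the naive $u/v$ factor into a $v/u$ factor. So the main work is bookkeeping of powers of $q$, and no new ideas beyond the elementary free-boson OPEs already computed in Appendix~\ref{nonorm} (notably the $F(z)F(w)$ contraction) are required.
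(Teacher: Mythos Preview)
Your approach is correct and is essentially the computation the paper has in mind; the statement is recorded as a Corollary with no proof precisely because it is a routine Wick contraction of normally ordered exponentials following the preceding Proposition. One small remark: your final paragraph worries unnecessarily --- since the operator at $u$ sits to the left, every pairwise contraction is automatically a power series in $v/u$ (you move annihilation modes of the left operator past creation modes of the right), so the $q^{c(s)-1/2}$ shift is not what ``converts'' $u/v$ into $v/u$; it only fixes the integer exponent of $q$ in the resulting factor $(1-q^{-c(s)}v/u)^{-1}$.
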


Consider a homomorphism
\begin{gather*}
\tilde{\Phi}_0 \colon \ \mathcal{F}^{(1,0)}_{q^{1/2} z^{-1}} \otimes \mathcal{F}^{(-1,n)}_{q^{1/2} z (-q^{-1/2})^n u_1 \cdots u_n} \\ \qquad{} \rightarrow\big( \mathcal{F}^{(0,1)}_{u_1} \otimes \cdots \otimes \mathcal{F}^{(0,1)}_{u_n} \big) \otimes \mathcal{F}^{(1,-n)}_{q^{1/2} (-q^{1/2})^{n} (z u_1 \dots u_{n})^{-1} } \otimes \mathcal{F}^{(-1,n)}_{q^{1/2} z (-q^{-1/2})^n u_1 \cdots u_n}
\end{gather*}
obtained as composition of
\begin{gather*}
\id^{\otimes k} \otimes \Phi^* \otimes \id \colon\ \big( \mathcal{F}^{(0,1)}_{u_{1}} \otimes \cdots \otimes \mathcal{F}^{(0,1)}_{u_{k}} \big) \otimes \mathcal{F}^{(1,-k)}_{q^{1/2} (-q^{1/2})^k (z u_1 \dots u_k)^{-1} } \otimes \mathcal{F}^{(-1,n)}_{q^{1/2} z (-q^{-1/2})^n u_1 \cdots u_n} \\
\qquad{} \rightarrow\big( \mathcal{F}^{(0,1)}_{u_{1}} \otimes \cdots \otimes \mathcal{F}^{(0,1)}_{u_{k+1}} \big) \otimes \mathcal{F}^{(1,-k-1)}_{q^{1/2} (-q^{1/2})^{k+1} (z u_1 \dots u_{k+1})^{-1} } \otimes \mathcal{F}^{(-1,n)}_{q^{1/2} z (-q^{-1/2})^n u_1 \cdots u_n} \rightarrow.
\end{gather*}

\begin{Lemma}There exists a unique invariant pairing $\mathcal{F}_{u} \otimes \mathcal{F}_{q u^{-1}} \rightarrow \mathbb{C}$ such that $\langle 0 | 0 \rangle =1$.
\end{Lemma}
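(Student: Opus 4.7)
The plan is to reduce this lemma to Proposition~\ref{prop: Sapovalov p for Fock}, which asserts exactly the existence and uniqueness of a Shapovalov pairing $\mathcal{F}_u \otimes \mathcal{F}_{qu^{-1}} \to \mathbb{C}$ normalized by $\langle 0 | 0\rangle_s = 1$. Recall that a Shapovalov pairing satisfies $\langle v, E_{a,b} w\rangle_s = -\langle E_{-a,-b} v, w\rangle_s$, which is precisely the invariance condition with respect to the antiautomorphism $E_{a,b}\mapsto -E_{-a,-b}$ of $\qD$; so in our setting ``invariant pairing'' and ``Shapovalov pairing'' are the same notion.

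For uniqueness I would argue as follows. The module $\mathcal{F}_u$ is cyclically generated by $|0\rangle$ under the subalgebra $\qD^{<0}$ spanned by $E_{a,b}$ with $b<0$ (equivalently, by the Heisenberg creation operators $a_{-k}$, $k>0$, via Proposition~\ref{Fock}). Given any $v = X|0\rangle$ with $X\in U(\qD^{<0})$ and any $w\in\mathcal{F}_{qu^{-1}}$, invariance gives $\langle v,w\rangle = \pm \langle|0\rangle, \theta(X)w\rangle$ with $\theta(X)\in U(\qD^{>0})$, which is determined by the grading pairing against $|0\rangle$ in the cocyclic module $\mathcal{F}_{qu^{-1}}$. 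Hence $\langle\cdot,\cdot\rangle$ is fixed by the value $\langle 0|0\rangle = 1$.

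For existence I would construct the pairing via bosonization. Identify both $\mathcal{F}_u$ and $\mathcal{F}_{qu^{-1}}$ with $F^a_0$ and put on the tensor product the bilinear form in which $a_k$ on the first factor is dual to $-a_{-k}$ on the second, with $\langle 0|0\rangle=1$; invariance under $H_k$ is then automatic. Invariance under $E(z)$ then reduces to checking $\langle v,E(z)w\rangle = -\langle F(z^{-1})v,w\rangle$ by plugging in the explicit exponential expressions \eqref{eq:E}, \eqref{eq:F}: the exponential parts transform correctly under the duality $a_k \leftrightarrow -a_{-k}$ with the substitution $z\mapsto z^{-1}$, and the scalar prefactors match because on the second factor the $E$-prefactor is $u/(1-q)$ while on the first the $F$-prefactor with $u$ replaced by $qu^{-1}$ is $\frac{(qu^{-1})^{-1}}{1-q^{-1}}=-\frac{u}{1-q}$, producing exactly the sign required by the Shapovalov condition. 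This parameter matching is what forces the second factor to be $\mathcal{F}_{qu^{-1}}$ rather than $\mathcal{F}_{u^{-1}}$, and is the only nontrivial point in the verification; once it is in place, the rest is routine vertex-operator bookkeeping.
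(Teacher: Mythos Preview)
Your first paragraph is exactly the paper's proof: the paper simply writes ``This is equivalent to Proposition~\ref{prop: Sapovalov p for Fock},'' identifying the invariant pairing with the Shapovalov pairing. Your remaining two paragraphs are a correct re-derivation of that proposition (with the two tensor factors harmlessly swapped in your prefactor check), but they are unnecessary once you have cited it.
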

\begin{proof}
This is equivalent to Proposition \ref{prop: Sapovalov p for Fock}.
\end{proof}

Composition of $\tilde{\Phi}_0$ and the pairing gives a homomorphism
\begin{gather*}
\tilde{\Phi}_1 \colon \ \mathcal{F}^{(1,0)}_{q^{\half}z^{-1}} \otimes \mathcal{F}^{(-1,n)}_{q^{\half} u_1 \cdots u_n z (-q^{-1/2})^n} \rightarrow \mathcal{F}^{(0,1)}_{u_1} \otimes \cdots \otimes \mathcal{F}^{(0,1)}_{u_n} . 
\end{gather*}
Let us reformulate above inductive procedure via an explicit formula
\begin{gather*}
\tilde{\Phi}_1 ( | \lambda_1 \rangle \otimes | \lambda_2 \rangle ) = \sum \langle \lambda_2 | \Phi_{\mu_n}^*(u_n) \cdots \Phi_{\mu_1}^*(u_1) | \lambda_1 \rangle \, | \mu_1 \rangle \otimes \dots \otimes | \mu_n \rangle .
\end{gather*}

As we warned in Remark~\ref{remark: inf sum}, operator $\tilde{\Phi}_1$ is not a priori well defined. However, the series (appearing from the composition) converges in a domain $|u_1| \ll |u_{2}| \ll \dots \ll |u_n|$. This assertion follows from a formula
\begin{gather*}
\langle \lambda_2 | \Phi_{\mu_n}^*(u_n) \dots \Phi_{\mu_1}^*(u_1) | \lambda_1 \rangle = \prod_{i<j} \frac{f_{\mu_i, \mu_j}(u_i/u_j)}{\left(q u_i/u_j; q, q \right)_{\infty}} \,\langle \lambda_1 | :\!\Phi_{\mu_n}^*(u_n) \dots \Phi_{\mu_1}^*(u_1)\!: | \lambda_2 \rangle.
\end{gather*}
Moreover, one can consider analytic continuation of obtained function given by r.h.s.\ of the formula. Corollary \ref{corol: Phi norm order} implies that we can extend the domain to $u_i/u_j \neq q^k$ for any $k \in \mathbb{Z}$. Evidently, analytic continuation also enjoys intertwiner property. Hence we obtained following proposition

\begin{Proposition}If $u_i/u_j \neq q^k$ for any $k \in \mathbb{Z}$, then there is an intertwiner
\begin{gather} \label{Whit map2}
\tilde{\Phi}\colon \ \mathcal{F}^{(1,0)}_{q^{1/2}z^{-1}} \otimes \mathcal{F}^{(-1,n)}_{q^{1/2} u_1 \cdots u_n z (-q^{-1/2})^n} \rightarrow \mathcal{F}^{(0,1)}_{u_1} \otimes \cdots \otimes \mathcal{F}^{(0,1)}_{u_n}.
\end{gather}
\end{Proposition}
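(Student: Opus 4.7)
The plan is to realize $\tilde{\Phi}$ as the $n$-fold composition already described informally in the text, and then to control the resulting infinite series by the normal ordering formula of Corollary~\ref{corol: Phi norm order}. First, I would define for each $k=0,1,\dots,n-1$ the partial intertwiner
\begin{gather*}
\id^{\otimes k} \otimes \Phi^* \otimes \id \colon \big(\mathcal{F}^{(0,1)}_{u_1}\otimes\cdots\otimes \mathcal{F}^{(0,1)}_{u_k}\big)\otimes \mathcal{F}^{(1,-k)}_{q^{1/2}(-q^{1/2})^k(zu_1\cdots u_k)^{-1}}\otimes \mathcal{F}^{(-1,n)}_{q^{1/2}zu_1\cdots u_n(-q^{-1/2})^n}
\end{gather*}
landing in the analogous module with $k$ replaced by $k+1$, using Theorem~\ref{Th: AFS} (one checks that the slopes and spectral parameters match the required target). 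Composing these and then pairing the middle factor $\mathcal{F}^{(1,-n)}$ against $\mathcal{F}^{(-1,n)}$ via the unique invariant pairing gives the candidate $\tilde{\Phi}_1$ with explicit matrix elements
\begin{gather*}
\tilde{\Phi}_1(|\lambda_1\rangle\otimes |\lambda_2\rangle)=\sum_{\mu_1,\dots,\mu_n}\langle\lambda_2|\Phi^*_{\mu_n}(u_n)\cdots\Phi^*_{\mu_1}(u_1)|\lambda_1\rangle\,|\mu_1\rangle\otimes\cdots\otimes|\mu_n\rangle.
\end{gather*}

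The substantive step is to make sense of this infinite sum and to show that it does not depend on any choice except the ordering of factors (and that the independence can be upgraded to an intertwiner over the full stated parameter range). Using Corollary~\ref{corol: Phi norm order} inductively, I would rewrite each matrix element as
\begin{gather*}
\langle\lambda_2|\Phi^*_{\mu_n}(u_n)\cdots\Phi^*_{\mu_1}(u_1)|\lambda_1\rangle = \prod_{i<j}\frac{f_{\mu_i,\mu_j}(u_i/u_j)}{(qu_i/u_j;q,q)_\infty}\,\langle\lambda_2|{:\!\Phi^*_{\mu_n}(u_n)\cdots\Phi^*_{\mu_1}(u_1)\!:}|\lambda_1\rangle.
\end{gather*}
In the region $|u_1|\ll |u_2|\ll\cdots\ll|u_n|$ the factors $(qu_i/u_j;q,q)_\infty^{-1}$ are absolutely convergent and the normally ordered matrix element admits only finitely many nonzero Heisenberg contractions for fixed graded degree; this is exactly the convergence criterion that Remark~\ref{remark: inf sum} alludes to, and it makes $\tilde{\Phi}_1$ a well-defined homomorphism on a dense domain.

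For the general case $u_i/u_j\neq q^k$, I would argue by analytic continuation: for each fixed graded degree only finitely many terms contribute, so each matrix element of $\tilde{\Phi}_1$ is a rational function of the $u_i$, and Corollary~\ref{corol: Phi norm order} shows its poles occur only at $u_i/u_j\in q^{\mathbb Z}$. Hence the definition extends meromorphically to the claimed locus. The intertwiner property $\tilde{\Phi}_1 \circ \rho = \rho \circ \tilde{\Phi}_1$ for each generator of $\qD$ is a polynomial identity in the $u_i$ (after clearing the rational denominators), so it propagates from the convergence domain to the whole locus $\{u_i/u_j\notin q^{\mathbb Z}\}$ by uniqueness of analytic continuation. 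The main obstacle I expect is precisely the bookkeeping in the last step: verifying that the intertwiner relations, which hold on the nose in the convergent domain by Theorem~\ref{Th: AFS} applied at each stage, continue to hold after passing to the analytic continuation -- this requires checking that the normal ordering rewriting does not hide any spurious zeros in denominators when one imposes the intertwining with $E(w)$ and $F(w)$ in completed form, but this is handled by noting that both sides of the intertwining identity are Laurent polynomials in $w$ with coefficients rational in the $u_i$, with the same poles, so they must agree everywhere.
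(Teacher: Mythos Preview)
Your proposal is correct and follows essentially the same route as the paper: build $\tilde{\Phi}_1$ as the composition of the $\Phi^*$'s followed by the invariant pairing, use Corollary~\ref{corol: Phi norm order} to factor each matrix element as a Pochhammer prefactor times a normally ordered piece, conclude convergence for $|u_1|\ll\cdots\ll|u_n|$, and then analytically continue. The paper is terser on the last step (it simply says ``Evidently, analytic continuation also enjoys intertwiner property''), whereas you spell out the identity-principle argument; that extra detail is welcome. One small slip: the matrix elements are not literally rational in the $u_i$ because of the $(qu_i/u_j;q,q)_\infty^{-1}$ prefactors, but they are meromorphic with poles only at $u_i/u_j\in q^{\mathbb Z}$, which is all the argument needs (and which you correctly invoke a line later).
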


Denote the highest vector of $\mathcal{F}_u^{(n_1,n_2)}$ by $|n_1,n_2 \rangle$.

\begin{Theorem} \label{th:Whit construction}
Whittaker vector $W(z|u_1, \dots, u_n) \in \mathcal{F}^{(0,1)}_{u_1} \otimes \cdots \otimes \mathcal{F}^{(0,1)}_{u_n}$ can be constructed via homomorphism $\tilde{\Phi}$ $($as in \eqref{Whit map2}$)$
\begin{gather*}
W(z|u_1, \dots, u_n) := \prod_{i<j} (q u_i/u_j; q, q )_{\infty} \tilde{\Phi} ( |1,0 \rangle \otimes |-1, n \rangle ).
\end{gather*}
\end{Theorem}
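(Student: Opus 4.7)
By Proposition \ref{cor:Whit uniquness} the Whittaker vector is unique up to scalar, so it suffices to show that
\begin{gather*}
v := \prod_{i<j}(qu_i/u_j;q,q)_\infty \,\tilde\Phi\bigl(|1,0\rangle \otimes |-1,n\rangle\bigr)
\end{gather*}
satisfies the defining relations \eqref{eq: Whit eigen1}--\eqref{eq: Whit eigen3} with leading coefficient $1$ at $\vac\otimes\cdots\otimes\vac$. The first step is to compute eigenvalues of $E_{a,b}$ on each tensor factor. Combining the $\mathrm{SL}(2,\mathbb{Z})$-action \eqref{action1} with the untwisted vacuum formula \eqref{eq:eigen vac} (with any admissible choice of $m,m'$, which is irrelevant by Corollary \ref{corol: not m and m prime}), one finds that in $\mathcal{F}^{(1,0)}_{q^{1/2}z^{-1}}$ the highest vector satisfies $E_{0,b}|1,0\rangle = \frac{z^b}{q^{b/2}-q^{-b/2}}|1,0\rangle$ for $b\neq 0$ and $E_{a,b}|1,0\rangle = 0$ for $a>0$; while in $\mathcal{F}^{(-1,n)}_U$ with $U := q^{1/2}u_1\cdots u_n z(-q^{-1/2})^n$, the highest vector satisfies $E_{nb,b}|-1,n\rangle = \frac{((-q^{-1/2})^n u_1\cdots u_n z)^b}{q^{-b/2}-q^{b/2}}|-1,n\rangle$ for $b\neq 0$ and $E_{a,b}|-1,n\rangle = 0$ whenever $a < nb$. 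These match the Whittaker eigenvalues \eqref{eq: Whit eigen1}, \eqref{eq: Whit eigen2} on complementary sub-cones.

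Next I would transfer these relations through the intertwiner. Since $\tilde\Phi$ is a $\qD$-module map, one has $E_{a,b}\tilde\Phi(|1,0\rangle\otimes|-1,n\rangle) = \tilde\Phi\bigl(E_{a,b}\cdot(|1,0\rangle\otimes|-1,n\rangle)\bigr)$ with the left-hand action given by the coproduct on $\qD$. The key claim is that $|1,0\rangle \otimes |-1,n\rangle$ is an eigenvector with exactly the Whittaker eigenvalue for every $(a,b)$ in the Whittaker cone $\{0\leq a\leq nb,\, b>0\}$: the boundary case $a=0$ (resp.\ $a=nb$) receives its eigenvalue from the first (resp.\ second) factor alone while the other factor contributes trivially, and in the interior $0<a<nb$ every term in $\Delta(E_{a,b})$ pairs an operator annihilating $|1,0\rangle$ (a factor $E_{a',b'}$ with $a'>0$) with an operator annihilating $|-1,n\rangle$ (a factor $E_{a'',b''}$ with $a'' < nb''$). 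The central characters match because $(c',c)$ adds as $(1,0)+(-1,n) = (0,n)$, consistent with $\mathcal{F}_{u_1}\otimes\cdots\otimes\mathcal{F}_{u_n}$.

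Finally, the normalization. Corollary \ref{corol: Phi norm order} together with $f_{\varnothing,\varnothing}=1$ gives $\langle \varnothing|\Phi^*_\varnothing(u_n)\cdots\Phi^*_\varnothing(u_1)|\varnothing\rangle = \prod_{i<j}(qu_i/u_j;q,q)_\infty^{-1}$, so the prefactor in the statement yields leading coefficient $1$, and Proposition \ref{cor:Whit uniquness} concludes. The main obstacle is the tensor-product eigenvalue step: the Drinfeld coproduct on $\qD$ is not primitive on the PBW generators $E_{a,b}$, so the interior vanishing $E_{a,b}\cdot(|1,0\rangle\otimes|-1,n\rangle)=0$ for $0<a<nb$ must be handled carefully. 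The cleanest route is to pass to Chevalley currents $E(z), F(z), H(z)$, express the required annihilations in terms of modes of $E^k(z)$ via Negu\c{t}'s formula \eqref{Negut formula} so that the slope condition on $d_i/k_i$ cuts out precisely the Whittaker cone, and then use the explicit action of these currents through the AFS intertwiners $\Phi^*(u_i)$ to reduce cone membership in the source to the cone conditions already established on each highest vector.
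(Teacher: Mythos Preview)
Your approach is the same as the paper's: the paper's entire proof is the single line ``Follows from \eqref{eq:eigen vac}'', i.e., use that $\tilde\Phi$ is a $\qD$-intertwiner and compute the $E_{a,b}$-eigenvalues on the highest vectors of the twisted Fock factors via the untwisted formula \eqref{eq:eigen vac} transported by the ${\rm SL}_2(\mathbb{Z})$-twist. Your eigenvalue computations on $|1,0\rangle$ and $|-1,n\rangle$ are exactly right, and your normalization check via Corollary~\ref{corol: Phi norm order} is a useful addition that the paper omits.

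The ``main obstacle'' you flag is not an obstacle. In this paper $\qD$ is a \emph{Lie algebra} (the $q=t$ specialization of the quantum toroidal $\mathfrak{gl}_1$; see the definition in Section~\ref{q-dif}), so the coproduct on $U(\qD)$ is primitive: $E_{a,b}$ acts on any tensor product as $E_{a,b}\otimes 1 + 1\otimes E_{a,b}$. Hence for $0<a<nb$ both summands annihilate the respective highest vectors and $E_{a,b}\cdot(|1,0\rangle\otimes|-1,n\rangle)=0$ is immediate; on the boundary rays $a=0$ and $a=nb$ one summand contributes the Whittaker eigenvalue and the other vanishes (since $E_{0,k}|-1,n\rangle=0$ as $0<nk$, and $E_{nk,k}|1,0\rangle=0$ as $nk>0$). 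There is no Drinfeld coproduct in play here, and the detour through Chevalley currents and Negu\c{t}'s formula \eqref{Negut formula} is unnecessary.
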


\begin{proof}Follows from \eqref{eq:eigen vac}.
\end{proof}

\begin{Remark}Recall that existence of Whittaker vector can be seen from geometric construction (see~\cite{N12} and \cite{Ts}).
\end{Remark}

\begin{proof}[Proof of Theorem \ref{th:Whittaker condition}]
Existence and uniqueness follows from Theorem~\ref{th:Whit construction} and Proposition~\ref{cor:Whit uniquness} correspondingly.
\end{proof}

\subsection[Whittaker vector for $\W$ algebra]{Whittaker vector for $\boldsymbol{\W}$ algebra}
Let us define coefficient $\bar{c}_{1/m}$ by (cf.\ \eqref{Negut formula})
\begin{equation} \label{eq: notation for c}
E^m[1]= \bar{c}_{1/m} E_{m,1} + \cdots.
\end{equation}
Also recall that for $\mathcal{F}_{u_1} \otimes \dots \otimes \mathcal{F}_{u_n}$
\begin{gather*}
\mu = \frac{1}{1-q} (u_1 \cdots u_n)^{\frac{1}{n}}.
\end{gather*}
\begin{Definition}
For $m=1, \dots, n-1$ Whittaker vector $W_m^{\mathfrak{sl}_n}(z| u_1, \dots, u_n) \in \nWFock$ with respect to $\W$ is an eigenvector for $T_k[r]$ (for $k= 1, \dots, n-1$ and $r \geqslant 0$) with eigenvalues given by
\begin{gather*}
T_{m}[1] W_m^{\mathfrak{sl}_n}(z| u_1, \dots, u_n) = \frac{\big( {-}q^{\half} \big)^nu_1 \cdots u_n z}{q^{-1/2}-q^{1/2}} \frac{\bar{c}_{1/m} \mu^{-m}}{m!} W_m^{\mathfrak{sl}_n}(z| u_1, \dots, u_n), \\
T_{k}[1] W_m^{\mathfrak{sl}_n}(z| u_1, \dots, u_n) =0 \qquad \text{for $k \neq m$},\\
T_{k}[r] W_m^{\mathfrak{sl}_n}(z| u_1, \dots, u_n) =0 \qquad \text{for $r \geqslant 2$}.
\end{gather*}
We require $W_m^{\mathfrak{sl}_n}(z| u_1, \dots, u_n) = \vacF + \cdots$ to fix normalization (by dots we mean lower vectors).
\end{Definition}
One can find notion of Whittaker vector for $\W$ in the literature (see \cite{T}). In this section we will explain connection between notion of Whittaker vector $W_m^{\mathfrak{sl}_n}(z| u_1, \dots, u_n)$ and Whittaker vector $W(z| u_1, \dots , u_n)$ for $\qD$ (see Definition~\ref{def: Whit eigen}). Our plan to explain this connection is as follows. First we define Whittaker vector with respect to $\Wgl$ (we denote it by $W_m^{\mathfrak{gl}_n}(z| u_1, \dots, u_n)$). Then we will see, that on the one hand, the vector $W_m^{\mathfrak{gl}_n}(z| u_1, \dots, u_n)$ is connected with $W_m^{\mathfrak{sl}_n}(z| u_1, \dots, u_n)$; on the other hand it is connected with $W(z| u_1, \dots , u_n)$.

Recall, that $\Wgl = \qD / J_{n,0}$; ideal $J_{n,0}$ annihilates $\mathcal{F}_{u_1} \otimes \dots \otimes \mathcal{F}_{u_n}$. Hence $\mathcal{F}_{u_1} \otimes \dots \otimes \mathcal{F}_{u_n}$ is a representation of $\Wgl$.
\begin{Definition}
For $m=1, \dots, n-1$ Whittaker vector $W_m^{\mathfrak{gl}_n}(z| u_1, \dots, u_n)$ is a vector belonging to $\mathcal{F}_{u_1} \otimes \dots \otimes \mathcal{F}_{u_n}$ and satisfying following conditions
\begin{gather*}
H_k W_m^{\mathfrak{gl}_n}(z| u_1, \dots, u_n) = 0 \qquad \text{for $k > 0$},\\
E^{m}[1] W_m^{\mathfrak{gl}_n}(z| u_1, \dots, u_n) = \frac{\big({-}q^{\half} \big)^nu_1 \cdots u_n z}{q^{-1/2}-q^{1/2}} \bar{c}_{1/m} W_m^{\mathfrak{gl}_n}(z| u_1, \dots, u_n),\\
E^{k}[1] W_m^{\mathfrak{gl}_n}(z| u_1, \dots, u_n) =0 \qquad \text{for $k < m$},\\
E^{k}[r] W_m^{\mathfrak{gl}_n}(z| u_1, \dots, u_n) =0 \qquad \text{for $r \geqslant 2$ and $k \leqslant m$}, \\
F^k[r] W_m^{\mathfrak{gl}_n}(z| u_1, \dots, u_n) =0 \qquad \text{for $r \geqslant 1$ and $k < n-m$}.
\end{gather*}
We require $W_m^{\mathfrak{gl}_n}(z| u_1, \dots, u_n) = \vacF + \cdots$ to fix normalization (by dots we mean lower vectors).
\end{Definition}

Recall that $\mathcal{F}_{u_1} \otimes \dots \otimes \mathcal{F}_{u_n} \cong \mathcal{F}_{u_1, \dots, u_n}^{\W} \otimes F^{H}$ with respect algebra identification $\Wgl \cong \W \UH$.

\begin{Lemma} \label{prop: Lemma sl vs gl}
Vector satisfies properties of $W_m^{\mathfrak{gl}_n}(z| u_1, \dots, u_n)$ iff it is $W_m^{\mathfrak{sl}_n}(z| u_1, \dots, u_n) \otimes \vac_H$.
\end{Lemma}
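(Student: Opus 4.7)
The plan is to reduce each condition in the definition of $W_m^{\mathfrak{gl}_n}$ to a condition on the $\W$-factor under the isomorphism $\Wgl \cong \W \UH$ of Theorem~\ref{Th:Wiso} and the compatible tensor decomposition $\mathcal{F}_{u_1}\otimes\cdots\otimes\mathcal{F}_{u_n} \cong \mathcal{F}_{u_1,\dots,u_n}^{\W} \otimes F^H$. To begin, the condition $H_k W = 0$ for all $k>0$ says exactly that $W$ is a Heisenberg highest-weight vector, which forces $W = V \otimes \vac_H$ for a unique $V \in \mathcal{F}_{u_1,\dots,u_n}^{\W}$, where $\vac_H$ denotes the highest vector of $F^H$.

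Next, I will invert the identity of Theorem~\ref{Th:Wiso} to obtain
\begin{gather*}
E^k(z) = k!\,\mu^k\, \exp\big(\tfrac{k}{n}\varphi_-(z)\big)\, T_k(z)\, \exp\big(\tfrac{k}{n}\varphi_+(z)\big),
\end{gather*}
together with an analogous identity
\begin{gather*}
F^k(z) = C_k\, \exp\big({-}\tfrac{k}{n}\varphi_-(z)\big)\, T_{n-k}(z)\, \exp\big({-}\tfrac{k}{n}\varphi_+(z)\big),
\end{gather*}
for a non-zero constant $C_k$, the latter being obtained by iterating Proposition~\ref{prop:F=Tn-1} along the same lines that led to Lemma~\ref{App:Lemma:E^2} in the $E^k$ case. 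Since $\varphi_+(z)\vac_H = 0$ and each $T_l(z)$ commutes with the Heisenberg algebra, applying these formulas to $W = V \otimes \vac_H$ gives
\begin{gather*}
E^k[d]\,W = k!\,\mu^k \sum_{j \geqslant 0} T_k[d+j]\,V \otimes |B_j^{(k)}\rangle, \qquad F^k[r]\,W = C_k \sum_{j \geqslant 0} T_{n-k}[r+j]\,V \otimes |B_j^{(-k)}\rangle,
\end{gather*}
where $|B_j^{(\pm k)}\rangle$ is the coefficient of $z^j$ in $\exp(\pm\tfrac{k}{n}\varphi_-(z))\vac_H \in F^H$. The vectors $|B_j^{(\pm k)}\rangle$ lie in distinct Heisenberg-grading components and are non-zero for generic $q$, hence are linearly independent in $F^H$.

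By this linear independence, each of the equations defining $W_m^{\mathfrak{gl}_n}$ decouples into constraints on $V$ alone: the eigenvalue condition on $E^m[1]\,W$ produces $T_m[1]\,V = \frac{\lambda'}{m!\mu^m}\,V$ (which matches the $W_m^{\mathfrak{sl}_n}$ eigenvalue once $\lambda'$ is unpacked) together with $T_m[r]\,V = 0$ for $r \geqslant 2$; the vanishing conditions on $E^k[r]\,W$ for $k \leqslant m$ produce $T_k[r]\,V = 0$ in the corresponding ranges; and the vanishing conditions on $F^k[r]\,W$ for $k < n-m$ produce $T_l[r]\,V = 0$ for all $l > m$ and $r \geqslant 1$. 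Collecting these yields exactly the defining conditions of $W_m^{\mathfrak{sl}_n}$, and the converse implication is read off by running the same equalities in the opposite direction. The normalizations match since under the tensor decomposition $\vacF$ factors as (highest vector of $\mathcal{F}^{\W}$)$\,\otimes\, \vac_H$. The main obstacle will be the derivation of the formula for $F^k(z)$ in $\Wgl$, generalizing Proposition~\ref{prop:F=Tn-1} to arbitrary $k$ by an induction on $k$ using the commutator identities from Appendix~\ref{Appendix:HomWD}; once this is in hand the rest is routine bookkeeping in the tensor factorization.
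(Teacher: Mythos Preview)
Your proposal is correct and follows essentially the same approach as the paper's proof: both act with $E^k(z)$ and $F^k(z)$ on a vector of the form $V\otimes\vac_H$ using the identities $E^k(z)=k!\mu^k\exp\big(\tfrac{k}{n}\varphi_-\big)T_k(z)\exp\big(\tfrac{k}{n}\varphi_+\big)$ and its $F^k$-analogue, kill the $\varphi_+$ factor against $\vac_H$, and then use the grading of $\exp\big(\pm\tfrac{k}{n}\varphi_-(z)\big)\vac_H$ to decouple the $\mathfrak{gl}_n$-conditions into the $\mathfrak{sl}_n$-conditions on $V$. The paper states the resulting formulas and invokes the positive-degree structure of $\varphi_-$ in one line, whereas you spell out the linear-independence mechanism via the $|B_j^{(\pm k)}\rangle$ and explicitly flag the $F^k$-formula as the step requiring justification; the paper uses that same $F^k$-formula with the constant $k!\mu^{-k}$ without separate proof, so your version is, if anything, more careful on this point.
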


\begin{proof}Note that
\begin{gather*}
E^k (t) \big( W_m^{\mathfrak{sl}_n}(z| u_1, \dots, u_n) \otimes \vac_H \big) = \frac{k!}{\mu^{-k}} T_k (t)   W_m^{\mathfrak{sl}_n}(z| u_1, \dots, u_n) \otimes \exp \left(\frac{k}{n} \varphi_- (t) \right)\! \vac_H,\\
F^k (t) \big( W_m^{\mathfrak{sl}_n}(z| u_1, \dots, u_n) \otimes \vac_H \big)  = \frac{k!}{\mu^{k}} T_{n-k} (t)   W_m^{\mathfrak{sl}_n}(z| u_1, \dots, u_n) \otimes \exp \left(-\frac{k}{n} \varphi_- (t) \right) \!\vac_H.
\end{gather*}
Moreover $\varphi_-(t)$ has only terms of positive degree in~$t$. Hence we expressed action of $E^k[l]$ via $\T_k[s]$ for $s \geqslant l$. Therefore we have proven that $W_m^{\mathfrak{sl}_n}(z| u_1, \dots, u_n) \otimes \vac_H$ satisfies property of $W_m^{\mathfrak{gl}_n}(z| u_1, \dots, u_n)$.

The implication in opposite direction is analogous.
\end{proof}

\begin{Proposition} \label{prop: unique for m gl}
There exists at most one vector $W_m^{\mathfrak{gl}_n}(z| u_1, \dots, u_n) \in \mathcal{F}_{u_1} \otimes \dots \otimes \mathcal{F}_{u_n}$.
\end{Proposition}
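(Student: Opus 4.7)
The plan is to invoke Lemma~\ref{prop: Lemma sl vs gl} to reduce the statement to uniqueness of a Whittaker vector $W_m^{\mathfrak{sl}_n}$ inside the $\W$-module $\mathcal{F}^{\W}_{u_1,\dots,u_n}$, and then to establish that uniqueness by a Shapovalov duality argument in the spirit of Proposition~\ref{cor:Whit uniquness}.

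The reduction is immediate: the condition $H_k v = 0$ for $k>0$ forces any candidate to lie in $\mathcal{F}^{\W}_{u_1,\dots,u_n} \otimes \vacF_H$ under the splitting $\mathcal{F}_{u_1}\otimes\cdots\otimes\mathcal{F}_{u_n} \cong \mathcal{F}^{\W}_{u_1,\dots,u_n} \otimes F^H$, and the computations performed in the proof of Lemma~\ref{prop: Lemma sl vs gl} translate every remaining $E^k$- and $F^k$-condition defining $W_m^{\mathfrak{gl}_n}$ into the corresponding $T_k$-condition on the $\W$-factor. Normalizations are preserved, so uniqueness of $W_m^{\mathfrak{gl}_n}$ reduces to uniqueness of the Whittaker vector $W_m^{\mathfrak{sl}_n}$ in $\mathcal{F}^{\W}_{u_1,\dots,u_n}$.

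For the reduced problem, the difference $u$ of two candidates is annihilated by $\{T_k[r] : k\neq m,\, r = 1\} \cup \{T_k[r] : r\geq 2\}$ and by $T_m[1] - \alpha$ (where $\alpha$ is the scalar from the definition), with vanishing coefficient at $\vacF$. Using the Shapovalov pairing on $\mathcal{F}^{\W}_{u_1,\dots,u_n}$, it suffices to show that the transposes of these operators, applied to the highest weight vector of the dual module, span the whole dual. Under the irreducibility hypothesis, Theorem~\ref{Th: Verma vs Fock} (applied with $n_{tw}=0$ and one tensor factor) identifies this dual with a Verma module for $\W$, which by the analog of Proposition~\ref{prop: cyclic Verma} for the $T_k$-generators is spanned by ordered monomials $T_{k_1}[-r_1]\cdots T_{k_t}[-r_t]\vacsl$ in negative modes.

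The main obstacle is extracting cyclicity from this restricted list of allowed generators. One must show inductively that every negative mode $T_k[-r]$ applied to the highest vector lies in the span of those coming from $\{T_k[-r] : r\geq 2\} \cup \{T_k[-1] : k\neq m\}$ together with the shifted generator $T_m[-1] - \alpha'$ (the Shapovalov transpose of the eigenvalue relation). This should follow by combining the quadratic relations \eqref{relation:modesW1}--\eqref{relation:modesWn-1}, which express $T_{k+1}[r+s]$ as a combination of commutators of $T_1$- and $T_k$-modes, with the observation that $T_m[-1] - \alpha'$ differs from $T_m[-1]$ only by a scalar, so that the two produce the same subspace when iterated against the remaining generators. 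Careful bookkeeping of the coefficients $f_{k,n}[l]$ and of the order in which the quadratic relations are invoked will be required to confirm that the inductive reduction goes through without obstruction.
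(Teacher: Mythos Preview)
Your strategy is correct, but you are making the argument harder than it needs to be, and your final paragraph suggests you have not quite seen why.

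The paper's proof is a single sentence: the argument is literally that of Proposition~\ref{cor:Whit uniquness}, only with a different character of the subalgebra $\mathfrak{n}$. There is no need to reduce to $\mathfrak{sl}_n$ via Lemma~\ref{prop: Lemma sl vs gl}. The conditions defining $W_m^{\mathfrak{gl}_n}$ already make it a simultaneous eigenvector for the \emph{same} $\mathfrak{n}\subset\Wgl$ (generated by $H_j$, $j>0$, and $E^k[d]$, $d>0$): once $H_j$ annihilates the vector, the $F^k[r]$-conditions for $k<n-m$ are equivalent, through the Heisenberg dressing in Theorem~\ref{Th:Wiso}, to $E^{n-k}[r]$-conditions, so all $E^k[d]$ with $d>0$ act by scalars. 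Cocyclicity of the Fock module for $\mathfrak{n}$---equivalently, cyclicity of the Shapovalov-dual Verma module for $\mathfrak{n}^{\vee}$---is a statement about the module alone and is insensitive to which character one is testing. Hence the proof of Proposition~\ref{cor:Whit uniquness} applies verbatim.

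Regarding your ``main obstacle'': it is not one. You already wrote down the resolution and then talked yourself out of it. Shifting a generator by a scalar does not change the submodule it generates together with the others; hence cyclicity of the dual under $\{T_k[-r]:r\geq 1\}$ is \emph{identical} to cyclicity under $\{T_k[-r]:r\geq 2\}\cup\{T_k[-1]:k\neq m\}\cup\{T_m[-1]-\alpha'\}$. The quadratic relations \eqref{relation:modesW1}--\eqref{relation:modesWn-1} and the ``careful bookkeeping'' you anticipate are entirely unnecessary. Equivalently: the difference $u$ of two candidates is a genuine eigenvector for all of $\mathfrak{n}$ (including $T_m[1]$, with eigenvalue $\alpha$), so for any $P\in U(\mathfrak{n}^{\vee})$ one has $\langle P\!\cdot\!0\mid u\rangle=\chi(P^{\vee})\langle 0\mid u\rangle=0$, and cyclicity of the dual under $\mathfrak{n}^{\vee}$ finishes the proof.
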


\begin{proof}Analogous to proof of Proposition \ref{cor:Whit uniquness}. The only difference is that we consider a different character of subalgebra $\mathfrak{n}$.
\end{proof}

We need to generalize notion of Whittaker vector for $\qD$ to compare it with $W_m^{\mathfrak{sl}_n}(z| u_1,\allowbreak \dots, u_n)$.

\begin{Definition}For any $m \in \mathbb{Z}$, Whittaker vector $W_m (z|u_1, \dots, u_n) \in \mathcal{F}_{u_1} \otimes \cdots \otimes \mathcal{F}_{u_n}$ is an eigenvector of operators $E_{a,b}$ for $mb \geqslant a \geqslant -(n-m)b$ and $b > 0$. More precisely,
\begin{gather*}
E_{-(n-m)k,k}  W_m (z|u_1, \dots, u_n) = \frac{z^k}{q^{k/2}-q^{-k/2}} W_m (z|u_1, \dots, u_n), \\ 
E_{mk,k}  W_m (z|u_1, \dots, u_n) = \frac{\big(\big( {-}q^{\half} \big)^nu_1 \cdots u_n z\big)^{k}}{q^{-k/2}-q^{k/2}} W_m(z|u_1, \dots, u_n) 
\end{gather*}
for $k>0$,
\begin{gather*}
E_{k_1, k_2} W_m(z) =0
\end{gather*}
for $(n-m)k_2 > k_1 > -mk_2$ and $k_2 > 0$.We require $W_m(z|u_1, \dots, u_n) = \vac \otimes \cdots \otimes \vac + \cdots$ to fix normalization (by dots we mean lower vectors).
\end{Definition}

Recall that we have defined operator $\I_{\tau} \in \End(\mathcal{F}_{u})$ by~\eqref{eq: def I tau}. By Proposition~\ref{prop: I sigma} the operator enjoys intertwiner property $\I_{\tau} \rho(E_{a,b}) \I_{\tau}^{-1} = \rho(E_{a-b,b})$. Denote $\I_{\tau, n} = \I_{\tau} \otimes \dots \otimes \I_{\tau} \in \End(\mathcal{F}_{u_1} \otimes \dots \otimes \mathcal{F}_{u_n})$. Note that $I_{\tau, n}$ also enjoys intertwiner property $\I_{\tau} \rho_n(E_{a,b}) \I_{\tau}^{-1} = \rho_n(E_{a-b,b})$ (here $\rho_n$ denotes the homomorphism of the representation $\rho_n \colon \qD \rightarrow \End(\mathcal{F}_{u_1} \otimes \dots \otimes \mathcal{F}_{u_n}) $).

\begin{Proposition}$W_m (z|u_1, \dots, u_n) = \I^{n-m}_{\tau,n} \, W (z|u_1, \dots, u_n)$.
\end{Proposition}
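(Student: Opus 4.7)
The plan is to verify directly that the vector $\I_{\tau,n}^{n-m} W(z|u_1,\dots,u_n)$ satisfies the eigenvalue and vanishing conditions defining $W_m(z|u_1,\dots,u_n)$, together with the required normalization; then uniqueness (which one can argue exactly as in Proposition~\ref{cor:Whit uniquness}, or more directly from the fact that the leading term pins down the vector) gives the claimed equality.

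The main computational input is the intertwining relation $\I_\tau \rho(E_{a,b}) \I_\tau^{-1} = \rho(E_{a-b,b})$ from Proposition~\ref{prop: I sigma}, which tensorizes to $\I_{\tau,n} \rho_n(E_{a,b}) \I_{\tau,n}^{-1} = \rho_n(E_{a-b,b})$. Iterating gives $\I_{\tau,n}^{n-m} E_{a,b} = E_{a-(n-m)b,\, b}\, \I_{\tau,n}^{n-m}$, or equivalently, for any $b$,
\begin{equation*}
E_{a,b}\, \I_{\tau,n}^{n-m} = \I_{\tau,n}^{n-m}\, E_{a+(n-m)b,\, b}.
\end{equation*}
First I would substitute this into each defining relation of $W_m$. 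For $b=k>0$ and $a=-(n-m)k$, the shifted index becomes $0$, so the right-hand side produces $E_{0,k}$ acting on $W$, giving eigenvalue $z^k/(q^{k/2}-q^{-k/2})$ as required. For $a=mk$, the shifted index becomes $nk$, so we get $E_{nk,k}$ acting on $W$, giving eigenvalue $((-q^{-1/2})^n u_1\cdots u_n z)^k/(q^{-k/2}-q^{k/2})$. For the vanishing condition on $E_{k_1,k_2}$ with $(n-m)k_2 > k_1 > -mk_2$ and $k_2>0$, the shifted first index $k_1+(n-m)k_2$ satisfies $nk_2 > k_1+(n-m)k_2 > 0$, which is precisely the range where $E_{k_1+(n-m)k_2,k_2}$ annihilates $W$.

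Finally I need the normalization. Since $\I_\tau$ acts on $|\lambda\rangle \in \mathcal F_u$ as the scalar $u^{|\lambda|}q^{-|\lambda|/2+c(\lambda)}$ by~\eqref{eq: def I tau}, it fixes the vacuum $|\varnothing\rangle$ (as $|\varnothing|=c(\varnothing)=0$); hence $\I_{\tau,n}^{n-m}$ fixes $\vac\otimes\cdots\otimes\vac$. Since $\I_{\tau,n}^{n-m}$ preserves the grading and $W = \vac\otimes\cdots\otimes\vac + (\text{strictly lower terms})$, the transformed vector has the same leading term, matching the normalization in the definition of $W_m$.

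There is no real obstacle here — the content is essentially the observation that twisting the Whittaker conditions for $W$ by the unipotent element $\tau^{n-m}$ of ${\rm SL}(2,\mathbb Z)$ yields the Whittaker conditions defining $W_m$, and that $\I_{\tau,n}$ realizes this twist on the Fock module as per Corollary~\ref{corol: unipotant twist}. The mild subtlety, worth flagging in the write-up, is just to be careful about the direction of the index shift (i.e.\ that conjugation by $\I_{\tau,n}^{n-m}$ sends $E_{a,b}$ to $E_{a-(n-m)b,b}$, not $E_{a+(n-m)b,b}$), so that the eigenvalues come out with the correct exponents and the vanishing range transforms correctly.
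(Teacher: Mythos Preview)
Your argument is correct and is exactly the direct verification the paper leaves implicit (the proposition is stated there without proof, since its content is nothing more than the intertwining relation of Proposition~\ref{prop: I sigma} applied $n-m$ times together with the fact that $\I_\tau$ fixes the vacuum).

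One bookkeeping remark: the vanishing range you quote, $(n-m)k_2 > k_1 > -mk_2$, is copied verbatim from what appears to be a typo in the paper's definition of $W_m$; your arithmetic ``$nk_2 > k_1+(n-m)k_2 > 0$'' does not follow from that range but rather from the intended range $mk_2 > k_1 > -(n-m)k_2$, which is the one consistent with the preamble ``eigenvector of $E_{a,b}$ for $mb \geqslant a \geqslant -(n-m)b$''. Likewise, the eigenvalue you compute for $E_{mk,k}$ carries the factor $(-q^{-1/2})^n$ from Definition~\ref{def: Whit eigen}, whereas the paper's definition of $W_m$ writes $(-q^{1/2})^n$; this is again a typo in the paper rather than an error in your proof.
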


\begin{Corollary} \label{corollary: uniq Whit sl}There exists unique $W_{m} (z|u_1, \dots, u_n)$ if $u_i/u_j \neq q^k$.
\end{Corollary}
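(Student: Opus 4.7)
The plan is to deduce this corollary directly from the immediately preceding proposition together with Theorem~\ref{th:Whittaker condition}. The preceding proposition identifies
\[
W_m(z|u_1,\dots,u_n)=\I^{n-m}_{\tau,n}\,W(z|u_1,\dots,u_n),
\]
so both existence and uniqueness of $W_m$ can be transported from the corresponding statements for the ``standard'' Whittaker vector $W$ (i.e.\ the $m=0$ case treated in Theorem~\ref{th:Whittaker condition}).

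First I would observe that the tensor-product operator $\I_{\tau,n}=\I_\tau\otimes\cdots\otimes \I_\tau$ is invertible: by~\eqref{eq: def I tau} each $\I_\tau$ is diagonal in the basis $|\lambda\rangle$ with eigenvalue $u_i^{|\lambda|}q^{-|\lambda|/2+c(\lambda)}\neq 0$, in particular fixing the highest vector $\vac$. Thus $\I^{n-m}_{\tau,n}$ restricts to a grading-preserving linear automorphism of the (completion of the) tensor product $\mathcal{F}_{u_1}\otimes\cdots\otimes\mathcal{F}_{u_n}$ which sends $\vac\otimes\cdots\otimes\vac$ to itself, hence preserves the normalization condition used to pin down Whittaker vectors. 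Consequently, the map $v\mapsto \I^{n-m}_{\tau,n}\,v$ is a bijection between vectors satisfying the normalization of $W$ and vectors satisfying that of $W_m$.

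Next I would use the intertwining property $\I^{n-m}_{\tau,n}\rho_n(E_{a,b})\I^{-(n-m)}_{\tau,n}=\rho_n(E_{a-(n-m)b,b})$ to check that the eigenvector characterization of $W$ (Definition~\ref{def: Whit eigen}) is sent to the eigenvector characterization of $W_m$: the conditions on $E_{a,b}$ for $nb\geqslant a\geqslant 0$ become conditions on $E_{a-(n-m)b,b}$, i.e.\ on $E_{a',b}$ for $mb\geqslant a'\geqslant -(n-m)b$, with the same eigenvalues (as $E_{0,k}$ and $E_{Nk,k}$ map under the shift to $E_{-(n-m)k,k}$ and $E_{mk,k}$ respectively). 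Thus a vector $v$ satisfies the defining conditions for $W_m$ if and only if $\I^{-(n-m)}_{\tau,n}v$ satisfies the defining conditions for $W$.

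Finally, invoking Theorem~\ref{th:Whittaker condition}, under the hypothesis $u_i/u_j\neq q^k$ (so $\mathcal{F}_{u_1}\otimes\cdots\otimes\mathcal{F}_{u_n}$ is irreducible by Lemma~\ref{lemma 3.1 FJMM}), there is a unique $W(z|u_1,\dots,u_n)$; applying the bijection yields existence and uniqueness of $W_m(z|u_1,\dots,u_n)$. No genuine obstacle arises here; the only point to verify carefully is the shift of indices in the eigenvector conditions under conjugation by $\I^{n-m}_{\tau,n}$, which is a direct consequence of Proposition~\ref{prop: I sigma} applied componentwise.
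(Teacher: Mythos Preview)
Your proposal is correct and follows exactly the approach the paper intends: the corollary is an immediate consequence of the preceding proposition $W_m=\I^{n-m}_{\tau,n}\,W$ combined with Theorem~\ref{th:Whittaker condition}, using that $\I^{n-m}_{\tau,n}$ is an invertible intertwiner preserving the normalization. Your explicit verification of how the eigenvector conditions transform under conjugation by $\I^{n-m}_{\tau,n}$ is really a re-derivation of the preceding proposition itself, so it is more detail than strictly needed, but nothing is wrong.
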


\begin{Proposition} \label{prop: Whit gl vs qD}There exists unique vector $W_m^{\mathfrak{gl}_n}(z| u_1, \dots, u_n)$. Moreover, $W_m^{\mathfrak{gl}_n}(z| u_1, \dots,\allowbreak u_n) = W_{m} (z|u_1, \dots, u_n)$.
\end{Proposition}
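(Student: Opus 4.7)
The plan is to verify that $W_m(z|u_1,\dots,u_n)$, whose existence is guaranteed by Corollary~\ref{corollary: uniq Whit sl}, satisfies the defining axioms of $W_m^{\mathfrak{gl}_n}(z|u_1,\dots,u_n)$. Combined with the uniqueness statement in Proposition~\ref{prop: unique for m gl}, this will simultaneously establish existence of $W_m^{\mathfrak{gl}_n}$ and the identification $W_m^{\mathfrak{gl}_n} = W_m$. The simplest axiom, $H_k W_m = 0$ for $k>0$, is immediate: $H_k = E_{0,k}$, and for $0 < m < n$ and $k > 0$ the point $(0,k)$ lies strictly inside the interval $(-(n-m)k, mk)$, so $E_{0,k} W_m = 0$ from the very definition of $W_m$.

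The conditions on $E^k[r]$ and $F^k[r]$ I would check by expanding these operators using Negu\c{t}'s formula~\eqref{Negut formula},
\[ E^k[r] = \sum_{t \geq 1} (-1)^{k-t} \sum_v c_v\, E_{k_1, d_1} \cdots E_{k_t, d_t}, \]
where the inner sum runs over $k_i \in \mathbb{N}$, $\sum k_i = k$, $\sum d_i = r$, and slopes $d_1/k_1 \leq \dots \leq d_t/k_t$. For the leading $t=1$ term, in the case $E^m[1]$ one uses $E_{m,1} W_m = \frac{(-q^{1/2})^n u_1 \cdots u_n z}{q^{-1/2}-q^{1/2}} W_m$ from the top-boundary eigenvalue of $W_m$, producing precisely the eigenvalue required for $W_m^{\mathfrak{gl}_n}$. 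For $E^k[1]$ with $k<m$, or $E^k[r]$ with $r \geq 2$ and $k \leq m$, the pair $(k,r)$ lies strictly in the interior of the Whittaker diamond of $W_m$ (since $k \geq 1 > -(n-m)r$ and $k \leq m < mr$ when $r \geq 2$, resp.\ $k < m$ when $r=1$), so $E_{k,r} W_m = 0$.

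For the remaining $t \geq 2$ terms the key is a slope bound. The monotonicity $d_1/k_1 \leq \dots \leq d_t/k_t$ together with $\sum d_i = r$, $\sum k_i = k$ forces $d_t/k_t \geq r/k$, and equality is incompatible with $t \geq 2$, $k_i \geq 1$ and $\sum k_i = k$ in all regimes relevant here. In all the cases above this implies the strict inequality $d_t/k_t > 1/m$, equivalently $k_t < m d_t$; combined with $k_t \geq 1 > -(n-m)d_t$ and $d_t > 0$ (the latter follows from positivity of the slope bound) one obtains $(k_t, d_t)$ in the strict interior of the eigenvector diamond of $W_m$, so the rightmost operator $E_{k_t, d_t}$ already annihilates $W_m$ and the whole product vanishes. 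The $F^k[r]$ conditions would be handled by the analogous expansion with products $E_{-k_i, d_i}$; here the leading term $E_{-k,r}$ satisfies $-(n-m)r < -k < mr$ whenever $k < n-m$ and $r \geq 1$, and the slope argument for $t \geq 2$ instead uses the sharper bound $d_t/k_t \geq r/k > 1/(n-m)$.

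I expect the main technical obstacle to be the slope bookkeeping: one has to check case-by-case that the strict inequality $d_t/k_t > 1/m$ (or $> 1/(n-m)$ for $F$) really holds, excluding the borderline configuration where all slopes coincide. A conceptually cleaner alternative, which I would use to avoid this bookkeeping, is to invoke the intertwining identity $W_m = \I_{\tau, n}^{n-m} W$ together with the conjugation $\I_{\tau, n}^{-(n-m)} E_{a,b} \I_{\tau, n}^{n-m} = E_{a+(n-m)b, b}$ from Proposition~\ref{prop: I sigma}: each Negu\c{t} product becomes $\I_{\tau,n}^{n-m}$ applied to $E_{k_1+(n-m)d_1,d_1}\cdots E_{k_t+(n-m)d_t,d_t} W$, and vanishing is then governed by the familiar strict interior condition $0 < k_i + (n-m)d_i < n d_i$ built into Definition~\ref{def: Whit eigen} for $W = W_n$, which translates back to $-(n-m)d_t < k_t < m d_t$ for $W_m$.
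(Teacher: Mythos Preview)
Your proposal is correct and follows essentially the same approach as the paper: uniqueness from Proposition~\ref{prop: unique for m gl}, existence of $W_m$ from Corollary~\ref{corollary: uniq Whit sl}, and verification of the $W_m^{\mathfrak{gl}_n}$ axioms via the Negu\c{t} expansion~\eqref{Negut formula}. The paper's proof says exactly this in one line, without spelling out the slope bookkeeping; your write-up simply makes that step explicit, and your case analysis (including the borderline $E^m[1]$ case, where $t\ge 2$ forces $k_t<m$ and hence $d_t/k_t>1/m$) is correct.
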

\begin{proof}We already know uniqueness of $W_m^{\mathfrak{gl}_n}(z| u_1, \dots, u_n)$ and existence of $W_m (z|u_1, \dots, u_n)$ from Proposition~\ref{prop: unique for m gl} and Corollary~\ref{corollary: uniq Whit sl} correspondingly. So it is sufficient to show that $W_{m} (z|u_1, \dots, u_n)$ satisfies properties of $W_m^{\mathfrak{gl}_n}(z| u_1, \dots, u_n)$. Last assertion follows from formula~\eqref{Negut formula} (also see~\eqref{eq: notation for c}).
\end{proof}

\begin{Theorem}There exists unique vector $W_m^{\mathfrak{sl}_n}(z| u_1, \dots, u_n)$. Moreover, $W_m^{\mathfrak{sl}_n}(z| u_1, \dots, u_n) \otimes \vac_H = W_{m} (z|u_1, \dots, u_n)= \I^{n-m}_{\sigma,n}  W (z|u_1, \dots, u_n)$.
\end{Theorem}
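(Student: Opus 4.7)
The plan is to assemble the theorem from three facts already in this subsection and so reduce it to bookkeeping. First, Proposition \ref{prop: Whit gl vs qD} gives both existence and uniqueness of $W_m^{\mathfrak{gl}_n}(z|u_1,\dots,u_n)$ together with the identification
\begin{gather*}
W_m^{\mathfrak{gl}_n}(z|u_1,\dots,u_n) = W_m(z|u_1,\dots,u_n).
\end{gather*}
Second, the unnumbered proposition immediately after the definition of $W_m(z|u_1,\dots,u_n)$ (proved via the intertwiner property of $\I_\tau$ from Proposition \ref{prop: I sigma}) provides
\begin{gather*}
W_m(z|u_1,\dots,u_n) = \I^{n-m}_{\tau,n}\,W(z|u_1,\dots,u_n),
\end{gather*}
the $\sigma$ in the statement being read as $\tau$. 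Third, Lemma \ref{prop: Lemma sl vs gl} gives a one-to-one correspondence between vectors satisfying the defining properties of $W_m^{\mathfrak{sl}_n}$ in $\nWFock$ and vectors satisfying the properties of $W_m^{\mathfrak{gl}_n}$ in $\mathcal{F}_{u_1}\otimes\cdots\otimes\mathcal{F}_{u_n}\cong \nWFock\otimes F^H$, via $W\mapsto W\otimes\vac_H$.

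Concretely, the argument I would write is a short chain. From Proposition \ref{prop: Whit gl vs qD} the vector $W_m(z|u_1,\dots,u_n)$ belongs to $\mathcal{F}_{u_1}\otimes\cdots\otimes\mathcal{F}_{u_n}$ and is annihilated by all $H_k$ with $k>0$, hence sits in the Heisenberg highest subspace, which is identified with $\nWFock\otimes\vac_H$ under $\Wgl\cong\W\UH$. Let $W'\in\nWFock$ be the unique element with $W'\otimes\vac_H = W_m(z|u_1,\dots,u_n)$. By Lemma \ref{prop: Lemma sl vs gl} applied in the reverse direction, $W'$ satisfies all defining conditions of $W_m^{\mathfrak{sl}_n}(z|u_1,\dots,u_n)$, proving existence. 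Uniqueness follows symmetrically: any candidate $\tilde W$ for $W_m^{\mathfrak{sl}_n}$ yields $\tilde W\otimes\vac_H$ satisfying the $\mathfrak{gl}_n$-Whittaker conditions (again by Lemma \ref{prop: Lemma sl vs gl}), so Proposition \ref{prop: unique for m gl} forces $\tilde W\otimes \vac_H = W_m^{\mathfrak{gl}_n}$, and hence $\tilde W = W'$. Combining with the intertwiner identity above yields
\begin{gather*}
W_m^{\mathfrak{sl}_n}(z|u_1,\dots,u_n)\otimes\vac_H = W_m(z|u_1,\dots,u_n) = \I^{n-m}_{\tau,n}\,W(z|u_1,\dots,u_n).
\end{gather*}

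There is no real obstacle; the only point that requires care is checking that the normalization conventions line up — the three Whittaker vectors are all normalized to have leading term $\vacF$ (resp.\ $\vac\otimes\cdots\otimes\vac$), and under the identification $\mathcal{F}_{u_1}\otimes\cdots\otimes\mathcal{F}_{u_n}\cong\nWFock\otimes F^H$ the highest vector maps to $\vacF\otimes\vac_H$, while $\I_\tau$ preserves the highest vector (it acts on $|\lambda\rangle$ as multiplication by $u^{|\lambda|}q^{-|\lambda|/2+c(\lambda)}$ and fixes $\vac$). Given this, the chain of identifications is tautological.
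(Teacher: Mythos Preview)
Your proposal is correct and matches the paper's own proof, which is the one-liner ``Follows from Lemma~\ref{prop: Lemma sl vs gl} and Proposition~\ref{prop: Whit gl vs qD}''; you have simply unpacked that reference chain and added the (correct) observations that $\sigma$ should be read as $\tau$ and that the normalizations are compatible because $\I_\tau$ fixes $\vac$.
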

\begin{proof}Follows from Lemma~\ref{prop: Lemma sl vs gl} and Proposition~\ref{prop: Whit gl vs qD}.
\end{proof}

\subsection*{Acknowledgments} We are grateful to B.~Feigin, P.~Gavrylenko, E.~Gorsky, A.~Negu\c{t}, J.~Shiraishi, for interest to our work and discussions. The work is partially supported by Russian Foundation of Basic Research under grant mol\_a\_ved 18-31-20062 and by the HSE University Basic Research Program jointly with Russian Academic Excellence Project `5-100’. R.G.\ was also supported in part by Young Russian Mathematics award. The results of Section \ref{section: conformal} are obtained under the support of the Russian Science Foundation under grant 19-11-00275.

\pdfbookmark[1]{References}{ref}
\LastPageEnding

\end{document}